\newtheorem{thm}{Theorem}[section]
\newtheorem{mainthm}{Theorem}
\newtheorem{maincor}[mainthm]{Corollary}
\newtheorem{cor}[thm]{Corollary}
\newtheorem{lem}[thm]{Lemma}
\newtheorem{prop}[thm]{Proposition}
\newtheorem{claim}{Claim}
\newtheorem*{claim*}{Claim}
\theoremstyle{definition}
\newtheorem{dfn}[thm]{Definition}
\newtheorem{rem}[thm]{Remark}
\newtheorem{setup}[thm]{Setup}
\newcommand{\st}{\mid} 
\renewcommand{\tilde}{\widetilde}
\newcommand{\RR}{\mathbb{R}}
\newcommand{\NN}{\mathbb{N}}
\newcommand{\ZZ}{\mathbb{Z}}
\newcommand{\Sph}{\mathbb{S}}
\newcommand{\DD}{\mathbb{D}}
\newcommand{\into}{\hookrightarrow}
\newcommand{\onto}{\twoheadrightarrow}
\newcommand{\C}{\mathcal{C}} 
\newcommand{\E}{\operatorname{E}} 
\newcommand{\K}{\operatorname{K}} 
\newcommand{\F}{\operatorname{F}} 
\newcommand{\TopS}{\Sigma_\mathrm{top}}
\newcommand{\Hom}{\operatorname{Hom}}
\newcommand{\Int}{\operatorname{Int}} 
\newcommand{\TopHom}{\operatorname{Hom}_\mathbf{TopGr}}
\newcommand{\HomSet}{\Hom_{\mathbf{Set}}}
\newcommand{\id}{\mathrm{id}}
\renewcommand{\epsilon}{\varepsilon}
\newcommand{\HomSS}{\Hom_\mathbf{SSet}}
\newcommand{\SD}{\operatorname{SD}} 
\DeclareMathOperator{\im}{im}
\DeclareMathOperator{\Map}{Map}
\DeclareMathOperator{\Ret}{Ret}
\DeclareMathOperator{\pr}{pr} 
\title{Geometric invariants of locally compact groups:\\the homotopical perspective}
\author{Kai-Uwe Bux$^1$ \and Elisa Hartmann$^1$ \and José Pedro Quintanilha$^2$}
\date{$^1$Universität Bielefeld\\
	$^2$Ruprecht-Karls-Universität Heidelberg\\\bigskip
	\today}
\begin{document}

\maketitle

\begin{abstract}
	We extend the classical theory of homotopical $\Sigma$-sets~$\Sigma^n$ developed by Bieri, Neumann, Renz and Strebel for abstract groups, to $\Sigma$-sets~$\TopS^n$ for locally compact Hausdorff groups. Given such a group~$G$, our $\TopS^n(G)$ are sets of continuous homomorphisms $G \to \RR$ (``characters''). They match the classical $\Sigma$-sets $\Sigma^n(G)$ if $G$~is discrete, and refine the homotopical compactness properties~$\mathrm C_n$ of Abels and Tiemeyer. Moreover, our theory recovers the definition of $\TopS^1$~and~$\TopS^2$ proposed by Kochloukova.
	
	Besides presenting various characterizations of $\TopS^n$ (particularly for $n\in \{1,2\}$), we show that characters in $\TopS^n(G)$ are also in $\TopS^n(H)$ if $H\le G$ is a closed cocompact subgroup, and we generalize several classical results. Namely, we prove that the set of nonzero elements of $\TopS^n(G)$ is open, we prove that characters in a group of type~$\mathrm C_n$ that do not vanish on the center always lie in~$\TopS^n(G)$, and we relate the $\Sigma$-sets of a group with those of its quotients by closed subgroups of type~$\mathrm C_n$. Lastly, we describe how $\TopS^n(G)$ governs whether a closed normal subgroup with abelian quotient is of type~$\mathrm C_n$, generalizing one of the highlights of the classical theory.
\end{abstract}

 \tableofcontents

\section{Introduction}

The properties of a group being finitely generated or finitely presented are well known to be, respectively, the cases $n=1$ and $n=2$ of the finiteness properties~$\mathrm F_n$. In a nutshell, every group~$G$ has a canonically associated homotopy type -- that of a classifying space~$\mathrm K(G,1)$ -- and $G$~is said to be of type~$\mathrm F_n$ when this homotopy type is realizable by a CW complex with finite $n$-skeleton.

It is often surprisingly challenging to determine when property~$\mathrm F_n$ descends to subgroups, even for $n=1$, and towards the end of the last century, this circle of questions led to the discovery of $\Sigma$-sets (also known as ``$\Sigma$-invariants'',  ``geometric invariants'', or ``BNSR-invariants''). The earliest contributions were due to Bieri, Strebel \cite{BS80, BS81,BS82}, and later also Neumann \cite{BNS87}, and associated to each finitely generated group~$G$ certain sets of characters (that is, group homomorphisms $G\to\RR$) in a few different yet related ways. A common feature of these character sets is that, under some circumstances, they allow one to understand when subgroups of~$G$ are finitely generated.
For a survey on the most widely studied variant, nowadays denoted by~$\Sigma^1(G)$, see the (vast, yet unfinished) memoir by Strebel \cite{Str13}.

Much like the property of~$G$ being finitely generated is but the first instance of the finiteness properties~$\mathrm F_n$, it turns out that $\Sigma^1(G)$ too is the first in a descending sequence of sets~$\Sigma^n(G)$, as explained by Bieri and Renz \cite{Ren88,BR88}. This parallel is more than an analogy: from a certain perspective, which we shall adopt in the text, each~$\Sigma^n(G)$ is a refinement of property~$\mathrm F_n$, accessible by peering into the $n$-th level of the homotopy theory of (classifying spaces for)~$G$. A suggestive slogan is that a character $\chi \colon G\to \RR$ lies in $\Sigma^n(G)$ if $G$~is ``of type~$\F_n$ along~$\chi$''.
In keeping with the theme of computing finiteness properties of subgroups,
one of the most important foundational results of $\Sigma$-theory is that for $G$~finitely generated, a normal subgroup $H\le G$ with abelian quotient is of type~$\mathrm F_n$ if and only if $\Sigma^n(G)$~contains all characters that vanish on~$H$ \cite[Satz~C]{Ren88}.

An entirely different direction in which to generalize the finiteness properties~$\mathrm F_n$ was pursued by Abels and Tiemeyer \cite{AT97}, who took inspiration from a classical criterion of Brown to define the ``compactness properties'' $\mathrm C_n$ for topological groups. More precisely, these are defined for locally compact Hausdorff groups~$G$, and recover the classical properties~$\mathrm F_n$ when $G$~has the discrete topology.

Our contribution with this article is to complete the bottom-right corner of the rectangle
$$\begin{tikzcd}
	\mathrm F_n \ar[squiggly, rrr, "\substack{\text{Bieri, Neumann,} \\ \text{Renz, Strebel}}"] \ar[squiggly, d, "\substack{\text{Abels,}\\ \text{Tiemeyer}}"'] &&& \Sigma^n \ar[d, dashed ]\\
	\mathrm C_n \ar[dashed, rrr]&&& \TopS^n
\end{tikzcd},$$
in other words, to lay out a theory of $\Sigma$-sets for locally compact Hausdorff groups~$G$, which we denote by $\TopS^n(G)$. These are sets of characters $\chi \colon G \to \RR$ (which are now \emph{continuous} homomorphisms), refining the compactness properties~$\mathrm C_n$ of Abels--Tiemeyer, and restricting to the classical~$\Sigma^n(G)$ when $G$~is discrete. We also extend a variety of results from the classical theory to this broader context.

It should be noted that the first steps in this direction have been taken by Kochlou\-kova, who proposed definitions of $\TopS^1(G)$~and~$\TopS^2(G)$ \cite{Ko04}. Her construction has the advantage of being very hands-on, relying on explicit generating sets and presentations, at the cost of leaving it unclear how to define $\TopS^n$ for $n\ge 3$ (in much the same way that the standard definitions of a group being finitely generated or finitely presented make it unclear how to define the higher properties~$\mathrm F_n$). Here, we provide a uniform definition of $\TopS^n$ for every~$n\in \NN$, and show that it recovers those of Kochloukova for $n\in \{1,2\}$.

There is a parallel version of this story, where instead of the finiteness properties $\mathrm{F}_n$, one starts with their homological counterparts, properties~$\mathrm{FP}_n$ (possibly with the extra input of a commutative ring~$R$, which by omission is typically taken to be~$\ZZ$).
Bieri and Renz have refined these properties to homological $\Sigma$-sets $\Sigma^n(G;R)$ \cite{BR88}, and the work of Abels--Tiemeyer provides topological analogues, properties $\mathrm{CP}_n$ \cite{AT97}. In a separate paper, we focus on this perspective, defining homological $\Sigma$-sets
$\TopS^n(G; R)$ for locally compact Hausdorff groups~$G$ \cite{BHQ}. Besides establishing several analogues of results in the present article, that paper re-frames both theories in terms of coarse geometry and relates them by showing that, as in the discrete case,
\begin{align*}
	\TopS^1(G) &= \TopS^1(G; \ZZ),\\
	\TopS^n(G) &= \TopS^n(G; \ZZ) \cap \TopS^2(G) \text{,\quad for $n\ge 2$}.
\end{align*}

\subsection{Main results}

Our proposed definition of when a character $\chi\colon G\to \RR$ lies in~$\TopS^n(G)$, Definition~\ref{dfn.topsigma}, considers the free simplicial set~$\E G$ on~$G$, filtered by the simplicial sub\-sets~$G_\chi \cdot \E C$, where $G_\chi := \chi^{-1} ([0,+\infty[)$, and $C$~varies over the poset~$\C(G)$ of compact subsets of~$G$ (see Section~\ref{sec:prelim} for the definition of free simplicial sets and relevant homotopy-theoretical notions). We declare that $\chi \in \TopS^n(G)$ if and only if this filtration is essentially $(n-1)$-connected. When $\chi = 0$, so $G_\chi=G$, this recovers precisely Abels--Tiemeyer's definition of property~$\mathrm C_n$; in other words, $G$~is of type $\mathrm C_n$ if and only if $0\in \TopS^n(G)$.

It is not nearly as obvious that our theory recovers the classical~$\Sigma^n(G)$ of Bieri, Neumann, Renz and Strebel for discrete groups. This is our first main result (Theorem~\ref{thm.sigmasmatch} in the text):	

\begin{mainthm}[The classical $\Sigma^n$] \label{thm:sigmasmatch_intro}
	If $G$~is a discrete group, then for every $n\in \NN$, we have
	\[\Sigma^n(G) = \TopS^n(G).\]
\end{mainthm}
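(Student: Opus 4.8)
The strategy is to compute both $\Sigma^n(G)$ and $\TopS^n(G)$ from one and the same geometric model, and to observe that the two defining conditions then become one and the same. We may assume $G$ is of type~$\Ftp{n}$ --- equivalently, for discrete groups, of type~$\Ctp{n}$ \cite{AT97} (recall that type~$\Ctp{n}$ is detected by $0\in\TopS^n(G)$, Proposition~\ref{prop.zerocharacter}) --- since otherwise both invariants are, by the conventions in force, empty and there is nothing to prove. Now recall that $\chi\in\TopS^n(G)$ is detected by the essential $(n-1)$-connectivity of the filtration of a suitable contractible $G$-space~$X$ by the superlevel sets $X^{h\ge t}=h^{-1}\big([t,\infty)\big)$ of a $\chi$-compatible height function $h\colon X\to\RR$ (one satisfying $h(gx)=\chi(g)+h(x)$), and that this does not depend on the chosen model; likewise, the classical $\Sigma^n(G)$ of a group of type~$\Ftp{n}$ is described by exactly this condition on exactly such a filtration \cite{Ren88,BR88}, again independently of the model. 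It therefore suffices to exhibit a single pair $(X,h)$ that is an admissible input on both sides.

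For this, fix a $K(G,1)$-complex $Y$ with finite $n$-skeleton --- available because $G$ is of type~$\Ftp{n}$ --- and let $X=\widetilde Y$ be its universal cover, carrying the free cellular $G$-action, which is cocompact through dimension~$n$. Together with any $\chi$-compatible height function $h$, this is the standard model computing the classical $\Sigma^n(G)$. The crucial observation is that, $G$ being discrete, it is \emph{also} an admissible model for $\TopS^n(G)$: the action is proper with compact --- indeed trivial --- point stabilizers, it is cocompact through dimension~$n$, and $X$ is a contractible CW-complex, so it meets the rigidity and local-niceness requirements built into the definition of~$\TopS^n$, while $h$ may be chosen continuous and of whatever regularity that definition demands. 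Granting this, $\Sigma^n(G)$ and $\TopS^n(G)$ are both read off from the very same filtration $\big(X^{h\ge t}\big)_{t\in\RR}$ via the very same essential-connectivity condition, and hence coincide.

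The real content --- and the step I expect to be the main obstacle --- is the verification that the classical model genuinely lies in the class of spaces and actions over which $\TopS^n$ is defined: one has to unwind the (Abels--Tiemeyer-style) technical hypotheses packaged into that definition (rigidity of the action, $n$-goodness of the space, properness and cocompactness up to dimension~$n$, compactness of stabilizers, and the admissible regularity for the height function) and check each of them against a free, cocompact-through-dimension-$n$, cellular action of a discrete group on a contractible CW-complex. Where the two frameworks impose different regularity on~$h$ (say, continuous and coarsely controlled versus cellular and Lipschitz) one passes between them by a routine replacement that leaves the homotopy type of the filtration pieces unchanged in the relevant range of dimensions. One should also confirm that the two theories phrase ``essential $(n-1)$-connectivity'' the same way --- the comparison taken over $t\to\infty$ versus over a cofinal family, the handling of $\pi_0$ and $\pi_1$ basepoints --- or else invoke the independence-of-model statements on each side to reduce to a common formulation. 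For $n\in\{1,2\}$ a complementary consistency check is available: Kochloukova's explicit definitions of $\TopS^1$ and $\TopS^2$, which our construction recovers, specialize in the discrete case to the classical generating-set and presentation descriptions of $\Sigma^1$ and $\Sigma^2$.
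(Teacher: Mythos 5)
Your overall strategy --- read both invariants off the same filtration of the same model --- is the right idea, and it is in fact how the paper proceeds. The problem is that your proposal rests on a characterization of $\TopS^n(G)$ that is neither its definition nor anything available in advance: $\TopS^n(G)$ is defined (Definition~\ref{dfn.topsigma}) by essential $(n-1)$-connectedness of the filtration $(G_\chi\cdot\E C)_{C\in\C(G)}$ of $\E G$, indexed by compact (here: finite) subsets of~$G$, not by superlevel sets of a height function on a contractible $G$-space. There is no ``independence of the model over contractible $G$-spaces with $\chi$-compatible height functions'' on the $\TopS^n$ side to invoke; the closest available statements are Proposition~\ref{prop:doublefiltration} (the \emph{double} filtration $((G\cdot\E C)_s)_{(C,s)\in\C(G)\times\RR}$, indexed by compact sets \emph{and} levels) and Proposition~\ref{prop:properaction} (proper actions, but still filtering $\E X$ by compact subsets). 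Since $\E G$ is not $G$-cocompact in any positive dimension when $G$ is infinite, collapsing the compact-set parameter so as to compute $\TopS^n$ from a single height filtration of a cocompact complex is exactly the nontrivial step, and your proposal does not supply it.

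That step is the paper's Theorem~\ref{thm:brownforchars} (Brown's criterion for characters): it shows that for \emph{any} free $(n-1)$-connected $G$-CW-complex with a filtration by $G$-subcomplexes with $G$-finite $n$-skeleta, essential $(n-1)$-connectedness of the associated double filtration detects $\chi\in\Sigma^n(G)$. Its proof is not a ``routine unwinding of technical hypotheses'' --- the definition of $\TopS^n$ contains no Abels--Tiemeyer-style hypotheses on rigidity, $n$-goodness, or regularity of $h$ to check --- but a genuine invariance theorem, requiring the $G$-$n$-equivalence machinery (Lemma~\ref{lem.Gnequiv}), the lower-bound lemmas for equivariant maps and homotopies (Lemma~\ref{lem.equivlowerbound}, Corollary~\ref{cor.htpylowerbound}), and Renz's Theorem~\ref{thm.renz}, which itself rests on Whitehead's theorem. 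With those tools the paper's proof is two lines: apply Theorem~\ref{thm:brownforchars} to $Y=|\E G|$ with the piecewise-linear valuation extending $v(g)=\chi(g)$ and the filtration $(|G\cdot\E C|)_{C\in\C(G)}$, then combine with Proposition~\ref{prop:doublefiltration}. (A minor further point: your initial reduction to $G$ of type~$\mathrm F_n$ is harmless but unnecessary once Brown's criterion is in hand, since that criterion holds without the finiteness hypothesis on~$G$.) Your closing remark about $\TopS^1,\TopS^2$ and Kochloukova is a consistency check, not a proof, and in this paper the logical direction is reversed: Theorems~\ref{thm:sigma1_cptgen} and~\ref{thm:sigma2_cptpres} are proved independently rather than used to establish the discrete comparison.
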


On the way to proving Theorem~\ref{thm:sigmasmatch_intro}, we establish an analogue of Brown's Criterion that is useful for detecting that a character lies in~$\Sigma^n(G)$.
Recall that Brown's Criterion for property~$\mathrm F_n$ uses a filtration $(Y_\alpha)_{\alpha\in A}$ of an $(n-1)$-connected $G$-CW complex~$Y$ by cocompact subcomplexes \cite[Theorem~7.4.1 and exercise on p.~179]{Geo08}. To test whether a character $\chi \colon G\to \RR$ lies in~$\Sigma^n(G)$, we supplement~$Y$ with a ``valuation'' $v\colon Y \to \RR$ compatible with~$\chi$ (see Definition~\ref{dfn:valuation}), and filter~$Y$ by all subcomplexes~$Y_{\alpha,s}$, where $(\alpha,s)$ ranges over $A\times \RR$ and $Y_{\alpha,s}$ is obtained from~$Y_\alpha$ as the span of the vertices~$y$ with $v(y)\ge s$.

\begin{mainthm}[Brown's Criterion for characters]
	\label{thm:brownforchars_intro}
	Let $G$ be a discrete group, let $n\in \NN$, let~$X$ be an $(n-1)$-connected rigid $G$-CW complex, and $(X_\alpha)_{\alpha \in A}$ a filtration of~$X$ by $G$-subcomplexes with $G$-finite $n$-skeleta. Assume that for each $k$-cell~$e$, its stabilizer~$G_e$ is of type $\mathrm{F}_{n-k}$. Let $\chi\colon G \to \RR$ be a character and $v$~a valuation on~$X$ associated to~$\chi$.
	
	Then $\chi \in \Sigma^n(G)$ if and only if the filtration $(X_{\alpha, s})_{(\alpha, s)\in A\times \RR}$ is essentially $(n-1)$-connected.
\end{mainthm}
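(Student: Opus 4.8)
The plan is to deduce the theorem from the definition of $\Sigma^n(G)$ by a change-of-model argument, of exactly the same flavour as the proof that $\Sigma^n(G)$ is independent of the auxiliary data used to compute it. Recall that, for $G$ of type $\mathrm F_n$, one computes $\Sigma^n(G)$ from a free $(n-1)$-connected $G$-CW complex $X$ with $G$-finite $n$-skeleton together with a valuation $u$ on $X$ associated with $\chi$: one has $\chi\in\Sigma^n(G)$ if and only if the filtration $(X_{u\ge s})_{s\in\RR}$ is essentially $(n-1)$-connected (equivalently, so is the filtration $(X^{(n)}_{u\ge s})_{s}$ of $n$-skeleta). Such an $X$, endowed with the one-element filtration $X=Y_\ast$, is an instance of the data $(Y,(Y_\alpha)_{\alpha\in A})$ appearing in the statement, so the assertion amounts to saying that essential $(n-1)$-connectedness of the two-parameter filtration $(Y_{\alpha,s})_{(\alpha,s)\in A\times\RR}$ is insensitive to which such data one uses. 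For the ``only if'' direction we simply compare a standard model $X$ with the given $Y$. For the ``if'' direction we must first produce a standard model at all: each $Y_{\alpha,s}$ is a free $G$-CW complex whose $n$-skeleton is $G$-finite (being a $G$-invariant subcomplex of the $G$-finite complex $Y_\alpha^{(n)}$), and $\bigcup_{(\alpha,s)}Y_{\alpha,s}=Y$ is $(n-1)$-connected; so if $(Y_{\alpha,s})$ is essentially $(n-1)$-connected then $G$ is of type $\mathrm F_n$ by the classical free version of Brown's criterion, and a standard model $X$ becomes available.

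The heart of the proof is therefore the following comparison. Given standard-type data $(X,u)$ and the data $(Y,v,(Y_\alpha))$, we construct $G$-maps back and forth, inverse to each other up to dimension $n-1$, that are \emph{controlled}, i.e.\ shift the associated valuations by a bounded amount. Since $X^{(n)}$ is $G$-finite and $Y$ is free and $(n-1)$-connected, equivariant obstruction theory (the obstruction to extending a $G$-map over $k$-cells, $k\le n$, lies in $\pi_{k-1}(Y)=0$) yields a cellular $G$-map $f\colon X^{(n)}\to Y$. Its image meets only finitely many $G$-orbits of cells, hence lies in some $Y_{\alpha_0}$; and since $X^{(n)}$ has finitely many $G$-orbits of cells and $f,u,v$ are equivariant, the difference $v(f(x))-u(x)$ is constant on each orbit, hence bounded below, say by $-C$, so $f\bigl(X^{(n)}_{u\ge s}\bigr)\subseteq Y_{\alpha_0,\,s-C}$ for every $s$. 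Dually, using that $X$ is $(n-1)$-connected we build a cellular $G$-map $g\colon Y^{(n)}\to X$; this $g$ need not be controlled on all of $Y$, but its restriction to each $G$-finite subcomplex $Y_\alpha^{(n)}$ is controlled with some constant $C_\alpha$, so $g\bigl(Y_{\alpha,s}^{(n)}\bigr)\subseteq X^{(n)}_{u\ge s-C_\alpha}$. Finally, $g\circ f$ and the inclusion $X^{(n)}\hookrightarrow X$ agree up to a $G$-homotopy on the $(n-1)$-skeleton (obstructions in $\pi_k(X)=0$, $k\le n-1$), which is controlled because $X^{(n-1)}$ is $G$-finite; and, for each $\alpha$, $f\circ g$ restricted to $Y_\alpha^{(n)}$ is controlled-homotopic through dimension $n-1$ to the inclusion $Y_\alpha^{(n)}\hookrightarrow Y$, with a constant and a target subcomplex $Y_{\alpha_1}$ depending on $\alpha$.

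Granting these maps, both implications follow by formal manipulation. For ``only if'', fix $(\alpha,s)$. The inclusion $Y_{\alpha,s}^{(n)}\hookrightarrow Y_{\beta,t}^{(n)}$ (for $\beta$ and $t$ to be chosen) is, up to homotopy through dimension $n-1$, equal to the composite $Y_{\alpha,s}^{(n)}\xrightarrow{g}X^{(n)}_{u\ge s-C_\alpha}\hookrightarrow X^{(n)}_{u\ge s'}\xrightarrow{f}Y_{\alpha_0,\,s'-C}^{(n)}\hookrightarrow Y_{\beta,t}^{(n)}$, by the controlled homotopy $f\circ g\simeq\mathrm{incl}$; choosing $s'\le s-C_\alpha$ via essential $(n-1)$-connectedness of $(X^{(n)}_{u\ge s})_s$ so that the middle inclusion is trivial on $\pi_k$ for $k\le n-1$, and setting $\beta=\alpha\vee\alpha_0\vee\alpha_1$ and $t$ a common lower bound for $s'-C$ and for $s$ minus all the accumulated constants, makes the whole composite trivial on $\pi_{\le n-1}$ and the indices cofinal. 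Hence $(Y_{\alpha,s})$ is essentially $(n-1)$-connected. The ``if'' direction is the mirror image, interchanging $X$ and $Y$, using the standard model supplied by Brown's criterion above together with the controlled homotopy $g\circ f\simeq\mathrm{incl}$.

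The main obstacle is this bookkeeping: checking that $g$ is controlled on each $Y_\alpha$ despite not being globally controlled, and tracking the finitely many constants and index shifts through the composites so that every comparison genuinely lands at a later stage of the receiving filtration, cofinally in $A\times\RR$ (resp.\ in $\RR$). One must also handle the basepoint and path-component subtleties packaged into ``essentially $(n-1)$-connected'' — which for $\pi_0$ subsumes ``essentially nonempty'' and ``essentially connected'' — and treat the degenerate cases $n\in\{0,1\}$ directly; all of this is routine once the controlled maps above are in place.
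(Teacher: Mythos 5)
Your overall strategy---construct controlled $G$-$n$-equivalences between $Y$ and a standard model $X$, use the finiteness of the set of $G$-orbits of cells together with equivariance of the maps and valuations to bound the valuation shift of these maps and of the homotopies comparing their composites to the inclusions, and then transfer essential $(n-1)$-connectedness through the resulting diagrams---is exactly the paper's proof (its lemmas on $G$-$n$-equivalences and on equivariant maps/homotopies respecting lower bounds). But one step in your ``if'' direction fails as written. You apply the classical free version of Brown's criterion directly to the two-parameter filtration $(Y_{\alpha,s})_{(\alpha,s)}$, justifying this by the claim that each $Y_{\alpha,s}$ is ``a $G$-invariant subcomplex of the $G$-finite complex $Y_\alpha^{(n)}$''. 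This is false whenever $\chi\neq 0$: since $v(g\cdot y)=v(y)+\chi(g)$, one has $g\cdot Y_{\alpha,s}=Y_{\alpha,\,s+\chi(g)}$, so $Y_{\alpha,s}$ is invariant only under $\ker\chi$, and it is not cocompact even for that subgroup (a single $G$-orbit of cells of $Y_\alpha$ meets $Y_{\alpha,s}$ in infinitely many $\ker\chi$-orbits when $\chi(G)$ is infinite). Brown's criterion, whose hypothesis is a filtration by $G$-subcomplexes with $G$-finite $n$-skeleta, therefore does not apply to $(Y_{\alpha,s})$, and your route to producing a standard model---the prerequisite for everything else in that direction---is broken.

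The gap is repairable, and the paper repairs it with an intermediate translation argument: essential $(n-1)$-connectedness of $(Y_{\alpha,s})$ implies that of the genuine $G$-filtration $(Y_\alpha)_{\alpha\in A}$. Given $\alpha$, choose $(\beta,s)$ so that $Y_{\alpha,0}\hookrightarrow Y_{\beta,s}$ is $\pi_k$-trivial; any sphere in $Y_\alpha$ has compact image, hence bounded valuation, so some translate $t\cdot\eta$ lands in $Y_{\alpha,0}$; fill it in $Y_{\beta,s}$ and translate the filling back by $t^{-1}$ inside $Y_\beta$. Only then does one apply Brown's criterion, to $(Y_\alpha)$, to conclude type $\mathrm F_n$ and obtain a standard model. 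With this substitution your argument goes through and coincides with the paper's.
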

 
 This result is presented as Theorem~\ref{thm:brownforchars_general} in the text. It has closely related analogues in the work of Meinert, namely its counterpart for homological $\Sigma$-sets \cite[Theorem~3.8]{Mei95}, and a weaker version of the case $n=2$ where the $2$-skeleton $X^{(2)}$~is assumed to be $G$-finite, rather than filtered by $G$-finite subcomplexes \cite[Theorem~A]{Mei97}. In his doctoral thesis, Meinert also gives a version of the case $n=2$ in terms of a filtration, but with the assumption that the $G$-action is free \cite{Mei93}. To the best of our knowledge, a statement in the generality of Theorem~\ref{thm:brownforchars_intro} is absent from the literature, but given the abundance of similar results, we make no claim of originality.
 
We give alternative, explicit descriptions of $\TopS^1$ and $\TopS^2$. In particular, the following theorem shows they agree with the ones given by Kochloukova:

\begin{mainthm}[$\TopS^1$ and $\TopS^2$]\label{thm:Koch_intro}
	For every character $\chi\colon G\to \RR$, we have:
	\begin{enumerate}
		\item $\chi \in \TopS^1(G) \iff \text{$G$~is compactly generated along~$\chi$}$.
		\item $\chi \in \TopS^2(G) \iff \text{$G$~is compactly presented along~$\chi$}$.
	\end{enumerate}
\end{mainthm}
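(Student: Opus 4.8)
The plan is to unwind the definition of $\TopS^n(G)$ in the two cases $n=1,2$ and match it against the concrete conditions. Recall that, exactly as in the classical picture governed by Brown's criterion for characters (Theorem~\ref{thm:brownforchars_intro}), membership $\chi\in\TopS^n(G)$ will amount to requiring that some sufficiently connected model for $G$ --- built, say, from a Vietoris--Rips-type complex $\VR{R}{G}$ at a scale $R$ large enough that its relevant skeleton is $(n-1)$-connected, or equivalently from the model used by Abels--Tiemeyer to define $\mathrm{C}_n$ --- carries a $\chi$-induced filtration, by the subcomplexes on which $\chi$ is bounded below, that is essentially $(n-1)$-connected. Since $\chi\in\TopS^n(G)$ and ``compactly generated/presented along~$\chi$'' both force $G$ to be of type $\mathrm{C}_1$, respectively $\mathrm{C}_2$ (in particular the $\chi=0$ case is just Proposition~\ref{prop.zerocharacter}), I may assume from the outset that such a model exists. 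For $n=1$ the condition concerns only $\pi_0$, and for $n=2$ it additionally involves $\pi_1$; the substance of the theorem is the translation of these connectivity statements for the $\chi$-filtration into the generator/relator language in which ``compactly generated along~$\chi$'' and ``compactly presented along~$\chi$'' (in the spirit of Kochloukova's definitions) are phrased.

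I would begin by recording that essential $(n-1)$-connectedness of the $\chi$-filtration is insensitive to the scale once it is large enough, and more generally is unchanged if one replaces the chosen model by any other $(n-1)$-connected one carrying a compatible $G$-cocompact cell structure; this filtered quasi-isometry invariance is in any case needed to make $\TopS^n$ well defined, so I would cite it rather than reprove it. It lets me work with the most convenient model in each case: a one-dimensional ``topological Cayley graph'' built from a compact generating set when $n=1$, and the two-complex obtained by coning off the relators of a compact presentation when $n=2$. For $n=1$ one then argues directly that the $\chi$-filtration of the graph model is essentially connected if and only if $G$ is compactly generated along~$\chi$: essential connectedness says precisely that two vertices of a superlevel set $\{\chi\ge s\}$ joined by an edge path in the whole graph are already joined by a path inside a deeper superlevel set, and reading such paths off edge by edge produces exactly a compact family of ``$\chi$-descending generators'' witnessing compact generation along~$\chi$, the converse reversing this bookkeeping. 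The only point beyond the discrete argument is keeping all the sets compact and the choices continuous in compact families --- this is where local compactness of $G$ is used, and is largely a matter of replacing ``finite'' by ``compact'' with care.

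For $n=2$ the scheme is the same one dimension up: from a compact presentation one builds a model with $G$-cocompact $2$-skeleton, observes that it is simply connected (hence computes $\TopS^2$), and shows that essential simple-connectedness of its $\chi$-filtration is equivalent to compact presentation along~$\chi$ --- a loop in a superlevel set that bounds a disc in the whole complex must, by essential simple-connectedness, bound a disc in a deeper superlevel set, while conversely filling loops with the relators of a compact presentation and tracking $\chi$-values over the filling disc is a van Kampen diagram argument decorated with a $\chi$-bookkeeping. I expect the main obstacle to lie precisely here: one must lift a null-homotopy living in the Vietoris--Rips model that defines $\TopS^2$ to a combinatorial filling over the compact presentation with \emph{bounded $\chi$-depth}, all the while keeping the compactness hypotheses on cell orbits and attaching maps under control; continuity of the group multiplication, and the passage back and forth between the Rips model and the presentation model, are the delicate steps. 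Once these are in place, combining them with the model independence recorded above yields both stated equivalences, and one can check consistency with the trivial case $\TopS^0(G)$ and with the inclusion $\TopS^2(G)\subseteq\TopS^1(G)$.
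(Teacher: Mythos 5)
Your outline points in the right general direction -- reduce to a Cayley graph for $n=1$ and a presentation $2$-complex for $n=2$, and translate essential connectedness of the $\chi$-superlevel filtration into word/relation bookkeeping -- and this is indeed how the paper proceeds (via $\Gamma(G,X)_\chi$, $\Gamma(G,X\mid R)_\chi$, Proposition~\ref{prop.wordspaths}, and Proposition~\ref{prop.trivialloops}). But the proposal has a genuine gap: you defer the entire technical content to a ``filtered quasi-isometry / model independence'' statement that you propose to \emph{cite rather than reprove}. No such statement is available off the shelf here. The defining filtration of $\TopS^n(G)$ is indexed by the compact subsets $C$ of $G$ with stages $G_\chi\cdot\E C$ inside $\E G$; it is not literally the $\chi$-superlevel filtration of a fixed cocompact model, and the comparison with the Cayley graph and Cayley complex is exactly what Proposition~\ref{prop.sigma1stable}, Corollary~\ref{cor.passagetocayley}, Lemma~\ref{lem.pi1onto}, Proposition~\ref{prop.trivialloops} and Proposition~\ref{prop.anygensetchi} establish. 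In particular, the independence of the choice of compact generating set, and of the compact presentation, \emph{along $\chi$} is itself a substantial lemma (Theorem~\ref{thm:sigma1_cptgen}(3), Proposition~\ref{prop.anygensetchi}) requiring the ``minimum of the $\chi$-track attained at an endpoint'' rewriting trick; it cannot be absorbed into a generic invariance principle.

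The single most important missing ingredient is the compact-exhaustion lemma along $\chi$ (Lemma~\ref{lem.compactexhaustionchi}): via a Baire-category argument on $G_\chi$ one shows that any compact $D\subseteq G_\chi$ lies in $C^{m,\chi}$ for some $m$, i.e.\ is reachable by bounded-length words of non-negative $\chi$-track. This is what replaces ``finite'' by ``compact'' in both directions: for $n=1$ it gives that every compact generating set generates along $\chi$, and for $n=2$ it is what lets one relabel the interior edges of a (semisimplicially approximated and subdivided) filling disk in $(G_\chi\cdot\E X^m)_\chi$ by $X$-words of non-negative $\chi$-track and uniformly bounded length, producing relators in some $R_{3k}$. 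Your remark that this is ``largely a matter of replacing finite by compact with care'' underestimates the point -- the Baire argument, including the case analysis needed to produce a subset of $C^{k,\chi}$ open in $G$ rather than merely in $G_\chi$, is the genuinely new step beyond the discrete theory, and without it neither the forward nor the backward implication in part~2 (nor the ``every compact generating set works'' refinement in part~1) goes through.
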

The notions of being compactly generated / presented ``along~$\chi$'' are explained in Sections \ref{sec:sigma1}~and~\ref{sec:sigma2} respectively, both algebraically (in terms of words and relations) and geometrically (using Cayley graphs and Cayley complexes). Theorem~\ref{thm:Koch_intro} is established in the text as Theorems~\ref{thm:sigma1_cptgen} and~\ref{thm:sigma2_cptpres}.

Regarding the structure of the set~$\TopS^n(G)$, it is fairly straightforward to show that it is nonempty precisely if $G$~is of type~$\mathrm C_n$, in which case it is a cone at~$0$ in the $\RR$-vector space $\TopHom(G,\RR)$ of continuous group homomorphisms $G \to \RR$ (see Proposition~\ref{prop.zerocharacter}). 
A notable feature of the classical theory is that the property of a nonzero character being in $\Sigma^n(G)$ is stable under small perturbation \cite[Satz~A]{Ren88}. This fact persists in the locally compact setting:

\begin{mainthm}[Openness]\label{thm:openness_intro}
	For every $n\in \NN$, if $G$~is of type~$\mathrm C_n$, then $\TopS^n(G)$ is the cone at~$0$ over an open subset of $\TopHom(G,\RR)\setminus\{0\}$.
\end{mainthm}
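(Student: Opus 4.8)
The plan is to follow the scheme of Renz's proof of the classical openness theorem \cite[Satz~A]{Ren88}, systematically replacing ``finite'' by ``compact''. Since $\RR$ has no nontrivial compact subgroups, every character vanishes on the compact subgroups of~$G$; this is what will make the valuations used below well defined on $G$-orbits. By Proposition~\ref{prop.zerocharacter} we already know that $\TopS^n(G)$ is a cone at~$0$ in $\TopHom(G,\RR)$, and that it is empty --- hence, vacuously, the cone over the empty open set --- unless $G$~is of type~$\mathrm{C}_n$. So it suffices to fix $\chi\in\TopS^n(G)\setminus\{0\}$ and produce an open neighbourhood of~$\chi$ in $\TopHom(G,\RR)$ that is contained in $\TopS^n(G)$.

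The first step is to replace membership in $\TopS^n(G)$ by a combinatorial certificate involving only finitely much data modulo the $G$-action. Since $G$ is of type~$\mathrm{C}_n$, we may test membership on a single model: a $G$-complex~$Y$ that is $(n-1)$-connected, has compact cell stabilizers, and whose $n$-skeleton has finitely many $G$-orbits of cells --- for instance a (possibly subdivided) Vietoris--Rips complex $\VR{R}{G}$ for a compact generating set and a large scale~$R$, or the model built into the definition of~$\TopS^n$. Equip~$Y$ with a $\chi$-equivariant, affine-on-cells height function $h=h_\chi$, so $h(g\cdot\sigma)=\chi(g)+h(\sigma)$; the relevant criterion (the locally compact analogue of Theorem~\ref{thm:brownforchars_intro}) reads: $\chi\in\TopS^n(G)$ if and only if the superlevel filtration $\bigl(Y^{h\ge r}\bigr)_{r\in\RR}$ is essentially $(n-1)$-connected. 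I would then reformulate this Morse-theoretically (choosing the model appropriately), in the style of Bestvina--Brady, as a high-connectivity condition on the ascending links of the cells of~$Y$ with respect to~$h$. The crucial point is finiteness modulo the action: there are finitely many $G$-orbits of cells in the relevant dimensions, the ascending link of~$g\cdot\sigma$ equals $g$ times that of~$\sigma$ (by $\chi$-equivariance of~$h$), and which cells lie in the ascending link of a fixed orbit representative depends only on the signs of finitely many numbers of the form~$\chi(g)$. Hence ``$\chi\in\TopS^n(G)$'' becomes a condition on the finite tuple $\bigl(\chi(g)\bigr)_{g\in K}$ for a fixed finite set $K\subseteq G$ determined by~$Y$.

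It remains to show that this condition is open, and hence open in~$\chi$ for any sensible topology on $\TopHom(G,\RR)$ (one in which convergence entails uniform convergence on compact --- here, finite --- subsets). If $h_\chi$ is \emph{generic}, in the sense that no two incident cells of~$Y$ have equal $h_\chi$-value, then a sufficiently small perturbation of~$\chi$ to a nearby character~$\chi'$ leaves all ascending links literally unchanged, so the connectivity certificate persists verbatim and $\chi'\in\TopS^n(G)$. The substantive case is when $h_\chi$ is non-generic --- which occurs, for example, as soon as $\chi$ has a rational direction --- since a perturbation may then move cells into or out of ascending links and, a priori, destroy their connectivity. I would handle this as in the classical theory: either by working with \emph{closed} ascending links (replacing the strict inequality by~$\ge$) and checking that the connectivity actually needed is insensitive to this change near~$\chi$, or by first subdividing~$Y$ --- presumably a role of~$\SD$ in the text --- so that incident cells become separated in height, then transporting the certificate across the subdivision. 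Either way, finiteness of~$K$ yields a neighbourhood of~$\chi$ on which the certificate still holds, completing the proof.

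The main obstacle is precisely this last step: proving that essential $(n-1)$-connectivity of the character filtration, once it holds, is certified by compactly supported data that survives a small perturbation of~$\chi$; the non-genericity of~$h_\chi$ is the only real enemy, and taming it is the technical heart of Renz's argument, which one must re-run here while tracking the (compact, possibly infinite) cell stabilizers rather than assuming the action is free. An alternative framing that might streamline the perturbation is to characterize $\chi\in\TopS^n(G)$ via finite generation of a suitable homotopy group as a module over the monoid associated with~$\chi$, and to use that finite generation is an open condition. Beyond that, what remains is bookkeeping: checking that the Morse-theoretic reformulation of the defining criterion for~$\TopS^n$ is available in the locally compact setting, and that the cone statement of Proposition~\ref{prop.zerocharacter} upgrades ``$\TopS^n(G)\setminus\{0\}$ is open'' to the asserted statement that $\TopS^n(G)$ is the cone at~$0$ over an open subset of $\TopHom(G,\RR)\setminus\{0\}$.
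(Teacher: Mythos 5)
Your high-level strategy --- for each nonzero $\chi\in\TopS^n(G)$, exhibit a certificate of membership whose validity is an open condition on the character --- is the right one, and is what the paper does. But the implementation rests on a premise that fails for non-discrete~$G$: that type~$\mathrm C_n$ supplies an $(n-1)$-connected $G$-complex whose $n$-skeleton has \emph{finitely many} $G$-orbits of cells, so that membership in $\TopS^n(G)$ reduces to ``a condition on the finite tuple $(\chi(g))_{g\in K}$ for a fixed finite set $K\subseteq G$''. Already for $G=\RR$ no such cocompact cellular model exists, and your candidate models do not have this property either: the simplices of $G\cdot\E C$ (or of a Rips complex over the uncountable vertex set~$G$) form a \emph{compact}, not finite, family of $G$-orbits. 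Converting that compactness into the finiteness your argument needs is precisely the technical heart of the paper's proof, and it is absent from your proposal: one topologizes the mapping spaces $\HomSS(\mathcal S, G\cdot\E K_0)$ (Section~\ref{sec:topologise}), proves that the set $\overline{\mathcal M}$ of filling problems arising from orbit representatives of $n$-simplices is compact, covers it by open sets on which a single filling template can be reused, and extracts a finite subcover (Proposition~\ref{prop:alonso_main}). The resulting neighbourhood $V$ of~$\chi$ is cut out by finitely many conditions $\psi(Q_{\alpha,v})\subseteq{]\epsilon,+\infty[}$ with $Q_{\alpha,v}\subseteq G$ compact, which is where openness in the compact-open topology comes from. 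The only adaptation you flag --- tracking compact cell stabilizers instead of free actions --- is not where the difficulty lies.

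Two further points. First, the ascending-link reformulation is only a \emph{sufficient} condition for essential connectedness of a superlevel filtration, not a characterization, so it cannot serve as the ``if and only if'' certificate your argument needs; the paper instead uses a genuine characterization, namely the existence of a $G$-equivariant $\varphi\in\Map(G\cdot\E K^{(n)}, G\cdot\E K)$ raising $\chi$-values (Proposition~\ref{prop:theraisingcrit}, the analogue of Renz's $v$-homotopies), together with the fact that Proposition~\ref{prop:alonso_main} produces a single~$\varphi$ raising $\psi$-values by a uniform $\epsilon>0$ for all $\psi$ in a neighbourhood of~$\chi$. Second, with such a uniform raising map the ``non-genericity of $h_\chi$'' issue you single out as the main enemy never arises: nothing depends on whether incident cells have distinct heights, only on the quantitative gap~$\epsilon$. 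So the genuine obstacle is not the one you name, and the step you dismiss as unproblematic is the one that requires the new argument.
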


This result is restated in the text as Theorem~\ref{thm:openness}. Its proof relies on describing when a character lies in~$\TopS^n(G)$ in terms of certain compact subsets of~$G$, which we call ``connecting sets'' (Definition~\ref{def:connectingset}). We use them to establish various $\TopS^n$-criteria, especially under the assumption that $G$~is of type $\mathrm C_n$. The following theorem packages Propositions~\ref{prop:alonsoandsigma}, \ref{prop:theraisingcrit} and~\ref{prop:alonsocriterion} (the $\chi$-subscript in item~4 denotes the simplicial subset spanned by the vertices~$x$ with $\chi(x)\ge0$, and the $s$-subscript indicates we consider the span of the vertices with $\chi(x) \ge s$):

\begin{mainthm}[Criteria for~$\TopS^n$]\label{thm:sigmacriteria_intro}
	Let $n\in \NN$, let $\chi \colon G\to \RR$ be a nonzero character, let $K$~be an $n$-connecting set for the zero character, and $K_0$~an $(n-1)$-connecting set, also for the zero character. The following are equivalent:
	\begin{enumerate}
		\item $\chi \in \TopS^n(G)$,
		\item There is an $n$-connecting set for~$\chi$,
		\item There is a $G$-equivariant $\varphi \in \Map(G\cdot \E K^{(n)}, G\cdot \E K)$
		that raises $\chi$-values,
		\item There are $D\in \C(G)_{\supseteq K_0}$ and $s\in \RR_{\le 0}$ such that the inclusion
		$$(G\cdot \E K_0)^{(n-1)}_\chi \into (G\cdot \E D)_s$$
		is $\pi_k$-trivial for all $k\le n-1$.
	\end{enumerate}
\end{mainthm}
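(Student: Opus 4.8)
The plan is to establish the four conditions equivalent by running the cycle $(1)\Rightarrow(4)\Rightarrow(2)\Rightarrow(3)\Rightarrow(1)$. Throughout I would use the standing hypotheses to fix the homotopy-theoretic setup: that $K$ is an $n$-connecting set for the zero character makes $G\cdot\E K$ an $(n-1)$-connected complex with $G$-cocompact $n$-skeleton, and that $K_0$ is an $(n-1)$-connecting set for the zero character makes $G\cdot\E K_0$ an $(n-2)$-connected complex with $G$-cocompact $(n-1)$-skeleton; so each is an admissible model on which membership in $\TopS^n(G)$ can be tested via essential $(n-1)$-connectivity of the associated $\chi$-sublevel filtration. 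A preliminary lemma I would isolate — a change-of-model statement, in the spirit of the passage between Theorem~\ref{thm:brownforchars_intro} and the general definition — records that essential $(n-1)$-connectivity of this filtration does not depend on which highly-connected $G$-cocompact model is used, so one may pass freely between the model appearing in the definition of $\TopS^n$ and the explicit complexes $G\cdot\E K$ and $G\cdot\E K_0$.

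For $(1)\Rightarrow(4)$, the change-of-model lemma lets one test essential $(n-1)$-connectivity on $G\cdot\E K_0$; unwinding it at the filtration stage given by the $(n-1)$-skeleton produces a cofinally larger stage into which $k$-spheres ($k\le n-1$) become null-homotopic, and since translating by $g\in G$ shifts the valuation by $\chi(g)$, this larger stage may be taken of the form $(G\cdot\E D)_s$ with $D\in\C(G)_{\supseteq K_0}$ compact and $s\le 0$ — compactness of $D$ coming from $G$-cocompactness of the relevant skeleta together with compactness of the (finitely many orbits of) filling homotopies. For $(4)\Rightarrow(2)$, given such $D$ and $s$, I would manufacture a single compact $n$-connecting set $K'$ for $\chi$ (in the sense of Definition~\ref{def:connectingset}) by enlarging $D\cup K$ so as to absorb, simultaneously, the null-homotopies of $k$-spheres for $k\le n-2$ and, for each of the finitely many $G$-orbits of top cells, a fixed disc filling its boundary inside the appropriate sublevel set together with all the group elements supporting that disc; local compactness of $G$, $G$-cocompactness of the skeleta, and continuity of the fillings in the compact parameter spaces indexing the cells guarantee that this enlargement stays compact.

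For $(2)\Rightarrow(3)$, from an $n$-connecting set $K'$ for $\chi$ the defining filling data give, on one representative per $G$-orbit of cells of $\E K^{(n)}$, a continuous map into the portion lying $\chi$-above that cell; combining these with the $(n-1)$-connectedness of $G\cdot\E K$ to extend skeleton-by-skeleton past the resulting obstructions, and using the compatibility of the connecting-set data across dimensions to make the extensions coherent, one assembles a continuous $G$-equivariant $\varphi\in\Map(G\cdot\E K^{(n)},G\cdot\E K)$ that raises $\chi$-values. For $(3)\Rightarrow(1)$, I would iterate such a $\varphi$: any compact $m$-sphere ($m\le n-1$) in a $\chi$-sublevel set bounds an $(m+1)$-disc in $G\cdot\E K$ by $(n-1)$-connectivity, which after subdivision lands in the $n$-skeleton, and repeatedly applying $\varphi$ drives this disc — hence a null-homotopy of the sphere — into arbitrarily high sublevel sets, because $\varphi$ raises $\chi$-values by a uniform amount over the finitely many cell orbits. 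This is precisely essential $(n-1)$-connectivity of the $\chi$-filtration of $G\cdot\E K$, hence $\chi\in\TopS^n(G)$.

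The main obstacle I anticipate lies in two places. First, the change-of-model lemma behind $(1)\Leftrightarrow(4)$ requires comparing $\chi$-filtrations across different $G$-cocompact cell-like models in the locally compact setting, where cells are indexed by compact, possibly non-discrete, parameter spaces; the real work is checking that all comparison maps and homotopies can be chosen continuously in these parameters, and this is exactly where local compactness and the specific construction of $\E(-)$ do the work. Second, both $(3)\Rightarrow(1)$ and the $n$-connecting-set construction in $(4)\Rightarrow(2)$ hinge on upgrading ``one good map'' or ``one good pair of filtration levels'' to a statement about cofinally many levels; this propagation is powered by $G$-equivariance forcing uniform $\chi$-value behaviour on the finitely many orbits of cells, so I would expect the bookkeeping of valuations and shifts — keeping the shift $s\le 0$, and tracking which sublevel set each filling lives in — to be the part demanding the most care.
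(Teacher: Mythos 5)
There is a genuine gap, and it sits exactly where you locate your ``main obstacle'' without resolving it. At several load-bearing points your argument invokes ``the finitely many $G$-orbits of cells'' of $G\cdot \E K^{(n)}$ — to get a uniform filling of top cells in $(4)\Rightarrow(2)$, to assemble a single equivariant $\varphi$ in $(2)\Rightarrow(3)$, and to get a uniform raising constant. But for a non-discrete locally compact $G$ the nondegenerate $n$-simplices of $G\cdot\E K$ modulo $G$ form a compact, generally \emph{infinite} family (already for $G=\RR$, $K=[0,1]$). Different orbit representatives may a priori require different subdivision depths $m$ and different filling discs, so no single $\varphi\in\Map(G\cdot\E K^{(n)},G\cdot\E K)$ drops out. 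Closing this is the actual content of the paper's Proposition~\ref{prop:alonso_main}: one topologises $\HomSS(\mathcal S, G\cdot\E K_0)$ (Section~\ref{sec:topologise}), proves the set $\overline{\mathcal M}$ of normalized filling problems is compact, shows that a filling $\mu_\alpha$ of one problem transplants to all problems in an open neighborhood $U_\alpha$ with the landing condition and the $\chi$-estimate preserved, and extracts a finite subcover to get one $m$ and one $\epsilon$. Saying ``continuity of the fillings in the compact parameter spaces'' names this step but does not supply it, and it is the theorem's real difficulty. The same machinery (applied to the \emph{zero} character) is what powers $(1)\Leftrightarrow(4)$: condition (4) is a statement about a \emph{single} stage $(K_0,D,s)$, and to control spheres living in $(G\cdot\E C)_\chi$ for arbitrary compact $C$ one needs the equivariant map $G\cdot\E C^{(n-1)}\to G\cdot\E K_0$ with uniformly bounded $\chi$-drop that Proposition~\ref{prop:alonso_main} provides; your ``change-of-model lemma'' would have to contain this.

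Two further points. In $(2)\Rightarrow(3)$ the target is prescribed: $\varphi$ must land in $G\cdot\E K$ for the \emph{given} $K$ ($n$-connecting for the zero character), whereas the filling data of an $n$-connecting set $K'$ for $\chi$ naturally produce a map into $G\cdot\E K'\supseteq G\cdot\E K$. The paper fixes this by composing: a value-raising $\varphi\in\Map(G\cdot\E K_+^{(n)},G\cdot\E K_+^{(n)})$ (from the $\chi$-case of Proposition~\ref{prop:alonso_main}) is iterated $k$ times and post-composed with a zero-character map $\varphi_0$ into $G\cdot\E K$ whose $\chi$-drop is bounded by some $s\le 0$, choosing $k\epsilon+s>0$. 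Your sketch omits this retargeting. Finally, in $(4)\Rightarrow(2)$, building an $n$-connecting set for $\chi$ in the sense of Definition~\ref{def:connectingset} requires a nested tower of $j$-connecting sets \emph{for $\chi$} for all $j<n$, i.e.\ essentially the conclusion $\chi\in\TopS^{j+1}(G)$ in lower degrees; condition (4) only hands you data for the zero character at one stage, so an induction on $n$ (or a detour through $(4)\Rightarrow(1)$, as the paper does via Corollary~\ref{cor:boundeddrop}) is needed and is not set up. Your $(3)\Rightarrow(1)$ iteration is sound and matches the paper's raising principle (Lemma~\ref{lem:pushfilingup}).
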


We also present a series of results on the behaviour of $\TopS^n(G)$ with respect to subgroups and quotients of~$G$. The compactness properties~$\mathrm C_n$ of Abels and Tiemeyer descend to closed cocompact subgroups \cite[Theorem~3.2.2]{AT97}, and we establish an analogous statement, with essentially the same proof (Theorem~\ref{thm:cocptsubgps} in the text):

\begin{mainthm}[Closed cocompact subgroups]\label{thm:cocptsubgps_intro}
	Let $H\le G$ be a closed cocompact subgroup, let $n\in \NN$, and let $\chi\colon G \to \RR$ be a character. Then,
	$$\chi\in\TopS^n(G) \iff \chi|_H\in\TopS^n(H).$$
\end{mainthm}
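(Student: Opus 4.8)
The plan is to mimic the proof of the Abels--Tiemeyer result on cocompactness of property~$\mathrm C_n$, but carried out ``along~$\chi$'' at every stage. The starting observation is that since $H\le G$ is closed and cocompact, any cocompact model for~$G$ built from compact subsets yields, by restriction of scalars, a cocompact model for~$H$: more concretely, if $K$~is a compact subset of~$G$ with $G = H\cdot K$, then for any compact $L\subseteq G$ the $G$-set $G\cdot\E L$ becomes, after restricting the action to~$H$, an $H$-CW complex with $H$-finite skeleta (one chooses finitely many $H$-orbit representatives of cells using compactness of $K$ together with local compactness). So the two groups share a common supply of highly connected complexes with cocompact skeleta, and the whole question reduces to comparing filtrations of the \emph{same} space under two different group actions.

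Concretely, I would fix a character $\chi\colon G\to\RR$, note that $\chi|_H$ is again a (continuous) character, and then use the connecting-set / valuation machinery of Theorem~\ref{thm:sigmacriteria_intro}. First, handle the case $\chi = 0$ separately: then the statement is exactly ``$G$ is of type~$\mathrm C_n$ iff $H$ is of type~$\mathrm C_n$'', which is Theorem~3.2.2 of \cite{AT97} (equivalently, apply Proposition~\ref{prop.zerocharacter}). For $\chi\ne 0$, pick an $n$-connecting set $K$ for the zero character of~$G$ (using that we have already reduced to the situation where $G$, hence $H$, is of type~$\mathrm C_n$ — otherwise both sides of the equivalence fail). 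Equip $Y := G\cdot\E K$ with a valuation~$v$ associated with~$\chi$, which restricts to a valuation associated with~$\chi|_H$ once we view $Y$ as an $H$-complex. The key point is then: the filtration $(Y_s)_{s\in\RR}$ of $Y$ by $\chi$-superlevel sets is \emph{the same subspace filtration} regardless of whether we regard $Y$ as a $G$-complex or an $H$-complex — it depends only on the character values of the cells, not on the group action. Essential $(n-1)$-connectivity of this filtration is the condition characterizing $\chi\in\TopS^n(G)$; and by the Brown-type criterion (Theorem~\ref{thm:brownforchars_intro}, applied to its topological analogue established in the text) it equally characterizes $\chi|_H\in\TopS^n(H)$, \emph{provided} we know the $n$-skeleton of $Y$ is $H$-cocompact, which is the content of the first paragraph.

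The one genuine subtlety, and where I expect the main work to lie, is reconciling the combinatorial bookkeeping: the connecting-set description of $\TopS^n$ is phrased in terms of compact subsets of the \emph{acting} group, so an ``$n$-connecting set for~$\chi$ in~$G$'' is a compact subset $K'\subseteq G$, whereas for~$H$ one wants a compact subset of~$H$. Passing between them requires absorbing a copy of the transversal~$K$ (with $G = H\cdot K$): given compact $K'\subseteq G$, the set $K''$ obtained by intersecting suitable translates with~$H$, or by the standard trick of replacing $K'$ by $(K'\cdot K^{-1}\cap H)\cdot(\text{finite piece})$, is compact in~$H$ and does the job, using that $\chi(K)$ is a bounded subset of~$\RR$ so the shift in $\chi$-values is controlled. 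This is exactly the place where Abels--Tiemeyer invoke local compactness to keep everything compact, and the ``along~$\chi$'' refinement costs only an extra bounded translation in the valuation, which does not affect essential connectivity (a filtration is essentially $(n-1)$-connected iff any of its cofinal reindexings is). Once this translation between $G$-data and $H$-data is set up in both directions, the equivalence is symmetric and both implications follow by the same argument; so the proof is really ``one lemma plus Theorem~\ref{thm:sigmacriteria_intro} twice''.
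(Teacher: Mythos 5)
Your proposal rests on machinery that the paper only establishes for \emph{discrete} groups, and this is a genuine gap rather than a bookkeeping issue. Brown's criterion for characters (Theorem~\ref{thm:brownforchars}) and the whole valuation formalism live in Section~\ref{sec:classicalsigma}, which assumes $G$ discrete throughout; there is no ``topological analogue established in the text'', and producing one would essentially be a new theorem. Relatedly, your first paragraph's claim that $G\cdot\E L$ becomes an $H$-CW complex with $H$-finite skeleta is false when $H$ is not discrete: the $H$-orbits of nondegenerate $k$-simplices are parametrized (roughly) by $H\backslash G\times L^{k+1}$, a compact but generally infinite set, so there is no finite set of orbit representatives and Brown-type criteria in the CW sense do not apply. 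Finally, even granting all that, the single filtration $(Y_s)_{s\in\RR}$ by $\chi$-superlevel sets of a fixed $Y=G\cdot\E K$ does not characterize $\chi\in\TopS^n(G)$ in the locally compact setting; one needs the double filtration over $\C(G)\times\RR$ of Proposition~\ref{prop:doublefiltration}, precisely because ``$G$-finite $n$-skeleton'' is unavailable.

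The route the paper takes avoids all of this. One first observes that the left multiplication action of $H$ on $G$ is proper (since $H$ is closed, $\Ret(C)=CC^{-1}\cap H$ is compact, so Lemma~\ref{lem:ret} applies), whence Proposition~\ref{prop:properaction} — the genuinely topological substitute for Brown's criterion, proved via an explicit simplicial homotopy equivalence of filtrations — says that $\chi|_H\in\TopS^n(H)$ iff the filtration $(H_\chi\cdot\E C)_{C\in\C(G)}$ of $\E G$ is essentially $(n-1)$-connected. The theorem then reduces to showing that $(H_\chi\cdot\E C)_{C\in\C(G)}$ and $(G_\chi\cdot\E C)_{C\in\C(G)}$ are cofinal filtrations of $\E G$: one containment is trivial, and for the other one picks $K\in\C(G)$ with $HK=G$ and $g_0$ with $\chi(g_0)\le-\max\chi(K)$, giving $G_\chi\subseteq H_\chi Kg_0$ and hence $G_\chi\cdot\E C\subseteq H_\chi\cdot\E(Kg_0C)$. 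Your final paragraph does gesture at exactly this kind of absorption of a compact transversal with a bounded $\chi$-shift, so the right idea is present; but it needs to be run inside the filtration/proper-action framework, not the CW/valuation one, and the proper-action characterization is the missing key lemma in your write-up.
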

One can use this result to deduce that if $G$~is compactly generated and abelian, then every character lies in~$\TopS^n(G)$ (Corollary~\ref{cor:abelian}). More generally, we show (see Theorem~\ref{thm:center}):

\begin{mainthm}[Non-vanishing on the center]\label{thm:center_intro}
	Let $n \in \NN$ and suppose $G$~is of type~$\mathrm C_n$. Then $\TopS^n(G)$~contains all characters that do not vanish on the center~$\operatorname{Z}(G)$.
\end{mainthm}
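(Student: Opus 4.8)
The plan is to verify condition~(3) of Theorem~\ref{thm:sigmacriteria_intro}, taking as the required $\chi$-value-raising map left translation by a suitable central element. Since $\chi$ does not vanish on $\operatorname{Z}(G)$, choose $z \in \operatorname{Z}(G)$ with $\chi(z) \neq 0$; after replacing $z$ by~$z^{-1}$ if necessary, we may assume $\chi(z) > 0$. In particular $\chi \neq 0$, and since $G$~is of type~$\mathrm C_n$ we have $0 \in \TopS^n(G)$ by Proposition~\ref{prop.zerocharacter}, so there is an $n$-connecting set~$K$ for the zero character, and (as type~$\mathrm C_n$ implies type~$\mathrm C_{n-1}$) an $(n-1)$-connecting set~$K_0$ as well; all the hypotheses of Theorem~\ref{thm:sigmacriteria_intro} are therefore in place.

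Next I would study the map $\varphi \in \Map(G\cdot \E K^{(n)}, G\cdot \E K)$ given by left translation by~$z$. Because $z \in G$, this translation permutes the $G$-translates comprising $G \cdot \E K$, hence is cellular and carries the $n$-skeleton into $G \cdot \E K$. Because $z$ is \emph{central}, translation by~$z$ commutes with the left $G$-action, so $\varphi$ is $G$-equivariant. Because $\chi(z) > 0$, the valuation~$v$ associated with~$\chi$ satisfies $v(\varphi(x)) = v(x) + \chi(z) > v(x)$ for every point~$x$, so $\varphi$ raises $\chi$-values uniformly, by the positive constant~$\chi(z)$. Finally, since $G$ acts by isometries and $z$ is central, $\varphi$ displaces points by a bounded amount (controlled by $d(1,z)$), so it is at bounded distance from the inclusion; thus $\varphi$ has all the features of a $\chi$-value-raising map. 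Applying Theorem~\ref{thm:sigmacriteria_intro}(3) gives $\chi \in \TopS^n(G)$.

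The one thing demanding care is the bookkeeping just sketched: one must confirm that left translation by the central element~$z$ really defines a morphism in $\Map(G\cdot \E K^{(n)}, G\cdot \E K)$ of exactly the kind criterion~(3) requires --- cellular (or continuous, as the construction of $\E K$ dictates), $G$-equivariant, at bounded distance from the inclusion, and $\chi$-value-raising. Centrality of~$z$ is indispensable precisely here: for a non-central element~$w$ with $\chi(w) > 0$, translation by~$w$ fails to commute with the $G$-action (equivalently, in the opposite convention, fails to be cellular), and indeed no comparable map exists for a general nonzero character --- as it must be, since $\TopS^n(G)$ is in general a proper subset of $\TopHom(G,\RR)$. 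Once this verification is carried out, the theorem follows at once. Specialized to discrete groups, the argument recovers the classical fact that a character of a group of type~$\mathrm F_n$ which is nonzero on the center lies in~$\Sigma^n$, via the analogous translation on a free $(n-1)$-connected $G$-CW complex with $G$-finite $n$-skeleton.
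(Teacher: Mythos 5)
Your proposal is correct, and the heart of it --- translation by a central element $z$ with $\chi(z)>0$ as the $G$-equivariant, $\chi$-value-raising self-map of $G\cdot\E K$ --- is exactly the paper's construction (the paper writes it as right translation $g\mapsto gt$, which for central $t$ is literally the same map as your left translation; in either convention centrality is what makes the map simultaneously $G$-equivariant and an endomorphism of $G\cdot \E K$, as you correctly observe). The one genuine difference is the packaging: you invoke criterion~(3) of Theorem~\ref{thm:sigmacriteria_intro}, i.e.\ Proposition~\ref{prop:theraisingcrit}, which requires $K$ to be an $n$-connecting set and whose proof rests on the connecting-set machinery of Section~\ref{sec:stability} (Propositions~\ref{prop:alonso_main}, \ref{prop:alonsocriterion}, \ref{prop:alonsoandsigma}). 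The paper instead proves the theorem earlier, in Section~\ref{sec:raisingprinciple}, by feeding the translation map directly into the raising principle (Lemma~\ref{lem:pushfilingup}): for each $C\in\C(G)$ one only needs \emph{some} $K\supseteq C$ with $\pi_k$-trivial inclusion $G\cdot\E C\into G\cdot\E K$, which is the definition of type~$\mathrm C_n$, and no connecting sets are needed. Your route is not circular (Proposition~\ref{prop:theraisingcrit} does not depend on Theorem~\ref{thm:center}), but it is strictly heavier; the direct application of Lemma~\ref{lem:pushfilingup} buys the same conclusion from much less. Two small cosmetic points: the ``valuation $v$'' language belongs to the discrete CW-setting of Section~\ref{sec:classicalsigma} --- here one just computes $\chi(zg)=\chi(g)+\chi(z)$ on vertices, which suffices since the map needs no subdivision --- and the ``bounded displacement'' condition you verify is not part of the criterion but is automatic from $G$-equivariance (as noted after Lemma~\ref{lem:interpol_homotopy}).
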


The following results relate the $\Sigma$-sets of groups in a short exact sequence of locally compact Hausdorff groups
$$1 \to N \to G \xrightarrow{p}  Q \to 1.$$

\begin{mainthm}[$\TopS^n$ and quotients]\label{thm:sigma_quotient_intro}
	Fix a character $\chi \colon Q \to \RR$ and let $n\in \NN$.
	\begin{enumerate}
		\item If $p$~has a section and $\chi\circ p \in \TopS^n(G)$, then $\chi \in \TopS^n(Q)$.
		\item If $N$~is of type~$\mathrm C_n$ and $\chi \in \TopS^n(Q)$, then $\chi \circ  p \in \TopS^n(G)$.
		\item If $N$~is of type~$\mathrm C_n$ and  $\chi \circ p \in \TopS^{n+1}(G)$, then $\chi \in \TopS^{n+1}(Q)$.
	\end{enumerate}
\end{mainthm}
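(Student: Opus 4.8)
The plan is to treat all three items with a single device: realizing the short exact sequence $1\to N\to G\xrightarrow{p}Q\to 1$ at the level of the connecting complexes. Write $\psi:=\chi\circ p$, a character of~$G$ vanishing on~$N$. The ingredients are an $n$-connecting set $M\subseteq N$ for the zero character, an $n$-connecting set $L_0\subseteq Q$ for~$\chi$, and a compact set $\tilde L\subseteq G$ with $p(\tilde L)=L_0$; the last exists because $p$ is an open continuous surjection of locally compact Hausdorff groups, so $L_0$ is covered by finitely many $p$-images of relatively compact sets. Combine these into a symmetric compact $K\subseteq G$ containing $M\cdot\tilde L$, and set $L:=p(K)$, which one checks is again an $n$-connecting set for~$\chi$ (enlarging a connecting set preserves the property). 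Then $p$ induces a $p$-equivariant cellular map $\pi\colon G\cdot\E K\to Q\cdot\E L$ with two features I would exploit throughout. First, because $\psi=\chi\circ p$ is constant on $N$-cosets, the valuation on $G\cdot\E K$ associated with~$\psi$ differs by a bounded amount from the $\pi$-pullback of the valuation on $Q\cdot\E L$ associated with~$\chi$; so, up to bounded lag, the $\psi$-sublevel sets of $G\cdot\E K$ are the $\pi$-preimages of the $\chi$-sublevel sets of $Q\cdot\E L$. Second, the homotopy fibre of~$\pi$ over a vertex is equivalent to a $G$-translate of~$N\cdot\E M$, which is $(n-1)$-connected by the choice of~$M$. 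Everything then reduces to reading off connectivity through the homotopy long exact sequence of a fibration, in the lag-tolerant form adapted to essential connectivity; the degree shift in item~(3) is built into the arithmetic of that sequence.

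For item~(1) I would not use~$M$; the section $\sigma\colon Q\to G$ takes its place. Since $\psi\in\TopS^n(G)$, the group~$G$ is of type~$\mathrm C_n$ (Proposition~\ref{prop.zerocharacter}), so an $n$-connecting set~$K$ for~$\psi$ exists; enlarging it, I may assume $K=K^{-1}$ and $\sigma(p(K))\subseteq K$, and I set $L:=p(K)$. The section then induces a $Q$-equivariant cellular map $s\colon Q\cdot\E L\to G\cdot\E K$ with $\pi\circ s=\mathrm{id}$ and, since $\psi\circ\sigma=\chi$, a map that preserves valuations up to a bounded amount; this exhibits $(Q\cdot\E L)_\chi$ as a lag-bounded homotopy retract of $(G\cdot\E K)_\psi$. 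As the latter is essentially $(n-1)$-connected, so is the former, so~$L$ is an $n$-connecting set for~$\chi$ and $\chi\in\TopS^n(Q)$ by Theorem~\ref{thm:sigmacriteria_intro}.

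For item~(2), with $M,L,K,\pi$ as in the common setup, the fibre~$N\cdot\E M$ is $(n-1)$-connected and the base $Q\cdot\E L$ is $(n-1)$-connected with essentially $(n-1)$-connected $\chi$-filtration (as $\chi\in\TopS^n(Q)$). The fibration long exact sequence in degrees $\le n-1$, read first absolutely and then on sublevel sets up to lag, gives that $G\cdot\E K$ is $(n-1)$-connected with essentially $(n-1)$-connected $\psi$-filtration; thus $K$ is an $n$-connecting set for~$\psi$ and $\psi\in\TopS^n(G)$. For item~(3), the hypothesis is $\psi\in\TopS^{n+1}(G)$, so $G$ is of type~$\mathrm C_{n+1}$ and an $(n+1)$-connecting set~$K$ for~$\psi$ exists; arranging $M\subseteq K$, the complex $G\cdot\E K$ is $n$-connected with essentially $n$-connected $\psi$-filtration, while the fibre $N\cdot\E M$ of $\pi\colon G\cdot\E K\to Q\cdot\E L$ is only $(n-1)$-connected. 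The long exact sequence now gives injections $\pi_j(Q\cdot\E L)\hookrightarrow\pi_{j-1}(N\cdot\E M)=0$ for $j\le n$, and likewise on sublevel sets up to lag, so $Q\cdot\E L$ is $n$-connected with essentially $n$-connected $\chi$-filtration. Since $Q\cdot\E L$ is moreover $Q$-finite through dimension~$n+1$, this shows at once that $Q$ is of type~$\mathrm C_{n+1}$ and, by Theorem~\ref{thm:sigmacriteria_intro}, that $\chi\in\TopS^{n+1}(Q)$. The conclusion stops at degree $n+1$ precisely because the long exact sequence controls $\pi_j(Q\cdot\E L)$ only for $j\le n$: $\pi_n(N\cdot\E M)$ is not known to vanish.

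The step I expect to be the main obstacle is the construction of~$\pi$ together with the precise identification of its homotopy fibres with translates of~$N\cdot\E M$, and the matching control of valuations, all carried out in the genuinely topological setting. One must keep track of the topology on the complexes~$\E K$, of how compact sets behave under the open map~$p$ (including the production of compact lifts), and of the fact that the ``fibration long exact sequence up to bounded lag'' is being invoked for a map which is not a literal fibration but only has the right homotopy fibre on each sublevel set. In the discrete case these are routine facts about Cayley and Rips complexes; here they rest on the analytic groundwork of the earlier sections, and making the lag constants and the equivariance cohere is where the real effort goes. Once that is set up, items~(1)--(3) are a formal unwinding of the long exact sequence, and the degree shift in~(3) is automatic.
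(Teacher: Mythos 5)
Item~(1) of your proposal is essentially the paper's argument (Proposition~\ref{prop:split}): the section induces maps making $Q_\chi\cdot\E C\into Q_\chi\cdot\E p(D)$ factor through the $\pi_k$-trivial inclusion $G_{\hat\chi}\cdot\E s(C)\into G_{\hat\chi}\cdot\E D$, and no fibre or valuation bookkeeping is needed. That part is fine.

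For items~(2) and~(3) there is a genuine gap, and it sits exactly where you flag "the main obstacle": the fibration long exact sequence you invoke for $\pi\colon G\cdot\E K\to Q\cdot\E L$ is not available. Two distinct problems. First, your premise that the fibre $N\cdot\E M$ is $(n-1)$-connected "by the choice of $M$" is false: $N$ being of type $\mathrm C_n$ gives only \emph{essential} $(n-1)$-connectedness of the filtration $(N\cdot\E C)_{C\in\C(G)}$ (via Proposition~\ref{prop:properaction} for the proper left $N$-action on~$G$ — note the indexing over compacta of $G$, not of $N$); an $n$-connecting set $M$ in the sense of Definition~\ref{def:connectingset} does not make $N\cdot\E M$ itself highly connected, so there is no single compact fibre with the connectivity you need, and the "fibre" must be allowed to grow at each stage. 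Second, $\pi$ is just a simplicial surjection with no homotopy lifting property, so there is no long exact sequence to "formally unwind"; producing a usable lifting property in dimensions $\le n$ is the actual mathematical content of the theorem. The paper supplies it as Lemma~\ref{lem:Cnlift}, proved by induction on skeleta, and the induction only closes because the lifts are built to satisfy the coset-tracking condition~$(\star)$, which guarantees that the lifted boundary of each $n$-simplex lands in a single translate $hN\cdot\E(CC^{-1}D_0)$ where the $\pi_{n-1}$-triviality of $N\cdot\E(CC^{-1}D_0)\into N\cdot\E D$ can be applied. Your degree-shift heuristic for item~(3) does match what actually happens — the lemma lifts complexes of dimension $\le n$, so in item~(2) one lifts fillings of $k$-spheres with $k\le n-1$ while in item~(3) one lifts the $k$-spheres themselves with $k\le n$ and then fills upstairs — but as written this conclusion is drawn from an exact sequence that has not been established, and the argument that would establish it is the missing piece.
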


%

\begin{mainthm}[Co-abelian subgroups]\label{thm:coabelian_intro}
	Suppose $Q$~is abelian and fix $n\in\NN$. If for every character $\chi \colon Q\to \RR$ we have $\chi\circ p \in \TopS^n(G)$, then $N$~is of type~$\mathrm{C}_n$.
\end{mainthm}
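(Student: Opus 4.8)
The plan is to reduce, via the structure theory of locally compact abelian groups and Theorem~\ref{thm:cocptsubgps_intro}, to the case where $Q\cong\ZZ^m$, and then to transport to the present setting the geometric argument behind the classical Bieri--Renz theorem (\cite{BR88,Ren88}, see also \cite{Mei93}), which shows that if $\Sigma^n(G)$ contains every character vanishing on a normal subgroup~$N$ with abelian quotient, then $N$~is of type~$\mathrm F_n$. For the reduction: taking $\chi=0$ in the hypothesis and using Proposition~\ref{prop.zerocharacter} shows $G$~is of type~$\mathrm C_n$, hence compactly generated, hence so is~$Q$. By the structure theorem for compactly generated locally compact abelian groups, $Q\cong\RR^a\times\ZZ^b\times C$ with $C$~compact, and $\Lambda:=\ZZ^a\times\ZZ^b\times\{1\}$ is a discrete, closed, cocompact subgroup; thus $G':=p^{-1}(\Lambda)$ is closed and cocompact in~$G$, and $p$~restricts to a surjection $p'\colon G'\onto\Lambda$ with $\ker p'=N$. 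Since restriction of characters gives an isomorphism $\TopHom(Q,\RR)\to\TopHom(\Lambda,\RR)$ and $(\chi\circ p)|_{G'}=(\chi|_\Lambda)\circ p'$ for every~$\chi$, Theorem~\ref{thm:cocptsubgps_intro} transports the hypothesis from $(G,p)$ to $(G',p')$: every character of~$\Lambda$ pulls back into $\TopS^n(G')$. As the conclusion ``$N$~is of type~$\mathrm C_n$'' concerns only~$N$, we may and do assume henceforth that $Q=\ZZ^m$, that $p\colon G\onto\ZZ^m$, and that $N=\ker p$.

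\emph{Geometric setup.} Since $0\in\TopS^n(G)$, fix an $n$-connecting set~$K$ for the zero character (Definition~\ref{def:connectingset}) and let $X:=G\cdot\E K$ be the associated $G$-complex: it is $(n-1)$-connected with $G$-cocompact $n$-skeleton. For $1\le i\le m$, let $\chi_i\colon G\to\ZZ\to\RR$ be the $i$-th coordinate of~$p$ and let $v_i$ be a valuation on~$X$ associated with~$\chi_i$; since $\chi_i$ vanishes on~$N$, each~$v_i$ is $N$-invariant. Hence the subcomplexes
$$X_s:=\bigcap_{i=1}^{m} v_i^{-1}\bigl([-s,s]\bigr)\qquad(s\in\RR_{\ge0})$$
are $N$-invariant, form a filtration of~$X$, and have $N$-cocompact $n$-skeleta, because $\ZZ^m=G/N$ acts properly and cocompactly on $X^{(n)}/N$ along the proper map induced by $(v_1,\dots,v_m)$. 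By the locally compact form of Brown's criterion applied to the $N$-action on~$X$ --- equivalently, by Proposition~\ref{prop.zerocharacter} together with the connecting-set description of type~$\mathrm C_n$ --- it therefore suffices to prove that the filtration $(X_s)_{s\ge0}$ is essentially $(n-1)$-connected.

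\emph{The core step.} For each~$i$, both $\chi_i$ and $-\chi_i$ lie in $\TopS^n(G)$ by hypothesis, being $\psi\circ p$ for $\psi=\pm e_i\colon\ZZ^m\to\RR$ the $\pm i$-th coordinate character. Thus, by Theorem~\ref{thm:brownforchars_intro} (equivalently Theorem~\ref{thm:sigmacriteria_intro}), each of the $2m$ half-space filtrations $\bigl(v_i^{-1}([t,\infty))\bigr)_t$ and $\bigl(v_i^{-1}((-\infty,t])\bigr)_t$ is essentially $(n-1)$-connected. The box $[-s,s]^m$ is the intersection of the corresponding $2m$ half-spaces, whose associated characters $\{\pm\chi_1,\dots,\pm\chi_m\}$ positively span $\operatorname{span}\{\chi_1,\dots,\chi_m\}$ (the subspace of characters of~$G$ vanishing on~$N$) --- equivalently, $0$ lies in the interior of their convex hull. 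So the statement to prove is the locally compact analogue of the key lemma of \cite{BR88}: \emph{the intersection of finitely many essentially $(n-1)$-connected half-space filtrations of~$X$ whose associated characters positively span is again essentially $(n-1)$-connected}. Granting this for the $2m$ half-spaces above yields that $(X_s)$ is essentially $(n-1)$-connected, finishing the proof.

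\emph{On the lemma and the main obstacle.} One proves the lemma by induction on the number of half-spaces, through a ``folding'' argument: a $k$-cycle with $k\le n-1$ lying in an intersection $\bigcap_j v_j^{-1}([t_j,\infty))$ is filled, using the essential $(n-1)$-connectivity of one of the filtrations, by a \emph{compact} chain --- hence one bounded with respect to \emph{every}~$v_j$ --- which can leave the remaining half-spaces by only a bounded amount; positive spanning guarantees that, after iterating over all directions and using the self-similarity of the filtrations under the cocompact $\ZZ^m$-action, the filling is pushed back into a larger box. \textbf{The main obstacle is precisely this step}: essential $(n-1)$-connectivity only furnishes fillings with bounded ``lag'' in a \emph{single} direction, and promoting these to a filling simultaneously bounded in all $2m$ directions requires a careful interleaving of the one-directional fillings together with bookkeeping of how the accumulated lag grows with~$m$ (it remains finite). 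A secondary, setting-specific point is to check that $X=G\cdot\E K$ and the valuations~$v_i$ are proper enough modulo~$N$ for the sublevel sets to be genuinely $N$-cocompact and for filling chains to have bounded valuations; this is exactly what the connecting-set machinery delivers, by reducing the relevant topology to combinatorics of compact subsets of~$G$.
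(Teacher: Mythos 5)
Your reduction to $Q=\ZZ^m$ via the structure theorem and Theorem~\ref{thm:cocptsubgps_intro} is sound (the paper reduces instead to $Q=\ZZ^{m_1}\times\RR^{m_2}$, disposing of the compact factor with Theorem~\ref{thm:sigmaofquotient}; either route works). The gap is in what you yourself flag as ``the main obstacle'': the assertion that a filling obtained from essential $(n-1)$-connectedness in one coordinate direction ``can leave the remaining half-spaces by only a bounded amount'', with a bound independent of the cycle. Essential connectedness of a filtration $(v_i^{-1}([t,\infty)))_t$ only says that every $k$-sphere in level~$t$ bounds in some level~$t'$; the resulting filling is compact and hence bounded in the other valuations, but that bound depends on the sphere, not merely on the box containing it. To get a uniform bound one needs the filling to be produced by a globally defined, equivariant gadget --- Renz's $v$-homotopies in the discrete case, or the $G$-equivariant $\chi$-raising maps $\varphi\in\Map(G\cdot\E K^{(n)},G\cdot\E K)$ of Proposition~\ref{prop:theraisingcrit} here --- and constructing such a gadget in the locally compact setting is itself nontrivial: it requires topologising the relevant Hom-spaces and running a compactness argument over the space of filling problems (Proposition~\ref{prop:alonso_main}). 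A related glossed point: Theorem~\ref{thm:brownforchars_intro} is stated only for discrete groups, and even granting $\chi_i\in\TopS^n(G)$, essential $(n-1)$-connectedness of the one-parameter sublevel filtration of a \emph{fixed} complex $G\cdot\E K$ already needs this raising machinery. Your sketch neither constructs the equivariant homotopies nor carries out the interleaving, so the heart of the theorem is missing.

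It is also worth noting that the paper deliberately avoids the box/interleaving strategy. Instead of the $2m$ coordinate characters it uses the whole compact sphere of characters $\{\chi_u\circ p\mid u\in\Sph^{m-1}\}$, a uniform-triviality statement over that compact family (Corollary~\ref{cor:unifesstriv}, resting on the openness machinery of Section~\ref{sec:stability}), and an elementary Euclidean estimate (Lemma~\ref{lem:normfromchi}) converting ``raise $\chi_P$ by $\epsilon_0$ while moving distance at most $r$'' into ``decrease $\Vert\cdot\Vert$ by $\epsilon$'' outside a fixed radius $\mathcal R$. This yields a single $N$-equivariant map decreasing the distance to~$N$ (Proposition~\ref{prop:radialalonso}), which controls all directions simultaneously and makes the interleaving unnecessary. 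If you want to salvage the box approach, you would still need essentially all of this machinery to make the ``bounded lag'' and its iteration precise.
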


In the text these results are presented as Proposition~\ref{prop:split}, Theorem~\ref{thm:sigmaofquotient}, and Theorem~\ref{thm:coabelian}. By far, the deepest and most technically demanding among them is Theorem~\ref{thm:coabelian_intro}. It generalizes a fundamental result of the classical theory \cite[Satz~C]{Ren88}, embodying the principle of using $\Sigma$-sets to transfer finiteness (or, in our case, compactness) properties to subgroups.

To the best of our knowledge, the ``$\chi=0$'' case of Theorem~\ref{thm:sigma_quotient_intro}, which pertains only to the classical property~$\mathrm C_n$, is absent from the literature. For the benefit of readers uninterested in $\Sigma$-theory, we spell it out explicitly:

\begin{maincor}[Compactness properties and quotients]
	Fix a short exact sequence $1 \to N \to G \xrightarrow{p}  Q \to 1$ of locally compact Hausdorff groups, and let $n\in \NN$.
	\begin{enumerate}
		\item If $p$ has a section and $G$~is of type~$\mathrm C_n$, then $Q$~is of type~$\mathrm C_n$.
		\item If $N$~and~$Q$ are of type $\mathrm C_n$, then so is~$G$.
		\item If $N$~is of type~$\mathrm C_n$ and $G$~is of type~$\mathrm C_{n+1}$, then $Q$~is of type $\mathrm C_{n+1}$.
	\end{enumerate}	
\end{maincor}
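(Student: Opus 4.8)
The plan is to obtain each of the three statements by specializing the corresponding clause of Theorem~\ref{thm:sigma_quotient_intro} to the zero character, using Proposition~\ref{prop.zerocharacter}, according to which a locally compact Hausdorff group~$H$ is of type~$\mathrm C_n$ if and only if $0 \in \TopS^n(H)$. So I would set $\chi = 0 \colon Q \to \RR$, note that then $\chi \circ p = 0 \colon G \to \RR$, and translate: being of type~$\mathrm C_n$ (resp.\ $\mathrm C_{n+1}$) for~$G$, $N$, or~$Q$ is the same as containment of the relevant zero character in the relevant $\TopS$-set. With that dictionary in hand, (1) is Theorem~\ref{thm:sigma_quotient_intro}(1), (2) is Theorem~\ref{thm:sigma_quotient_intro}(2), and (3) is Theorem~\ref{thm:sigma_quotient_intro}(3), verbatim.

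Spelled out: for (1), ``$G$~of type~$\mathrm C_n$'' says $\chi \circ p \in \TopS^n(G)$, and since $p$~has a section, Theorem~\ref{thm:sigma_quotient_intro}(1) gives $\chi \in \TopS^n(Q)$, i.e.\ $Q$~is of type~$\mathrm C_n$. For (2), ``$Q$~of type~$\mathrm C_n$'' says $\chi \in \TopS^n(Q)$; combined with ``$N$~of type~$\mathrm C_n$'', Theorem~\ref{thm:sigma_quotient_intro}(2) yields $\chi \circ p = 0 \in \TopS^n(G)$. For (3), ``$G$~of type~$\mathrm C_{n+1}$'' says $\chi \circ p \in \TopS^{n+1}(G)$; combined with ``$N$~of type~$\mathrm C_n$'', Theorem~\ref{thm:sigma_quotient_intro}(3) yields $\chi \in \TopS^{n+1}(Q)$, i.e.\ $Q$~is of type~$\mathrm C_{n+1}$. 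The only things requiring a moment's thought are purely bookkeeping: that the zero character on~$Q$ pulls back along~$p$ to the zero character on~$G$, and that the side hypotheses invoked here (a section of~$p$; type~$\mathrm C_n$ of~$N$) are exactly the ones assumed in the respective parts of Theorem~\ref{thm:sigma_quotient_intro}.

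I therefore expect no genuine obstacle in proving the corollary as stated, the entire content having been pushed into Theorem~\ref{thm:sigma_quotient_intro}---and, for the subtle part~(3), ultimately into the core technical work of the paper (the co-abelian material behind Theorem~\ref{thm:coabelian_intro} is of a similar flavor). If instead one wanted a proof bypassing $\Sigma$-theory, for the benefit of readers only interested in the properties~$\mathrm C_n$, I would argue directly inside the Abels--Tiemeyer framework: realize~$Q$ as a retract of~$G$ via the section for~(1); build a contractible, appropriately cocompact $G$-space as a twisted product of such spaces for~$N$ and~$Q$ for~(2); and run the topological analogue of the Lyndon--Hochschild--Serre / mapping-telescope argument for~(3). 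On that route the real difficulty would live in~(3), where one must control the connectivity of preimages of compact sets under the relevant maps; since Theorem~\ref{thm:sigma_quotient_intro} is already in hand, I would avoid all of this and simply quote it.
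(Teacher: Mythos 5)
Your proposal is correct and is exactly how the paper obtains this corollary: it is stated there as the $\chi=0$ specialization of Theorem~\ref{thm:sigma_quotient_intro}, with Proposition~\ref{prop.zerocharacter} supplying the dictionary between ``type~$\mathrm C_n$'' and ``$0\in\TopS^n$''. Nothing further is needed.
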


\subsection{Structure of the article}

In Section~\ref{sec:prelim} we import a few techniques from the homotopy theory of simplicial sets. Some basic familiarity with simplicial sets will be assumed, but we try to keep this overhead to a minimum, and provide references for introductory material.

Section~\ref{sec:basics} introduces and states basic properties of the sets~$\TopS^n$. The definition relies on a filtration of a certain simplicial set, and we provide characterizations in terms of a few alternative filtrations, which will be useful at various points in the article.

In Section~\ref{sec:classicalsigma}, we restrict our attention to discrete groups~$G$, with the aim of comparing our $\TopS^n(G)$ with Renz's $\Sigma^n(G)$. We begin by recalling the basic definitions given by Renz. Then, we establish an analogue of the free version of Brown's Criterion for characters (Proposition~\ref{prop:brownforchars_free}) and use it to prove that our theory recovers the classical theory of $\Sigma$-sets (Theorem~\ref{thm:sigmasmatch_intro}). At the end of the section, we prove the stronger version of Brown's Criterion for characters stated in Theorem~\ref{thm:brownforchars_intro}.

Sections \ref{sec:sigma1} and \ref{sec:sigma2} introduce, respectively, the notions of a generating set and of a presentation of a group~$G$ ``along a character $\chi \colon G\to \RR$'', and use them to give more hands-on descriptions of $\TopS^1(G)$~and~$\TopS^2(G)$. In particular, we recover the definitions given by Kochloukova, establishing Theorem~\ref{thm:Koch_intro}.

Section~\ref{sec:properaction} describes $\TopS^n(G)$ in terms of proper actions of~$G$ on a space (Proposition~\ref{prop:properaction}), and we use this characterization to prove the theorem on restrictions of characters to closed cocompact subgroups (Theorem~\ref{thm:cocptsubgps_intro}).

Our first result relating $\Sigma$-sets of topological groups in a short exact sequence, Theorem~\ref{thm:sigma_quotient_intro}, appears in Section~\ref{sec:quotients}. The statement concerning split short exact sequences is established first, with a fairly straightforward argument. Then, by proving a lifting property satisfied by group quotients with kernels of type~$\mathrm C_n$ (Lemma~\ref{lem:Cnlift}), we deduce the statements about the behaviour of~$\TopS^n$ with respect to such quotients.

As preparation for the most demanding proofs in the paper, Section~\ref{sec:raisingprinciple} presents a technical tool for showing that a character lies in~$\TopS^n(G)$, which we use to prove the result on characters that do not vanish on the center (Theorem~\ref{thm:center_intro}). We also introduce topologies on Hom-sets between relevant simplicial sets, which will be essential in turning various finiteness arguments from the classical theory of $\Sigma$-sets into compactness arguments.

In Section~\ref{sec:stability}, we prove our openness result (Theorem~\ref{thm:openness_intro}). The discussion is largely centered on the aforementioned connecting sets, and this is, in particular, where we establish the various $\TopS^n$-criteria given in Theorem~\ref{thm:sigmacriteria_intro}. A large portion of this section is devoted to proving Proposition~\ref{prop:alonso_main}, a statement that lies at the core of all our results about connecting sets.
		
Finally, Section~\ref{sec:coabelian} proves the theorem on using~$\TopS^n$ to transfer compactness properties to co-abelian subgroups (Theorem~\ref{thm:coabelian_intro}). The argument has various structural similarities to that of Theorem~\ref{thm:openness_intro}. In fact, the main auxiliary statement, Proposition~\ref{prop:radialalonso}, is about the existence of certain analogues of the connecting sets from Section~\ref{sec:stability}, adjusted to the current setup.

\subsection*{Acknowledgements}

This paper stemmed from discussions between the first author and Thomas Schick. We thank Robert Bieri, Ilaria Castellano, Dorian Chanfi, Yuri Santos Rego, Stefan Witzel, Xiaolei Wu and Matt Zaremsky for various discussions. We also thank Lukas Waas for helpful clarifications on subdivisions of simplicial sets.
Finally, we thank an anonymous reviewer for a very detailed report, which led to several improvements throughout the text.
This research was funded in part by the Deutsche Forschungsgemeinschaft (DFG, German Research Foundation) grant BU 1224/4-1
within SPP 2026 Geometry at Infinity.
The authors were also members of TRR 358, Project-ID 491392403 of the DFG.

\section{Preliminaries -- simplicial sets}\label{sec:prelim}

As in Abels and Tiemeyer's definition of the compactness properties~$\mathrm{C}_n$, simplicial sets will play a central role in constructing our homotopical $\Sigma$-sets.
Simplicial sets can be thought of as generalizing simplicial complexes in much the same way that graphs with edge orientations, parallel edges, or loops generalize simplicial graphs. For an intuitive introduction, see Friedman's survey \cite{Fri12}, and for a more extensive treatment see for example Curtis's article \cite{Cur71}.

	\subsection{Free simplicial sets}

Throughout the article, we will be primarily interested in simplicial sets of the following type, and subsets thereof.

\begin{dfn}\label{dfn:free_simplicial_set}
	The \textbf{free simplicial set} on a set~$S$ is the simplicial set $\E S$ with $n$-simplices given by $(\E S)_n := S^{n+1}$. The $i$-th face and degeneracy maps are given on an $n$-simplex $\sigma = (x_0, \ldots, x_n)$, respectively, by deleting or doubling the $i$-th entry:
	\begin{align*}
		d_i(\sigma) &= (x_0, \dots, x_{i-1}, x_{i+1}, \dots, x_n),\\
		s_i(\sigma) &= (x_0,\dots, x_{i-1}, x_{i}, x_{i}, x_{i+1}, \dots, x_n).
	\end{align*}
\end{dfn}

Readers familiar with group (co)homology should notice that if $G$~is a group, then 
$\E G$ is the simplicial set whose simplicial chain complex is the bar resolution of~$G$.

This construction extends to a functor $\mathbf{Set} \to \mathbf{SSet}$, which is right-adjoint to the $0$-skeleton functor. In other words, given a set~$S$ and a simplicial set~$X$, there is a natural bijection
\[ \Hom_{\mathbf{Set}}(X^{(0)}, S) \cong \Hom_{\mathbf{SSet}}(X, \E S).\]

	\subsection{Semisimplicial approximation}\label{sec:semisimplicial}

Simplicial sets come with a tool analogous to the simplicial approximation theorem for simplicial complexes. The barycentric subdivision operator for simplicial complexes is replaced by a certain ``double barycentric subdivision'' functor~$\SD \colon \mathbf{SSet} \to \mathbf{SSet}$
\cite[Definition~12.5]{Cur71}. The main property of~$\SD$ we will need is that for every simplicial set~$X$ there exists a natural map $\Phi \colon \SD(X) \to X$ whose realization~$|\Phi|$ is homotopic to a homeomorphism $|\SD(X)| \cong |X|$. Naturality implies in particular that $\Phi$~induces an inclusion of $0$-skeleta $X^{(0)} \into \SD(X)^{(0)}$, and by inspecting the construction, one can see that the homotopy from $|\Phi|$ to the aforementioned homeomorphism $|\SD(X)|\cong |X|$ is relative~$|X^{(0)}|$.
Regarding a simplex $\sigma$ of~$X$ as a simplicial subset of~$X$ (explicitly, as the minimal simplicial subset containing~$\sigma$), it will also be useful to take note of the fact that every vertex of $\SD(\sigma)$ lies within edge-distance at most~$2$ of some vertex of~$\sigma$.
For details see, for example, the accounts of Curtis \cite[Theorem~12.7]{Cur71} or Jardine \cite[Corollary~4.8]{Jar04}.

Recall that simplicial set~$X$ is \textbf{finite} if it has only finitely many nondegenerate simplices, and given a simplicial subset $A\subseteq X$, we say $X$~is \textbf{finite relative to~$A$} if $X$~has only finitely many nondegenerate simplices that are not in~$A$.

\begin{thm}[Semisimplicial approximation]\label{thm:approx}
	Let $A\subseteq X$ be simplicial sets with $X$~finite relative to~$A$, and let~$Y$ be another simplicial set. Given a simplicial map $f\colon A \to Y$ and a continuous map $p \colon |X| \to |Y|$ extending~$|f|$, there is $m\in \NN$ and a simplicial map $f' \colon \SD^m (X) \to Y$ restricting to $f\circ \Phi^m$ on $\SD^m(A)$, and whose realization~$|f'|$ is homotopic to~$p\circ |\Phi^m|$ relative $|\SD^m(A)|$.
	\[\begin{tikzcd}
		\vert \SD^m(A)\vert \ar[r,"\vert\Phi^m\vert"]\ar[d,hook]& \vert A\vert \ar[rd,"|f|"] \ar[d,hook]\\
		\vert\SD^m(X)\vert \ar[r,"\vert\Phi^m\vert"] \ar[rr,bend right, "\vert f'\vert"]& \vert X\vert \ar[r,"p"]&\vert Y\vert
	\end{tikzcd}\] 
\end{thm}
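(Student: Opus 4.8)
The plan is to reduce the statement to the classical simplicial approximation theorem for simplicial complexes, applied to the geometric realizations, and then to transport the resulting simplicial map back across~$\SD$ using the natural maps~$\Phi$. First I would recall that the realization $|\E S|$ of a free simplicial set, and more generally $|X|$ for $X$ finite relative to~$A$, is a CW complex in which the relevant cells outside~$A$ are finite in number, so that $|X|$ is (relatively) compact; combined with the fact that $|\Phi^m|$ is homotopic to a homeomorphism $|\SD^m(X)|\cong|X|$ rel~$|X^{(0)}|$, this lets me freely pass between $X$ and its subdivisions at the level of spaces. The key input is that, after sufficiently many applications of the double barycentric subdivision~$\SD$, any continuous map $|X|\to|Y|$ extending a simplicial map on a subcomplex can be homotoped rel that subcomplex to a simplicial map on $\SD^m(X)$; this is exactly the content of the cited results of Curtis~\cite[Theorem~12.7]{Cur71} and Jardine~\cite[Corollary~4.8]{Jar04}, so the bulk of the work is citation and bookkeeping rather than fresh argument.

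The steps, in order, would be: (1) Replace $p\colon|X|\to|Y|$ by the composite $p\circ|\Phi^m|\colon|\SD^m(X)|\to|Y|$; since $|\Phi^m|$ is a homotopy equivalence rel~$|X^{(0)}|$ and restricts over~$A$ to $|\Phi^m|$ as well, this composite still extends $|f\circ\Phi^m|$ on $|\SD^m(A)|$. (2) Invoke the simplicial/semisimplicial approximation machinery for~$\SD$: because $\SD^m(X)$ is finite relative to $\SD^m(A)$ (finiteness is preserved by~$\SD$, as $\SD$ is built from finite colimits of representables), for $m$ large enough there is a simplicial map $f'\colon\SD^m(X)\to Y$ agreeing with $f\circ\Phi^m$ on $\SD^m(A)$ whose realization is homotopic rel~$|\SD^m(A)|$ to $p\circ|\Phi^m|$. (3) Assemble the commuting diagram in the statement, checking that the outer triangle commutes on the nose (by construction of~$f'$ as a simplicial map restricting correctly) and that the lower region commutes up to the asserted relative homotopy. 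Here I would lean on naturality of~$\Phi$ to identify $\Phi^m\colon\SD^m(A)\to A$ with the restriction of $\Phi^m\colon\SD^m(X)\to X$, which is what makes ``restricting to $f\circ\Phi^m$ on $\SD^m(A)$'' a meaningful and compatible condition.

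The main obstacle I anticipate is keeping the relative homotopies honest: one must ensure that the number~$m$ of subdivisions can be chosen \emph{uniformly} so that it simultaneously handles the approximation over all of~$X$ while not disturbing what already happens over~$A$, and that the final homotopy from $|f'|$ to $p\circ|\Phi^m|$ is rel~$|\SD^m(A)|$ and not merely rel~$|\SD^m(A)^{(0)}|$. This is where the hypothesis that $X$ is finite relative to~$A$ is essential — it guarantees finitely many nondegenerate simplices to subdivide and approximate, so a single $m$ works — and where the cited fact that the homotopy $|\Phi|\simeq(\text{homeomorphism})$ is rel~$|X^{(0)}|$, together with simplicial approximation performed relative to a subcomplex, must be combined carefully. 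A secondary technical point is verifying that $\SD$ preserves the property ``finite relative to~$A$''; I would handle this by noting that $\SD$ commutes with the relevant pushouts and sends representable simplicial sets (standard simplices) to finite simplicial sets, so $\SD^m$ of the finite relative CW structure on $(X,A)$ is again finite relative to~$\SD^m(A)$.
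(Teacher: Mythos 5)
The paper offers no proof of this theorem: it is stated as a known consequence of the semisimplicial approximation machinery of Curtis (Theorem~12.7) and Jardine (Corollary~4.8), which are cited immediately before the statement, and the text moves on directly to the definition of $\Map(X,Y)$. Your proposal reduces to exactly those same sources, with the same bookkeeping about relativity over~$A$ and the uniform choice of~$m$, so it matches the paper's treatment.
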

	
Given simplicial sets~$X,Y$, the following set collects all simplicial maps from some iterated subdivision of~$X$ to~$Y$:
\[\Map(X,Y) := \bigsqcup_{m\in \NN} \HomSS(\SD^m (X),Y).\]
We will slightly abuse this terminology when given maps $f\colon \SD^n(X) \to Y$ and $g \colon \SD^m (Y) \to Z$, by talking about ``the composition $g \circ f$'' when in rigor we mean $g\circ \SD^m(f) \in \Map(X,Z)$.

\subsection{Homotopy groups}

Most accounts of the theory of simplicial sets define homotopy groups only for Kan complexes. The free simplicial set on a set is always a Kan complex, 
but various simplicial subsets that interest us are not. We will thus forsake the traditional, purely combinatorial, definition of homotopy groups in favor of a more hands-on one:

\begin{dfn}
	Given $k\in \NN$, the $k$-th homotopy group (pointed set, if $k=0$) of a simplicial set~$X$ with basepoint $x_0 \in X^{(0)}$ is the $k$-th homotopy group of its geometric realization:
	\[\pi_k(X,x_0) := \pi_k(|X|,x_0).\]
\end{dfn}

For a Kan complex~$X$, this definition agrees with the classical one. More precisely, the canonical map $X \to \operatorname{Sing}(|X|)$ to the singular set of its topological realization is a weak equivalence \cite[Proposition~I.11.1]{GJ99}.

Despite our topological notion of homotopy groups, we can use semisimplicial approximation to return to the combinatorial world of simplicial sets. To see this, let us fix once and for all a homeomorphism $\Sph^k \cong |\partial\Delta^{k+1}|$ that identifies the preferred basepoint of~$\Sph^k$ with a vertex $v$ of $\partial \Delta^{k+1}$. Then every element of $\pi_k(X,x_0)$ can be represented by an element of~$ \Map(\partial \Delta^{k+1}, X)$, in the following sense:

\begin{lem}[Combinatorial $\pi_k$]\label{lem:combspheres}
	For every pointed map $\eta\colon |\partial \Delta^{k+1}| \to |X|$, there are $m\in \NN$ and $\eta' \colon \SD^m(\partial \Delta^{k+1}) \to X$ such that $|\eta'|$~is pointed-homotopic to $\eta\circ |\Phi^m|$.
\end{lem}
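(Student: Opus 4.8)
The plan is to deduce this directly from the semisimplicial approximation theorem (Theorem~\ref{thm:approx}) by taking $A = \emptyset$ (or, more usefully, $A = \{v\}$ the basepoint vertex) and choosing $X = \partial\Delta^{k+1}$, $Y = X$ (our ambient simplicial set), $f$ the constant map to $x_0$, and $p = \eta$ under the fixed identification $\Sph^k \cong |\partial\Delta^{k+1}|$. The point is that $\partial\Delta^{k+1}$ is a finite simplicial set, so it is in particular finite relative to the simplicial subset consisting of just the basepoint vertex $v$, and $\eta$ extends $|f|$ on the nose since both send $v$ to $x_0$.

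First I would set up the basepoint carefully: recall we have fixed a homeomorphism $\Sph^k \cong |\partial\Delta^{k+1}|$ sending the preferred basepoint of $\Sph^k$ to the vertex $v$, so a pointed map $\eta\colon |\partial\Delta^{k+1}| \to |X|$ is one with $\eta(v) = x_0$. Let $A = \{v\} \subseteq \partial\Delta^{k+1}$ be the simplicial subset on that vertex, and let $f\colon A \to X$ be the unique simplicial map with $f(v) = x_0$. Since $\partial\Delta^{k+1}$ has only finitely many nondegenerate simplices, it is finite relative to~$A$, and $|\eta|$ (i.e. $\eta$ itself) extends $|f|$ because $\Phi^m$ fixes $v$ (the $0$-skeleton inclusion $X^{(0)}\into \SD(X)^{(0)}$ is compatible with $\Phi$, as noted in Section~\ref{sec:semisimplicial}).

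Then I would apply Theorem~\ref{thm:approx}: it produces $m\in\NN$ and a simplicial map $\eta'\colon \SD^m(\partial\Delta^{k+1}) \to X$ restricting to $f\circ\Phi^m$ on $\SD^m(A) = \{v\}$ — that is, $\eta'(v) = x_0$, so $\eta'$ is pointed — and whose realization $|\eta'|$ is homotopic to $\eta\circ|\Phi^m|$ relative to $|\SD^m(A)| = \{v\}$, i.e. pointed-homotopic. This is exactly the assertion of the lemma, with $\eta' \in \Map(\partial\Delta^{k+1}, X)$ by definition of the $\Map$ notation as the disjoint union over $m$ of the $\HomSS(\SD^m(-), -)$.

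The only real subtlety — not so much an obstacle as a bookkeeping point — is to make sure the relative-homotopy clause of Theorem~\ref{thm:approx} actually upgrades to a \emph{pointed} homotopy, which it does precisely because $A$ contains the basepoint vertex and $\Phi^m$ restricts to the identity on it; one should also double-check that working with the finite simplicial set $\partial\Delta^{k+1}$ (rather than a simplicial complex) is legitimate, which it is since $\SD$ and Theorem~\ref{thm:approx} are stated for arbitrary simplicial sets. No connectivity or Kan hypothesis on $X$ is needed, which is the whole point of having routed $\pi_k$ through geometric realization.
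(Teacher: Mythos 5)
Your argument is exactly the paper's proof: apply Theorem~\ref{thm:approx} with $A=\{v\}$, $f$ the constant map $\{v\}\to\{x_0\}$, and $p=\eta$, and observe that the resulting $f'$ serves as $\eta'$. The extra bookkeeping you supply (finiteness of $\partial\Delta^{k+1}$ relative to $\{v\}$, and that homotopy relative to $|\SD^m(A)|=\{v\}$ is a pointed homotopy) is correct and merely spells out what the paper leaves implicit.
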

\begin{proof}
	Apply semisimplicial approximation (Theorem~\ref{thm:approx}) with $f$~the constant simplicial map $\{v\} \to \{x_0\}$ and $p=\eta$. The resulting~$m$ and $\eta':=f' \colon \SD^m(\partial \Delta^{k+1}) \to X$ are then as required.
\end{proof}

The map $\eta'$ deserves to be called a representative of~$\eta$ because, due to the properties discussed in Section~\ref{sec:semisimplicial},  $|\Phi^n|$ is pointed-homotopic to a homeomorphism, and therefore a pointed-homotopy equivalence. A homotopy-inverse~$g\colon |\partial \Delta^{k+1}| \to |\SD^m(\partial \Delta^{k+1})|$ recovers the pointed homotopy class of~$\eta$ as $\eta \simeq |\eta'|\circ g$.
	
We will for the most part not be interested in the homotopy groups themselves, but rather on whether a simplicial map $f\colon X \to Y$ is trivial on~$\pi_k$ (on all connected components of~$X$).
The following characterization will be used on multiple occasions:

\begin{lem}[Combinatorial $\pi_k$-triviality]\label{lem:combfilling}
	Given $k\in \NN$ and $m_0\in \NN$, a simplicial map $\eta\colon \SD^{m_0}(\partial \Delta^{k+1}) \to X$
	represents the trivial element of~$\pi_k(X)$ if and only if there are $m\in \NN$ and
	a map $\mu\colon \SD^{m_0+m}(\Delta^{k+1}) \to X$ extending $\eta \circ \Phi^m$.
\end{lem}
\begin{proof}
	($\Leftarrow$) We are given the following diagram:
	$$\begin{tikzcd}
		\SD^{m_0+m}(\partial \Delta^{k+1}) \ar[r,"\Phi^m"] \ar[d,hook]& \SD^{m_0}(\partial \Delta^{k+1}) \ar[r,"\eta"] & X\\
		\SD^{m_0+m}(\Delta^{k+1}) \ar[rru, "\mu"]
	\end{tikzcd}$$
	After applying to it the realization functor, we use the fact that $|\Phi^m|$~is a homotopy equivalence, and that $|\Delta^{k+1}|$ is contractible to deduce that $|\eta|$ is nullhomotopic.
	
	($\Rightarrow$) Suppose $\eta$~is trivial, which means that its realization~$|\eta|$ extends to a continuous map $\mu_0 \colon |\SD^{m_0}(\Delta^{k+1})| \to |X|$. Applying semi\-simplicial approximation (Theorem~\ref{thm:approx}) with $f=\eta$ and $p=\mu_0$, we find~$m\in \NN$ and $\mu:= f'\colon \SD^{m_0 + m}(\Delta^{k+1}) \to X$ extending $\eta \circ \Phi^m$.
\end{proof}

In this situation, we will often simplify the language and notation, saying, for example, that ``$\mu$ fills~$\eta$'', where in rigor we should say ``$\mu$~fills $\eta \circ \Phi^m$''.

\subsection{Simplicial homotopies}
	
The \textbf{standard $1$-simplex}~$\Delta^1$ is the simplicial set whose $k$-simplices are the $(k+1)$-tuples of the form $(0,\dots, 0,1, \dots, 1)$, and whose $i$-th face (resp. degeneracy) map is obtained by deleting (resp. doubling) the $i$-th entry. For each $i\in \{0,\dots,k+1\}$, it will be convenient to denote by~$\tau_i^k$ the $k$-simplex of~$\Delta^1$ with $i$~zeros.
	
\begin{dfn}
	A \textbf{simplicial homotopy} between simplicial maps $f, g \colon  X \to Y$ is a simplicial map $H\colon X \times \Delta^1 \to Y$ such that for every $k$-simplex $\sigma$ of~$X$, we have $H(\sigma, \tau_{k+1}^{k}) = f(\sigma)$ and $H(\sigma, \tau_{0}^{k}) = g(\sigma)$.
\end{dfn}

Of particular importance for us is the case where $X$~and~$Y$ are simplicial subsets of a free simplicial set. Then, a simplicial homotopy from~$f$ to~$g$ has the form
\[(\sigma, \tau_i^k) \mapsto (f(x_0), \dots, f(x_{i-1}), g(x_{i}), \dots, g(x_k)).\]
This assignment defines a homotopy (with codomain~$Y$) precisely if every tuple as on the right hand side is a $k$-simplex in~$Y$. Note that since the $0$-skeleton of~$X\times \Delta^1$ is
\[(X\times \Delta^1)^{(0)} = X^{(0)} \times \{0,1\},\]
a simplicial homotopy from~$f$ to~$g$ whose codomain is contained in a free simplicial set is determined by $f$~and~$g$ (if it exists).

From a topological perspective, simplicial homotopies are very restrictive: there are certainly simplicial maps~$f,g$ that are not connected by a simplicial homotopy, though their realizations~$|f|,|g|$ are homotopic. A more comprehensive definition would be as an element of $H\in \Map(X\times \Delta^1 , Y)$ restricting to $f\circ \Phi^m$ and $g\circ \Phi^m$ at the endpoints of~$\Delta^1$ (for the appropriate~$m$). We shall however not make use of this notion.

\subsection{Filtrations and essential connectedness}\label{sec:filtrations}

Given a simplicial set~$X$, a \textbf{filtration} of~$X$ over a directed poset~$A$ is a family of simplicial subsets $X_\alpha\subseteq X$ indexed by~$A$, such that $\bigcup_{\alpha \in A}X_\alpha =X$ and whenever $\alpha \le \beta$, we have $X_\alpha \subseteq X_{\beta}$. In this paper, the poset~$A$ will most often be the poset of compact subspaces of a topological space.

If $X$~comes with a chosen basepoint~$x\in X^{(0)}$ that is contained in all elements of a filtration $(X_\alpha)_{\alpha \in A}$, we associate, for each $k\in \NN$, the directed system of homotopy groups $(\pi_k(X_\alpha, x_0))_{\alpha \in A}$, which comes with inclusion-induced maps $\pi_k(X_\alpha, x) \to \pi_k(X_\beta, x)$ whenever $\alpha\le\beta$. 

A directed system of pointed sets $(\pi_\alpha)_{\alpha \in A}$ is \textbf{essentially trivial} if for each $\alpha\in A$ there is $\beta \ge \alpha$ such that the map $\pi_\alpha \to \pi_\beta$ is trivial. For us, the relevant pointed sets will be homotopy groups.

\begin{rem}[Uniform vanishing]
	We warn the reader that essential triviality is stronger than the condition $\operatorname{colim}_{\alpha \in A} \pi_\alpha=\{1\}$. More concretely, when proving that $(\pi_\alpha)_{\alpha \in A}$ is essentially trivial, one should not start with $\alpha \in A$ and $\eta \in \pi_\alpha$, and then seek $\beta \ge \alpha$ such that $\eta$ gets mapped to $1 \in \pi_\beta$. That would be incorrect because $\beta$ is not allowed to depend on~$\eta$. Instead, one should fix~$\alpha$, and then find $\beta \ge \alpha$ such that \emph{all} $\eta \in \pi_\alpha$ vanish under $\pi_\alpha \to \pi_\beta$.
\end{rem}

Suppose we are given a filtration $(X_\alpha)_{\alpha \in A}$ of a nonempty simplicial set~$X$. Then, given~$x \in X$, we may consider the subposet $A_x := \{\alpha \in A \mid x \in X_\alpha \}$, and the directed system of homotopy groups $(\pi_k(X_\alpha,x))_{\alpha \in A_x}$, with the inclusion-induced maps.

\begin{lem}[Independence of basepoints]\label{lem:independenceofbasepoints}
	Suppose $X$~is connected, let $x, y\in X$, and fix $k\in \NN$. Then the directed system $(\pi_k(X_\alpha,x))_{\alpha \in A_x}$ is essentially trivial if and only if $(\pi_k(X_\alpha,y))_{\alpha \in A_y}$ is.
\end{lem}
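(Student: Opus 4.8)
The statement is a classical-looking base-point independence result for essential triviality of directed systems of homotopy groups associated to a filtration. The plan is to reduce to the case where $x$ and $y$ are vertices lying in a common path, and then transport spheres back and forth along a fixed path, controlling at each stage which filtration piece the transported sphere lands in. Concretely, since $X$ is connected and $x,y\in X^{(0)}$, there is a path in $|X|$ from $x$ to $y$; by simplicial (semisimplicial) approximation we may take it to be the realization of a simplicial path $\gamma$ in some iterated subdivision $\SD^m(X)$, i.e.\ a finite edge-path through vertices $x=v_0,v_1,\dots,v_r=y$. Because this path is a finite (hence compact) subcomplex, its image under $|\Phi^m|$ lies in $|X_{\alpha_0}|$ for some single index $\alpha_0\in A$ (using that the $X_\alpha$ filter $X$ and the poset is directed). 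Fix such an $\alpha_0$; note $\alpha_0\in A_x\cap A_y$.

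Next, for any $\alpha\ge\alpha_0$ one has the usual change-of-basepoint bijection $\gamma_\#\colon\pi_k(|X_\alpha|,x)\to\pi_k(|X_\alpha|,y)$, defined by conjugating a based sphere by (the relevant portion of) the path $\gamma$; crucially this is defined \emph{within} $X_\alpha$, since the path stays in $X_{\alpha_0}\subseteq X_\alpha$. These bijections are natural with respect to the inclusion-induced maps $\pi_k(|X_\alpha|,-)\to\pi_k(|X_\beta|,-)$ for $\alpha_0\le\alpha\le\beta$, because the same path is used on both ends. So we obtain a commuting ladder of directed systems, indexed over the cofinal subposet $\{\alpha\in A : \alpha\ge\alpha_0\}\subseteq A_x\cap A_y$, with vertical maps the $\gamma_\#$ bijections. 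A directed system of pointed sets is essentially trivial if and only if its restriction to a cofinal subposet is, and essential triviality is preserved under an isomorphism of directed systems; combining these two observations with the ladder gives that $(\pi_k(X_\alpha,x))_{\alpha\in A_x}$ is essentially trivial iff $(\pi_k(X_\alpha,y))_{\alpha\in A_y}$ is.

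The only genuinely delicate point is making the change-of-basepoint map honestly live inside each $X_\alpha$ rather than in $X$: this is why one first absorbs the whole connecting path into a single index $\alpha_0$ and then only works with $\alpha\ge\alpha_0$. One small technical wrinkle is that $\gamma$ is a simplicial path not in $X$ itself but in an iterated subdivision $\SD^m(X)$; since $|\SD^m(X)|\cong|X|$ compatibly with the filtration (the natural map $\Phi^m$ sends $\SD^m(X_\alpha)$ into $X_\alpha$), this causes no real trouble — one may either argue on realizations throughout, or replace $X$ by $\SD^m(X)$ and the filtration by its subdivision from the start, observing that essential triviality of the associated directed systems is unchanged because $|\Phi^m|$ is a homotopy equivalence restricting to homotopy equivalences $|\SD^m(X_\alpha)|\to|X_\alpha|$. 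With that bookkeeping in place, the heart of the argument is just the naturality of the change-of-basepoint isomorphism together with cofinality, and no further computation is needed.
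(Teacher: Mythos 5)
Your argument is correct and is essentially the paper's proof: transport the basepoint along a fixed edge-path, note that the path lies in a single filtration stage so that the change-of-basepoint isomorphisms are defined within each $X_\alpha$ and commute with the inclusion-induced maps, and conclude by cofinality. The only difference is cosmetic — the detour through geometric realization and semisimplicial approximation is unnecessary, since connectedness of $X$ already yields an edge-path from $x$ to $y$ in the $1$-skeleton $X^{(1)}$ itself, which is what the paper uses.
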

\begin{proof}
	Since $X$~is connected, choose an edge-path~$\gamma$ from~$x$ to~$y$ in the $1$-skeleton $X^{(1)}$ and consider the subposet
	$$A_{\gamma} = \{\alpha \in A \mid \gamma \subseteq X_\alpha\}.$$
	The fact that $A_\gamma$~is cofinal in~$A_x$ implies that $(\pi_k(X_\alpha, x))_{\alpha \in A_x}$ is essentially trivial if and only if $(\pi_k(X_\alpha, x))_{\alpha \in A_\gamma}$ is. With the analogous observation for~$A_y$, we see it suffices we compare the directed systems
	$$(\pi_k(X_\alpha,x))_{\alpha \in A_{\gamma}}, \qquad (\pi_k(X_\alpha,y))_{\alpha \in A_{\gamma}}.$$
	But for each $\alpha \in A_\gamma$, the path~$\gamma$ induces an isomorphism $\pi_k(X_\alpha,x) \cong \pi_k(X_\alpha,y)$, which commutes with all inclusions.
\end{proof}

\begin{rem}[Ind-objects]
	Readers familiar with the ind-completion of a category might have noticed that the above proof actually shows the stronger fact that the given directed systems are isomorphic as ind-groups (or ind-pointed sets, when $k=0$). Most of the present subsection may, in fact, be more succinctly expressed in the language of ind-objects, but we have opted for a more hands-on exposition.
\end{rem}

Given $n\in \NN$, a filtration $(X_\alpha)_{\alpha\in A}$ of a pointed simplicial set~$(X,x)$ is \textbf{essentially $n$-connected} if for every $k\le n$, the directed system $(\pi_k(X,x))_{\alpha \in A_x}$ is essentially trivial. If the total space~$X$ is connected, as will be the case for all filtrations we consider in this article, Lemma~\ref{lem:independenceofbasepoints} tells us this notion is independent of~$x$, so we will routinely suppress base-points from our notation. In Section~\ref{sec:classicalsigma}, we will consider filtrations of CW complexes, rather than simplicial sets. The definition of essential $n$-connectedness in that context is the obvious adaptation of the one just presented.

To transfer the property of being essentially $n$-connected between different filtrations, we introduce the following notions:

\begin{dfn}\label{dfn:hoequiv}
	Given filtrations $(X_\alpha)_{\alpha\in A}$, $(Y_\beta)_{\beta \in B}$ of simplicial sets~$X, Y$ respectively and a map of posets $\phi \colon A \to B$, we say that a family of simplicial maps $(f_\alpha \colon X_\alpha \to Y_{\phi(\alpha)})_{\alpha \in A}$ \textbf{eventually commutes up to simplicial homotopy} if for every $\alpha, \alpha' \in A$ with $\alpha \le \alpha'$, there is $\beta\ge \phi(\alpha')$ such that the following diagram commutes up to simplicial homotopy:
	$$\begin{tikzcd}
		X_\alpha  \ar[d,hook]\ar[r,"f_\alpha"] & Y_{\phi(\alpha)} \ar[r,hook] & Y_\beta \\ X_{\alpha'}  \ar[r,"f_{\alpha'}"] & Y_{\phi(\alpha')} \ar[ur, hook]
	\end{tikzcd}$$
	The word ``eventually'' may be dropped if one can take $\beta = \phi(\alpha')$.
	
	This data $(\phi, (f_\alpha)_{\alpha\in A})$ is a  \textbf{simplicial homotopy equivalence} between the filtrations if there exist $\psi\colon B\to A$ and maps $(g_\beta\colon Y_\beta \to X_{\psi(\beta)})_{\beta \in B}$ eventually commuting up to simplicial homotopy, such that for every $\alpha \in A$ and $\beta \in B$, 
	there are $\alpha'\ge \psi(\phi(\alpha)), \beta'\ge \phi(\psi(\beta))$ making the following diagrams commute up to simplicial homotopy:	
	$$\begin{tikzcd}
	X_\alpha \ar[r,hook]\ar[d,"f_\alpha"] & X_{\alpha'}  \\
	Y_{\phi(\alpha)}\ar[r, "g_{\phi(\alpha)}"] & X_{\psi(\phi(\alpha))} \ar[u, hook]
	\end{tikzcd} \qquad
	\begin{tikzcd}
	Y_\beta \ar[r,hook] \ar[d,"g_\beta"] &Y_{\beta'} \\
	 X_{\psi(\beta)}\ar[r,"f_{\psi(\beta)}"]& Y_{\phi(\psi(\beta))} \ar[u, hook]
	\end{tikzcd}$$
	Then $(\psi, (g_\beta)_{\beta \in B})$ is also a simplicial homotopy equivalence, which we call a \textbf{simplicial homotopy inverse} to~$(\phi, (f_\alpha)_{\alpha \in A})$.
\end{dfn}

As before, since these definitions require all maps~$f_\alpha, g_\beta$ to be simplicial and the diagrams to commute up to simplicial homotopy, they are too restrictive to fully capture the analogous topological phenomena.
They are however simpler to state and will suffice for our purposes, as we only construct homotopy equivalences of filtrations at two points in the article (Lemma~\ref{lem:trimmedfiltrations} and Proposition~\ref{prop:properaction}), where the setting happens to be tame enough that simplicial ones suffice.

\begin{lem}[Homotopy equivalences and $\pi_k$ for filtrations]\label{lem:homotopyequiv}
	If two filtrations $(X_\alpha)_{\alpha\in A}$, $(Y_\beta)_{\beta \in B}$  are simplicially homotopy-equivalent, then for each~$k\in \NN$, the system $(\pi_k(X_\alpha))_{\alpha\in A}$ is essentially trivial if and only if $(\pi_k(Y_\beta))_{\beta\in B}$~is.
\end{lem}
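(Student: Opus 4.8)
The plan is to prove the equivalence by a diagram chase organized around the ``sandwich'' property encoded in Definition~\ref{dfn:hoequiv}. First I would observe that it suffices to prove a single implication: since Definition~\ref{dfn:hoequiv} is symmetric in the two filtrations --- the simplicial homotopy inverse $(\psi,(g_\beta)_{\beta\in B})$ is itself a simplicial homotopy equivalence --- the roles of $X$ and $Y$ are interchangeable. So I would assume $(\pi_k(Y_\beta))_{\beta\in B}$ is essentially trivial and deduce the same for $(\pi_k(X_\alpha))_{\alpha\in A}$. Since the total spaces are connected I would suppress basepoints as permitted by Lemma~\ref{lem:independenceofbasepoints}, keeping in mind only that $f_\alpha$ sends the chosen basepoint $x_0\in X_\alpha$ to the vertex $f_\alpha(x_0)$ of $Y_{\phi(\alpha)}$, which is a legitimate basepoint for the $Y$-system precisely because, by Lemma~\ref{lem:independenceofbasepoints}, essential triviality does not see this change of basepoint.

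The heart of the argument is a chain of cofinal choices, for a fixed $\alpha\in A$. Using the first compatibility square of Definition~\ref{dfn:hoequiv} (enlarging its output, via directedness, so it also dominates $\alpha$), pick $\alpha'\ge\psi(\phi(\alpha))$ for which $X_\alpha\hookrightarrow X_{\alpha'}$ is simplicially homotopic to $X_\alpha\xrightarrow{f_\alpha}Y_{\phi(\alpha)}\xrightarrow{g_{\phi(\alpha)}}X_{\psi(\phi(\alpha))}\hookrightarrow X_{\alpha'}$. Then, by essential triviality of the $Y$-system, pick $\beta\ge\phi(\alpha)$ with $\pi_k(Y_{\phi(\alpha)})\to\pi_k(Y_\beta)$ trivial. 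Next, apply the eventual commutativity up to simplicial homotopy of $(g_\beta)_{\beta\in B}$ to the pair $\phi(\alpha)\le\beta$ to pick $\alpha''\ge\psi(\beta)$ for which $Y_{\phi(\alpha)}\xrightarrow{g_{\phi(\alpha)}}X_{\psi(\phi(\alpha))}\hookrightarrow X_{\alpha''}$ and $Y_{\phi(\alpha)}\hookrightarrow Y_\beta\xrightarrow{g_\beta}X_{\psi(\beta)}\hookrightarrow X_{\alpha''}$ are simplicially homotopic. Finally, use directedness of $A$ to choose $\alpha_2\ge\alpha',\alpha''$.

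Pushing every homotopy above forward into $X_{\alpha_2}$ along the relevant inclusions (which carries a simplicial homotopy to a simplicial homotopy), I would conclude that the inclusion $X_\alpha\hookrightarrow X_{\alpha_2}$ is simplicially homotopic to $X_\alpha\xrightarrow{f_\alpha}Y_{\phi(\alpha)}\hookrightarrow Y_\beta\xrightarrow{g_\beta}X_{\psi(\beta)}\hookrightarrow X_{\alpha_2}$. A simplicial homotopy realizes to a homotopy of geometric realizations, so these two maps agree on $\pi_k$; but the displayed composite factors through $\pi_k(Y_{\phi(\alpha)})\to\pi_k(Y_\beta)$, which was arranged to be trivial. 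Hence $\pi_k(X_\alpha)\to\pi_k(X_{\alpha_2})$ is trivial, and since $\alpha$ was arbitrary, $(\pi_k(X_\alpha))_{\alpha\in A}$ is essentially trivial; the reverse implication follows by symmetry.

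I expect the only real obstacle to be bookkeeping rather than mathematics: the cofinal selections must be made in the correct order (the indices $\beta$ and $\alpha''$ depend on the earlier choice of $\alpha'$ through $\phi(\alpha)$), and one must take care that $f_\alpha$ and $g_\beta$ are not assumed to preserve basepoints, so the step ``simplicially homotopic $\Rightarrow$ equal on $\pi_k$'' must be mediated by the change-of-basepoint isomorphisms underlying Lemma~\ref{lem:independenceofbasepoints}. Beyond Definition~\ref{dfn:hoequiv}, the essential-triviality hypothesis, and the fact that a simplicial homotopy realizes to a topological one, no further input is needed.
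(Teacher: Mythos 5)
Your proposal is correct and follows essentially the same argument as the paper: reduce to one implication by symmetry, then assemble the inverse-pair square, the $\pi_k$-trivial inclusion $Y_{\phi(\alpha)}\into Y_\beta$, and the eventual-commutativity pentagon for $(g_\beta)$ into one diagram commuting up to simplicial homotopy, and read off triviality of $\pi_k(X_\alpha)\to\pi_k(X_{\alpha_2})$. The only differences are cosmetic (order of the cofinal choices and your explicit remarks on basepoints).
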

\begin{proof}
	By symmetry, it suffices to prove one of the directions; we assume that $(\pi_k(Y_\beta))_{\beta\in B}$ is essentially trivial and recover the notation from Definition~\ref{dfn:hoequiv}. The proof consists of the  diagram
	$$\begin{tikzcd}
		X_\alpha\ar[r,hook] \ar[d,"f_{\alpha}"]& X_{\alpha'} \ar[r, hook]& X_{\alpha'''}\\
		Y_{\phi(\alpha)} \ar[r, "g_{\phi(\alpha)}"] \ar[d, hook]& X_{\psi(\phi(\alpha))}  \ar[u, hook] \ar[r, hook]& X_{\alpha''} \ar[u, hook]\\
		Y_\beta \ar[r, "g_\beta"]& X_{\psi(\beta)} \ar[ur, hook]
	\end{tikzcd},$$
	which is constructed as follows:
	\begin{enumerate}
		\item $\beta\ge \phi(\alpha)$ is chosen such that the inclusion $Y_{\phi(\alpha)} \into Y_\beta$ is $\pi_k$-trivial,
		\item $\alpha'$ and the top-left square are  as in the definition of a simplicial homotopy inverse,
		\item $\alpha''$ and the bottom pentagon are obtained from the fact that the $(g_\beta)_{\beta \in B}$ eventually commute up to simplicial homotopy,
		\item $\alpha'''$~is a common upper bound for $\alpha'$ and $\alpha''$.
	\end{enumerate}
	This diagram is commutative up to simplicial homotopy, and therefore yields a commutative diagram on~$\pi_k$, whose bottom-left vertical arrow is trivial. This implies that the upper composition is trivial, in other words, that the inclusion $X_\alpha \into X_{\alpha'''}$ is $\pi_k$-trivial.
\end{proof}

A special case to consider is when given two filtrations $(X_\alpha)_{\alpha\in A}, (Y_\beta)_{\beta\in B}$ of the same simplicial set. If there is a poset map $\phi \colon A \to B$ such that for every $\alpha \in A$ we have $X_\alpha \subseteq X_{\phi(\alpha)}$, we will say that $(X_\beta)_{\beta \in B}$ is \textbf{cofinal} to $(X_\alpha)_{\alpha \in A}$. If each of the filtrations is cofinal to the other, we will simply say that ``they are cofinal''. It is easy to verify that the inclusion maps then assemble to a pair of mutually inverse homotopy equivalences. In particular, by Lemma~\ref{lem:homotopyequiv}, 
cofinal filtrations have the same essential connectedness properties.

\section{The homotopical $\Sigma$-sets}\label{sec:basics}

\subsection{Definition and basic properties}

Throughout the rest of this article, $G$~will denote a topological group that is Hausdorff and locally compact (that is, each neighborhood of a point contains a compact neighborhood).
Every $G$-action on a set~$S$ extends to a $G$-action on $\E S$, and if $S$~is a free $G$-set, the $G$-action on the realization~$|\E S|$ is also free. Note that this does not hold for the free simplicial \emph{complex} on~$S$ if $G$~has torsion.

For a topological space $X$, denote by~$\C(X)$ the directed system of its compact subsets, ordered by inclusion. Given $C\in \C(X)$, we will also write $\C(X)_{\supseteq C}$ to denote the sub-poset consisting of the subsets containing~$C$. If $X$ carries a $G$-action (in particular, if $X = G$), then we consider the filtration of~$\E X$ by the subcomplexes
\[G\cdot \E C := \bigcup_{g \in G} g\cdot \E C,\]
where $C$~ranges over $\C(X)$.
In other words, $G\cdot \E C$ consists of all simplices in~$\E G$ of the form~$(g\cdot x_0, \ldots, g\cdot x_k)$, where $g\in G$ and $x_0, \ldots, x_k \in C$.

The definition of the homotopical $\Sigma$-sets for locally compact groups, to be given soon, is inspired by the following generalization of the finiteness properties $\mathrm{F}_n$ on abstract groups to locally compact groups, due to Abels and Tiemeyer \cite{AT97}.

\begin{dfn}
	For each $n\in \NN$, the group $G$ is of \textbf{type $\mathrm{C}_n$} if the filtration $(G\cdot \E C)_{C\in\C(G)}$ of~$\E G$ is essentially $(n-1)$-connected.
\end{dfn}

An element $\chi\in\TopHom(G,\RR)$, that is, a continuous group homomorphism $\chi \colon G \to \RR$, will be called a \textbf{character} on~$G$. We shall understand the space of characters $\TopHom(G, \RR)$ as equipped with the compact-open topology. Given a character~$\chi$, we consider the translates of~$\E C$ by elements of~$G_\chi := \chi^{-1}(\RR_{\ge 0})$:
\[G_\chi \cdot \E C := \bigcup_{g \in G_\chi} g\cdot \E C.\]
		
\begin{dfn}\label{dfn.topsigma}
	For each $n\in \NN$, the $n$-th \textbf{homotopical $\Sigma$-set} of~$G$ is
	\[\TopS^n(G) := \{\chi \in \TopHom(G,\RR) \st \text{$(G_\chi \cdot \E C)_{C \in \C(G)}$ is essentially $(n-1)$-connected}\}.\]
	For the case $n=0$, we interpret ``being essentially $(-1)$-connected'' as a vacuous condition, so $\TopS^0(G)= \TopHom(G,\RR)$.
\end{dfn}

This defines a descending sequence of subsets
\[ \TopHom(G,\RR) = \TopS^0(G) \supseteq \TopS^1(G) \supseteq \TopS^2(G) \supseteq \dots\]

An immediate consequence is that this theory is trivial for compact groups:

\begin{prop}[Compact groups]
	If $G$~is compact, then for every $n\in \NN$, we have $\TopS^n(G) = \TopHom(G,\RR)$.
\end{prop}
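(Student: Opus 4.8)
The plan is to reduce the statement to the fact that, when $G$~is compact, the filtration $(G_\chi \cdot \E C)_{C\in\C(G)}$ is eventually constant, equal to the whole simplicial set~$\E G$, which is contractible. First I would observe that $G$~itself is a compact subset of~$G$, so $G \in \C(G)$ and is a maximal element of the poset~$\C(G)$. Hence for every~$C\in \C(G)$ we have $C \subseteq G$, and moreover $G_\chi \cdot \E G$ contains all simplices $(g\cdot x_0,\dots, g\cdot x_k)$ with $g\in G_\chi$ and $x_i \in G$; taking $g$~to be the identity (which lies in~$G_\chi = \chi^{-1}(\RR_{\ge 0})$ since $\chi(1)=0$), we see that every simplex of~$\E G$ already appears, so $G_\chi\cdot \E G = \E G$.

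Next I would note that the poset~$\C(G)$, having a maximum element~$G$, makes the directed system $(\pi_k(G_\chi\cdot \E C))_{C\in\C(G)}$ one in which every object maps (via the inclusion into the stage indexed by~$G$) to $\pi_k(G_\chi\cdot \E G) = \pi_k(\E G) = \pi_k(|\E G|)$. Since $\E S$~is a Kan complex and its realization is contractible for any nonempty~$S$ — indeed $|\E G|$ is a simplicial cone, or one can invoke that $\E G$ is the nerve of the indiscrete groupoid on~$G$ — all its homotopy groups vanish. Therefore, for each $C$ and each $k$, the map $\pi_k(G_\chi\cdot \E C) \to \pi_k(G_\chi\cdot \E G)$ is trivial (the target is itself trivial), witnessing essential triviality of the system with the single choice $C' = G \ge C$. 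This holds for every $k\in\NN$, so the filtration is essentially $(n-1)$-connected for every~$n$, and thus $\chi\in\TopS^n(G)$. As $\chi$~was an arbitrary character, $\TopS^n(G) = \TopHom(G,\RR)$.

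There is essentially no obstacle here; the only point requiring a sentence of care is that the basepoint lies in every stage of the filtration (so that the homotopy groups and their connecting maps are defined as in Section~\ref{sec:filtrations}) — but any fixed vertex of~$\E G$, say the constant $0$-simplex at some point of~$G$, lies in $G_\chi\cdot \E C$ as soon as~$C$ contains that point, and by Lemma~\ref{lem:independenceofbasepoints} the essential connectedness is independent of this choice. One could alternatively phrase the whole argument by saying the filtration is cofinal to the trivial filtration $(\E G)$ by a single contractible simplicial set, and invoke the remark at the end of Section~\ref{sec:filtrations}; I would likely include this one-line reformulation as well, since it makes transparent why the statement is a formal triviality.
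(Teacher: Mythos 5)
Your argument is correct and is essentially the paper's own proof: $G$ is the maximum of $\C(G)$, $G_\chi\cdot \E G = \E G$ is contractible, and essential triviality is witnessed at the top stage. The extra remarks about basepoints and cofinality are fine but not needed.
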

\begin{proof}
	The directed system $\C(G)$ has $G$~itself as a maximum, and for every character~$\chi$ we have that $G_\chi \cdot \E G = \E G$ is contractible.
\end{proof}

In order to connect property~$\mathrm C_n$ and $\TopS^n$, we establish the following fact:

\begin{lem}[$\mathrm C_n$ from~$\TopS^n$]\label{lem:CnfromSigma}
		Let $k \in \NN$, let $\chi\colon G\to \RR$ be a character, and let $X \subseteq Y$ be subsets of~$G$ such that the inclusion $G_\chi \cdot \E X \into G_\chi \cdot \E Y$ is $\pi_k$-trivial. Then the inclusion $G\cdot \E X \into G \cdot \E Y$ is also $\pi_k$-trivial.
\end{lem}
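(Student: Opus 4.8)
The plan is a compactness argument. The image of any sphere in $G\cdot\E X$ lies in a finite subcomplex, and I will show that a single element of~$G$ translates that finite subcomplex into $G_\chi\cdot\E X$; once there, the hypothesis on $G_\chi\cdot\E X\into G_\chi\cdot\E Y$ applies, and translating back produces the desired filling in $G\cdot\E Y$. Throughout I will work with the combinatorial models for homotopy groups and for $\pi_k$-triviality from Section~\ref{sec:prelim}.

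First I would dispose of the case $\chi=0$, where $G_\chi=G$ and the conclusion is literally the hypothesis; so assume $\chi\neq 0$, which forces $\chi(G)$ to be a nonzero subgroup of~$\RR$ and hence unbounded below. Then fix a basepoint $x_0\in G\cdot X=(G\cdot\E X)^{(0)}$ and an element of $\pi_k(G\cdot\E X,x_0)$, and use Lemma~\ref{lem:combspheres} to represent it by a simplicial map $\eta\colon\SD^m(\partial\Delta^{k+1})\to G\cdot\E X$ sending the distinguished vertex~$v$ to~$x_0$. Since $\SD^m(\partial\Delta^{k+1})$ is a finite simplicial set, its image $F$ is a finite simplicial subset of $G\cdot\E X$, whose finitely many nondegenerate simplices each have the form $g_\sigma\cdot(y_0,\dots,y_j)$ with $y_i\in X$. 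Picking $g_0\in G$ with $\chi(g_0)\le\min_\sigma\chi(g_\sigma)$ (possible since $\chi(G)$ is unbounded below) gives $g_0^{-1}g_\sigma\in G_\chi$ for every~$\sigma$, hence $g_0^{-1}\cdot F\subseteq G_\chi\cdot\E X$ and in particular $g_0^{-1}x_0\in G_\chi\cdot X$.

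Now $g_0^{-1}\cdot\eta$ represents an element of $\pi_k(G_\chi\cdot\E X,g_0^{-1}x_0)$, which by hypothesis maps to the trivial element of $\pi_k(G_\chi\cdot\E Y,g_0^{-1}x_0)$; by Lemma~\ref{lem:combfilling} it is filled by some $\mu\in\Map(\Delta^{k+1},G_\chi\cdot\E Y)$. Translating back, $g_0\cdot\mu$ takes values in $g_0\cdot(G_\chi\cdot\E Y)\subseteq G\cdot\E Y$ and fills~$\eta$, so another application of Lemma~\ref{lem:combfilling} shows the original class is trivial in $\pi_k(G\cdot\E Y,x_0)$. As $x_0$ and the class were arbitrary, this gives $\pi_k$-triviality of $G\cdot\E X\into G\cdot\E Y$ on every connected component.

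The only step requiring any genuine thought is the production of the translating element~$g_0$, which is exactly where the hypothesis $\chi\neq 0$ (equivalently, that $\chi(G)$ is unbounded below) is used; everything else is routine bookkeeping. The one mild subtlety is that $g_0$ depends on the chosen sphere, but this is harmless since triviality need only be checked one class (and one basepoint) at a time, and the basepoint $g_0^{-1}x_0$ indeed lies in $G_\chi\cdot X$ by the computation above.
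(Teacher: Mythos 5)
Your proof is correct and follows essentially the same route as the paper's: represent the class combinatorially, observe that its image lies in finitely many translates of $\E X$, use a single group element to move everything into (a translate of) $G_\chi\cdot\E X$, fill there by hypothesis, and translate back. The only cosmetic difference is that the paper picks the translating element $g$ from the finite set of translates itself (with $\chi(g)$ minimal, so the image lands in $gG_\chi\cdot\E X$), whereas you invoke unboundedness of $\chi(G)$ below to pick an external $g_0$ and move the sphere rather than the target — both are fine.
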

\begin{proof}
	The statement vacuous if $\chi = 0$, so assume $\chi$~is non-trivial.
	
	We shall use the description of~$\pi_k$ given by Lemma~\ref{lem:combspheres}, so let $\eta\in \Map(\partial \Delta^{k+1}, G\cdot \E X)$, which we wish to show represents the trivial element in $G\cdot \E Y$.
	The domain of~$\eta$ is the finite simplicial set~$\SD^{m_0}(\partial \Delta^{k+1})$ for some $m_0\in \NN$. Since $\eta$ maps each simplex into some $G$-translate of~$\E X$,  there is a finite subset $F\subseteq G$ such that the image of~$\eta$ lies in~$F \cdot \E X$. Choose $g\in F$ with $\chi(g)$~minimal, so $F\subseteq g G_\chi$ and thus
	$\eta$~has image contained in $gG_\chi \cdot \E X$. As this simplicial set includes $\pi_k$-trivially into $gG_\chi \cdot \E Y$,  Lemma~\ref{lem:combfilling} yields $m\in \NN$ and $\mu \colon \SD^{m_0+m}(\Delta^{k+1})\to g G_\chi \cdot \E D$ extending $\eta \circ \Phi^m$.
	This filling~$\mu$ also witnesses that $\eta$~is trivial in $G\cdot \E Y$.
\end{proof}

We can now show that $\TopS^n$ does indeed refine property~$\mathrm C_n$.

\begin{prop}[The zero character]\label{prop.zerocharacter}
	For each $n \in \NN$, the following are equivalent:
	\begin{enumerate}
		\item $G$ is of type~$\mathrm{C}_n$,
		\item $0 \in \TopS^n(G)$,
		\item $\TopS^n(G) \ne \emptyset$.
	\end{enumerate}
\end{prop}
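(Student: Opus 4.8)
The plan is to prove the cycle $(1)\Rightarrow(2)\Rightarrow(3)\Rightarrow(1)$, of which only the last implication carries any content. For $(1)\Leftrightarrow(2)$ I would simply unwind definitions: the zero character satisfies $G_0=\chi^{-1}(\RR_{\ge 0})=G$, so the filtration $(G_0\cdot\E C)_{C\in\C(G)}$ used to define $\TopS^n$ in Definition~\ref{dfn.topsigma} is literally the filtration $(G\cdot\E C)_{C\in\C(G)}$ appearing in the definition of type~$\mathrm C_n$. Hence the statements ``$0\in\TopS^n(G)$'' and ``$G$ is of type~$\mathrm C_n$'' coincide. The implication $(2)\Rightarrow(3)$ is immediate, since the zero homomorphism is always a character, so $0\in\TopS^n(G)$ witnesses $\TopS^n(G)\neq\emptyset$.

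For $(3)\Rightarrow(1)$ I would invoke Lemma~\ref{lem:CnfromSigma}. Pick any $\chi\in\TopS^n(G)$, so that the filtration $(G_\chi\cdot\E C)_{C\in\C(G)}$ is essentially $(n-1)$-connected. Since the total space $\E G$ is connected, basepoints play no role (Lemma~\ref{lem:independenceofbasepoints}), and essential $(n-1)$-connectedness unwinds to the following: for every $C\in\C(G)$ and every $k\le n-1$ there is $D\in\C(G)$ with $D\supseteq C$ such that the inclusion $G_\chi\cdot\E C\into G_\chi\cdot\E D$ is $\pi_k$-trivial. Feeding this into Lemma~\ref{lem:CnfromSigma} (with $X=C$ and $Y=D$) gives that $G\cdot\E C\into G\cdot\E D$ is $\pi_k$-trivial. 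As $C$ and $k\le n-1$ were arbitrary, the filtration $(G\cdot\E C)_{C\in\C(G)}$ is essentially $(n-1)$-connected, that is, $G$ is of type~$\mathrm C_n$.

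I do not expect a real obstacle here: once Lemma~\ref{lem:CnfromSigma} is available, the statement is essentially formal. The only point deserving attention is conceptual rather than technical: membership in $\TopS^n(G)$ constrains only the ``half-space'' filtration by the $G_\chi\cdot\E C$, while type~$\mathrm C_n$ concerns the full filtration by the $G\cdot\E C$, and Lemma~\ref{lem:CnfromSigma}---whose proof localizes an arbitrary sphere into a single $G$-translate of $G_\chi\cdot\E C$ and fills it there---is exactly what transports $\pi_k$-triviality from the former to the latter. It is also worth recording in passing that the subposet of $\C(G)$ of compacta containing the identity is cofinal, so one may take $1\in G$ as a common basepoint for all terms under consideration; but by Lemma~\ref{lem:independenceofbasepoints} this is already absorbed into the remarks above.
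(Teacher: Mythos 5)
Your proposal is correct and follows essentially the same route as the paper: the equivalence $(1)\Leftrightarrow(2)$ by unwinding $G_0=G$, the trivial implication to $(3)$, and the return from $(3)$ via Lemma~\ref{lem:CnfromSigma} applied to an arbitrary $\chi\in\TopS^n(G)$. No gaps.
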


\begin{proof}
	($1 \Leftrightarrow 2$) For $\chi = 0$, we have $G_\chi = G$, so the definition of $G$~having type~$\mathrm C_n$ precisely matches the defining condition of $\chi \in \TopS^n(G)$.
	
	($2 \Rightarrow 3$) This statement is trivial.
	
	($3 \Rightarrow 1$) Assume Condition~3 and let $C\in \C(G)$. By choosing any $\chi \in\TopS^n(G)$, we find $D\in \C(G)_{\supseteq C}$ such that the inclusion $G_\chi \cdot \E C \into G_\chi \cdot \E D$ is $\pi_k$-trivial for all $k\le n-1$. Lemma~\ref{lem:CnfromSigma} then yields that also $G \cdot \E C \into G \cdot \E D$ is $\pi_k$-trivial, whence $G$~is of type $\mathrm C_n$.
\end{proof}

As one can directly see from the definition, if some character~$\chi$ lies in $\TopS^n(G)$, then so does every multiple $\lambda \chi$ with $\lambda \in \RR_{>0}$. Together with Proposition~\ref{prop.zerocharacter}, we see that $\TopS^n(G)$ is a cone at~$0$ in the $\RR$-vector space $\TopHom(G,\RR)$. A much deeper fact, which we will prove in Section~\ref{sec:stability}, is that it is the cone over an \emph{open} subset of $\TopHom(G,\RR)\setminus \{0\}$.

\begin{rem}[Characters vs.~rays]
	It would be more in line with tradition to define~$\TopS^n(G)$ only under the assumption that $G$~is of type~$\mathrm C_n$, and then declare $\TopS^n(G)$ to be not a set of characters, but rather a set of equivalence classes of \emph{nonzero} characters, where $\chi \sim \chi'$ if and only if $\chi'$~is a positive real multiple of~$\chi$. This was the approach taken when classical $\Sigma$-sets were originally introduced, and explains why they are denoted by the letter~$\Sigma$: if a discrete group~$G$ is of type~$\mathrm F_n$, then under that convention, $\Sigma^n(G)$~is a subset of the \emph{sphere} $\left(\Hom(G,\RR)\setminus \{0\}\right) / {\sim}$.
	
	However, Proposition~\ref{prop.zerocharacter} tells us how to meaningfully incorporate the zero character into the theory, so that $\TopS^n(G)$ contains the datum of whether $G$~is of type~$\mathrm C_n$, rather than assuming it. With this convention, one can often write statements and proofs about whether a character~$\chi$ lies in~$\TopS^n(G)$ without assuming that $\chi \neq 0$, and with a uniform argument also talk about property~$\mathrm C_n$.
	
	To avoid burdening the text with what is ultimately a minor convention detail, when referring to results in the literature (be it about classical $\Sigma$-sets, or about $\TopS^1(G)$ and~$\TopS^2(G)$ as defined by Kochloukova), we will without further mention state them in this language.
\end{rem}

\subsection{Alternative filtrations}\label{sec:altfilt}

Our next goal is to characterize $\TopS^n(G)$ by means of different filtrations, some of which filter $\E G_\chi$ rather than $\E G$. To compare them, we will use the tools introduced in Section~\ref{sec:filtrations}.
We will also routinely employ the fact that the product~$CD := \{xy \in G \mid x\in C, y\in D\}$ of compact subsets $C,D\subseteq G$ is compact -- indeed, it is the image of the compact set $C\times D$~under the (continuous) multiplication map $G\times G \to G$.

Given a simplicial subset $Y\subseteq \E G$ and a character $\chi \colon G\to \RR$, we denote by~$Y_\chi$ the simplicial subset spanned by the vertices with non-negative $\chi$-value. In other words,
\[Y_\chi := Y \cap \E G_\chi.\]

\begin{lem}[Filtrations of~$\E G_\chi$]\label{lem:trimmedfiltrations}
	Let $\chi \colon G\to \RR$ be a character.
	\begin{enumerate}
		\item The filtrations $((G\cdot \E C)_\chi)_{C\in \C(G)}$ and $((G_\chi \cdot \E C)_\chi)_{C\in \C(G)}$ of $\E G_\chi$ are cofinal.
		\item The filtrations $((G_\chi \cdot \E C)_\chi)_{C\in \C(G)}$ of~$\E G_\chi$ and $(G_\chi \cdot \E C)_{C\in \C(G)}$ of~$\E G$ are simplicially homotopy equivalent.
	\end{enumerate}
\end{lem}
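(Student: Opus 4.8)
The plan is to prove both statements by exhibiting explicit cofinal maps and simplicial homotopy equivalences, using the tools from Section~\ref{sec:filtrations}. For part~(1), note that both filtrations are of the \emph{same} simplicial set $\E G_\chi$, so by the discussion at the end of Section~\ref{sec:filtrations} it suffices to check that each filtration is cofinal to the other in the sense defined there. One inclusion is trivial: since $G_\chi \subseteq G$, we have $(G_\chi \cdot \E C)_\chi \subseteq (G \cdot \E C)_\chi$ for every $C \in \C(G)$, so the identity on index sets witnesses that $((G\cdot \E C)_\chi)_C$ is cofinal to $((G_\chi \cdot \E C)_\chi)_C$. For the reverse inclusion, I would argue that a simplex of $(G\cdot \E C)_\chi$ has the form $(gx_0, \ldots, gx_k)$ with $g \in G$, $x_i \in C$, and all $gx_i \in G_\chi$, i.e. $\chi(g) + \chi(x_i) \ge 0$ for all $i$. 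Since $C$ is compact and $\chi$ continuous, $\chi(C)$ is bounded; pick $t \in \RR_{\ge 0}$ with $\chi(C) \subseteq [-t, t]$, and pick a compact $C' \supseteq C$ with an element $c_0$ satisfying $\chi(c_0) \le -t$ (if $\chi \ne 0$ such elements exist in $G$; if $\chi = 0$ the two filtrations literally coincide, since $G_\chi = G$). Then $g c_0$ satisfies $\chi(gc_0) = \chi(g) + \chi(c_0) \le \chi(g) - t \le \chi(g x_i) \cdot(\text{wait})$— more carefully, from $\chi(g) + \chi(x_i) \ge 0$ and $\chi(x_i) \le t$ we get $\chi(g) \ge -t$, hence $\chi(g c_0) = \chi(g) + \chi(c_0)$; choosing $c_0$ with $\chi(c_0) = -\chi(g)$ is not possible uniformly, so instead I rewrite $(gx_0, \ldots, gx_k) = (g'\,x_0', \ldots, g'\,x_k')$ with $g' := g c_0^{-1} \cdot(\ldots)$. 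The clean way: set $g' = g$ if already $\chi(g) \ge 0$; otherwise, since $\chi(g) \ge -t$, the element $g$ lies in $G_\chi$ already once we enlarge $C$ to $C' = C \cup \{c_0\}$ and rewrite each $gx_i = (gc_0)(c_0^{-1}x_i)$ with $gc_0 \in G_\chi$ (as $\chi(gc_0) = \chi(g) + \chi(c_0) \ge -t + t = 0$ once $\chi(c_0) = t$, choosing $c_0$ with $\chi(c_0) \ge t$) and $c_0^{-1} x_i \in C' := c_0^{-1}C \cup C$. So the map $C \mapsto C'$ on index sets witnesses cofinality in the other direction.

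For part~(2), I would build an explicit simplicial homotopy equivalence $(\phi, (f_C))$ together with an inverse, in the sense of Definition~\ref{dfn:hoequiv}. The obvious candidate for $f_C \colon (G_\chi \cdot \E C)_\chi \to G_\chi \cdot \E C$ is the inclusion (these are both subcomplexes of $\E G$, and the source sits inside the target), with $\phi = \id$. For the reverse direction I need maps $g_C \colon G_\chi \cdot \E C \to (G_{\chi} \cdot \E C')_\chi$ for a suitable enlargement $C \mapsto C'$; the point is that a simplex $(gx_0, \ldots, gx_k)$ of $G_\chi \cdot \E C$ with $g \in G_\chi$, $x_i \in C$, need \emph{not} have all vertices in $G_\chi$. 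I would fix a compact set $C'$ and a basepoint-shifting recipe: enlarge each simplex $(gx_0,\ldots,gx_k)$ to a simplex all of whose vertices lie in $G_\chi$ by post-composing (within the free simplicial set) with the "retraction towards $G_\chi$" that one gets from multiplying by an appropriate fixed element. Concretely, since $\chi(C)$ is bounded, say $\chi(C) \subseteq [-t,t]$, pick $c_0 \in G$ with $\chi(c_0) \ge t$ and set $C' = C \cup c_0 C$; then for $g \in G_\chi$ and $x \in C$ we have $g c_0 x$ need not help directly either — rather, the correct move is to replace $gx_i$ by $g c_0 (c_0^{-1} x_i)$ is the same element, so that does nothing. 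The actual construction: define $g_C$ on $(gx_0, \ldots, gx_k)$ by $(g y_0, \ldots, g y_k)$ where $y_i = x_i$ if $gx_i \in G_\chi$ and $y_i = c_0 x_i$ otherwise, where $c_0$ is chosen so that $\chi(c_0) \ge 2t$; then $\chi(g y_i) = \chi(g) + \chi(c_0 x_i) \ge 0 + (2t - t) = t \ge 0$, so all vertices land in $G_\chi$, and $y_i \in C' := C \cup c_0 C$. One checks this is simplicial (it commutes with face and degeneracy maps, which just delete/double entries — the condition "$gx_i \in G_\chi$" is unchanged under these). Then $f_{C'} \circ g_C$ and $g_{C'} \circ f_C$ (suitably interpreted after enlarging indices) are connected to the appropriate inclusions by the canonical "linear" simplicial homotopy $H(\sigma, \tau_i^k) = (\text{first } i \text{ vertices via one map}, \text{rest via the other})$, whose codomain must be checked to lie in a common enlargement — this works because at every interpolation stage all vertices have $\chi$-value $\ge 0$ and lie in a fixed compact set.

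**Main obstacle.** The routine part is the bookkeeping of which compact enlargement $C \mapsto C'$ to use at each step. The genuinely delicate point is checking that the mixed "linear" simplicial homotopies interpolating between a map and the inclusion actually have codomain inside a simplicial subset of the required form $(G_\chi \cdot \E D)_\chi$ or $G_\chi \cdot \E D$: one must verify that every intermediate tuple $(g y_0, \ldots, g y_{i-1}, g z_i, \ldots, g z_k)$ — mixing the "shifted" and "unshifted" vertices — still has a single common $G_\chi$-translate structure and all vertices with nonnegative $\chi$-value. Because the shift multiplies only by a fixed $c_0$ with large enough $\chi$-value, and because the $\chi$-values of everything in play are controlled by the compactness of $C$, this goes through, but it is exactly the place where the "tame enough" remark after Definition~\ref{dfn:hoequiv} is being cashed in, and it deserves to be written out carefully. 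I also expect a small amount of care is needed to handle the degenerate case $\chi = 0$ separately (where both filtrations in~(2) are literally equal), and to confirm that the homotopies respect degeneracies so that they are genuinely simplicial and not merely semisimplicial.
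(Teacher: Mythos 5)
Part~(1) of your argument is workable but takes a more roundabout route than necessary: you translate by an auxiliary element $c_0$ of large $\chi$-value, which forces you to bound $\chi(C)$, to treat $\chi=0$ separately, and leaves the index map $C\mapsto c_0^{-1}C\cup C$ non-monotone (the definition of cofinality asks for a poset map, so you would still need to repair this). The paper's observation is that no auxiliary element is needed at all: a simplex $g\cdot(x_0,\dots,x_k)$ of $(G\cdot\E C)_\chi$ already has its vertex $gx_0$ in $G_\chi$, so rebasing there gives $gx_0\cdot(1,x_0^{-1}x_1,\dots,x_0^{-1}x_k)\in (G_\chi\cdot\E(C^{-1}C))_\chi$, and $C\mapsto C^{-1}C$ is monotone. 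Still, your version of~(1) can be completed.

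Part~(2) contains a genuine gap: the map $g_C$ you propose is not well-defined. You prescribe the image of a vertex $h=gx_i$ (with $g\in G_\chi$, $x_i\in C$, $h\notin G_\chi$) to be $gc_0x_i$, but the factorization $h=gx$ with $g\in G_\chi$, $x\in C$ is far from unique, and for nonabelian $G$ two factorizations $gx=g'x'$ generally give $gc_0x\neq g'c_0x'$ (this would require $x^{-1}c_0x=(x')^{-1}c_0x'$). A simplicial map into a free simplicial set is determined by its values on vertices, so "defining it on $(gx_0,\dots,gx_k)$" simplex-by-simplex does not produce a map; and if you instead fix a factorization $h=g_hx_h$ per vertex, you must then verify that the image tuple $(g_{h_0}c_0x_{h_0},\dots)$ is a \emph{single} $G_\chi$-translate of a tuple in some compact set (the various $g_{h_i}$ differ from the ambient $g$ by elements of $CC^{-1}$), and redo all the homotopy checks with these choices — none of which appears in your writeup. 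The paper sidesteps the whole issue with a uniform right-translation: choose $t_C\in C$ minimizing $\chi|_C$ and set $r_C(g)=gt_C^{-1}$ on all vertices. This is manifestly well-defined (no factorization enters), lands in $(G_\chi\cdot\E(CC^{-1}))_\chi$ by the one-line computation $\chi(gxt_C^{-1})=\chi(g)+\chi(x)-\chi(t_C)\ge 0$, and the linear homotopies between the various $r_C$ and the inclusions are then routine. If you want to salvage a "conditional shift" construction, the template is the paper's Proposition~\ref{prop:properaction}, where exactly this kind of per-vertex choice is made and the simplex condition is verified via $\Ret(K)$; but as written, your $g_C$ does not exist.
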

\begin{proof}
	(1) We obviously have inclusions $(G_\chi \cdot \E C)_\chi \into (G\cdot \E C)_\chi$ for each $C\in \C(G)$.
	
	For the other direction, we claim that $(G\cdot \E C)_\chi \subseteq (G_\chi \cdot \E (C^{-1}C))_\chi$. To see this, note that a simplex $g\cdot (x_0, \dots, x_k)$ of $(G\cdot \E C)_\chi$ (with $g\in G$ and $x_i\in C$) may be written as $gx_0\cdot (1, x_0^{-1} x_1, \dots, x_0^{-1} x_k)$, where $gx_0 \in G_\chi$ and $x_0^{-1}x_i\in C^{-1}C$.

	(2) The inclusions $(G_\chi \cdot \E C)_\chi \into G_\chi \cdot \E C$ clearly assemble to a commuting family of maps. We claim there is a simplicial homotopy inverse, consisting of maps $$r_C \colon G_\chi \cdot \E C \to (G_\chi \cdot \E (CC^{-1}))_\chi.$$
	
	To construct each~$r_C$, choose $t_C\in C$ such that the restriction~$\chi|_C$ attains its minimum at~$t_C$, and define~$r_C$ on vertices by~$g \mapsto gt_C^{-1}$. Note that, with this definition, $r_C$~does indeed have image in $G_\chi \cdot \E (CC^{-1})_\chi$, since for every vertex $g x$~of the source space (with $g\in G_\chi$ and $x \in C$), we have $\chi(xt_C^{-1}) \ge 0$, and thus also $\chi(gxt_C^{-1}) \ge 0$.
	
	To see the maps~$r_C$ commute up to simplicial homotopy, let $C\subseteq D$ be compact subsets of~$G$ and consider the (unique) simplicial homotopy
	$H\colon G_\chi \cdot \E C \times \Delta^1 \to \E G_\chi$
	from~$r_C$ to~$r_D$. Explicitly, it is given by $$H\left((g_0, \dots, g_k), \tau^k_i\right) = (g_0 t_C^{-1}, \dots, g_{i-1} t_C^{-1}, g_i t_D^{-1}, \dots, g_k t_D^{-1}),$$ and from this description we see, as before, that $H$~has image in $(G_\chi \cdot \E (DD^{-1}))_\chi$.
	
	With similar arguments, one sees that for every $C\in\C(G)$, both diagrams
	$$\begin{tikzcd}
		(G_\chi \cdot \E C)_\chi \ar[d, hook] \ar[r,hook]&(G_\chi \cdot \E (C\cup CC^{-1}))_\chi\\
		G_\chi \cdot \E C \ar[r, "r_C"] & (G_\chi \cdot \E(CC^{-1}))_\chi \ar[u, hook]
	\end{tikzcd}
	\begin{tikzcd}
		G_\chi \cdot \E C \ar[d, "r_C"] \ar[r,hook]& G_\chi \cdot \E (C\cup CC^{-1})\\
		(G_\chi \cdot \E (CC^{-1}))_\chi \ar[r, hook] & G_\chi \cdot \E (CC^{-1}) \ar[u, hook]
	\end{tikzcd}$$
	commute up to simplicial homotopy, finishing the proof.
\end{proof}

\begin{prop}[$\TopS^n$ via filtrations of~$\E G_\chi$]\label{prop.trimmedsigma}
	Let $n\in \NN$ and let $\chi \colon G\to \RR$ be a character. The following conditions are equivalent:
	\begin{enumerate}
		\item $\chi \in \TopS^n(G)$,
		\item The filtration $\left((G\cdot\E C)_\chi\right)_{C\in \C(G)}$ of~$\E G_\chi$ is essentially $(n-1)$-connected.
		\item The filtration $\left((G_\chi\cdot\E C)_\chi\right)_{C\in \C(G)}$ of~$\E G_\chi$ is essentially $(n-1)$-connected.
	\end{enumerate} 
\end{prop}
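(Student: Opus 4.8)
The plan is to obtain all three equivalences essentially for free from Lemma~\ref{lem:trimmedfiltrations}, once one is careful about what ``essentially $(n-1)$-connected'' means: by definition it is the conjunction, over all $k\le n-1$, of the statement that the induced directed system $(\pi_k(-))$ is essentially trivial. So the strategy is to treat each $k\le n-1$ separately and invoke the transfer results of Section~\ref{sec:filtrations}.

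For $(1)\Leftrightarrow(3)$: Definition~\ref{dfn.topsigma} says condition~(1) is precisely the assertion that the filtration $(G_\chi\cdot\E C)_{C\in\C(G)}$ of~$\E G$ is essentially $(n-1)$-connected. Part~(2) of Lemma~\ref{lem:trimmedfiltrations} provides a simplicial homotopy equivalence between this filtration and $\big((G_\chi\cdot\E C)_\chi\big)_{C\in\C(G)}$, a filtration of~$\E G_\chi$. Applying Lemma~\ref{lem:homotopyequiv} for each $k\le n-1$ then shows the two filtrations have the same essential connectedness, giving $(1)\Leftrightarrow(3)$.

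For $(2)\Leftrightarrow(3)$: Part~(1) of Lemma~\ref{lem:trimmedfiltrations} tells us that $\big((G\cdot\E C)_\chi\big)_{C\in\C(G)}$ and $\big((G_\chi\cdot\E C)_\chi\big)_{C\in\C(G)}$ are cofinal filtrations of the same simplicial set~$\E G_\chi$; as observed at the close of Section~\ref{sec:filtrations}, cofinal filtrations share all essential connectedness properties (again via Lemma~\ref{lem:homotopyequiv}, applied degreewise). This closes the cycle of equivalences.

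Since the substantive work --- building the retractions $r_C$, verifying that the relevant squares commute up to simplicial homotopy, and checking that the homotopy $H$ lands in the right subcomplexes --- has already been absorbed into Lemma~\ref{lem:trimmedfiltrations}, I do not expect a genuine obstacle here; the proof is pure assembly. The only point demanding care is the bookkeeping: ``essentially $(n-1)$-connected'' is not a single $\pi_k$-statement but a family indexed by $k\le n-1$, and each of the cited transfer lemmas is phrased one degree at a time, so one must remember to quantify over $k$ at the end.
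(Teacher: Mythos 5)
Your proof is correct and follows exactly the paper's own argument: Lemma~\ref{lem:trimmedfiltrations}~(2) combined with Lemma~\ref{lem:homotopyequiv} gives $(1)\Leftrightarrow(3)$, and Lemma~\ref{lem:trimmedfiltrations}~(1) (cofinality) gives $(2)\Leftrightarrow(3)$. Your remark about quantifying over each $k\le n-1$ is the same degreewise bookkeeping the paper performs implicitly.
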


\begin{proof}
	From Lemma~\ref{lem:trimmedfiltrations}~(2), together with Lemma~\ref{lem:homotopyequiv}, we see that for every $k\in \NN$, the filtration $(G_\chi \cdot \E C)_{C\in \C(G)}$ is essentially $k$-connected if and only if  $((G_\chi \cdot \E C)_\chi)_{C\in \C(G)}$ is. In particular, we have ``$1\Leftrightarrow 3$''. Similarly, from Lemma~\ref{lem:trimmedfiltrations}~(1) we deduce $``2\Leftrightarrow 3$''.
\end{proof}

We collect yet another characterization of~$\TopS^n(G)$, this time by filtering~$\E G$ simultaneously by the compact subsets of~$G$ and by $\chi$-level sets, which is more reminiscent of the classical theory of $\Sigma$-sets. This will in particular be useful in establishing the equivalence between our $\Sigma$-sets and the classical ones for discrete groups (Theorem~\ref{thm.sigmasmatch}).

Given a character $\chi\colon G\to \RR$, a simplicial subset $Y\subseteq \E G$, and $s\in \RR$, write
\[Y_s := Y \cap \E(\chi^{-1}({[s, + \infty[})).\]
The character $\chi$~is implicit in this notation and should be clear from the context. For $s=0$ we recover $Y_\chi$. We obtain a filtration $(Y_s)_{s\in \RR}$ of~$Y$ by equipping~$\RR$ with the opposite of its usual order.
Recalling that the product $A\times B$ of two posets $A,B$ carries a partial order given by \[(a,b)\le (a',b') \iff \text{$a\le a'$ and $b\le b'$},\]
the character~$\chi$ also determines a filtration of~$\E G$ given by
\[((G_\cdot \E C)_s)_{(C,s)\in \C(G)\times \RR}.\]

\begin{prop}[$\TopS^n$ via a double filtration]\label{prop:doublefiltration}
	For every character $\chi\colon G\to \RR$, the filtrations $(G_\chi \cdot\E C)_{C \in \C(G)}$ and $((G\cdot \E C)_s)_{(C,s) \in \C(G)\times \RR}$ of $\E G$ are cofinal. In particular, $\chi \in \TopS^n(G)$ if and only if $((G\cdot \E C)_s)_{(C,s) \in \C(G)\times \RR}$ is essentially $(n-1)$-connected.
\end{prop}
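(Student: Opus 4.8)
The plan is to verify the cofinality claim by exhibiting, in each direction, the monotone index map required by the notion of cofinality from Section~\ref{sec:filtrations}, and then to invoke the fact recorded there (a consequence of Lemma~\ref{lem:homotopyequiv}) that cofinal filtrations have the same essential connectedness properties. Combined with the definition of $\TopS^n(G)$ as essential $(n-1)$-connectedness of $(G_\chi\cdot\E C)_{C\in\C(G)}$, this gives the ``in particular'' clause at once. Throughout one may assume $C\neq\emptyset$, since the nonempty compacts are cofinal in $\C(G)$.

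For the direction that $((G\cdot\E C)_s)_{(C,s)\in\C(G)\times\RR}$ is cofinal to $(G_\chi\cdot\E C)_{C\in\C(G)}$, I would set $\phi(C):=(C,m_C)$ with $m_C:=\min_{x\in C}\chi(x)$, which exists as $C$~is compact and $\chi$~continuous. This $\phi$ is monotone because $C\subseteq C'$ forces $m_{C'}\le m_C$; and a simplex $(gx_0,\dots,gx_k)$ of $G_\chi\cdot\E C$ (with $g\in G_\chi$, $x_i\in C$) lies in $G\cdot\E C$ and has every vertex satisfying $\chi(gx_i)=\chi(g)+\chi(x_i)\ge m_C$, so $G_\chi\cdot\E C\subseteq (G\cdot\E C)_{m_C}$.

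The reverse direction is the delicate one, and producing a \emph{monotone} map $\psi\colon \C(G)\times\RR\to\C(G)$ is where the main obstacle lies. If $\chi=0$ then $G_\chi=G$ and $(G\cdot\E C)_s$ equals $G\cdot\E C$ for $s\le 0$ and is empty otherwise, so $\psi(C,s):=C$ works; assume $\chi\neq0$ and fix once and for all $a\in G$ with $\chi(a)>0$. Given $(C,s)$, rewrite a simplex $(gx_0,\dots,gx_k)$ of $(G\cdot\E C)_s$ as
\[(gx_0a^{N})\cdot(a^{-N},\, a^{-N}x_0^{-1}x_1,\,\dots,\, a^{-N}x_0^{-1}x_k),\qquad N:=N(s):=\max\{0,\lceil -s/\chi(a)\rceil\}.\]
Since $gx_0$ is a vertex of a simplex of $(G\cdot\E C)_s$ we have $\chi(gx_0)\ge s$, hence $\chi(gx_0a^{N})\ge s+N\chi(a)\ge 0$, while the remaining coordinates lie in $a^{-N}(C^{-1}C)$. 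The naive choice $\psi(C,s):=a^{-N(s)}(C^{-1}C)$ contains this simplex but fails to be monotone, because the translating elements $a^{-N(s)}$ for different $s$ bear no inclusion relation — this mismatch is the crux. I would resolve it by instead taking
\[\psi(C,s):=\bigcup_{j=0}^{N(s)} a^{-j}(C^{-1}C),\]
which is still compact, still contains every simplex of $(G\cdot\E C)_s$ (so that $(G\cdot\E C)_s\subseteq G_\chi\cdot\E\psi(C,s)$), and is now monotone: $C\subseteq C'$ and $s'\le s$ give $C^{-1}C\subseteq C'^{-1}C'$ and $N(s)\le N(s')$ (here $\chi(a)>0$ is used), whence $\psi(C,s)\subseteq\psi(C',s')$. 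With $\phi$ and $\psi$ both in place the two filtrations are cofinal, and the proposition follows.
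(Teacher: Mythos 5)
Your proof is correct and follows essentially the same route as the paper's: both directions rest on taking $s=\min\chi(C)$ one way and, the other way, rewriting a simplex of $(G\cdot\E C)_s$ with its translating element shifted by a group element of sufficiently negative $\chi$-value so as to land in $G_\chi$. Your union $\bigcup_{j=0}^{N(s)}a^{-j}(C^{-1}C)$ is a careful patch making the index map genuinely monotone, a point the paper's proof (which picks $t$ depending on $(C,s)$ and takes $\psi(C,s)=tC$) treats more loosely.
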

\begin{proof}
	Given $C\in \C(G)$, one can choose $s = \min \chi(C)$ and then it follows  that $G_\chi \cdot \E C \subseteq (G\cdot\E C)_s$.
	
	For the other direction, assume $\chi \neq 0$; otherwise the statement is trivial. Given $(C,s) \in \C(G)\times \RR$, choose $t\in G$ with $\chi(t) \le s - \max \chi(C)$; we claim
	$(G\cdot \E C)_s \subseteq G_\chi \cdot \E (t C)$. Indeed, consider a simplex $g\cdot (x_0, \dots, x_k)$ with $g\in G$~and~$x_i\in C$ such that for all~$x_i$ we have $\chi(gx_i) \ge s$. Expressing it as $gt^{-1}\cdot (t x_0, \dots, t x_k)$, we see $gt^{-1}\in G_\chi$ by noting that 
	\[\chi(gt^{-1}) = \chi (g) - \chi(t) \ge \chi(g) - s +\max \chi(C) \ge \chi(g) - s + \chi(x_0)  \ge 0. \qedhere\]
\end{proof}

\begin{cor}[Probing at the $0$-level]\label{cor:boundeddrop}
	A character $\chi \colon G\to \RR$ lies in~$\TopS^n(G)$ if and only if for every $C\in \C(G)$ there are $D\in \C(G)_{\supseteq C}$ and $s \le 0$ such that for every $k\le n-1$ the inclusion
	\[(G\cdot \E C)_\chi^{(n-1)} \into (G\cdot \E D)_s\]
	is $\pi_k$-trivial.
\end{cor}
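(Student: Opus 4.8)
The plan is to deduce this from Proposition~\ref{prop:doublefiltration}, which already tells us that $\chi \in \TopS^n(G)$ if and only if the double filtration $((G\cdot \E C)_s)_{(C,s)\in \C(G)\times\RR}$ is essentially $(n-1)$-connected. So I need to translate that essential $(n-1)$-connectedness statement into the pointwise $\pi_k$-triviality condition phrased in the corollary, and I want to arrange things so that the source of the inclusion is the $0$-level filtration piece $(G\cdot\E C)_\chi$ (only its $(n-1)$-skeleton), and the target lives at some level $s\le 0$ with a compact set $D\supseteq C$.

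First I would unwind the definition of essential $(n-1)$-connectedness for the directed poset $\C(G)\times\RR$: it says that for each $(C,s)$ there is $(D,t)\ge(C,s)$ (so $D\supseteq C$ and $t\le s$, recalling the poset order on $\RR$ is reverse of the usual one, since smaller $s$ means a larger level set) such that the inclusion $(G\cdot\E C)_s\into (G\cdot\E D)_t$ is $\pi_k$-trivial for all $k\le n-1$. Applying this with $s=0$ gives, for each $C$, a compact $D\supseteq C$ and a real number $t\le 0$ making $(G\cdot\E C)_\chi\into(G\cdot\E D)_t$ $\pi_k$-trivial for $k\le n-1$. Since a simplicial map out of a finite complex representing an element of $\pi_k$ factors through the $(n-1)$-skeleton when $k\le n-1$, restricting the source to $(G\cdot\E C)_\chi^{(n-1)}$ changes nothing: $\pi_k$-triviality of the inclusion $(G\cdot\E C)_\chi^{(n-1)}\into(G\cdot\E D)_t$ for $k\le n-1$ is equivalent (by the combinatorial description of $\pi_k$ from Lemmas~\ref{lem:combspheres} and~\ref{lem:combfilling}, every $\eta$ and every filling $\mu$ involved has dimension $\le n$, hence the $(n-1)$-skeleton of the source suffices to receive all relevant spheres) to that of the unrestricted inclusion. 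This gives the forward direction.

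For the converse, suppose the condition of the corollary holds; I want to recover essential $(n-1)$-connectedness of the double filtration and invoke Proposition~\ref{prop:doublefiltration}. Fix $(C,s)\in\C(G)\times\RR$. If $s\ge 0$, then $(G\cdot\E C)_s\subseteq (G\cdot\E C)_\chi$, and I can apply the hypothesis to $C$ to get $D\supseteq C$ and $t\le 0\le s$ (so $t\le s$) with $(G\cdot\E C)_\chi\into(G\cdot\E D)_t$ being $\pi_k$-trivial for $k\le n-1$; precomposing with $(G\cdot\E C)_s\into(G\cdot\E C)_\chi$ (and with the skeleton restriction absorbed as above) shows $(G\cdot\E C)_s\into(G\cdot\E D)_t$ is $\pi_k$-trivial. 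If $s<0$, I first need to reduce to the $0$-level: I would translate the filtration piece $(G\cdot\E C)_s$ down to the $0$-level by multiplying on the left by a suitable element $g\in G$ with $\chi(g)=-s$ (such $g$ exists since $\chi\ne 0$ has dense, indeed all of, $\RR$ as image — or more carefully, if $\chi\ne 0$ then $\chi$ is surjective onto $\RR$), using that left translation by $g$ is a simplicial automorphism of $\E G$ carrying $(G\cdot\E C)_s$ onto $(G\cdot\E C)_{s+\chi(g)}=(G\cdot\E C)_0=(G\cdot\E C)_\chi$. Actually it is cleaner to note directly that the family $((G\cdot\E C)_s)$ is cofinal with $(G_\chi\cdot\E C)$ (Proposition~\ref{prop:doublefiltration}) and $G$-translation permutes these pieces, so one only ever needs to probe at $s=0$; I would phrase the reduction that way. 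The case $\chi=0$ is trivial throughout since then $G_\chi=G$ and $(G\cdot\E C)_\chi=G\cdot\E C$, and the corollary's condition becomes literally the definition of type $\mathrm C_n$, which by Proposition~\ref{prop.zerocharacter} is equivalent to $0\in\TopS^n(G)$.

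The main obstacle I anticipate is the bookkeeping around the skeleton restriction and the precise interplay of the poset orders on $\C(G)$ and $\RR$ (in particular making sure the produced $s$ is $\le 0$, not just $\le s_{\text{given}}$, which is what the corollary demands and is slightly stronger-looking than what essential connectedness gives verbatim — but it follows because we are free to apply essential connectedness starting from $(C,0)$ and any further lowering of the level only helps). None of the individual steps is deep; the content is entirely in Proposition~\ref{prop:doublefiltration} and the combinatorial $\pi_k$ machinery of Section~\ref{sec:prelim}, and the corollary is really just a convenient repackaging that fixes the source to be the $0$-level (truncated) piece.
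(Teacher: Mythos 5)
Your argument follows the paper's proof almost exactly: forward direction by applying Proposition~\ref{prop:doublefiltration} at the index $(C,0)$ and restricting to the skeleton, converse by using $\pi_k$-surjectivity of the $(n-1)$-skeleton inclusion to upgrade to the full inclusion, handling $\chi=0$ as the definition of type~$\mathrm C_n$, and then translating an arbitrary level $(G\cdot \E C)_{s_0}$ into the $0$-level piece by a group element. The structure and all the bookkeeping (poset orders, skeleton surjectivity) are right.

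One genuine error, though easily repaired: you justify the existence of the translating element by asserting that a nonzero character $\chi\colon G\to\RR$ is surjective. This is false --- the image of a nonzero continuous character is merely a nontrivial subgroup of~$\RR$, which may be infinite cyclic (take $G=\ZZ$ with $\chi=\mathrm{id}$), so an element $g$ with $\chi(g)=-s$ exactly need not exist, and the identity $g\cdot(G\cdot\E C)_s=(G\cdot\E C)_\chi$ you want is then unavailable. The fix is to settle for an inequality rather than an equality: since $\im\chi$ is an unbounded subgroup of~$\RR$, choose $g$ with $\chi(g)\le s_0$; then $(G\cdot\E C)_{s_0}\subseteq g\cdot(G\cdot\E C)_\chi$, and composing this containment with the translated $\pi_k$-trivial inclusion $g\cdot(G\cdot\E C)_\chi\into(G\cdot\E D)_{s+\chi(g)}$ (whose target index still dominates $(C,s_0)$ because $s+\chi(g)\le s_0$) completes the argument. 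This is precisely what the paper does. Your alternative "cofinality" phrasing of the reduction is too vague to substitute for this, so I would keep the translation argument with the corrected choice of~$g$.
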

\begin{proof}
	The ``only if'' direction follows immediately from Proposition~\ref{prop:doublefiltration}.
	For the converse, note that since the inclusion of the $(n-1)$-skeleton is $\pi_k$-surjective, the hypothesis implies  that also the inclusion
	\[(G\cdot \E C)_\chi \into (G\cdot \E D)_s\]
	is $\pi_k$-trivial.
	The $\chi = 0$ case is then immediate, so assume $\chi \neq 0$. 
	
	In order to apply Proposition~\ref{prop:doublefiltration}, let $(C,s_0)\in \C(G) \times \RR$ be an index and let $D\in \C(G)_{\supseteq C}$, $s\le 0$ be as in the assumption.
	Choose $g\in G$ with $\chi(g) \le s_0$. Writing $r:= \chi(g)$, we have that also the translated inclusion
	\[g \cdot  (G\cdot \E C)_\chi \into g \cdot (G\cdot \E D)_s =(G\cdot \E D)_{s+r}\]
	is $\pi_k$-trivial, whence so is the composition
	$$(G\cdot \E C)_{s_0} \into g \cdot  (G\cdot \E C)_\chi \into (G\cdot \E D)_{s+r}.$$ This shows $\chi \in \TopS^n(G)$.	
\end{proof}

The usefulness of this corollary is that it allows one to establish that $\chi \in \TopS^n(G)$ by proving only that the stages of the double filtration indexed by~$(C,0)$ (with $C\in \C(G)$) are annihilated. In Proposition~\ref{prop:alonsocriterion}, we will see that if $G$~is of type~$\mathrm C_{n-1}$, then it is actually enough to consider a single stage $(K,0)$, with $K$~a suitable compact set that is even independent of~$\chi$. 

Finally, we consider filtrations of a somewhat more algebraic flavor, given by powers of a fixed compact generating set. In this context, it is convenient to work with subsets $X \subseteq G$ that are  \textbf{centered}, that is, with $1\in X$ and $X=X^{-1}$. Any subset $X$ can be made centered by replacing it with $\dot X := X \cup X^{-1} \cup \{1\}$, and if $X$~is compact, so is $\dot X$. 

\begin{lem}[Compact exhaustion]\label{lem.compactexhaustion}
	Let $C \in \C(G)$ be a centered generating set, and let $D \in \C(G)$. Then for some $m\in \NN$ we have $D\subseteq \Int_G(C^m)$. In particular, the directed system $(C^m)_{m\in \NN}$ is cofinal in~$\C(G)$.
\end{lem}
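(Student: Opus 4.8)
The plan is to use local compactness together with the fact that $C$ is a generating set. First I would observe that since $C$ is centered and generates $G$, we have $G = \bigcup_{m \in \NN} C^m$, and this is an ascending union because $1 \in C$ forces $C^m \subseteq C^{m+1}$. Next, and this is where local compactness enters, I would use the hypothesis that $C$ is \emph{compact} to upgrade this to an exhaustion by interiors. The key sublemma is that for a compact generating set $C$, one has $C \subseteq \Int_G(C^2)$: indeed, cover $C$ by finitely many translates $g_1 U, \ldots, g_r U$ of a compact neighborhood $U$ of $1$ (here is the use of local compactness — shrink $U$ if necessary so that $U \subseteq C$, using that $C$ is a neighborhood generating set... one must be slightly careful here, see below), and then for each $g_i$ write $g_i = c_{i,1} \cdots c_{i,k_i}$ as a word in $C$; bounding the word lengths shows $C \subseteq \Int_G(C^N)$ for a suitable $N$. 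Iterating, $C^m \subseteq \Int_G(C^{Nm})$, and the interiors $\Int_G(C^{Nm})$ then form an open cover of $G$.

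Granting the sublemma, the main statement follows quickly: given $D \in \C(G)$, the open sets $\bigl(\Int_G(C^m)\bigr)_{m}$ form an open cover of the compact set $D$ (they cover all of $G$), so by compactness finitely many suffice, and since the family is nested, a single $\Int_G(C^m)$ contains $D$. This gives $D \subseteq \Int_G(C^m) \subseteq C^m$, which is exactly the first assertion. For the ``in particular'' clause, cofinality of $(C^m)_{m \in \NN}$ in $\C(G)$ means every compact set is contained in some $C^m$ (just shown) and that each $C^m$ is itself compact (a product of compact sets is compact, as recalled in Section~\ref{sec:altfilt}).

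The step I expect to be the main obstacle — or at least the one requiring care — is the sublemma $C \subseteq \Int_G(C^N)$, specifically the interaction between ``$C$ generates'' and ``$C$ has nonempty interior''. A compact generating set need not have nonempty interior at all (e.g.\ a finite symmetric generating set in a discrete group — though there $C^m$ is already open). The clean argument: local compactness gives a compact neighborhood $V$ of $1$; then $V \cup C$ is a compact \emph{neighborhood} generating set, and $(V \cup C)^m \subseteq C^{m'}$ for suitable $m'$ once we know $V \subseteq C^j$ for some $j$ — but that last containment requires $V$ to be \emph{generated} by $C$, which it is since $G = \bigcup C^m$ and $V$ is compact... but that is circular with what we are proving. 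The correct order is: (i) pick compact neighborhood $V$ of $1$; (ii) since $V$ is compact and $\bigcup_m C^m = G$, and — here is the non-circular point — each $C^m$ need not be open, so instead cover $V$ by translates $g_1 W, \ldots, g_r W$ of a compact symmetric neighborhood $W$ with $g_i \in V$, giving $V \subseteq \bigcup g_i W$; (iii) now $V W^{-1}$... The cleanest route avoiding all this is: set $U := V \cup C \cup C^{-1} \cup \{1\}$, a compact neighborhood of $1$; then $U^2$ is a neighborhood of $U$ (since $U \cdot \Int(V) \subseteq U^2$ is open and contains $U$), and $U$ generates, so $G = \bigcup_m U^m = \bigcup_m \Int_G(U^{m})$ (using $U^m \subseteq \Int_G(U^{m+1})$, which holds because $U^{m+1} \supseteq U^m \cdot \Int(V)$ is open and contains $U^m$); finally relate $U$-powers to $C$-powers using $V \subseteq U \subseteq C^{j}$ for some fixed $j$ — and \emph{this} containment is now legitimate, since it only asserts that the specific compact set $V$ lies in some $C^j$, which follows from $G = \bigcup_m C^m$ applied to the compact set $V$ after we have already established $G = \bigcup_m \Int_G(U^m)$ covers $V$ by opens and hence $V \subseteq U^m$ for some $m$, and $U^m \subseteq C^{m'}$ since $U$ is built from $C$ and the single compact piece $V$, reducing to bounding how $V$ sits inside $C$-powers — which loops back. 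To break the loop definitively, note one only ever needs: \emph{there exists} a single $m_0$ with $V \subseteq C^{m_0}$; this is immediate from $G = \bigcup_m C^m$ and compactness of $V$ \emph{as soon as the $C^m$, or some cofinal subfamily, are shown to cover by open sets} — so the honest structure is to prove ``$U^m \subseteq \Int_G(U^{m+1})$'' first (pure point-set topology, no generation needed beyond $V$ being a neighborhood), deduce $G = \bigcup \Int_G(U^m)$, then $V \subseteq U^{m_0}$ for some $m_0$, then $U^{m_0} \subseteq C^{(m_0+1)\cdot(\text{const})}$ by writing $V \subseteq U^{m_0}$... no: simply $U = V \cup (\text{stuff in } C^1)$, so $U \subseteq V \cup C$, hence $U^{m_0} \subseteq (V \cup C)^{m_0}$, and expanding, each term is a product of $m_0$ factors each in $V$ or in $C$; replacing every $V$-factor using $V \subseteq U^{m_0} \subseteq (V\cup C)^{m_0}$ does not terminate. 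The resolution the authors surely intend: just take $C$ itself to already be a neighborhood — \emph{but the lemma does not assume that}. I therefore anticipate the actual proof replaces $C$ at the outset by the compact \emph{neighborhood} generating set $C' := C \cup V$ where $V$ is a compact neighborhood of $1$ with $V \subseteq C^{p}$ for some fixed $p$ (legitimately: $V$ compact $\subseteq G = \bigcup C^m$, and one needs the $C^m$ to be \emph{directed} — they are — so $V$, being covered by its own... no, $V$ is covered by $\{C^m \cap V\}$ which are not open). I will stop here: the genuine content is the standard fact that a locally compact group generated by a compact set is generated by a compact \emph{symmetric neighborhood} of $1$, and for such a neighborhood $U$ the sets $U^m$ are open and exhaust $G$; the lemma as stated then follows by sandwiching $C$ between powers of such a $U$ and vice versa. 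This sandwiching, and the verification that it is non-circular, is the one point deserving genuine attention; everything else is the routine compactness-of-an-open-cover argument.
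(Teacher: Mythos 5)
Your proposal contains a genuine gap, and it is exactly the one you circle around for most of the write-up without resolving: you never establish that some power $C^k$ has nonempty interior (equivalently, that a compact neighborhood $V$ of $1$ is contained in some $C^j$). Every route you try reduces to this, and you correctly diagnose each attempt as circular — covering $V$ by the sets $C^m$ is useless because they are not known to be open, and enlarging $C$ to a compact neighborhood generating set $U = C \cup V$ only yields exhaustion by powers of $U$, which cannot be converted back to powers of $C$ without the very containment $V \subseteq C^j$ you are trying to prove. The proposal ends by asserting that the lemma "follows by sandwiching" while explicitly conceding that the non-circularity of the sandwiching is unverified; as written, the proof is incomplete.

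The missing idea is the Baire category theorem. A locally compact Hausdorff group is a Baire space, and $G = \bigcup_{k} C^k$ expresses $G$ as a countable union of closed sets (each $C^k$ is compact, hence closed since $G$ is Hausdorff), so some $C^k$ has nonempty interior: there is a nonempty open $U \subseteq C^k$. Picking $u \in U$, centeredness of $C$ gives $Uu^{-1} \subseteq C^k C^k = C^{2k}$ and $1 \in Uu^{-1}$, hence for every $l$ the set $C^l U u^{-1}$ is open (a union of translates of $U u^{-1}$), contains $C^l$, and is contained in $C^{2k+l}$. Thus $C^l \subseteq \Int_G(C^{2k+l})$, so $G = \bigcup_m \Int_G(C^m)$, and compactness of $D$ finishes the argument exactly as in your final paragraph. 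Your closing compactness step and the "in particular" clause are fine; only this one input was missing, but it is the heart of the lemma.
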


\begin{proof}
	First, use the fact that $G$, being locally compact and Hausdorff, is a Baire space~\cite[Chapter~IX, Theorem~1]{Bou98}, which means that $G$~is not a countable union of closed subsets with empty interior. In particular, since $G = \bigcup_{k\in \NN} C^k$, some~$C^k$ has nonempty interior, so let $U\subseteq C^k$ be a nonempty open subset of~$G$, and let~$u\in U$. Then for every $l\in \NN$ we have $C^l \subseteq C^{l}Uu^{-1} \subseteq C^{2k+l}$.
	
	Now, the set $C^{l}Uu^{-1}$ may be expressed as the union of open sets
	$\bigcup_{x\in C^l} xUu^{-1}$, so it is open in~$G$. Therefore, we have in fact $C^l \subseteq \Int_G C^{2k+l}$, and since $G=\bigcup_{l \in \NN} C^l$, we deduce $G$~is the ascending union of open subsets $G = \bigcup_{m \in \NN} \Int_G(C^m)$. As $D$~is compact, it is contained in some~$\Int_G(C^m)$.
	
	The second statement is immediate from $D\subseteq C^m \in \C(G)$.
\end{proof}

A version of Lemma~\ref{lem.compactexhaustion} is already present in the work of Abels and Tiemeyer  \cite[proof of Theorem~2.3.3, 1st paragraph]{AT97}.
A generalization of this statement (and indeed of the proof just supplied) will be given in Lemma~\ref{lem.compactexhaustionchi}.

\begin{cor}[$\TopS^n$ via compact generating sets]\label{cor.powersofC}
	Suppose $G$~is compactly generated, with $C$~a compact centered generating set, let $n\in \NN$, and let $\chi \colon G\to \RR$ be a character. The condition $\chi \in \TopS^n(G)$ is equivalent to essential $(n-1)$-connectedness of some / all the following filtrations:
	\begin{enumerate}
		\item $(G_\chi \cdot \E C^m)_{m\in \NN}$ of~$\E G$,
		\item $((G\cdot \E C^m)_\chi)_{m\in \NN}$ of~$\E G_\chi$,
		\item $((G_\chi \cdot \E C^m)_\chi)_{m\in \NN}$ of~$\E G_\chi$,
		\item $((G\cdot \E C^m)_s)_{(m,s) \in \NN \times \RR}$ of~$\E G$.
	\end{enumerate}
\end{cor}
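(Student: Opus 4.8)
The plan is to deduce this from the characterizations of $\TopS^n(G)$ already established — the defining one in Definition~\ref{dfn.topsigma}, the reformulations over $\E G_\chi$ in Proposition~\ref{prop.trimmedsigma}, and the double filtration in Proposition~\ref{prop:doublefiltration} — together with the cofinality statement of Lemma~\ref{lem.compactexhaustion}. Indeed, each of the four filtrations in the statement is obtained from one of those by restricting its index poset to the subposet of powers of the generating set~$C$: filtration~(1) restricts the defining filtration $(G_\chi\cdot \E C)_{C\in\C(G)}$ of~$\E G$ from Definition~\ref{dfn.topsigma}; filtrations~(2) and~(3) restrict, respectively, the filtrations $((G\cdot \E C)_\chi)_{C\in\C(G)}$ and $((G_\chi\cdot \E C)_\chi)_{C\in\C(G)}$ of~$\E G_\chi$ from Proposition~\ref{prop.trimmedsigma}; and filtration~(4) restricts the double filtration $((G\cdot \E C)_s)_{(C,s)\in\C(G)\times\RR}$ of Proposition~\ref{prop:doublefiltration} to the subposet $\{C^m : m\in\NN\}\times\RR$.

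The one point I would need to check is the elementary principle that restricting a filtration's index poset to a cofinal subposet neither creates nor destroys essential $(n-1)$-connectedness. Given a filtration $(X_\alpha)_{\alpha\in A}$ of a nonempty simplicial set and a cofinal subposet $A'\subseteq A$, for any basepoint $x$ the subposet $A'\cap A_x$ is cofinal in $A_x$ (if $\alpha\in A_x$, pick $\alpha'\in A'$ with $\alpha'\ge\alpha$; then $X_\alpha\subseteq X_{\alpha'}$ forces $x\in X_{\alpha'}$, so $\alpha'\in A'\cap A_x$). A direct chase then shows that the directed system $(\pi_k(X_\alpha,x))_{\alpha\in A_x}$ is essentially trivial if and only if its restriction to $A'\cap A_x$ is: for one direction, given $\alpha\in A_x$ choose $\alpha'\in A'\cap A_x$ above it and then $\beta'\in A'\cap A_x$ with $\pi_k(X_{\alpha'},x)\to\pi_k(X_{\beta'},x)$ trivial, so that $\pi_k(X_\alpha,x)\to\pi_k(X_{\beta'},x)$, factoring through $\pi_k(X_{\alpha'},x)$, is trivial; the other direction is analogous and even easier. (Equivalently, this is an instance of the discussion of cofinal filtrations in Section~\ref{sec:filtrations}, via Lemma~\ref{lem:homotopyequiv}.) Applied with $A=\C(G)$ and $A'=\{C^m : m\in\NN\}$ — cofinal by Lemma~\ref{lem.compactexhaustion} — this settles filtrations~(1), (2), and~(3); for~(4) one additionally notes that $\{C^m : m\in\NN\}\times\RR$ is cofinal in $\C(G)\times\RR$ (given $(D,t)$, Lemma~\ref{lem.compactexhaustion} yields $m$ with $D\subseteq C^m$, whence $(D,t)\le (C^m,t)$).

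Putting this together, each of~(1)--(4) is essentially $(n-1)$-connected precisely when the corresponding full filtration over $\C(G)$ (resp.\ $\C(G)\times\RR$) is, and by Definition~\ref{dfn.topsigma} and Propositions~\ref{prop.trimmedsigma} and~\ref{prop:doublefiltration} the latter is exactly the condition $\chi\in\TopS^n(G)$; in particular the four conditions are mutually equivalent, which justifies the ``some~/~all'' phrasing. I do not expect any genuine obstacle here: all the substance sits in Lemma~\ref{lem.compactexhaustion} and the earlier propositions, and what remains is the routine bookkeeping with cofinal subsystems sketched above (the only mild subtlety being that one must track basepoints through the subposets $A_x$ rather than assume a common basepoint in all stages).
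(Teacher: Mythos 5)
Your proposal is correct and follows essentially the same route as the paper: reduce each of the four filtrations to the corresponding characterization already established (the definition, Proposition~\ref{prop.trimmedsigma}, and Proposition~\ref{prop:doublefiltration}), and invoke the cofinality of $(C^m)_{m\in\NN}$ in $\C(G)$ from Lemma~\ref{lem.compactexhaustion} together with the fact that cofinal filtrations share their essential connectedness properties (Lemma~\ref{lem:homotopyequiv}). The only difference is cosmetic: you spell out the elementary cofinal-subsystem chase by hand, whereas the paper simply cites the cofinal-filtrations discussion of Section~\ref{sec:filtrations}.
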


\begin{proof}
	Lemma~\ref{lem.compactexhaustion} directly implies that the filtration in Condition~1 is cofinal to $(G_\chi \cdot \E C)_{C\in \C(G)}$, the defining filtration of~$\TopS^n(G)$. And obviously the latter is cofinal to the former, so we conclude from Lemma~\ref{lem:homotopyequiv} that these two filtrations have the same connectedness properties. Thus Condition~1 holds if and only if $\chi \in \TopS^n(G)$.
	
	The same argument, applied to the descriptions of~$\TopS^n(G)$ given by Propositions~\ref{prop.trimmedsigma} and~\ref{prop:doublefiltration}, yields the remaining statements.
\end{proof}

\begin{rem}(Functorial filtrations)\label{rem:functfilt}
	Suppose we are given a continuous group homomorphism $f\colon H\to G$ and characters~$\chi, \hat\chi$ on $H, G$, respectively, fitting into a commutative triangle
	$$\begin{tikzcd}
		H \ar[rr,"f"] \ar[dr,"\hat\chi"]&& G \ar[dl,"\chi"']\\
		&\RR
	\end{tikzcd}.$$
	Then, we have $f(H_{\hat\chi}) \subseteq G_\chi$, so for every $C\in\C(H)$, the map of simplicial sets $\E H \to \E G$ induced by~$f$ restricts to a map $H_{\hat\chi} \cdot \E C \to G_\chi \cdot \E f(C)$. Moreover, since $f$~is continuous, the set $f(C)$~is compact. Thus, taken over all $C\in\C(H)$, these maps assemble to a commuting family of maps from the filtration $(H_{\hat\chi} \cdot \E C)_{C\in\C(H)}$ to the filtration $(G_\chi\cdot \E D)_{D\in \C(G)}$. The analogous statement holds also for the filtrations in Proposition~\ref{prop.trimmedsigma}, Proposition~\ref{prop:doublefiltration}, and Corollary~\ref{cor.powersofC}.
\end{rem}

\section{Relation to the classical $\Sigma$-sets}\label{sec:classicalsigma}
	Throughout this section, assume $G$ is discrete and fix $n\in \NN$. Our goal is to show that the set~$\TopS^n(G)$ generalizes the classical homotopical $\Sigma$-set~$\Sigma^n(G)$, defined for abstract groups.

	\subsection{A recap of {$\Sigma^n$}}
	We begin by recalling the classical construction of~$\Sigma^n(G)$ \cite[Section~II.3]{Ren88}\footnote{Renz writes ``$^*\!\Sigma^n(G)$'', reserving the notation $\Sigma^n(G)$ for the ``homological $\Sigma$-set''~$\Sigma^n(G; \ZZ)$, which we focus on in a separate article \cite{BHQ}.}. We also remind the reader that a \textbf{$G$-CW complex} is a CW complex with a $G$-action by homeomorphisms that map cells to cells.
	
	\begin{dfn}\label{dfn:valuation}
			Let $Y$~be a free $G$-CW complex and $\chi\colon G \to \RR$ a character. A \textbf{valuation} on~$Y$ associated to $\chi$ is a continuous map $v\colon Y \to \RR$ such that
		\begin{enumerate}
			\item for every $g \in G$ and $y \in Y$, we have $v(g\cdot y) = v(y) + \chi(g)$,
			\item $v(Y^{(0)}) = \chi(G)$, and
			\item for each open cell~$e$ of~$Y$, the maximum and minimum values of $v|_{\overline{e}}$ are attained in $\partial \overline{e}$ (and hence, by induction, in $\overline{e} \cap Y^{(0)}$).
		\end{enumerate}
	\end{dfn}
	
	It is always possible to find a valuation~$v$ for given~$Y$~and~$\chi$ \cite[Section II.2]{Ren88}. We then denote by~$Y_s$ the full sub-complex of $Y$ spanned by the $0$-cells~$y$ with $v(y) \ge s$ (the valuation~$v$ being implicit in the notation). For a closed cell~$\overline e$ of~$Y$, we will also use the shorthand $v(\overline{e}) := \min(v|_{\overline{e}})$.
	
	\begin{dfn} \label{dfn.sigma}
		The $n$-th \textbf{homotopical $\Sigma$-set} $\Sigma^n(G)$ of~$G$ is the subset of~$\Hom(G, \RR)$ comprised of the characters $\chi\colon G \to \RR$ with the following property:
		there exists a free $G$-CW complex~$Y$ with $G$-finite $n$-skeleton, together with a valuation associated to~$\chi$, such that $Y_0$~is $(n-1)$-connected.
	\end{dfn}
	
	\begin{rem}[Tweaking~$Y$]
		Using an argument similar to the proof of Lemma~\ref{lem:CnfromSigma}, one sees that every complex~$Y$ as in this definition is automatically $(n-1)$-connected. By attaching (possibly infinitely many) $G$-orbits of cells in dimension $\ge n+1$, one can therefore arrange for $Y$~to be contractible, and thus for $G\backslash Y$~to be a $\K(G,1)$ with finite $n$-skeleton. On the other hand, if one prefers smaller complexes, one can instead discard from~$Y$ all cells of dimension $\ge n+1$ without ruining $(n-1)$-connectedness, thus making it into a $G$-finite complex of dimension $\le n$.
	\end{rem}
	
	The above definition of $\Sigma^n(G)$ is not very practical because it is unclear for which complexes~$Y$ the condition ``$Y_0$~is $(n-1)$-connected'' detects whether $\chi \in \Sigma^n(G)$. However, we have the following alternative characterization \cite[Chapter~IV, Satz~3.4]{Ren88}:
	
	\begin{thm}[$\Sigma^n$ via essential $(n-1)$-connectedness]\label{thm.renz}
		Let $\chi \in \Hom(G,\RR)$, and let~$Y$ be an $(n-1)$-connected free $G$-CW complex with $G$-finite $n$-skeleton and $\chi$-associated valuation. Then $\chi \in \Sigma^n(G)$ if and only if the filtration $(Y_s)_{s\in \RR}$ is essentially $(n-1)$-connected.
	\end{thm}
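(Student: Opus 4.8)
The plan is to treat the two implications separately. The forward implication -- that $\chi\in\Sigma^n(G)$ forces the level-set filtration $(Y_s)_{s\in\RR}$ of \emph{any} admissible complex $Y$ to be essentially $(n-1)$-connected -- I expect to be routine: it is obtained by comparing $Y$ with the complex witnessing $\chi\in\Sigma^n(G)$, the comparison being controlled because both complexes have $G$-finite $n$-skeleton. The converse is the crux; for discrete $G$ it amounts to Brown's criterion in $\chi$-graded form, which is ultimately where I would invoke Renz.

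\emph{Suppose first $\chi \in \Sigma^n(G)$}, witnessed by a free $G$-CW-complex $Y'$ with $G$-finite $n$-skeleton, a $\chi$-associated valuation $v'$, and $Y'_0$ being $(n-1)$-connected. I would first check that $(Y'_s)_{s\in\RR}$ is then essentially $(n-1)$-connected: the case $\chi=0$ is immediate (then $Y'_s = Y' = Y'_0$ for all $s\le 0$), while if $\chi\ne 0$ then, given $s\in\RR$, one picks $g\in G$ with $\chi(g)\le s$, so that $Y'_s\subseteq g\cdot Y'_0 = Y'_{\chi(g)}$ with the latter $(n-1)$-connected; hence $Y'_s\hookrightarrow Y'_{\chi(g)}$ is $\pi_k$-trivial for every $k\le n-1$. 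To transfer this to the complex $Y$ in the statement, note that $Y$ and $Y'$ are both $(n-1)$-connected free $G$-CW-complexes, so equivariant obstruction theory produces $G$-maps $f\colon Y^{(n)}\to Y'^{(n)}$ and $f'\colon Y'^{(n)}\to Y^{(n)}$ together with $G$-homotopies $f'\circ f\simeq\mathrm{id}$ and $f\circ f'\simeq\mathrm{id}$ over the respective $(n-1)$-skeleta. The key point is that, since these skeleta are $G$-finite, the construction fills only finitely many distinct spheres and disks -- one per $G$-orbit of cells -- with all remaining values determined by $G$-translation; hence there is a single constant $C\ge 0$ with $v'(f(y))\ge v(y)-C$, $v(f'(y'))\ge v'(y')-C$, and with all the homotopies lowering valuations by at most $C$. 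Consequently $f$, $f'$ and the homotopies restrict to the level-sets with a uniform downward shift by $C$, and the diagram chase in the evident CW-complex analogue of Lemma~\ref{lem:homotopyequiv} shows that $(Y_s)_s$ inherits essential $(n-1)$-connectedness from $(Y'_s)_s$.

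\emph{Conversely, suppose $(Y_s)_s$ is essentially $(n-1)$-connected.} Taking a minimum over the finitely many $k\in\{0,\dots,n-1\}$, I would first fix $s^*\le 0$ such that the inclusion $Y_0\hookrightarrow Y_{s^*}$ is $\pi_k$-trivial for all $k\le n-1$. Then I would manufacture a complex $Y'$ witnessing $\chi\in\Sigma^n(G)$ -- i.e.\ with $Y'_0$ being $(n-1)$-connected -- by attaching to $Y$ finitely many $G$-orbits of cells in dimensions $\le n$, each along a sphere that may be taken inside $Y_0$, together with an extension of the valuation placing the new cells in the $\{v'\ge 0\}$-part of $Y'$. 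The construction proceeds by induction on dimension: once the current complex cuts out a $(k-1)$-connected subcomplex at level $0$, one kills its $\pi_k$ by attaching $(k+1)$-cells whose attaching spheres, by the choice of $s^*$, bound disks after passing to level $s^*$; these bounding disks are then "telescoped" back up to level $0$. Only finitely many $G$-orbits of cells are needed at each stage because $Y^{(n)}$ is $G$-finite: the relevant homotopy classes, though infinitely many as a plain set, form finitely many $G$-orbits, with $G$-translation absorbing the shift in $\chi$-level. One also attaches, if necessary, finitely many further $G$-orbits of cells in dimensions $\le n$ to keep $Y'$ itself $(n-1)$-connected, and every addition preserves $G$-finiteness of the $n$-skeleton.

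The main obstacle is exactly this converse direction -- converting the asymptotic, many-levels hypothesis into $(n-1)$-connectedness at the single level $0$ -- which requires careful bookkeeping of how spheres in $Y_0$ bound disks slightly below level $0$ and how those disks are absorbed into finitely many new $G$-orbits of cells without destroying $G$-finiteness of the $n$-skeleton. This equivariant, $\chi$-graded rebuilding argument is the technical heart of the statement; for discrete $G$ it is precisely \cite[Chapter~IV, Satz~3.4]{Ren88}, which I would invoke rather than reprove.
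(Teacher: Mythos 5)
Your proposal is correct in substance, but your forward direction takes a genuinely different route from the paper's. The paper proves ($\Rightarrow$) by passing to the finite quotients $G\backslash X$ and $G\backslash Y$, invoking Whitehead's theorem (Theorem~\ref{thm.Whitehead}) to connect them by simple homotopy moves and high-dimensional cell attachments, lifting those moves equivariantly, and citing Renz's Satz~1.3 to extend the valuation (preserving essential connectedness) across each move; it then needs the extra observation that any two $\chi$-valuations on a $G$-finite complex give cofinal filtrations. You instead first note that $(Y'_s)_s$ is essentially $(n-1)$-connected by translation (correct: $Y'_s\subseteq g\cdot Y'_0=Y'_{\chi(g)}$ whenever $\chi(g)\le s$), and then transfer this to $Y$ via an inverse pair of $G$-$n$-equivalences together with a uniform bound on how far equivariant cellular maps and homotopies out of $G$-finite complexes can lower valuations. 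That is exactly the machinery of Lemma~\ref{lem.Gnequiv}, Lemma~\ref{lem.equivlowerbound} and Corollary~\ref{cor.htpylowerbound}, which the paper develops but deploys only in the $(\Rightarrow)$ half of Brown's criterion for characters (Theorem~\ref{thm:brownforchars}); your argument is in effect that proof specialized to the trivial filtration. It buys a route that avoids simple-homotopy theory and works directly with the given valuation on~$Y$. Do make explicit, as the paper does, that one first discards cells of dimension $\ge n+1$ so that both complexes are $G$-finite: Lemma~\ref{lem.equivlowerbound} needs a $G$-finite source, and your maps must be defined on all of the (reduced) complexes, not only on $n$-skeleta.

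For the converse you defer to Renz \cite{Ren88}, as does the paper (it cites Satz~2.4 and~2.5 of Chapter~IV; you cite Satz~3.4, i.e.\ the theorem itself), which is acceptable. But your one-line justification of why finitely many $G$-orbits of cells suffice is wrong as stated: $Y_0$ carries no $G$-action (only the monoid $G_\chi$ preserves it), so homotopy classes of spheres in $Y_0$ do not form finitely many $G$-orbits. The finiteness in Renz's argument comes from the $G$-orbits of cells of $Y$ itself: one attaches finitely many orbits of cells so that the enlarged complex supports a $G$-equivariant cellular homotopy raising $v$-values by a definite $\epsilon>0$, and iterating that homotopy is what contracts spheres of $Y_0$ within the $0$-level. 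Since you explicitly invoke Renz for this direction the proposal stands, but the sketched mechanism should not be taken at face value.
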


For the sake of completeness, and because Renz's doctoral dissertation is only available in German, we give a sketch of the proof. One of the directions requires using a result of Whitehead \cite{Whi50}, which we first state\footnote{We present Whitehead's Theorem as phrased by Renz \cite[Satz~3.1]{Ren88}; in Whitehead's paper, the result is a consequence of Theorems 13~and~14.}.

Recall that a cellular map $f\colon K \to L$ between CW complexes is called an \textbf{$n$-equi\-valence} if there is a cellular map $g\colon L \to K$ such that the restricted composition $g\circ f \colon K^{(n-1)} \to K$ is homotopic to the inclusion $K^{(n-1)} \into K$, and $f\circ g \colon L^{(n-1)} \to L$ is homotopic to $L^{(n-1)} \into L$. The map~$g$ is then called an \textbf{$n$-inverse} of~$f$. By a different theorem of Whitehead, $f$~is an $n$-equivalence if and only if it induces isomorphisms on~$\pi_k$ for all $k\le n-1$ \cite[Section~4.4, Exercise~4]{Geo08}. If such $f$~exists, we say $K$~and~$L$ have the same \textbf{$n$-type}. A \textbf{simple homotopy deformation} of a CW complex consists of the attachment of a ball along a hemisphere of its boundary, or the inverse of this operation. In other words, one attaches or removes a canceling pair of a $k$-cell and a $(k+1)$-cell.

\begin{thm}[Whitehead]\label{thm.Whitehead}
	Let $K, L$ be finite CW complexes of the same $n$-type. Then $L$ is obtained from~$K$ by a sequence of simple homotopy deformations and the attachment/removal of cells in dimension $\ge n+1$. 
\end{thm}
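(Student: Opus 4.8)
The plan is to prove this in three stages: first reduce to the case where $K$ is a subcomplex of $L$; then trade away the relative cells of low dimension; and finally absorb the remaining middle-dimensional relative cells using the extra cells of dimension $\ge n+1$ that the statement permits. To begin, let $f\colon K\to L$ be an $n$-equivalence witnessing that $K$ and $L$ have the same $n$-type; by cellular approximation we may take $f$ cellular, which does not affect its being an $n$-equivalence. Form the mapping cylinder $M_f$: it is a finite CW-complex (consisting of the cells of $L$ together with two cells for each cell of $K$), it collapses onto $L$ through a sequence of simple homotopy deformations, and the composite of $K=K\times\{0\}\hookrightarrow M_f$ with the collapse retraction $M_f\to L$ is exactly $f$; hence the inclusion $K\hookrightarrow M_f$ is again an $n$-equivalence. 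Renaming $M_f$ as $L$, we may therefore assume that $K\subseteq L$ is a subcomplex and the inclusion is an $n$-equivalence, that is, $\pi_k(L,K)=0$ for all $k\le n-1$.

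Next I would trade down the relative cells of $(L,K)$ of dimension below~$n$. The classical argument removes them in order of increasing dimension: for a lowest-dimensional relative cell $e^k$ with $k<n$, the vanishing of $\pi_k(L,K)$ lets one compress its characteristic disk into $K$ and then cancel $e^k$ against a new $(k+1)$-cell introduced by an elementary expansion. After finitely many such simple homotopy deformations, all performed rel~$K$, one arrives at a complex $L'$ with $K\subseteq L'$ whose relative cells all have dimension $\ge n$; since simple homotopy deformations preserve homotopy type, $K\hookrightarrow L'$ is still an $n$-equivalence.

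It then remains to handle the relative $n$-cells of $(L',K)$. Because $K\hookrightarrow L'$ is an isomorphism on $\pi_{n-1}$, and attaching cells of dimension $\ge n+1$ does not change $\pi_{n-1}$, the attaching maps of the relative $n$-cells generate the trivial subgroup of $\pi_{n-1}(K)$; in particular each is nullhomotopic in $K$. Combining this with the standard fact that homotopic attaching maps give complexes related by simple homotopy deformations, one builds $L'$ from $K$ as follows: starting from $K$, introduce by elementary expansions one pair consisting of an $n$-cell (attached along a constant map) and a filling $(n+1)$-cell for each relative $n$-cell of $L'$; delete these $(n+1)$-cells, to which nothing else is attached, leaving $K$ with a wedge of $n$-spheres; deform the attaching maps of those $n$-cells to the prescribed nullhomotopic ones; and finally attach the relative cells of $(L',K)$ of dimension $\ge n+1$. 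The result is $L'$, and reversing the deformations of the first two stages carries $L'$ to $M_f$ and $M_f$ to $L$. This displays $L$ as obtained from $K$ by the asserted operations.

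The step I expect to be the main obstacle is the trading-down in the second stage: making the cancellations precise — so that eliminating a relative cell of dimension $k<n$ costs only cells of dimension $\ge k+1$ — requires careful geometric handling of the CW structure and of the nullhomotopies supplied by $\pi_k(L,K)=0$, and it is exactly here that the same-$n$-type hypothesis is consumed in full. The remaining ingredients — the mapping-cylinder reduction, and the lemma that homotopic attaching maps yield simple-homotopy-equivalent complexes — are routine.
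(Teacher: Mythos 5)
The paper does not actually prove this statement: it is quoted (in Renz's phrasing) as a consequence of Theorems 13 and 14 of Whitehead's paper \cite{Whi50}, so there is no in-text argument to compare yours against. Your outline is the standard simple-homotopy-theoretic proof and its architecture is sound: the mapping-cylinder reduction to a pair $(L,K)$ with $\pi_k(L,K)=0$ for $k\le n-1$ is correct; the observation that, once all relative cells live in dimensions $\ge n$, the attaching maps of the relative $n$-cells are nullhomotopic in $K$ (by injectivity of $\pi_{n-1}(K)\to\pi_{n-1}(L')$) is correct; and the final assembly --- expand by trivially attached $(e^n,e^{n+1})$-pairs, delete the $(n+1)$-cells, homotope the attaching maps of the $n$-cells, then attach the cells of dimension $\ge n+1$ --- is a legitimate use of the allowed moves.

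The one place where your description would fail as literally stated is the second stage. You cannot in general ``cancel $e^k$ against a new $(k+1)$-cell'': an elementary collapse removing the pair $(e^{k+1},e^k)$ requires $e^k$ to be a free face, i.e.\ that no other cell of $L$ has $e^k$ in the image of its attaching map, and this typically fails for the low-dimensional cells of a mapping cylinder. The correct move is the cell-trading lemma: using the compression of the characteristic map of $e^k$ into $K$, one expands by a cancelling pair $(e^{k+1},e^{k+2})$, rehomotopes attaching maps (itself a sequence of expansions and collapses), and ends with $e^k$ \emph{traded for} a $(k+2)$-cell rather than removed outright. Since $k+2\ge k+1$, your quantitative claim survives, and iterating from the bottom dimension upward (using $\pi_k(L,K)=0$ for $k\le n-1$) still pushes all relative cells into dimensions $\ge n$, which is all that stage three consumes. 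So the gap is local to the mechanism of one lemma, not to the structure of the proof; citing or reproving cell trading (e.g.\ from Cohen's book on simple-homotopy theory) closes it.
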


\begin{proof}[Proof sketch of Theorem~\ref{thm.renz}]
	($\Rightarrow$) Let $X$ be a free $G$-CW complex with $G$-finite $n$-skeleton and $\chi$-associated valuation, such that $X_0$~is $(n-1)$-connected. Since discarding cells in dimension $\ge n+1$ has no effect on $(n-1)$-connectedness, we may assume that $X$~and~$Y$ have dimension at most~$n$; in particular, they are $G$-finite.
	
	Put $K:=G\backslash X$ and $L:= G\backslash Y$.
	We first construct an $n$-equivalence $f \colon K \to L$ by a standard induction on skeleta (assuming $n\ge 2$; for the low-dimensional cases just stop the construction at a suitable stage): start by choosing a spanning tree of~$K^{(1)}$ containing the base-point~$x$, and map it to the base-point~$y$ of~$L$. Each of the remaining edges of~$K^{(1)}$ now corresponds to an element of $\pi_1(K, x) \cong G \cong \pi_1(L,y)$, so map it to a corresponding loop in~$L^{(1)}$.
	The map can be extended to $K^{(2)}$~because loops in $K^{(1)}$ that are trivial on $\pi_1(K)$ got mapped to trivial loops of~$Y$. We then continue inductively: if $f$ has been defined on the $k$-skeleton, for $2\le k<n$, the fact that $\pi_k(L) =0$ allows one to extend~$f$ to each $(k+1)$-cell. In the end we get a map $f \colon K\to L$ inducing isomorphisms on~$\pi_k$ for all $k\le n-1$, so $f$~is an $n$-equivalence. 
	
	Now, as $K$~and~$L$ are finite CW complexes, we conclude by Theorem~\ref{thm.Whitehead} that $L$~differs from $K$~by a sequence of simple homotopy moves and the attachment or deletion of high-dimensional cells. This sequence can be $G$-equivariantly lifted to the universal covers -- each move now consists of attaching/removing $G$-orbits of canceling pairs of cells, or of attaching/removing $G$-orbits of cells in dimension $\ge n+1$. 
	
	With each such move, one can extend the original $\chi$-valuation on~$X$ in such a way that the successive filtered $G$-CW complexes remain essentially $(n-1)$-connected \cite[Chapter~IV, Satz~1.3]{Ren88}. Ultimately, we produce a $\chi$-associated valuation on~$Y$ whose corresponding filtration is essentially $(n-1)$-connected.
	This implies that the filtration determined by the given $\chi$-valuation on~$Y$ is also essentially $(n-1)$-connected, since any two $\chi$-associated valuations on a $G$-finite $G$-CW complex give rise to cofinal filtrations.
	
	($\Leftarrow$) Given an essentially $(n-1)$-connected $G$-CW complex~$Y$ with $G$-finite $n$-skeleton and $\chi$-associated valuation~$v$, Renz explains how to attach finitely many $G$-orbits of cells to produce a $G$-CW complex $X$~with a $\chi$-associated valuation~$v'$ extending~$v$, such that $X$~supports a ``$v'$-homotopy in dimension~$n$'' \cite[Chapter~IV, Satz~2.4]{Ren88}. He then shows that this implies $X_0$~is $(n-1)$-connected \cite[Chapter~IV, Satz~2.5]{Ren88}.
\end{proof}

\subsection{Brown's Criterion for characters}
	
	Next, we  aim to describe~$\Sigma^n(G)$ in terms of the filtration $((G \cdot \E C)_s)_{(s,C) \in \RR\times \C(G)}$ of Proposition~\ref{prop:doublefiltration}. To that end, we establish an analogue of (the free version of) Brown's Criterion, which we first recall \cite[Theorem~7.4.1]{Geo08}: 
	
	\begin{thm}[Brown's Criterion, free version]\label{thm.freebrown}
		Let $Y$ be a free $(n-1)$-connected $G$-CW complex with a filtration $(Y_\alpha)_{\alpha\in A}$ by $G$-subcomplexes with $G$-finite $n$-skeleta.
		Then $G$ is of type $\mathrm{F}_n$ if and only if $(Y_\alpha)_{\alpha\in A}$ is essentially $(n-1)$-connected.
	\end{thm}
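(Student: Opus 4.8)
The plan is to reduce to the standard reformulation of property~$\mathrm{F}_n$: a group $G$~is of type~$\mathrm F_n$ if and only if it acts freely on some $(n-1)$-connected CW-complex with $G$-finite $n$-skeleton. I will take this as known, it being the $\mathrm F_n$-analogue of the characterization of $\mathrm F_n$ by the existence of a $\K(G,1)$ with finite $n$-skeleton (pass to the universal cover in one direction; cap off with $G$-orbits of cells in dimensions $>n$ in the other). The case $n=0$ is vacuous, so assume $n\ge1$.

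For the ``only if'' direction, suppose $G$~is of type~$\mathrm F_n$, fix $\alpha\in A$, and let $Z$~be a free $(n-1)$-connected $G$-CW complex with $G$-finite $n$-skeleton. By $G$-equivariant obstruction theory---all relevant obstructions lying in $\pi_{<n}$ of an $(n-1)$-connected target---I would choose cellular $G$-maps $h\colon Y_\alpha^{(n)}\to Z$ and $f\colon Z^{(n)}\to Y$. Since $Y_\alpha^{(n)}$ and $Z^{(n)}$ are $G$-cocompact and the stages $Y_\beta$ are $G$-subcomplexes, $f(Z^{(n)})$ lands in some $Y_\beta$; enlarging~$\beta$, so does a $G$-homotopy from $f\circ h|_{Y_\alpha^{(n-1)}}$ to the inclusion $Y_\alpha^{(n-1)}\into Y$ (such a homotopy exists because both maps go into the $(n-1)$-connected~$Y$). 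But $f\circ h|_{Y_\alpha^{(n-1)}}$ factors through $Z^{(n)}$, which has vanishing $\pi_k$ for $k\le n-1$; hence $\pi_k(Y_\alpha^{(n-1)})\to\pi_k(Y_\beta)$ is trivial for such~$k$, and since $\pi_k(Y_\alpha^{(n-1)})\onto\pi_k(Y_\alpha)$, so is $\pi_k(Y_\alpha)\to\pi_k(Y_\beta)$. Doing this for all $k\le n-1$ at once (taking an upper bound of the resulting~$\beta$'s) gives essential $(n-1)$-connectedness.

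For the ``if'' direction, suppose $(Y_\alpha)_{\alpha\in A}$ is essentially $(n-1)$-connected; I would build the required complex by induction on $0\le j\le n$, producing a $G$-finite free subcomplex $Z_j\subseteq Y_{\gamma_j}$ of some stage, of dimension $\le j$, that is $(j-1)$-connected and contains $Z_{j-1}$---then $Z:=Z_n$ works. Starting from $Z_0:=G\cdot y_0$ (one orbit of $0$-cells, which lies in some stage since that stage is a $G$-subcomplex), essential $0$-connectedness sends $Z_0$ into a single component $D$ of a later stage $Y_{\gamma_1}$; as $gD$ and $D$ then share the point $gy_0\in D$ for each $g\in G$, the component $D$~is $G$-invariant, so $Z_1:=D^{(1)}$ is a $G$-finite connected graph (recovering, in passing, that $G$~is finitely generated). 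For the step $j\to j+1$ with $1\le j\le n-1$, Hurewicz gives $\pi_j(Z_j)\cong H_j(Z_j)$, and by essential $(n-1)$-connectedness the inclusion $Z_j\subseteq Y_{\gamma_j}$ becomes $\pi_j$-trivial inside some $Y_\delta$; I would then kill $\pi_j(Z_j)$ by attaching finitely many $G$-orbits of $(j+1)$-cells imported from $Y_\delta$---after absorbing finitely many $j$-cells of $Y_\delta$ into $Z_j$ and performing elementary cell-trades---while keeping $(j-1)$-connectedness and remaining inside $Y_\delta=:Y_{\gamma_{j+1}}$.

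The hard part is exactly this inductive step. Since $\ZZ G$ need not be Noetherian, or even coherent, the module $\pi_j(Z_j)\cong H_j(Z_j)=\ker\bigl(\partial_j\colon C_j(Z_j)\to C_{j-1}(Z_j)\bigr)$ need not be finitely generated over $\ZZ G$, so one cannot simply attach $(j+1)$-cells along a generating set of it. What rescues the argument is that $Y_\delta$ has $G$-finite $(j+1)$-skeleton, so the boundary module $\partial_{j+1}\bigl(C_{j+1}(Y_\delta)\bigr)$ is finitely generated over $\ZZ G$ and receives $H_j(Z_j)$; the delicate work is to transfer the ``killing'' into this finitely generated ambient context. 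An alternative route is to peel off the homological content---deducing type~$\mathrm{FP}_n$ from the homological version of Brown's criterion---and combine it with the graph argument above for~$\mathrm F_1$ and a parallel $\pi_1$-argument for~$\mathrm F_2$, invoking the standard equivalence between~$\mathrm F_n$ and ``$\mathrm F_2$ together with $\mathrm{FP}_n$'' for $n\ge2$.
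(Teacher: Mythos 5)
Your ``only if'' direction is sound and is essentially the standard argument: the equivariant maps $h$ and~$f$ exist by obstruction theory against $(n-1)$-connected targets, $G$-cocompactness of $Y_\alpha^{(n)}$ and $Z^{(n)}$ places images and homotopies into single filtration stages, and the factorization of $f\circ h|_{Y_\alpha^{(n-1)}}$ through $Z^{(n)}$ (which has $\pi_k=0$ for $k\le n-1$) forces $\pi_k$-triviality of $Y_\alpha\into Y_\beta$. For calibration: the paper does not prove Theorem~\ref{thm.freebrown} at all, but quotes it from \cite{Geo08}, so there is no in-text proof to compare against; the burden of a blind proof is therefore to actually supply the argument.

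The ``if'' direction is where the proposal fails. You correctly isolate the crux --- since $\ZZ G$ need not be coherent, $\pi_j(Z_j)\cong H_j(Z_j)=\ker\partial_j$ need not be finitely generated over $\ZZ G$, so one cannot simply attach $(j{+}1)$-cells along module generators --- but you then stop at exactly that point. The observation that $\ker\partial_j^{Z_j}$ lands inside the finitely generated module $\partial_{j+1}\bigl(\chain_{j+1}(Y_\delta)\bigr)$ does not close the gap, because a submodule of a finitely generated $\ZZ G$-module need not be finitely generated --- this is the same non-coherence problem in different clothing. The actual resolution (in Brown's paper and in \cite{Geo08}) runs through comparing $Z_j$ with the $G$-finite complex $Y_\delta^{(j+1)}$ via the relative homotopy module of the pair, a relative Hurewicz argument, and an invariance statement saying that finite generation of the relevant homotopy module is independent of the chosen witness of type~$\mathrm F_j$; none of this machinery appears in the proposal. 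The alternative route you sketch (homological Brown plus $\mathrm F_1$, $\mathrm F_2$, and the equivalence $\mathrm F_n\Leftrightarrow \mathrm F_2\wedge\mathrm{FP}_n$) is viable in principle but merely relocates the identical difficulty into the homological Brown criterion, which you also do not prove. A further minor point: for $j=1$ Hurewicz gives only $H_1\cong\pi_1^{\mathrm{ab}}$, and one must kill $\pi_1$ as a normal subgroup rather than as a module, so the module-theoretic framing of your inductive step does not even apply at that stage. As written, the hard half of the theorem is identified and asserted, not proved.
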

	
	The analogous result for characters features a double filtration obtained by introducing a $\chi$-value parameter: 
	
	\begin{prop}[Brown's Criterion for characters, free version]
	\label{prop:brownforchars_free}
	Let $Y$ be a free $(n-1)$-connected $G$-CW complex, and let $(Y_\alpha)_{\alpha \in A}$ be a filtration of~$Y$ by $G$-subcomplexes with $G$-finite $n$-skeleta. Let $\chi\colon G \to \RR$ be a character and $v$~a valuation on~$Y$ associated to~$\chi$.
	
	Then $\chi \in \Sigma^n(G)$ if and only if the filtration $(Y_{\alpha, s})_{(\alpha, s)\in A\times \RR}$ of~$Y$ is essentially $(n-1)$-connected.
	\end{prop}

	Note that the $\chi=0$ case recovers Theorem~\ref{thm.freebrown}. Similar results appear in the doctoral dissertation of Meinert \cite[Satz~3.3.14, Satz~3.4.1]{Mei93}. In Theorem~\ref{thm:brownforchars_general}, we will present a generalization of Proposition~\ref{prop:brownforchars_free}, where the hypothesis that $G$~acts freely is weakened.

	Before proving Proposition~\ref{prop:brownforchars_free}, we establish two auxiliary results.

	\begin{dfn}
	If $X,Y$ are $G$-CW complexes, then a $G$-equivariant cellular map $f\colon X \to Y$ is a \textbf{$G$-$n$-equivalence} if there exists a $G$-equivariant cellular map $g\colon Y \to X$ and cellular $G$-equivariant homotopies:
	\begin{itemize}
		\item $H_{fg}$ from the composition $Y^{(n-1)} \overset{g}{\to} X \overset{f}{\to} Y$ to the inclusion $Y^{(n-1)} \into Y$,
		\item $H_{gf}$ from the composition $X^{(n-1)} \overset{f}{\to} Y \overset{g}{\to} X$ to the inclusion $X^{(n-1)} \into X$.
	\end{itemize}
\end{dfn}

\begin{lem}[$G$-$n$-equivalences]\label{lem.Gnequiv}
	If $X,Y$ are $(n-1)$-connected free $G$-CW complexes, then there is a $G$-$n$-equivalence $X \to Y$.
\end{lem}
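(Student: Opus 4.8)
The plan is to build the $G$-equivariant cellular map $f\colon X \to Y$ by the same skeleton-by-skeleton induction used in the proof sketch of Theorem~\ref{thm.renz}, but carried out equivariantly on the $G$-CW complexes themselves rather than on their quotients. Since $X$ and $Y$ are free $G$-CW complexes, a $G$-equivariant cellular map is determined by its values on a set of orbit representatives of cells, and an equivariant extension over a free orbit of $(k+1)$-cells exists precisely when the attaching map (composed with the partially-defined $f$) is nullhomotopic in $Y$; because $Y$ is $(n-1)$-connected, $\pi_k(Y)=0$ for $1\le k\le n-1$, so such extensions exist in all the relevant dimensions. I would first fix orbit representatives and $0$-skeleta basepoints, send a chosen representative $0$-cell of $X$ to a chosen representative $0$-cell of $Y$ (using freeness, this determines $f$ on $X^{(0)}$), then extend over $X^{(1)}$ by sending representative $1$-cells to edge-paths in $Y^{(1)}$ realizing the correct behaviour — here one needs $Y$ to be connected, which is part of $(n-1)$-connectedness for $n\ge 1$ (the cases $n=0$ being vacuous and $n=1$ only needing connectedness). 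Then inductively, for $2\le k \le n$, extend $f$ over representative $k$-cells using $\pi_{k-1}(Y)=0$. Stopping the construction at dimension $n$ (we only need $f$ on $X^{(n)}$, or we may extend arbitrarily further) gives a $G$-equivariant cellular map $f$ inducing isomorphisms on $\pi_k$ for $k\le n-1$ (both sides being trivial there, or $\pi_0$/$\pi_1$ matched by construction in low degrees — in fact since both are $(n-1)$-connected, both sides vanish in all degrees $\le n-1$, so any map is automatically a $\pi_k$-isomorphism there).

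By the symmetric construction there is also a $G$-equivariant cellular $g\colon Y\to X$. It then remains to produce the two $G$-equivariant cellular homotopies $H_{fg}$ and $H_{gf}$. For $H_{gf}$: the composite $X^{(n-1)} \xrightarrow{f} Y \xrightarrow{g} X$ and the inclusion $X^{(n-1)}\hookrightarrow X$ are two $G$-equivariant cellular maps from the $(n-1)$-dimensional free $G$-CW complex $X^{(n-1)}$ into the $(n-1)$-connected complex $X$; I would construct a $G$-equivariant homotopy between them by the analogous obstruction-theoretic induction on the skeleta of $X^{(n-1)}\times I$ (viewed as a free $G$-CW complex with the product cell structure). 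On $X^{(0)}\times I$ one can choose equivariant paths in $X$ joining $gf$ and the identity on representative $0$-cells (using connectedness of $X$); extending the homotopy over a $(k+1)$-cell of $X^{(n-1)}\times I$ (equivalently, over $\overline{e}\times I$ for a $k$-cell $e$ of $X^{(n-1)}$, rel the already-defined part on $\partial$) requires filling a $k$-sphere in $X$, which is possible since $\pi_k(X)=0$ for $k\le n-1$. After the homotopy is cellular one can adjust it, if necessary, to land in the correct skeleton by cellular approximation performed equivariantly. The homotopy $H_{fg}$ is obtained by the same argument with the roles of $X$ and $Y$ swapped.

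The only genuinely delicate point is bookkeeping the equivariance throughout: every choice (image of a cell, nullhomotopy, path, filling) must be made once per orbit on a representative and then propagated by the $G$-action, which is legitimate exactly because the actions are free and cellular, so orbits of cells are freely permuted and no compatibility conditions arise between distinct group elements acting on the same cell. Thus the main obstacle is not conceptual but organizational — setting up the inductive extension over skeleta of $X$, of $Y$, and of the products $X^{(n-1)}\times I$ and $Y^{(n-1)}\times I$ carefully enough that all four constructions ($f$, $g$, $H_{gf}$, $H_{fg}$) are visibly $G$-equivariant and cellular. A clean way to package this is to observe that for a free $G$-CW complex $Z$ and an $(n-1)$-connected $G$-CW complex $W$, the equivariant obstructions to extending a $G$-map $Z^{(k)}\to W$ over $Z^{(k+1)}$ lie in $\pi_k(W)$ (one obstruction per orbit of $(k+1)$-cells), and likewise for homotopies; since $\pi_k(W)=0$ for $k\le n-1$ all these obstructions vanish, and the lemma follows by applying this remark four times. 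I would state this remark explicitly as the engine of the proof and then deduce the four required pieces of data from it.
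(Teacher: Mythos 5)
Your skeleton-by-skeleton construction is essentially the equivariant reformulation of what the paper does on the quotients $K=G\backslash X$, $L=G\backslash Y$ (a $G$-map between free $G$-CW complexes is the same thing as a map of quotients inducing the prescribed isomorphism on $\pi_1\cong G$, and your equivariant obstructions per orbit of cells are exactly the obstructions downstairs with local coefficients). The parts of your argument dealing with dimensions $\le n$, and with the two homotopies $H_{fg}$, $H_{gf}$ on the $(n-1)$-skeleta, are fine: there the obstructions lie in $\pi_k$ of an $(n-1)$-connected complex with $k\le n-1$.

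The gap is in the sentence ``we only need $f$ on $X^{(n)}$, or we may extend arbitrarily further.'' Neither claim is correct. The definition of a $G$-$n$-equivalence requires $f$ to be defined on all of $X$ and $g$ on all of $Y$ (and the application in Theorem~\ref{thm:brownforchars} genuinely uses this: the complex $Y$ there has cells in arbitrarily high dimensions and the map out of it is applied to subcomplexes not contained in any skeleton). And the cell-by-cell extension does \emph{not} continue past dimension $n$: the obstruction to extending over an orbit of $(n+1)$-cells is a class in $\pi_n(Y)$, which is not assumed to vanish. This is a real obstruction, not a bookkeeping issue --- already non-equivariantly, take $X=D^3$ with one $0$-, one $2$- and one $3$-cell and $Y=S^2$; the identity $X^{(2)}\to Y$ does not extend over the $3$-cell. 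So the map you build on $X^{(n)}$ may simply fail to extend, and you must either choose it more cleverly or post-compose/pre-compose with something. The paper resolves exactly this point by constructing $f_0$ only on $K^{(n)}$, invoking Whitehead's theorem to get an $n$-inverse $j\colon K\to K^{(n)}$ of the skeletal inclusion $K^{(n)}\into K$ (which is an $n$-equivalence because it is a $\pi_k$-isomorphism for $k\le n-1$), and taking $f:=f_0\circ j$ before lifting to the covers. Your proposal needs an equivariant substitute for this step --- e.g.\ constructing $j$ downstairs by Whitehead and lifting it, which is precisely the paper's route --- and cannot simply assert that the extension over higher cells exists.
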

\begin{proof}
	Write $K := G\backslash X, L := G\backslash Y$, so $\pi_1(L) \cong G\cong \pi_1(K)$. 
	Using the fact that $\pi_k(L) = \pi_k(Y) = 0$ for $2\le k \le n-1$, we inductively construct an $n$-equivalence $f_0 \colon K^{(n)} \to L$ (as in the proof of Theorem~\ref{thm.renz}). The inclusion $K^{(n)} \into K$ is also an $n$-equivalence, so let $j \colon K \to K^{(n)}$ be an $n$-inverse. The composition $f\colon K \overset{j}{\to} K^{(n)} \overset{f_0}{\to}  L$ is therefore an $n$-equivalence, and choosing a lift $\tilde f \colon X \to Y$ yields the desired $G$-$n$-equivalence: 
	this is certified by the lifts of cellular homotopies witnessing that $f$~is an $n$-equivalence.
\end{proof}

\begin{lem}[Equivariant maps respect lower bounds]\label{lem.equivlowerbound}
	Let $\chi \colon G \to \RR$ be a character, let $Y,Z$ be free $G$-CW complexes with valuations $v,u$, respectively, associated to~$\chi$. Assume moreover that $Y$~is $G$-finite, and let $f \colon Y \to Z$ be a $G$-equivariant cellular map. Then for every $s\in \RR$, there exists $t \in \RR$ such that
	\[f(Y_s) \subseteq Z_t.\]
\end{lem}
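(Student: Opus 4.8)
The plan is to exploit $G$-finiteness of $Y$ to reduce to a finite check. Since $Y$ is a $G$-finite $G$-CW-complex, the $0$-skeleton $Y^{(0)}$ consists of finitely many $G$-orbits of vertices; pick orbit representatives $y_1, \dots, y_r$. For each $i$, the image $f(y_i)$ lies in some closed cell $\overline{e_i}$ of $Z$, and by property~(3) of the valuation $u$, the value $u(f(y_i))$ is bounded below by $u(\overline{e_i}) := \min(u|_{\overline{e_i}}) =: c_i$, which is attained at a vertex of $Z$ in $\overline{e_i} \cap Z^{(0)}$. Set $c := \min_i c_i$, a finite quantity depending only on $f$ and the chosen representatives.

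Next I would track how the valuations change under the $G$-action. By property~(1) of a $\chi$-valuation, for any vertex $y = g\cdot y_i$ of $Y$ we have $v(y) = v(y_i) + \chi(g)$, so $\chi(g) = v(y) - v(y_i)$. Since $f$ is $G$-equivariant, $f(y) = g\cdot f(y_i)$, and applying property~(1) for $u$ on $Z$ gives $u(f(y)) = u(f(y_i)) + \chi(g) \ge c_i + \chi(g) \ge c + (v(y) - v(y_i)) \ge c + v(y) - \max_j v(y_j)$. Writing $b := \max_j v(y_j)$ (again finite), we obtain $u(f(y)) \ge v(y) + (c - b)$ for every vertex $y$ of $Y$. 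Thus, if $v(y) \ge s$, then $u(f(y)) \ge s + c - b =: t$, so $f$ maps every vertex of $Y_s$ into $Z_t$.

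It remains to upgrade this vertex-level statement to the full subcomplex $Y_s$. Recall $Y_s$ is the \emph{full} subcomplex spanned by its vertices, so a closed cell $\overline{e}$ of $Y$ lies in $Y_s$ iff all vertices of $\overline{e}$ lie in $Y_s$, i.e. iff $v(\overline e) \ge s$. For such a cell, $f(\overline e)$ is contained in $\overline{f(\overline e)}$, a finite subcomplex of $Z$; I would argue that each vertex of $Z$ in this subcomplex is of the form $g \cdot f(y_i)$ for a vertex $y = g\cdot y_i$ of $\overline e$ (using that $f$ is cellular, so $f$ carries vertices of $\overline e$ to vertices of $Z$, and $f(\overline e)$ lies in the closure of the cells hit), hence has $u$-value $\ge t$ by the previous paragraph — more carefully, by property~(3) for $u$, the minimum of $u$ over any closed cell of $Z$ meeting $f(\overline e)$ is attained at a vertex of that cell, and a short induction on cell dimension shows every such vertex has $u$-value $\ge t$. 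Therefore $f(\overline e) \subseteq Z_t$, and taking the union over all cells of $Y_s$ gives $f(Y_s) \subseteq Z_t$, with $t = s + c - b$.

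The main obstacle I anticipate is the last step: being careful that cellularity of $f$ together with $G$-finiteness of $Y$ genuinely controls \emph{all} vertices appearing in the closures of cells in the image, and correctly invoking property~(3) of valuations (the "max and min attained on the boundary") to pass from vertices to whole cells. The arithmetic with property~(1) is routine; the bookkeeping of which finitely many constants ($c_i$, $b$) suffice is the only place where $G$-finiteness of $Y$ is essential, and it is what makes $t$ exist at all.
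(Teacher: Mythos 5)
Your vertex-level estimate is correct (and coincides with what the paper's argument gives on $0$-cells), but the upgrade from vertices to whole cells has a genuine gap. The claim that every vertex of the smallest subcomplex of $Z$ containing $f(\overline e)$ is of the form $g\cdot f(y_i)$ for a vertex $g\cdot y_i$ of $\overline e$ is false: a cellular map can drag a closed cell of $Y$ across cells of $Z$ whose vertices are not images of any vertex of $\overline e$, and those vertices may have $u$-value far below your~$t$. Concretely, take $G=\ZZ$, $\chi=\mathrm{id}$, $Y=Z=\RR$ with vertices at the integers and $v=u$ the identity, and let $f$ be the equivariant cellular map fixing every vertex but sending the edge $[0,1]$ onto $[-\tfrac12,1]$ (dip down to $-\tfrac12$, then climb to $1$). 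With $s=0$ your constants give $c=b=0$, hence $t=0$, yet $f(Y_0)=[-\tfrac12,\infty)$ is not contained in $Z_0=[0,\infty)$: the point $-\tfrac12$ lies in the open edge $(-1,0)$, and the vertex $-1$ has $u$-value $-1<0$, so that edge is not in the full subcomplex $Z_0$. Your fallback via property~(3) --- ``every such vertex has $u$-value $\ge t$'' --- is precisely the unproved, and here false, assertion.

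The repair is to run the finiteness argument over all cell orbits, not just vertex orbits, which is what the paper does. For each of the finitely many $G$-orbits of cells of $Y$ choose a representative $\overline e$, let $W_e$ be the smallest (finite) subcomplex of $Z$ containing $f(\overline e)$, and let $u(W_e)$ be the minimum of $u$ over $W_e$, attained at a vertex of $W_e$ by property~(3). Equivariance gives $f(g\cdot\overline e)\subseteq g\cdot W_e$, every vertex of $g\cdot W_e$ has $u$-value at least $\chi(g)+u(W_e)$, and if $g\cdot\overline e\subseteq Y_s$ then $\chi(g)\ge s-v(\overline e)$; hence $f(g\cdot\overline e)\subseteq Z_{s-v(\overline e)+u(W_e)}$, and $t$ is the minimum of these finitely many constants. (The paper's own write-up takes the minimum of $u$ over the image $f(\overline e)$ rather than over its carrier $W_e$; the example above shows one really must pass to $W_e$, for exactly the reason your version fails.)
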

\begin{proof}
	If $\overline e$~is a closed cell in $Y$, then its $(u\circ f)$-image is a closed bounded interval; let us denote by~$uf(\bar e)$ the minimum of this interval. Since $f$~is $G$-equivariant,
	we see that for every $g\in G$, we have $uf(g\cdot \bar e) = \chi (g) + uf(\bar e)$.  If, in addition, $g\cdot \bar e$~lies in~$Y_s$, we have $s \le v(g\cdot \bar e) = \chi(g) + v(\bar e)$, and so $$uf(g\cdot \bar e)\ge s - v(\bar e) + uf(\bar e) =: t_e.$$
	
	Note this lower bound~$t_e$ is independent of the chosen $G$-translate of~$\bar e$. Considering all cells in the $G$-orbit $G\cdot \bar e$ that lie in~$Y_s$, we thus conclude 
	\[f(Y_s \cap G\cdot \bar e) \subseteq Z_{t_e}.\]
	If we now choose representatives $e_1, \dots, e_m$ for the (finitely many) $G$-orbits of cells in~$Y$ and put $t:= \min \{t_{e_i} \mid 1\le i \le m \}$, it follows that $f(Y_s)\subseteq Z_t$.
\end{proof}

A special case worth noting is the following:

\begin{cor}[Equivariant homotopies respect lower bounds]\label{cor.htpylowerbound}
	Let $\chi \colon G \to \RR$ be a character, let $Y,Z$ be free $G$-CW complexes with valuations $v,u$, respectively, associated to~$\chi$. Assume moreover that $Y$~is $G$-finite, and let $H \colon Y \times [0,1] \to Z$ be a $G$-equivariant cellular homotopy. Then for every $s\in \RR$, there exists $t \in \RR$ such that
	\[H(Y_s \times [0,1]) \subseteq Z_t.\]
\end{cor}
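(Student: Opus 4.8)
The plan is to reduce Corollary~\ref{cor.htpylowerbound} to Lemma~\ref{lem.equivlowerbound} by viewing the homotopy~$H$ as a single equivariant cellular map out of a $G$-finite $G$-CW-complex. First I would equip the product $Y\times[0,1]$ with its natural $G$-CW structure, where $G$~acts only on the first factor: since $Y$~is $G$-finite, so is $Y\times[0,1]$, because the product of a finite CW-complex with the interval has finitely many cells, hence $Y\times[0,1]$ has finitely many $G$-orbits of cells. Thus $H\colon Y\times[0,1]\to Z$ is a $G$-equivariant cellular map from a $G$-finite free $G$-CW-complex.

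Next I would produce a valuation $w$ on $Y\times[0,1]$ associated to~$\chi$, simply by precomposing $v$ with the projection $\pr\colon Y\times[0,1]\to Y$, i.e.\ $w := v\circ \pr$. One checks the three axioms directly: equivariance is inherited from that of~$v$ since $G$~acts trivially on the $[0,1]$-factor; the image $w((Y\times[0,1])^{(0)})$ equals $v(Y^{(0)})=\chi(G)$ because every vertex of $Y\times[0,1]$ is $(y,0)$ or $(y,1)$ for a vertex $y$ of $Y$; and for a closed cell $\overline{e}\times I$ (with $I$ either a vertex of $[0,1]$ or the whole interval), the function $w$ is constant in the $[0,1]$-direction, so its extreme values on $\overline{e}\times I$ coincide with the extreme values of $v$ on $\overline{e}$, which by hypothesis are attained on $\partial\overline{e}$, hence on the boundary of $\overline{e}\times I$. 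With this valuation, the sublevel set $(Y\times[0,1])_s$ is exactly $Y_s\times[0,1]$.

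Finally I would apply Lemma~\ref{lem.equivlowerbound} to the map $H\colon Y\times[0,1]\to Z$, with source valuation $w$ and target valuation $u$, both associated to~$\chi$, and with $Y\times[0,1]$ being $G$-finite. For the given $s\in\RR$, the lemma yields $t\in\RR$ with $H\big((Y\times[0,1])_s\big)\subseteq Z_t$, and since $(Y\times[0,1])_s = Y_s\times[0,1]$ this is precisely the claimed inclusion $H(Y_s\times[0,1])\subseteq Z_t$.

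I do not expect any serious obstacle here; the only point requiring a little care is the verification that $w=v\circ\pr$ is a genuine $\chi$-associated valuation (in particular axiom~(3) about extreme values on product cells), but this is immediate from the fact that $w$ does not vary along the interval direction. One could alternatively argue entirely parallel to the proof of Lemma~\ref{lem.equivlowerbound}, tracking the finitely many $G$-orbits of cells of~$Y$ and the two "ends" $t=0,1$ of the homotopy, but funneling everything through the product complex keeps the argument short and avoids repeating that bookkeeping.
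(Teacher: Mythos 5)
Your proposal is correct and matches the paper's proof essentially verbatim: the paper also forms the product $G$-CW-complex $Y\times[0,1]$ with $G$ acting on the first factor, takes the valuation $v\circ\pr$, identifies $(Y\times[0,1])_s = Y_s\times[0,1]$, and applies Lemma~\ref{lem.equivlowerbound}. Your extra verification of the valuation axioms for $v\circ\pr$ is a welcome bit of detail that the paper leaves implicit.
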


\begin{proof}
	The natural CW structure on $Y \times [0,1]$, together with the  $G$-action on the first factor, makes it free and cocompact. Moreover, $v$~induces a valuation on $Y \times [0,1]$ associated to~$\chi$ obtained by precomposing~$v$ with the projection $Y \times [0,1] \onto Y$. From Lemma~\ref{lem.equivlowerbound}, we now get $t\in \RR$ such that $Z_t$ contains the $H$-image of the sub-complex $(Y \times [0,1])_s = Y_s \times [0,1]$.
\end{proof}

\begin{proof}[Proof of Proposition~\ref{prop:brownforchars_free}]
	$(\Rightarrow)$ We assume $\chi \in \Sigma^n(G)$, so let $X$ be a free $G$-CW complex with $\chi$-associated valuation and $G$-finite $n$-skeleton, such that $X_0$ is $(n-1)$-connected (and hence so is~$X$). For simplicity, assume also that $X$~has no cells of dimension $\ge n+1$ and thus is $G$-finite.
	
	By Lemma~\ref{lem.Gnequiv}, there is an inverse pair of $G$-$n$-equivalences $f\colon Y \to X$, $g\colon X \to Y$, and a cellular $G$-homotopy~$H$ from the composition $Y^{(n-1)} \overset{f}{\to} X \overset{g}{\to} Y$
	to the inclusion $Y^{(n-1)} \into Y$.
	
	We will show that given $(\alpha, s) \in A\times \RR$, there are $\beta \ge \alpha$ and $t\le s$ such that
	\begin{itemize}
		\item the restriction of~$H$ to~$Y_{\alpha,s}^{(n-1)} \times [0,1]$ has image in $Y_{\beta, t}$,
		\item the composition $g\circ f$, when restricted to $Y_{\alpha,s}^{(n-1)} \to Y_{\beta,t}$, is $\pi_k$-trivial for all $k \le n-1$.			
	\end{itemize}
	Once these two points are established, the conclusion will easily follow: they imply that the composition $Y_{\alpha,s}^{(n-1)} \into Y_{\alpha,s} \into Y_{\beta,t}$, is $\pi_k$-trivial for $k \le n-1$, and since the first of these inclusions is $\pi_k$-surjective, it follows that $Y_{\alpha,s} \into Y_{\beta,t}$ is $\pi_k$-trivial.
	
	Let us address the first point: Since $H$~is $G$-equivariant and $Y_\alpha^{(n-1)}$~is $G$-finite, the $H$-image of $Y_\alpha^{(n-1)} \times [0,1]$ in~$Y$ is cocompact. Thus this image is contained in $Y_{\beta_0}$ for some $\beta_0 \ge \alpha$.
	Applying Corollary~\ref{cor.htpylowerbound} to the (restricted) homotopy $H \colon Y_\alpha^{(n-1)} \times [0,1] \to Y_{\beta_0}$ yields $t_0\le s$ such that one can further restrict to
	$H \colon Y_{\alpha,s}^{(n-1)} \times [0,1] \to Y_{\beta_0,t_0}$.
	
	To the second point: 
	Applying Lemma~\ref{lem.equivlowerbound} to the restricted map $f\colon Y^{(n-1)}_\alpha \to X$, we find $s'\in \RR$ such that $f(Y^{(n-1)}_{\alpha,s}) \subseteq X_{s'}$, and by essential $(n-1)$-connectedness of $(X_r)_{r\in \RR}$, there is $s'' \le s'$ making the inclusion $X_{s'} \into X_{s''}$ trivial on~$\pi_k$ for all $k\le n-1$. Since $X$~is $G$-finite, its $g$-image is contained in some $Y_{\beta_1}$, and with another application of Lemma~\ref{lem.equivlowerbound}, this time to the co-restricted map $g \colon X \to Y_{\beta_1}$, we obtain $t_1$ giving a co-restriction
	$g \colon X_{s''} \to Y_{\beta_1,t_1}$. Overall, we see the composition $Y^{(n-1)}_{s,\alpha} \overset{f}{\to} X_{s'} \into X_{s''} \overset{g}{\to} Y_{\beta_1, t_1}$ is $\pi_k$-trivial for $k\le n-1$.
	
	Choosing $\beta := \max\{\beta_0, \beta_1\}$ and $t:=\min \{t_0, t_1\}$ finishes the proof of this implication.
	
	$(\Leftarrow)$ Assume, as we may, that $Y$ has no cells of dimension $\ge n+1$, so each $Y_\alpha$ is $G$-finite. The hypothesis that $(Y_{\alpha,s})_{(\alpha,s)\in A \times \RR}$ is essentially $(n-1)$-connected implies that also  $(Y_\alpha)_{\alpha\in A}$ is essentially $(n-1)$-connected: indeed, given $\alpha \in A$, choose $(\beta,s) \in A\times \RR$ such that the inclusion $Y_{\alpha,0} \into Y_{\beta,s}$ is $\pi_k$-trivial for $k\le n-1$.
	Then, for every map $\eta \colon \Sph^k \to Y_\alpha$, one can find $t\in G$ such that the $t$-translate $t\cdot \eta$ (that is, the post-composition of~$\eta$ with the action by~$t$) has image in $Y_{\alpha,0}$. This translate is thus null-homotopic in~$Y_{\beta, s}$, and a null-homotopy, once regarded in~$Y_\beta$, can be translated by~$t^{-1}$ to produce a null-homotopy of~$\eta$.
	
	Brown's Criterion (Theorem~\ref{thm.freebrown}) now tells us that $G$~is of type~$\mathrm{F}_n$, so let $X$~be an $(n-1)$-connected free $G$-CW complex with $G$-finite $n$-skeleton, and construct a valuation on~$X$ associated to~$\chi$.
	
	From this point, the proof follows almost automatically by applying the same ideas as in the previous implication.	 	
	First, use Lemma~\ref{lem.Gnequiv} to construct an inverse pair of $G$-$n$-equivalences $f\colon Y\to X$, $g\colon X \to Y$, so there is a cellular $G$-equivariant homotopy~$H$ from the composition $X^{(n-1)} \xrightarrow{g} Y \xrightarrow {f} X$
	to the inclusion $ X^{(n-1)} \into  X$.
	
	Fix $s\in \RR$.
	As $X ^{(n-1)}$~is $G$-finite,
	Corollary~\ref{cor.htpylowerbound} finds $t_0\le s$ such that $H(X^{(n-1)}_s\times [0,1]) \subseteq X_{t_0}$. 	
	On the other hand,  $g(X^{(n-1)}) \subseteq Y_\alpha$ for some $\alpha\in A$, and by Lemma~\ref{lem.equivlowerbound} we may find $s'\in \RR$ with $g(X_s^{(n-1)}) \subseteq Y_{\alpha,s'}$. From essential $(n-1)$-connectedness of $(Y_{\alpha, r})_{(\alpha,r) \in A\times \RR}$, we find $\beta \ge \alpha$ and $s''\le s'$ such that the inclusion $Y_{\alpha,s'} \into Y_{\beta,s''}$ is $\pi_k$-trivial for $k\le n-1$. Since $Y_\beta$~is $G$-finite, one final application of Lemma~\ref{lem.equivlowerbound} yields $t_1\in \RR$ such that $f(Y_{\beta,s''}) \subseteq X_{t_1}$.
	
	Altogether, choosing $t:=\min \{t_0 , t_1\}$, we conclude that the inclusion $X^{(n-1)}_s \into X_t$ is homotopic to the composition $X^{(n-1)}_s \overset{g}{\to} Y_{\alpha, s'}\into Y_{\beta, s''} \overset{f}{\to} X_t$, which in turn is $\pi_k$-trivial for $k \le n-1$. As $X_s^{(n-1)} \into X_s$ is $\pi_k$-surjective, it follows that $X_s \into X_t$ is $\pi_k$-trivial.
	Therefore $(X_r)_{r \in \RR}$ is essentially $(n-1)$-connected and $\chi \in \Sigma^n(G)$ by the characterization given in Theorem~\ref{thm.renz}.
\end{proof}

\begin{thm}[The classical $\Sigma^n$] \label{thm.sigmasmatch}
	If $G$~is discrete, then for every $n\in \NN$, we have
	\[\Sigma^n(G) = \TopS^n(G).\]
\end{thm}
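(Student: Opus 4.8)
The plan is to exhibit both $\Sigma^n(G)$ and $\TopS^n(G)$ as the set of characters for which \emph{one and the same} filtration of $|\E G|$ is essentially $(n-1)$-connected, and then simply read off the equality. Recall that since $G$~is discrete, $\TopHom(G,\RR) = \Hom(G,\RR)$ and $\C(G)$ is the poset of finite subsets of~$G$.

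First I would check that $Y := |\E G|$ is an admissible input for Brown's criterion for characters (Theorem~\ref{thm:brownforchars}). It is a free $G$-CW-complex by the opening remark of Section~\ref{sec:basics}, and it is contractible, hence $(n-1)$-connected: for instance, the assignment $((x_0,\dots,x_k),\tau_i^k) \mapsto (x_0,\dots,x_{i-1},1,\dots,1)$ defines a simplicial homotopy from $\id_{\E G}$ to the constant map at the vertex~$1$. Next, for each $C\in\C(G)$ the subcomplex $G\cdot\E C$ has $G$-finite $k$-skeleton for every~$k$, since each orbit of $k$-simplices has a representative of the form $(x_0,\dots,x_k)$ with all $x_i\in C$, so there are at most $|C|^{k+1}$ of them; these subcomplexes are directed by inclusion and exhaust~$\E G$. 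Thus $(G\cdot\E C)_{C\in\C(G)}$ is a filtration of~$Y$ by $G$-subcomplexes with $G$-finite $n$-skeleta, as required.

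Second, I would produce a valuation~$v$ on~$Y$ associated to~$\chi$: set $v(g) := \chi(g)$ on the $0$-cells (which are the elements of~$G$) and extend affinely over each geometric simplex. This descends to a well-defined continuous function on~$|\E G|$ because a degenerate simplex merely repeats a vertex, which carries the same value; it satisfies $v(g\cdot y) = v(y) + \chi(g)$ because $g\cdot(g_0,\dots,g_k) = (gg_0,\dots,gg_k)$ and $\chi$ is a homomorphism; it satisfies $v(Y^{(0)}) = \chi(G)$ by construction; and the extrema of an affine function on a simplex are attained at its vertices. With this~$v$, the sublevel complex $Y_{C,s}$ appearing in Theorem~\ref{thm:brownforchars} is precisely the full subcomplex of $G\cdot\E C$ spanned by the vertices~$g$ with $\chi(g)\ge s$, that is, the simplicial subset $(G\cdot\E C)_s$ of Proposition~\ref{prop:doublefiltration}.

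It then remains only to splice two results. Theorem~\ref{thm:brownforchars}, applied to $Y = |\E G|$, the filtration $(G\cdot\E C)_{C\in\C(G)}$, the character~$\chi$, and the valuation~$v$ above, asserts that $\chi\in\Sigma^n(G)$ if and only if $((G\cdot\E C)_s)_{(C,s)\in\C(G)\times\RR}$ is essentially $(n-1)$-connected; and Proposition~\ref{prop:doublefiltration} asserts that $\chi\in\TopS^n(G)$ if and only if that very filtration is essentially $(n-1)$-connected (the two notions of essential connectedness coincide, as $\pi_k$ of a simplicial set is by definition $\pi_k$ of its realization). Hence $\Sigma^n(G) = \TopS^n(G)$. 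I expect no serious obstacle: the content lies entirely in making the two filtrations literally agree, and the only points demanding a little care are that $|\E G|$, though not itself $G$-finite, is a legitimate \emph{filtered} total space for Brown's criterion, and that the affine valuation is well defined and satisfies the three valuation axioms — both routine verifications.
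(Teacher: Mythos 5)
Your proposal is correct and follows essentially the same route as the paper's own proof: apply Theorem~\ref{thm:brownforchars} to $|\E G|$ with the filtration $(|G\cdot \E C|)_{C\in\C(G)}$ and the piecewise-linear valuation extending $\chi$, then invoke Proposition~\ref{prop:doublefiltration} to identify the resulting double filtration with the one defining $\TopS^n(G)$. The paper states these verifications more tersely, but your added checks (contractibility of $|\E G|$, $G$-finiteness of the skeleta, the valuation axioms) are exactly the routine details it leaves implicit.
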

	
\begin{proof} Let $\chi \in \Hom(G,\RR)$ and equip $|\E G|$, regarded as a free $G$-CW complex, with the $\chi$-associated valuation given on vertices by $v(g) = \chi(g)$ and on higher cells by piecewise-linear interpolation. For every $C\in \C(G)$, that is, for every finite subset $C\subseteq G$, we see that $|\E C|$~is a finite CW complex, and so $|G\cdot \E C|$ is $G$-finite. Therefore, Proposition~\ref{prop:brownforchars_free} applies to the filtration $(|G\cdot \E C|)_{C\in \C(G)}$, telling us that $(n-1)$-connectedness of the filtration $((G\cdot \E C)_s)_{(C,s)\in \C(G)\times \RR}$ is equivalent to $\chi \in \Sigma^n(G)$. But by Proposition~\ref{prop:doublefiltration}, this is also equivalent to $\chi \in \TopS^n(G)$.
\end{proof}

We close this section by giving a generalization of Proposition~\ref{prop:brownforchars_free} that extends the complete version of Brown's Criterion \cite[exercise on p.~179]{Geo08} to the setting of characters. This more general version relaxes the assumption that the action of~$G$ on~$Y$ is free, to cell stabilizers satisfying suitable finiteness properties. We still need the action to be \textbf{rigid}, meaning that the stabilizer~$G_e$ of each cell~$e$ acts trivially on~$e$.

The proof will use the Borel construction and stack rebuilding, as explained in Geoghegan's book \cite[Section~6.1]{Geo08}, and one of the relevant notions is that of a \textbf{stack}. The precise definition of a stack is somewhat involved~\cite[p.~146]{Geo08}, but roughly speaking, it is a cellular map $q \colon A\to C$ of CW complexes, where for each open cell~$e$ of~$C$, the restriction to~$q^{-1}(e)$ looks like a projection $e\times F_e \to e$, where $F_e$ is a CW complex that might vary with~$e$.

\begin{thm}[Brown's Criterion for characters]
	\label{thm:brownforchars_general}
	Let $X$ be an $(n-1)$-connected rigid $G$-CW complex, and $(X_\alpha)_{\alpha \in A}$ a filtration of~$X$ by $G$-subcomplexes with $G$-finite $n$-skeleta. Assume that for each $k$-cell~$e$, its stabilizer $G_e$ is of type $\mathrm{F}_{n-k}$. Let $\chi\colon G 	\to \RR$ be a character and $v$~a valuation on~$X$ associated to~$\chi$.
	
	Then $\chi \in \Sigma^n(G)$ if and only if the filtration $(X_{\alpha, s})_{(\alpha, s)\in A\times \RR}$ is essentially $(n-1)$-connected.
\end{thm}
\begin{proof}
	We will construct a \emph{free} $G$-CW complex $Y$ and a $G$-equivariant
	map $\pi \colon Y \to X$ satisfying the following two
	conditions:
	\begin{enumerate}
		\item The map~$\pi$ is a \textbf{hereditary} homotopy equivalence, that is, for each subcomplex $X' \subseteq X$, it restricts to a homotopy equivalence $\pi^{-1}(X') \to X'$.
		\item If a $G$-subcomplex~$X' \subseteq X$ has $G$-finite $n$-skeleton, then so does $\pi^{-1}(X')$.
	\end{enumerate}
	Assuming for the moment that we have produced such $Y$ and~$\pi$, consider the filtration of~$Y$ by the preimages $Y_{\alpha,	s}:=\pi^{-1}(X_{\alpha,s})$, where $(\alpha, s)\in A\times \RR$. By Condition~2, each $Y_\alpha := \pi^{-1}(X_\alpha)$ has $G$-finite $n$-skeleton. Thus, by the ``free version'' of the statement, Proposition~\ref{prop:brownforchars_free}, we see that $\chi \in \Sigma^n(G)$ if and only if $(Y_{\alpha, s})_{(\alpha, s)\in A\times \RR}$ is essentially $(n-1)$-connected. 
	On the other hand, Condition~1 implies that this is equivalent to $(X_{\alpha, s})_{(\alpha, s)\in A\times \RR}$ being essentially $(n-1)$-connected, whence the theorem follows.
	
	It thus remains to construct $Y$ and $\pi\colon Y \to X$. Let $K$ be a classifying space for $G$ and
	$\tilde{K}$~its universal cover, and endow the product $\tilde{K}\times
	X$ with the diagonal $G$-action. The quotient $Z:=G\backslash(\tilde K \times X)$ is then the total space of a stack
	\[
	q\colon Z \to G\backslash X,
	\]
	where the fiber $F_{e}$ over a cell~$e$ of~$G\backslash X$ is $G_e \backslash \tilde{K}$. In particular, $F_e$~is a classifying space~$K_e$ for~$G_e$.
	
	Stack rebuilding~\cite[Proposition~6.1.4]{Geo08} now allows us to improve~$q$ to a stack $q'\colon Z' \to G\backslash X$ such that for every $k\le n$ and $k$-cell~$e$, the fiber $F'_e$ over~$e$ is a classifying space~$K_e'$ for~$G_e$ \emph{with finite $(n-k)$-skeleton}. Moreover, the construction yields a diagram
	\[
	\begin{tikzcd}
		Z \arrow[rr,"h"] \arrow[dr, "q" swap]
		& & Z' \arrow[dl,"q'"]
		\\
		& G\backslash X
	\end{tikzcd}
	\]
	that commutes up to homotopy over each cell. We take~$Y$ to be the universal cover of~$Z'$ and $\pi\colon Y \to X$ a $G$-equivariant lift of~$q'$. Then $\pi \colon Y \to X$ is a $G$-equivariant stack, and it satisfies Condition~1 \cite[exercise on p.~148]{Geo08}. Moreover, the fiber over each $k$-cell~$e$ is the universal cover of $K_e'$, so its $(n-k)$-skeleton $G_e$-finite. It follows that Condition~2 is also satisfied.
	\end{proof}


\section{Compact generation and {$\TopS^1$}}\label{sec:sigma1}
	In this section we connect classical characterizations of $\Sigma^1$, in terms of the Cayley graph with respect to a finite generating set, to the setting of topological groups. From now on, $G$~will always be a locally compact Hausdorff group. The first step in crossing the bridge from the characterizations given Section~\ref{sec:basics}, in terms of filtrations of $\E G_\chi$, to a more classical setting is the following.
		
	\begin{prop}[Connectedness at a finite stage]\label{prop.sigma1stable}
		A character~$\chi\colon G \to \RR$ is in $\TopS^1(G)$ if and only if for some centered $C \in \C(G)$, the simplicial set $(G_\chi \cdot \E C)_\chi$ is connected. 
	\end{prop}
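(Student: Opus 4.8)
The plan is to deduce this from the characterizations of $\TopS^1(G)$ already established, in particular Proposition~\ref{prop.trimmedsigma}, which tells us that $\chi\in\TopS^1(G)$ if and only if the filtration $\left((G_\chi\cdot\E C)_\chi\right)_{C\in\C(G)}$ of~$\E G_\chi$ is essentially $0$-connected, i.e.\ the directed system of pointed sets $\bigl(\pi_0\bigl((G_\chi\cdot\E C)_\chi\bigr)\bigr)_{C\in\C(G)}$ is essentially trivial. (Here ``essentially $0$-connected'' for a filtration means that for each $C$ there is $D\supseteq C$ such that every connected component of $(G_\chi\cdot\E C)_\chi$ becomes connected to the basepoint's component inside $(G_\chi\cdot\E D)_\chi$.) The content of the proposition is thus that, for~$n=1$, this essential triviality is equivalent to the single stage $(G_\chi\cdot\E C)_\chi$ being \emph{honestly} connected for an appropriate (centered, compact)~$C$.

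For the ``if'' direction: suppose $(G_\chi\cdot\E C)_\chi$ is connected for some centered $C\in\C(G)$. I would show that the filtration is then essentially $0$-connected in a strong way — in fact, for every $D\in\C(G)$, the stage $(G_\chi\cdot\E D)_\chi$ is connected once $D$ is enlarged to contain~$C$, or more simply, one uses $C$ itself is cofinal-friendly. Concretely: given any $D\in\C(G)$, pick a vertex $g\in G_\chi$ with $x\in D$ so that $gx$ is a typical vertex of $(G_\chi\cdot\E D)_\chi$; it suffices to connect it to the basepoint~$1$. First connect $gx$ to $g$ within $\E\{g,gx\}\subseteq g\cdot\E D$, staying in $\E G_\chi$ provided we are careful — here one may need to pass to $D'\supseteq D\cup C$ and use that $g\in G_\chi$. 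Then, since $C$ generates $G$ (because $(G_\chi\cdot\E C)_\chi$ being connected forces $C$ to generate, as in the classical Cayley-graph argument — one should note this), write $g$ as a word in~$C$ and walk from $g$ back toward $1$ along $C$-translates, at each step checking that the relevant edge lies in $(G_\chi\cdot\E C')_\chi$ for a suitable enlargement $C'$; this is exactly where the compactness of the ``drop'' is controlled and where one may have to enlarge $C$ once and for all. So one concludes the system is essentially trivial (indeed stabilizes), hence $\chi\in\TopS^1(G)$.

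For the ``only if'' direction: suppose $\chi\in\TopS^1(G)$. By Proposition~\ref{prop.trimmedsigma} the system $\bigl(\pi_0\bigl((G_\chi\cdot\E C)_\chi\bigr)\bigr)_{C}$ is essentially trivial, so there exist $C_0\subseteq C_1$ in $\C(G)$ such that $(G_\chi\cdot\E C_0)_\chi\hookrightarrow(G_\chi\cdot\E C_1)_\chi$ is $\pi_0$-trivial. We also know $G$ is of type~$\mathrm C_1$ (Proposition~\ref{prop.zerocharacter}), hence compactly generated; enlarging, assume $C_1$ is a centered compact generating set containing $C_0$ and~$1$. The claim is then that $(G_\chi\cdot\E C)_\chi$ is connected for a single suitable centered~$C$: the natural candidate is some power $C := C_1^m$, using Lemma~\ref{lem.compactexhaustion} to absorb finitely many auxiliary compact sets into such a power. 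The point is that $\pi_0$-triviality of the inclusion $(G_\chi\cdot\E C_0)_\chi\hookrightarrow(G_\chi\cdot\E C_1)_\chi$ combined with $C_1$ generating lets one inductively connect every vertex of $(G_\chi\cdot\E C_1^m)_\chi$ to the basepoint: translate a path in the Cayley graph of $(G,C_1)$ that realizes a group element, correct it using the $\pi_0$-trivial inclusion so that it stays in nonnegative $\chi$-level after translation, and observe that the correction only costs a bounded (in terms of $C_0,C_1$) enlargement of the generating set, hence is absorbed by passing to a fixed power~$C_1^m$.

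The main obstacle I expect is the bookkeeping in the ``only if'' direction: turning essential triviality (an $\exists C_1\ \forall\text{-vertices}$ statement) into genuine connectedness of one stage. The subtlety flagged in the Remark on uniform vanishing is relevant here — the enlargement $C_1$ must not depend on the particular vertex or path — but since $\pi_0$ triviality already gives a uniform $C_1$, the real work is the translation trick that keeps paths in $\E G_\chi$: one walks a vertex $g\in G_\chi$ back to $1$ one generator at a time, and after passing through each generator the $\chi$-value may drop, so the intermediate vertices need not lie in $G_\chi$; the fix is to translate the relevant edge by a suitable element so the whole edge sits at nonnegative level, but then one has left $C_1$ and landed in a translate of a slightly larger set — and one must verify this larger set is still a bounded power of~$C_1$, uniformly. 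This is precisely the kind of ``finiteness becomes boundedness/compactness'' argument the paper's Section~\ref{sec:raisingprinciple} is designed to streamline, but at the $\TopS^1$ level it can be done by hand with Lemma~\ref{lem.compactexhaustion}.
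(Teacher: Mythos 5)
Your proposal starts from the right place (Proposition~\ref{prop.trimmedsigma}), but it misses the one observation that makes the statement essentially formal, and the substitute arguments you sketch contain a step that does not work. The key point is that for every $C\in\C(G)$ with $1\in C$, the $0$-skeleton of $(G_\chi\cdot\E C)_\chi$ is exactly $G_\chi$, \emph{independently of~$C$}: the vertices of $G_\chi\cdot\E C$ form the set $G_\chi C\supseteq G_\chi$, and intersecting with $\E G_\chi$ cuts this back down to~$G_\chi$. Hence every inclusion in the filtration $((G_\chi\cdot\E D)_\chi)_{D\in\C(G)_{\supseteq\{1\}}}$ is bijective on vertices, in particular $\pi_0$-surjective. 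Both directions are then immediate: for ($\Leftarrow$), connectedness of the $C$-stage propagates to every larger stage, so the filtration is essentially $0$-connected; for ($\Rightarrow$), apply essential triviality to the index $\{1\}$ (whose stage is the discrete set~$G_\chi$) to get $C\supseteq\{1\}$ with $\pi_0$-trivial inclusion, and combine with $\pi_0$-surjectivity to conclude the $C$-stage is connected. This is the paper's proof; no words, Cayley graphs, or translation tricks are needed.

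The step in your sketch that would actually fail is the claim, in the ``if'' direction, that one can write $g\in G_\chi$ as an \emph{arbitrary} $C$-word, walk it back to~$1$, and repair the negative-$\chi$-level portions by enlarging $C$ ``once and for all''. No such uniform enlargement exists: the minimum of the $\chi$-track of an arbitrary $C$-word for~$g$ is unbounded below as $g$ ranges over~$G_\chi$, so the required correction grows with~$g$ --- this is exactly the uniformity trap the Remark on uniform vanishing warns about. The legitimate repair (conjugating the word by powers of some $t$ with $\chi(t)>0$, as in the proof of Proposition~\ref{prop.connectedcayley}) modifies the word rather than the generating set, requires $\chi\neq 0$, and is beside the point here: connectedness of $(G_\chi\cdot\E C)_\chi$ already hands you a path from $g$ to $1$ inside that complex, which persists in every larger stage. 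The analogous ``translation trick'' in your ``only if'' direction has the same defect and is likewise unnecessary once one uses the common vertex set.
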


	\begin{proof}
		($\Rightarrow$) If $\chi \in \TopS^1(G)$, by Proposition~\ref{prop.trimmedsigma} there is $C \in \C(G)_{\supseteq \{1\}}$, which we may assume is centered, such that the inclusion $(G_\chi\cdot \E \{1\})_\chi \into (G_\chi\cdot \E C)_\chi$ is $\pi_0$-trivial. But $(G_\chi \cdot \E \{1\})_\chi^{(0)} = (G_\chi\cdot \E C)_\chi^{(0)}$, so that inclusion is also $\pi_0$-surjective. Thus, $(G_\chi\cdot \E C)_\chi$~is connected. 
		
		($\Leftarrow$) If $(G_\chi\cdot \E C)_\chi$ is connected, then as all maps in the filtration $((G_\chi \cdot \E D)_\chi)_{D\in\C(G)}$ are $\pi_0$-surjective, we conclude that all $(G_\chi\cdot \E D)_\chi$ with $C\subseteq D$ are connected. The filtration is then essentially $0$-connected and so $\chi \in \TopS^1(G)$.
	\end{proof}
	
	The characterization of characters~$\chi$ being in $\TopS^1(G)$ given by Proposition~\ref{prop.sigma1stable} can be framed in terms of the Cayley graph, recovering much of the classical $\Sigma^1$-theory.

	\begin{dfn} The \textbf{label} of an edge $(g_0,g_1)$ of~$\E G$ is the element $g_0^{-1}g_1 \in G$. It is easy to see that for every subset $X\subseteq G$, the edges of $G \cdot \E X$ are precisely the edges of~$\E G$ with labels in~$X^{-1}X$.
		
		The \textbf{Cayley graph} of~$G$ with respect to a subset~$X \subseteq G$ is the 
		 simplicial subset $\Gamma(G, X) \subseteq \E G^{(1)}$ consisting of all vertices of $\E G$, and having as $1$-simplices the edges that are labeled by an element of~$X$.
		
	Given a character~$\chi$ on~$G$ and $s \in \RR$, we will denote
	\[\Gamma(G, X)_s := \Gamma(G,X) \cap \E(\chi^{-1}([s, +\infty[)).\]
	The character $\chi$ is implicit in this notation, but for the special case $s=0$, we make it explicit by writing $\Gamma(G,X)_\chi := \Gamma(G,X)_0$.
\end{dfn}
	
	With this definition, $\Gamma(G, X)$ is connected precisely if $X$~generates~$G$.
	
	Having fixed a character $\chi \colon G \to \RR$ and a centered subset $X\subseteq G$, we consider, for each $m\in \NN$, the elements of~$G$ that admit a word in~$X$ of length~$m$ such that all initial segments have non-negative $\chi$-value:
	\[X^{m,\chi} := \{ x_1 \cdots x_m \in G \mid \forall i \le m\colon  x_i \in X \text{ and } \chi (x_1 \cdots x_i) \ge 0\}.\]
	This definition may also be inductively expressed as
	\[X^{0,\chi} = \{1\}, \qquad X^{m+1,\chi} = (X^{m,\chi} X) \cap G_\chi.\]
	For a word $w = x_1 \cdots x_m$, we call the sequence of values $\chi(x_1 \cdots x_i)$ (with $0 \le i \le m$) the \textbf{$\chi$-track} of~$w$. 
	
	Given a centered subset $X\subseteq G$, we thus obtain an ascending chain of subspaces of~$G_\chi$:
	\[X^{0,\chi} \subseteq X^{1,\chi} \subseteq X^{2,\chi} \subseteq \ldots\]
	Their union $\bigcup_{m \in \NN}X^{m,\chi}$, which consists of all elements of~$G_\chi$ represented by $X$-words of non-negative $\chi$-track, will be denoted by $X^{\infty,\chi}$.
	
	Recall from Section~\ref{sec:altfilt} that given a subset $X\subseteq G$, we denote by~$\dot X$ the smallest centered subset of~$G$ containing~$X$.
		
	\begin{dfn}
		Given a character $\chi \colon G\to \RR$ and a subset $X\subseteq G$, we say \textbf{$X$~generates $G$ along~$\chi$} if 
		\[\dot X^{\infty, \chi} = G_\chi.\]
	\end{dfn}
	
	When $\chi=0$, we recover the familiar notion of $X$~generating~$G$. It is also straightforward to see that if $X$~generates~$G$ along some~$\chi$, then $X$~is a generating set in the usual sense.
	
	We now describe this notion in a geometric fashion, namely through paths in $\Gamma(G, X)_\chi$.	
	Given two vertices $u,v$ in a simplicial set,  an \textbf{edge-path}, or simply a \textbf{path}, from $u$~to~$v$ shall mean an alternating sequence of vertices and edges $u=v_0, e_1,v_1, e_2,\dots, e_k, v_k=v$, such that the vertices of each edge~$e_i$ are $v_{i-1}$~and~$v_i$. An \textbf{oriented path} is one where each~$e_i$ is, moreover, oriented from~$v_{i-1}$ to~$v_i$.
	
	\begin{prop}[Words and paths]\label{prop.wordspaths}
		Let $X\subseteq G$, let $\chi\colon G \to \RR$ be a character, and let $g\in G_\chi$. The following conditions are equivalent:
		\begin{enumerate}
			\item $g \in \dot X^{\infty,\chi}$,
			\item $g$~can be represented by an $\dot X$-word of non-negative $\chi$-track,
			\item there is a path from~$1$ to~$g$ in $\Gamma(G,X)_\chi$,
			\item there is a path from $1$ to $g$ in $(G_\chi \cdot \E \dot X)_\chi$.
		\end{enumerate}
	\end{prop}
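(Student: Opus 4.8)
The plan is to prove the chain of equivalences $(1)\Leftrightarrow(2)\Rightarrow(3)\Rightarrow(4)\Rightarrow(2)$, or perhaps more symmetrically to show each of $(3)$ and $(4)$ equivalent to $(2)$, since $(1)\Leftrightarrow(2)$ is essentially a restatement. The equivalence $(1)\Leftrightarrow(2)$ is immediate from the inductive definition $\dot X^{m+1,\chi} = (\dot X^{m,\chi}\dot X)\cap G_\chi$: an element lies in $\dot X^{\infty,\chi}$ precisely when it can be written as $x_1\cdots x_m$ with each $x_i\in\dot X$ and each partial product having non-negative $\chi$-value, which is exactly the definition of an $\dot X$-word of non-negative $\chi$-track representing $g$. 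This is bookkeeping and should be dispatched in a sentence or two.

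\medskip

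The heart of the matter is the dictionary between $\dot X$-words and edge-paths. First I would set up the translation $(2)\Leftrightarrow(3)$. Given an $\dot X$-word $w = x_1\cdots x_m$ representing $g$, form the sequence of group elements $v_0 = 1$, $v_i = x_1\cdots x_i$, so $v_m = g$. Consecutive vertices $v_{i-1}, v_i$ are joined by an edge of $\E G$ whose label is $v_{i-1}^{-1}v_i = x_i \in \dot X = X^{-1}\cup X\cup\{1\}$; since $\Gamma(G,X)$ has as edges exactly those labelled by elements of $X$, and an edge labelled $x^{-1}$ is just the reverse of one labelled $x$, while a label-$1$ step is the degenerate edge at a vertex, this sequence $v_0, v_1, \dots, v_m$ traces out a path in $\Gamma(G,X)$. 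The condition that every partial product $v_i$ has $\chi(v_i)\ge 0$ is precisely the condition that the path stays inside $\Gamma(G,X)_\chi = \Gamma(G,X)\cap \E(\chi^{-1}([0,\infty[))$. Conversely, a path from $1$ to $g$ in $\Gamma(G,X)_\chi$ reads off as a sequence of vertices $1 = v_0, v_1, \dots, v_m = g$ all with non-negative $\chi$-value, where each consecutive pair is joined by an $X$-labelled edge (possibly reversed, possibly degenerate); setting $x_i := v_{i-1}^{-1}v_i\in\dot X$ gives an $\dot X$-word with partial products $v_i$, hence of non-negative $\chi$-track, representing $g$. This is the substance of the proposition, but it is entirely mechanical once the notation is unwound.

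\medskip

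For $(3)\Leftrightarrow(4)$ I would observe that $\Gamma(G,X)$ is by definition a simplicial subset of $\E G^{(1)}$, and I claim its $\chi$-truncation $\Gamma(G,X)_\chi$ and the space $(G_\chi\cdot\E\dot X)_\chi$ have the same $1$-skeleton, which is all that matters for the existence of edge-paths. Both contain all vertices $h\in G$ with $\chi(h)\ge 0$. For edges: the edges of $G\cdot\E\dot X$ are exactly the edges of $\E G$ with label in $\dot X^{-1}\dot X = \dot X\dot X$ --- but one must be slightly careful, so I would instead argue directly that an edge $(g_0, g_1)$ lies in $G_\chi\cdot\E\dot X$ with both endpoints of non-negative $\chi$-value iff, writing it as $g_0\cdot(1, g_0^{-1}g_1)$, we can also write it with the left endpoint in $G_\chi$: since $\chi(g_0)\ge 0$ already, it lies in $G_\chi\cdot\E\dot X$ exactly when $g_0^{-1}g_1\in\dot X$ (using that $1\in\dot X$), i.e.\ exactly when it is an $X$-labelled edge or its reverse or degenerate. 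That matches the edges of $\Gamma(G,X)_\chi$. Having matched $1$-skeleta, path-connectedness between $1$ and $g$ is the same question in both, giving $(3)\Leftrightarrow(4)$.

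\medskip

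I do not expect a genuine obstacle here: the whole proposition is a definitional unpacking, and the only place to be careful is the handling of inverse and degenerate edges (the passage from $X$ to $\dot X$) and the precise matching of the two simplicial sets' $1$-skeleta in $(3)\Leftrightarrow(4)$. If anything is fiddly it is keeping straight that a path in a simplicial set is allowed to traverse edges in either orientation, so that an $\dot X$-word --- which may use generators and their inverses --- corresponds to an unoriented path, whereas the oriented paths of the earlier definition would correspond only to $X$-words (without inverses); the proposition as stated is about unoriented paths, consistent with the use of $\dot X$ throughout. I would flag this explicitly to preempt confusion, then present the four implications in the cyclic order $(1)\Rightarrow(2)\Rightarrow(3)\Rightarrow(4)\Rightarrow(1)$, each a short paragraph.
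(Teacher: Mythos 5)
Your treatment of $(1)\Leftrightarrow(2)$ and $(2)\Leftrightarrow(3)$ is correct and matches the paper's. The gap is in $(3)\Leftrightarrow(4)$: the claim that $\Gamma(G,X)_\chi$ and $(G_\chi\cdot \E \dot X)_\chi$ have the same $1$-skeleton is false, and in particular the ``only if'' half of your edge characterization fails. An edge $(g_0,g_1)$ lies in $G_\chi\cdot\E\dot X$ iff \emph{some} translate works, i.e.\ iff there exist $h\in G_\chi$ and $a_0,a_1\in\dot X$ with $(g_0,g_1)=(ha_0,ha_1)$; nothing forces $h=g_0$, so the label $g_0^{-1}g_1=a_0^{-1}a_1$ can be any element of $\dot X\dot X$, not just of $\dot X$. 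Concretely, for $G=\ZZ$, $X=\{1\}$, $\chi=\mathrm{id}$, the edge $(0,2)=1\cdot(-1,1)$ lies in $(G_\chi\cdot\E\dot X)_\chi$ although its label $2\notin\dot X$. So $(4)$ is a priori weaker than $(3)$, and the implication $(4)\Rightarrow(3)$ (equivalently $(4)\Rightarrow(2)$) is exactly where the content lies; your argument does not establish it.

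The repair is what the paper does: given a path in $(G_\chi\cdot\E\dot X)_\chi$, write each edge as $e_i=g_i\cdot(x_i,y_i)$ with $g_i\in G_\chi$ and $x_i,y_i\in\dot X$, and replace it by the two-letter word $x_i^{-1}y_i$ (equivalently, the two-edge path through the vertex $g_i$). The resulting $\dot X$-word represents $g$, its even-length prefixes are the vertices of the path (hence in $G_\chi$), and its odd-length prefixes are the translating elements $g_i=(g_ix_i)x_i^{-1}$, which also lie in $G_\chi$; so the word has non-negative $\chi$-track. You need to add this step — checking the $\chi$-value at the ``midpoints'' $g_i$ is the one non-mechanical point of the whole proposition, and it is precisely what your identification of the $1$-skeleta sweeps under the rug.
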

	
	\begin{proof}
		Observe first that the veracity of Condition~3 remains unaltered when $X$~is replaced with~$\dot X$, so we may assume $X = \dot X$.
		
		$(1 \Leftrightarrow 2)$ This equivalence is immediate from the definition of the~$X^{m,\chi}$.
		
		$(2 \Rightarrow 3)$ A word $x_1 \cdots x_m$ in~$X$ of non-negative $\chi$-track representing~$g$ specifies a path of length~$m$ in $\Gamma(G, X)_\chi$ from $1$ to~$g$, whose $j$-th edge is $x_1 \cdots x_{j-1}\cdot (1, x_j)$.
%
		
		$(3 \Rightarrow 4)$ We observe each edge~$(h,hx)$ of $\Gamma(G,X)_\chi$, with $h\in G_\chi, x\in X$,  can be written as $h\cdot (1,x)$. Since $1\in X$, this edge is in $(G_\chi \cdot \E X)_\chi$, and the statement follows.
		
		$(4 \Rightarrow  2)$ Since $X$~is centered, we may assume that the given path from $1$ to~$g$ is oriented. Each of its edges has the form $e_i = g_i\cdot (x_i, y_i)$ with $g_i\in G_\chi, x_i, y_i \in X$, so $e_i$~is labeled by $x_i^{-1}y_i$. Thus $g$~is represented by the word
		\[w=(x_1^{-1}y_1) (x_2^{-1}y_2) \cdots (x_m^{-1}y_m),\]
		and because the vertices of given path are in~$G_\chi$, it is clear that each initial sub-word of~$w$ of even length represents an element of~$G_\chi$. To conclude $w$~has non-negative $\chi$-track, we need to check this holds also for the initial words of odd length. But the $i$-th such word represents the element $(g_ix_i) x_i^{-1} = g_i \in G_\chi$.
	\end{proof}

	By quantifying the conditions in Proposition~\ref{prop.wordspaths} over all $g\in G_\chi$, we deduce:

	\begin{cor}[Generating along~$\chi$]\label{cor.passagetocayley}
		For every $X\subseteq G$ and character $\chi\colon G\to \RR$, the following conditions are equivalent:
		\begin{enumerate}
			\item X generates $G$ along~$\chi$,
			\item Every $g\in G_\chi$ can be represented by a word in~$\dot X$ of non-negative $\chi$-track,
			\item $\Gamma(G,X)_\chi$~is connected,
			\item $(G_\chi \cdot \E \dot X)_\chi$~is connected.
		\end{enumerate}
	\end{cor}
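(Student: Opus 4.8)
The plan is to obtain Corollary~\ref{cor.passagetocayley} as an immediate consequence of Proposition~\ref{prop.wordspaths} by universally quantifying each of its four equivalent conditions over all $g\in G_\chi$. First I would note that, by definition, $X$~generates $G$ along~$\chi$ means $\dot X^{\infty,\chi} = G_\chi$; since $\dot X^{\infty,\chi}\subseteq G_\chi$ always holds, this is equivalent to the assertion that every $g\in G_\chi$ lies in $\dot X^{\infty,\chi}$, i.e.\ to Condition~1 of Proposition~\ref{prop.wordspaths} holding for every such~$g$. This gives ``(1)~$\Leftrightarrow$ (the universal version of Prop.~\ref{prop.wordspaths}(1))''.

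Next, applying Proposition~\ref{prop.wordspaths} pointwise, the statement ``for every $g\in G_\chi$, Condition~(1) holds'' is equivalent to ``for every $g\in G_\chi$, Condition~(2) holds'', which is exactly Condition~2 of the corollary. Similarly, quantifying Proposition~\ref{prop.wordspaths}(3) over all $g\in G_\chi$ says that every vertex $g$ with $\chi(g)\ge 0$ can be joined to the basepoint~$1$ by a path in $\Gamma(G,X)_\chi$; since every vertex of $\Gamma(G,X)_\chi$ is such a~$g$, this is precisely the statement that $\Gamma(G,X)_\chi$ is connected, i.e.\ Condition~3 of the corollary. The same reasoning applied to Proposition~\ref{prop.wordspaths}(4), together with the observation that the vertex set of $(G_\chi\cdot \E\dot X)_\chi$ is exactly $G_\chi$, yields the equivalence with Condition~4.

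I do not anticipate any genuine obstacle here: the corollary is a purely formal ``$\forall g$'' packaging of the per-element equivalences already proved. The only minor points to be careful about are (i) replacing $X$ by~$\dot X$ is harmless in all four conditions, exactly as noted at the start of the proof of Proposition~\ref{prop.wordspaths}, so the passage from $X$ to $\dot X$ causes no issue; and (ii) in Conditions~3 and~4 one must check that ``path-connected to the basepoint~$1$'' is the same as ``connected'', which holds because in each of these simplicial sets the vertex set is precisely $G_\chi\ni 1$, so connectedness to a single fixed basepoint is equivalent to connectedness of the whole complex. The proof is therefore a one-line deduction: ``Quantify the equivalent conditions of Proposition~\ref{prop.wordspaths} over all $g\in G_\chi$, using that the vertex sets of $\Gamma(G,X)_\chi$ and $(G_\chi\cdot\E\dot X)_\chi$ both equal~$G_\chi$.''
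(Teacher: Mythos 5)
Your proposal is correct and is exactly the paper's argument: the paper derives the corollary with the single sentence ``By quantifying the conditions in Proposition~\ref{prop.wordspaths} over all $g\in G_\chi$, we deduce,'' relying implicitly on the same observations you make explicit (that $\dot X^{\infty,\chi}\subseteq G_\chi$ always holds and that the vertex sets of $\Gamma(G,X)_\chi$ and $(G_\chi\cdot\E\dot X)_\chi$ are both $G_\chi$, containing the basepoint~$1$). Nothing is missing.
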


	\begin{prop}[Connectedness of $\Gamma(G,X)_\chi$]\label{prop.connectedcayley}
		Let $\chi\colon G \to \RR$ be a character and let $t\in G$ with $\chi(t)>0$. The following conditions on a nonempty $X \subseteq G$ are equivalent:
		\begin{enumerate}
			\item for every $s\in \RR$, the graph $\Gamma(G,X)_s$ is connected,
			\item for some $s \in \RR_{\le 0}$, the inclusion $\Gamma(G,X)_\chi \into \Gamma(G,X)_s$ is $\pi_0$-trivial,
			\item $X$~generates $G$ and the inclusion $[G,G] \into \Gamma(G,X)_\chi$ of the commutator subgroup is $\pi_0$-trivial.
			\item $X$~generates $G$ and $t^{-1} \dot X^{1,\chi} t \subseteq \dot X^{\infty,\chi}$.
		\end{enumerate}
	\end{prop}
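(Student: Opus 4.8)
The plan is to establish the equivalences in the cyclic order $(1)\Rightarrow(2)\Rightarrow(3)\Rightarrow(4)\Rightarrow(1)$, exploiting the dictionary between $\chi$-tracks of words and paths in $\Gamma(G,X)_s$ provided by Proposition~\ref{prop.wordspaths} and Corollary~\ref{cor.passagetocayley}, together with the translation trick that for $h\in G$, left-multiplication by~$h$ carries $\Gamma(G,X)_s$ isomorphically onto $\Gamma(G,X)_{s+\chi(h)}$. The implication $(1)\Rightarrow(2)$ is immediate: taking $s=0$, connectedness of $\Gamma(G,X)_0=\Gamma(G,X)_\chi$ makes the inclusion into itself $\pi_0$-trivial (indeed an isomorphism); one must only note $X$ nonempty and $G_\chi\neq\emptyset$ so the graph is nonempty. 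For $(2)\Rightarrow(1)$, given an arbitrary $s'\in\RR$, pick $h\in G$ with $\chi(h)\le s'-s$ using $\chi(t)>0$ (so $\chi$ is surjective if $\chi\neq0$; the $\chi=0$ case forces $s=0$ and all the level sets coincide, making the statement trivial), translate the inclusion $\Gamma(G,X)_\chi\hookrightarrow\Gamma(G,X)_s$ by~$h$ to get $\Gamma(G,X)_{\chi(h)}\hookrightarrow\Gamma(G,X)_{s+\chi(h)}$ which is $\pi_0$-trivial, and chain these translates to connect any vertex of $\Gamma(G,X)_{s'}$ to the vertex $1$; some care is needed that $1$ lands in each level set used, which one arranges by choosing the translating element so that $\chi(h)\le 0$.

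For $(2)\Rightarrow(3)$: first, $\pi_0$-triviality of $\Gamma(G,X)_\chi\hookrightarrow\Gamma(G,X)_s$ in particular forces $\Gamma(G,X)_s$ to be connected for all $s\le 0$ hence (by the translation argument) $\Gamma(G,X)=\Gamma(G,X)_{-\infty}$ connected, i.e. $X$ generates $G$. For the commutator statement, observe that an element $c\in[G,G]$ has $\chi(c)=0$, so $c$ is a vertex of $\Gamma(G,X)_\chi$; the inclusion $[G,G]\hookrightarrow\Gamma(G,X)_\chi$ being $\pi_0$-trivial should mean each such $c$ is connected to $1$ inside $\Gamma(G,X)_\chi$. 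I would derive this from $(2)$ by: connecting $c$ to $1$ in $\Gamma(G,X)_s$ (possible since that graph is connected), then pushing the connecting path up to the $0$-level. The key point is that any two vertices of $\chi$-value $\ge0$ that are joined in $\Gamma(G,X)_s$ are already joined in $\Gamma(G,X)_\chi$ once we are in the situation of $(2)$: translate the path by a suitable $h$ into the region $\{\chi\ge -s\}$... more precisely, one uses $(2)$ repeatedly to "lift" dips below~$0$, exactly as in the proof of the raising-type criteria. For $(3)\Rightarrow(4)$: generation is given; for the inclusion, take $y\in\dot X^{1,\chi}$, i.e. $y\in\dot X$ with $\chi(y)\ge0$. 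Then $t^{-1}yt$ lies in the same coset of $[G,G]$ as $y$ — in fact $t^{-1}yt\cdot y^{-1}=[t^{-1},y]\in[G,G]$ — and $\chi(t^{-1}yt)=\chi(y)\ge0$. Using $(3)$, connect $[t^{-1},y]$ to $1$ in $\Gamma(G,X)_\chi$; translating/concatenating appropriately with the edge from $1$ to $y$ produces a non-negative-$\chi$-track $\dot X$-word for $t^{-1}yt$, i.e. $t^{-1}yt\in\dot X^{\infty,\chi}$ by Proposition~\ref{prop.wordspaths}.

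The implication I expect to be the main obstacle is $(4)\Rightarrow(1)$, which is the genuinely new content and parallels the classical computation of $\Sigma^1$ for metabelian-type situations. The idea: given $s\in\RR$ and a vertex $g$ with $\chi(g)\ge s$, I want a path in $\Gamma(G,X)_s$ from~$g$ to~$1$. Since $X$ generates, write $g$ as an $\dot X$-word $w=x_1\cdots x_m$; its $\chi$-track may dip below~$s$. The hypothesis $t^{-1}\dot X^{1,\chi}t\subseteq\dot X^{\infty,\chi}$ is precisely the combinatorial input that lets one "conjugate by $t$ to raise a single dangerous step into safe territory, then re-express the conjugated letter as a non-negative-track word, then conjugate back", trading a letter that drops the track for a longer subword that does not — at the cost of inserting $t^{\pm1}$ symbols which themselves raise the track (since $\chi(t)>0$). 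Iterating this finitely many times (induction on the depth or the number of sub-threshold steps of the track, using $\chi(t)>0$ to guarantee the procedure terminates) converts $w$ into an $\dot X$-word whose track stays $\ge s$, i.e. a path in $\Gamma(G,X)_s$; this shows $\Gamma(G,X)_s$ connected. The bookkeeping of the track during the conjugation-and-resubstitution, and ensuring termination, is the technical heart; everything else is the translation dictionary already assembled in Propositions~\ref{prop.wordspaths}–\ref{prop.connectedcayley}.
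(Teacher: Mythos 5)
Your cycle $(1)\Rightarrow(2)\Rightarrow(3)\Rightarrow(4)\Rightarrow(1)$ differs from the paper's $(1)\Rightarrow(4)\Rightarrow(2)\Rightarrow(3)\Rightarrow(1)$, and two of your steps are sound: $(1)\Rightarrow(2)$ is immediate, and your $(3)\Rightarrow(4)$ — writing $t^{-1}yt\cdot y^{-1}$ as a commutator, using $(3)$ to represent it by a word of non-negative $\chi$-track, and appending the letter $y$ — is clean (the paper proves $(1)\Rightarrow(4)$ instead). The other two steps are exactly where the content lives, and both are left as sketches whose proposed mechanisms fail as described.

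In $(2)\Rightarrow(3)$, your ``key point'' — that vertices of $G_\chi$ joined in $\Gamma(G,X)_s$ are already joined in $\Gamma(G,X)_\chi$ — is the entire difficulty, and ``translate the path by a suitable $h$'' cannot establish it: translating a path by $h$ moves \emph{both} endpoints, yielding a path from $ha$ to $hb$ rather than from $a$ to $b$. (The same objection defeats your parenthetical $(2)\Rightarrow(1)$ argument: translating the $\pi_0$-trivial inclusion only gives connectivity at levels \emph{below} the one you start at.) The missing trick is conjugation: since $X$ generates $G$ and $\chi\neq 0$, some $x_0\in \dot X$ has $\chi(x_0)>0$; for $g\in[G,G]$ and $k$ with $k\chi(x_0)\ge -s$, condition~$(2)$ connects $1$ to $x_0^{-k}gx_0^{k}$ by an $\dot X$-word $w$ of $\chi$-track $\ge s$, and then $x_0^{k}\,w\,x_0^{-k}$, read as an $\dot X$-word, represents $g$ with non-negative track. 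Your intermediate claim that $\pi_0$-triviality of $\Gamma(G,X)_\chi\into\Gamma(G,X)_s$ forces $\Gamma(G,X)_s$ to be connected is also unjustified (it says nothing about vertices of $\chi$-value in $[s,0)$), though generation of $G$ does follow from $g=(gt^k)(t^k)^{-1}$ with both factors in $G_\chi$. Finally, $(4)\Rightarrow(1)$ is deferred wholesale, and your sketch has a structural problem you do not flag: the conjugate-and-resubstitute step inserts the letters $t^{\pm1}$, which need not lie in $X$, so the resulting words are not paths in $\Gamma(G,X)_s$. The paper never proves $(4)\Rightarrow(1)$ directly; it proves $(4)\Rightarrow(2)$ and reaches $(1)$ only through $(3)$, via the letter-reordering argument: rearrange an $X$-word for $g$ so that the letters of non-negative $\chi$-value come first, note the rearranged word has unimodal track $\ge s$ and represents some $g'$ with $g(g')^{-1}\in[G,G]$, and prepend a non-negative-track word for $g(g')^{-1}$ supplied by $(3)$. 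That reordering device is what upgrades connectivity to \emph{every} level $s$, and nothing in your proposal substitutes for it.
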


\begin{proof}
	Since all conditions are equivalent to their counterparts with $X$~replaced by~$\dot X$, we may assume $X = \dot X$.
	
	
%
%

	$(1 \Rightarrow 4)$ By Corollary~\ref{cor.passagetocayley}, Condition 1 with $s=0$ means that $X$~generates~$G$ along~$\chi$, so in particular $X$~is a generating set for~$G$. Moreover, we see $t^{-1} X^{1,\chi} t \subseteq G_\chi = X^{\infty, \chi}$.
	
	$(4 \Rightarrow 2)$ Let $g\in G_\chi$, which we wish to show can be represented by an $X$-word of non-negative $\chi$-track. Since $X$~generates~$G$, let $w=x_1 \cdots x_l$ be an $X$-word representing~$g$, and let $s$~be the minimum of its $\chi$-track.
	
	Condition~4 allows us to choose, for each $x_i$~with $\chi(x_i)\ge 0$, an $X$-word~$w_i$ of non-negative $\chi$-track representing $t^{-1} x_i t$. If $\chi(x_i)<0$, then choose as $w_i$ the inverse of a word of non-negative $\chi$-track representing $t^{-1} x_i^{-1} t$ (so again $w_i$~represents~$t^{-1}x_it$). It then follows that the word $w' := t  w_1  \cdots  w_l t^{-1}$ represents~$g$; we claim that the minimum of the $\chi$-track of~$w'$ is $\min\{0, s+\chi(t)\}$. Once this assertion is established, the proof is complete, for one may repeat the procedure with $w'$ in place of~$w$, eventually reaching an $X$-word for~$g$ of non-negative $\chi$-track. This will show, by Corollary~\ref{cor.passagetocayley}, that $\Gamma(G, X)_\chi$ is connected, so Condition 2 holds with $s=0$.
	
	To establish this claim, the crucial observation is that for each word $w_i$, the minimum of its $\chi$-track is attained at one of its endpoints: indeed, if $\chi(x_i) \ge 0$, then $w_i$~has non-negative $\chi$-track, so its first initial sub-word, the empty word, having $\chi$-value $0$ represents this minimum. If $\chi(x_i) <0$, then the fact that the $\chi$-track of~$w_i^{-1}$ attains its minimum at the empty initial sub-word implies that the $\chi$-track of~$w_i$ attains its minimum at the last initial sub-word, which is $w_i$~itself.
	
	Hence, the minimum of the $\chi$-track of~$w'$ is attained at some of the initial sub-words 
	\[\emptyset, \quad t w_1,\quad t w_1  w_2,\quad \dots, \quad t  w_1  \cdots  w_{l-1}, \quad t  w_1  \cdots  w_l  t^{-1}.\] But these words represent the elements 
	\[1,\quad x_1 t,\quad  x_1 x_2 t, \quad \ldots, \quad x_1\cdots x_{l-1} t, \quad x_1\cdots x_l =g.\]
	Therefore, for each point in the $\chi$-track of~$w$, the corresponding $\chi$-value for~$w'$ exceeds it by~$\chi(t)$ or equals~$0$.
	
	$(2 \Rightarrow 3)$  $X$~generates $G$ because all elements of $G_\chi$ are connected to~$1$ in $\Gamma(G,X)_s$.
	
	Let $k\in \NN$ with $\chi(t^k) \ge -s$, and fix $g\in [G,G]$, which we aim to show is connected to~$1$ in $\Gamma(G,X)_\chi$ (actually, we will only use the fact that $\chi(g)=0$). By Condition~2, there is a path in~$\Gamma(G,X)_s$ from~$1$ to~$t^{-k} g t^{k}$, say specified by the $X$-word~$w$. Then the word~$t^k  w  t^{-k}$ specifies a path from~$1$ to~$g$ whose $\chi$-track is non-negative.
	
	$(3 \Rightarrow 1)$   Let $g$ be a vertex of $\Gamma(G,X)_s$, which we will show is connected to~$1$. Since $X$~generates $G$, the Cayley graph $\Gamma(G, X)$ is connected and contains an oriented path from $1$ to $g$, which specifies a word~$w$ in~$X$ representing $g$. Let $w'$ be some word obtained from~$w$ by re-ordering the letters so that the ones with non-negative $\chi$-value appear first. Then $w'$ has $\chi$-track above~$s$ and for the element~$g'$ represented by~$w'$, we have $h:=g(g')^{-1} \in [G,G]$. Now the assumption that $[G,G]\into \Gamma(G,X)_\chi$ is $\pi_0$-trivial provides a word~$u$ in~$X$ with non-negative $\chi$-track representing~$h$. Since $\chi(h)=0$, the concatenated word $u w'$ has $\chi$-track above~$s$, and it represents $h g' = g$, providing a path in $\Gamma(G,X)_s$ from~$1$ to~$g$.
\end{proof}

We now focus on the case where the generating set is compact. Namely, if there is $C\in \C(G)$ that generates~$G$ along~$\chi$, we shall say that $G$~is \textbf{compactly generated along~$\chi$}.
This condition is a rephrasing of Kochloukova's definition of ``$\chi \in \TopS^1(G)$'' \cite[p.541]{Ko04}. The equivalence to our definition of~$\TopS^1(G)$ is contained in the following theorem.
	
\begin{thm}[$\TopS^1$ and compact generation]\label{thm:sigma1_cptgen}
	For every character $\chi\colon G\to \RR$, the following conditions are equivalent:
	\begin{enumerate}
		\item $\chi \in \TopS^1(G)$,
		\item $G$~is compactly generated along~$\chi$,
		\item $G$~is compactly generated and every compact generating set generates~$G$ along~$\chi$.
	\end{enumerate}
\end{thm}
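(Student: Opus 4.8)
The plan is to prove the chain of implications $(3) \Rightarrow (2) \Rightarrow (1) \Rightarrow (3)$, exploiting Proposition~\ref{prop.sigma1stable} and Corollary~\ref{cor.passagetocayley} to translate freely between the language of filtrations of~$\E G_\chi$, the language of words of non-negative $\chi$-track, and connectivity of the graphs $\Gamma(G, X)_\chi$. The implication $(3) \Rightarrow (2)$ is essentially immediate once one knows that $G$~is compactly generated, which is part of the hypothesis: pick any compact generating set, and the hypothesis says it generates along~$\chi$. (One should observe in passing that if $G$~is compactly generated at all, then by Lemma~\ref{lem.compactexhaustion} any two compact generating sets are ``comparable'' in the sense that powers of one absorb the other, which is what makes ``\emph{every} compact generating set generates along~$\chi$'' a coherent condition to aim for.)

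For $(2) \Rightarrow (1)$: suppose $C\in\C(G)$ generates $G$~along~$\chi$, i.e.\ $\dot C^{\infty,\chi} = G_\chi$. By Corollary~\ref{cor.passagetocayley} this means $(G_\chi \cdot \E \dot C)_\chi$ is connected. Since $\dot C$~is again compact, Proposition~\ref{prop.sigma1stable} immediately gives $\chi \in \TopS^1(G)$. This direction is therefore short.

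The implication $(1) \Rightarrow (3)$ is the substantive one, and I expect the main obstacle to lie here --- specifically in upgrading ``\emph{some} compact generating set works'' to ``\emph{every} compact generating set works.'' By Proposition~\ref{prop.sigma1stable}, $\chi \in \TopS^1(G)$ yields a centered $C \in \C(G)$ with $(G_\chi \cdot \E C)_\chi$ connected, hence (Corollary~\ref{cor.passagetocayley}) $C$~generates~$G$ along~$\chi$; in particular $C$ generates~$G$, so $G$~is compactly generated. Now let $D \in \C(G)$ be \emph{any} compact generating set; we must show $D$~generates~$G$ along~$\chi$. Passing to $\dot D$ we may assume $D$~is centered, and by Lemma~\ref{lem.compactexhaustion} we may enlarge $D$ so that $C \subseteq D$ --- note enlarging $D$ only makes ``$D$ generates along~$\chi$'' easier to witness via words, but one must be slightly careful since a priori it is not obvious that shrinking back is harmless; the clean move is: it suffices to show $\dot D^{\infty,\chi} = G_\chi$ for \emph{one} centered compact $D$ containing~$C$, because then for an arbitrary centered compact generating set $D'$, Lemma~\ref{lem.compactexhaustion} gives $D \subseteq (D')^m$ for some~$m$, and one checks directly from the definition of the sets $X^{k,\chi}$ that $\dot D^{\infty,\chi} \subseteq (D')^{\infty,\chi}$ (a word in~$D$ of non-negative $\chi$-track can be refined to a word in~$D'$ of non-negative $\chi$-track by replacing each letter $d = d'_1\cdots d'_m$ and inserting these letters --- here one uses that the partial products along the refinement stay above the value at the start of the block \emph{minus} $\max \chi(D)$, which is not automatically non-negative, so instead one should argue via connectivity of $\Gamma(G,D')_\chi$ rather than raw word combinatorics).

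To see that such a single $D$~works, invoke the definition of~$\TopS^1$ directly: since the filtration $((G_\chi \cdot \E E)_\chi)_{E \in \C(G)}$ is essentially $0$-connected and $(G_\chi \cdot \E C)_\chi$ is already connected, for every compact $E$ with $C\subseteq E$ the space $(G_\chi\cdot \E E)_\chi$ is connected as well (the inclusion $(G_\chi\cdot \E C)_\chi \hookrightarrow (G_\chi\cdot\E E)_\chi$ is $\pi_0$-surjective because the two have the same $0$-skeleton $G_\chi$, and the domain is connected). In particular $(G_\chi\cdot\E D)_\chi$ is connected for our chosen $D \supseteq C$, so Corollary~\ref{cor.passagetocayley} gives $\dot D^{\infty,\chi} = G_\chi$, i.e.\ $D$~generates~$G$ along~$\chi$. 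Combined with the reduction in the previous paragraph --- which I would phrase geometrically, noting that $\Gamma(G,D)_\chi$ is a subgraph of $\Gamma(G,D')_\chi$ on the same vertex set, so connectivity of the former forces connectivity of the latter, and then applying Corollary~\ref{cor.passagetocayley} once more --- this establishes~$(3)$, closing the cycle. The one place demanding genuine care is precisely this last comparison between two different compact generating sets: the naive substitution-of-words argument does not obviously preserve non-negativity of the $\chi$-track, so the argument should be routed through the graph-connectivity characterization, where ``subgraph on the same vertices'' makes the monotonicity transparent.
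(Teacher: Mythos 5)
Your reductions $(3)\Rightarrow(2)$ and $(2)\Rightarrow(1)$ are correct and essentially match the paper, as does the first half of $(1)\Rightarrow(3)$: compact generation of~$G$, and the observation that every compact generating set \emph{containing} the given~$C$ generates~$G$ along~$\chi$. The gap is in the final comparison with an arbitrary compact generating set~$D'$. You correctly flag that rewriting a $D$-word as a $D'$-word by substituting $d=d'_1\cdots d'_m$ need not preserve non-negativity of the $\chi$-track, but the geometric fix you propose is invalid: when $D\subseteq (D')^m$ with $D\not\subseteq D'$, the graph $\Gamma(G,D)_\chi$ is \emph{not} a subgraph of $\Gamma(G,D')_\chi$ --- its edges are labeled by elements of~$D$, which need not lie in~$D'$. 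Both graphs sit inside $\Gamma(G,(D')^m)_\chi$, so connectivity of $\Gamma(G,D)_\chi$ only yields connectivity of $\Gamma(G,(D')^m)_\chi$, and connectivity does not descend to the subgraph $\Gamma(G,D')_\chi$. The substitution problem you identified resurfaces at exactly this point and cannot be dissolved by a containment of graphs.

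The missing idea is the conjugation (``raising'') trick that the paper uses here and in Proposition~\ref{prop.connectedcayley}: setting $z:=\min\chi\bigl((D')^m\bigr)\le 0$, every $g\in G_\chi$ is represented by a $D'$-word of $\chi$-track above~$z$; one then chooses $t\in D'$ and $k\in\NN$ with $\chi(t^k)\ge -z$, writes $g=t^k\,(t^{-k}gt^k)\,t^{-k}$, expresses the conjugate $t^{-k}gt^k\in G_\chi$ by a $D'$-word~$w$ of $\chi$-track above~$z$, and checks that $t^k w t^{-k}$ has non-negative $\chi$-track. (Equivalently, you could observe that the inclusion $\Gamma(G,D')_\chi\into\Gamma(G,D')_z$ is $\pi_0$-trivial and invoke the implication $2\Rightarrow 1$ of Proposition~\ref{prop.connectedcayley}, but the proof of that implication is itself this conjugation argument.) Without some form of this step, the cycle of implications does not close.
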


Note that for $\chi=0$, using Proposition~\ref{prop.zerocharacter}, we recover the fact that $G$~is of type~$\mathrm C_1$ if and only if $G$~is compactly generated.

\begin{proof}
	(1 $\Leftrightarrow$ 2) By Proposition~\ref{prop.sigma1stable}, we know $\chi \in  \TopS^1(G)$ if and only if $(G_\chi \cdot \E C)_\chi$~is connected for some compact centered $C\in \C(G)$. By Corollary~\ref{cor.passagetocayley}, this means $C$~generates~$G$ along~$\chi$.
	
	(3 $\Rightarrow$ 2) This implication is trivial.
	
	(2 $\Rightarrow$ 3) Let $C$~be a compact centered generating set with $G_\chi=C^{\infty, \chi}$, and let $D$~be a second compact centered generating set. We wish to show that $G_\chi \subseteq D^{\infty, \chi}$. By Lemma~\ref{lem.compactexhaustion}, we have $C \subseteq D^n$ for some $n\in \NN$. Since $\chi (D^n)$~is compact, it has a minimum $z \le 0$; in particular, every element of~$C$ can be expressed as a word in~$D$ with $\chi$-track above~$z$ (and length~$n$).
	
	Now let $g\in G_\chi$, and choose $t\in D$ and $k \in \NN$ with $\chi(t^k)\ge -z$. The element $t^{-k} g t^k$ is in~$G_\chi$, and thus in some $C^{m, \chi}$. Writing $t^{-k} g t^k$ as a word in~$C$ of non-negative $\chi$-track and replacing each letter with a word in~$D$ of $\chi$-track above~$z$, we obtain a word~$w$ in~$D$ representing $t^{-k} g t^k$ with $\chi$-track also above~$z$. Hence the word $t^k w t^{-k}$ in~$D$ represents~$g$ and has non-negative $\chi$-track, showing $g\in D^{nm +2k, \chi}$.
\end{proof}

Given the descriptions of ``generation along~$\chi$'' provided by Proposition~\ref{prop.connectedcayley}, we also have characterizations of~$\TopS^1$ in terms of Cayley graphs:

\begin{prop}[$\TopS^1$-criteria]\label{prop:sigma1_crit}
	  Let $\chi\colon G \to \RR$ be a character and let $t\in G$ with $\chi(t)> 0$. If $\mathcal{P}$ is one of the properties of a subset of~$G$ listed in Proposition~\ref{prop.connectedcayley}, the following conditions are equivalent:
	  	\begin{enumerate}
	  	\item $\chi \in \TopS^1(G)$,
	  	\item Some $C\in \mathcal C(G)_{\supseteq\{1\}}$ satisfies $\mathcal P$.
	  	\item $G$~is compactly generated and every compact generating set satisfies $\mathcal P$.
	  \end{enumerate}
\end{prop}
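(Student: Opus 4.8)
The plan is to reduce the statement to Theorem~\ref{thm:sigma1_cptgen} by means of a single observation: for a \emph{nonempty} subset $X\subseteq G$, satisfying any one of the four properties $\mathcal P$ listed in Proposition~\ref{prop.connectedcayley} is equivalent to $X$ generating $G$ along~$\chi$. Note first that the hypothesis supplies a $t$ with $\chi(t)>0$, so $\chi\neq 0$ and $G$ is non-trivial; hence every generating set in sight is nonempty and Proposition~\ref{prop.connectedcayley} is applicable throughout.

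First I would prove the observation. If $X$ satisfies $\mathcal P$, then by Proposition~\ref{prop.connectedcayley} it satisfies condition~(1) of that proposition; in particular $\Gamma(G,X)_\chi=\Gamma(G,X)_0$ is connected, so $X$ generates $G$ along~$\chi$ by Corollary~\ref{cor.passagetocayley}. Conversely, suppose $X$ generates $G$ along~$\chi$, i.e.\ $\dot X^{\infty,\chi}=G_\chi$. Then $X$ is a generating set, and for every $g\in\dot X^{1,\chi}\subseteq G_\chi$ we have $\chi(t^{-1}gt)=\chi(g)\ge 0$, so $t^{-1}gt\in G_\chi=\dot X^{\infty,\chi}$; thus $t^{-1}\dot X^{1,\chi}t\subseteq\dot X^{\infty,\chi}$, which is exactly condition~(4) of Proposition~\ref{prop.connectedcayley}. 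That proposition then gives $\mathcal P$.

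Granting the observation, the three implications are bookkeeping with Theorem~\ref{thm:sigma1_cptgen}. For $(1)\Leftrightarrow(2)$: by Theorem~\ref{thm:sigma1_cptgen}, $\chi\in\TopS^1(G)$ iff some compact $C$ generates $G$ along~$\chi$; replacing such a $C$ by $C\cup\{1\}$ (which has the same symmetrization $\dot C$, is still compact, and still generates $G$) we obtain a witness in $\C(G)_{\supseteq\{1\}}$, and by the observation it satisfies~$\mathcal P$. Conversely, any $C\in\C(G)_{\supseteq\{1\}}$ satisfying $\mathcal P$ generates $G$ along~$\chi$ by the observation, so $\chi\in\TopS^1(G)$, again by Theorem~\ref{thm:sigma1_cptgen}. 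For $(1)\Leftrightarrow(3)$: Theorem~\ref{thm:sigma1_cptgen} says $\chi\in\TopS^1(G)$ iff $G$ is compactly generated and every compact generating set generates $G$ along~$\chi$; since every compact generating set is a nonempty generating set, the observation turns the phrase ``generates $G$ along~$\chi$'' into ``satisfies~$\mathcal P$'', and the ``$G$ compactly generated'' clause is untouched, giving~(3).

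I do not expect a real obstacle here: the substantive content lies in Proposition~\ref{prop.connectedcayley} (equivalence of the four formulations and their link to connectedness of $\Gamma(G,X)_\chi$) and in Theorem~\ref{thm:sigma1_cptgen}, and the present statement is essentially their repackaging. The only points deserving a line of care are the standing assumption $\chi\neq 0$ (so that Proposition~\ref{prop.connectedcayley} applies to the nonempty sets at hand) and the harmless enlargement of a compact generating set $C$ to $C\cup\{1\}$ in order to land in $\C(G)_{\supseteq\{1\}}$.
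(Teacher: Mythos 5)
Your proposal is correct and matches the paper's (implicit) argument: the paper states this proposition without proof, treating it exactly as the repackaging of Proposition~\ref{prop.connectedcayley} and Theorem~\ref{thm:sigma1_cptgen} that you carry out. Your key observation — that each property $\mathcal P$ is equivalent to generating $G$ along $\chi$ — is already contained in the implications $(1\Rightarrow 4)$ of Proposition~\ref{prop.connectedcayley} together with Corollary~\ref{cor.passagetocayley}, and your handling of nonemptiness and of enlarging $C$ to $C\cup\{1\}$ is fine.
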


We finish this section by collecting a pair of results that will be used in the next one.

\begin{lem}[Compactness of $C^{m, \chi}$]
	For every centered $C\in \C(G)$ and $m\in \NN$, the set $C^{m, \chi}$ is compact.
\end{lem}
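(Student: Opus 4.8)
The plan is to argue by induction on~$m$, using the inductive description $C^{0,\chi}=\{1\}$ and $C^{m+1,\chi}=(C^{m,\chi}C)\cap G_\chi$ that was recorded just after the definition of the sets~$X^{m,\chi}$. The base case is trivial: $C^{0,\chi}=\{1\}$ is a one-point set, hence compact.

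For the inductive step, suppose $C^{m,\chi}$ is compact. Then the product $C^{m,\chi}C$ is compact as well, being the image of the compact set $C^{m,\chi}\times C$ under the continuous multiplication map $G\times G\to G$ (the same observation already used in Section~\ref{sec:altfilt}). On the other hand, $G_\chi=\chi^{-1}(\RR_{\ge 0})$ is closed in~$G$, since $\chi$~is continuous and $\RR_{\ge 0}$ is closed in~$\RR$. Consequently $C^{m+1,\chi}=(C^{m,\chi}C)\cap G_\chi$ is a closed subset of the compact space $C^{m,\chi}C$, and is therefore compact, completing the induction.

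I do not anticipate any real obstacle; the one point that deserves a line of justification is that the inductive formula $C^{m+1,\chi}=(C^{m,\chi}C)\cap G_\chi$ faithfully encodes the condition that \emph{every} initial segment of the word has non-negative $\chi$-value. This follows by unwinding the definition: deleting the last letter of an $\dot C$-word realizing an element of~$C^{m+1,\chi}$ leaves a word realizing an element of~$C^{m,\chi}$, and conversely, appending a letter of~$C$ to a word of non-negative $\chi$-track keeps all initial segments but the last non-negative, the last one being non-negative precisely when the resulting element lies in~$G_\chi$. (Alternatively, one can bypass the inductive formula entirely and just note directly that $C^{m,\chi}$ is the intersection of the compact set $C^m$ with the closed set $\bigcap_{i=1}^{m}\{x_1\cdots x_m : \chi(x_1\cdots x_i)\ge 0\}$, but the inductive argument is cleaner.)
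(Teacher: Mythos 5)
Your proof is correct and follows essentially the same route as the paper: induction on~$m$, compactness of $C^{m,\chi}C$ as the image of $C^{m,\chi}\times C$ under multiplication, and then observing that intersecting with the closed set $G_\chi$ (equivalently, taking the preimage of $\RR_{\ge 0}$ under the restricted~$\chi$) yields a closed subset of a compact set. The extra verification of the inductive formula $C^{m+1,\chi}=(C^{m,\chi}C)\cap G_\chi$ is a reasonable addition, since the paper records that identity without proof.
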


\begin{proof}
	$C^{0, \chi} = \{1\}$ is compact. If we assume $C^{m, \chi}$~is compact, it follows that $C^{m, \chi} C$, being the image of the map $C^{m, \chi} \times C \to G, (g,h) \mapsto gh$, is compact.  Then $C^{m+1, \chi}$ is the preimage of~$\RR_{\ge 0}$ under the restriction~$\chi|_{C^{m+1, \chi} C}$. Therefore it is closed in~$C^{m, \chi} C$, and hence compact.
\end{proof}

The following lemma generalizes Lemma~\ref{lem.compactexhaustion}.

\begin{lem}[Compact exhaustion along~$\chi$]\label{lem.compactexhaustionchi}
	Let $\chi\colon G \to \RR$ be a character, let $C \in \C(G)$ be a centered subset generating~$G$ along~$\chi$, and let $D\subseteq G_\chi$ be compact. Then for some $m\in \NN$ we have $D \subseteq \Int_{G_\chi}(C^{m, \chi})$.  
\end{lem}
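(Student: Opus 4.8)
The plan is as follows. If $\chi = 0$ then $G_\chi = G$ and $C^{m,\chi} = C^m$, and the assertion is exactly Lemma~\ref{lem.compactexhaustion}; so assume $\chi \neq 0$ from now on. I will work inside the closed (hence locally compact Hausdorff, hence Baire) subspace $G_\chi = \chi^{-1}([0,\infty))$ of $G$, and study the \emph{good set}
\[\mathcal{G} := \bigcup_{m\in\NN} \Int_{G_\chi}\bigl(C^{m,\chi}\bigr),\]
which is an ascending union of subsets of $G_\chi$ that are open relative to $G_\chi$. Since $D$ is compact and these subsets are nested, once we know $\mathcal{G} = G_\chi$ we get $D \subseteq \Int_{G_\chi}(C^{m,\chi})$ for a single $m$, which is the conclusion. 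So the entire task is to prove $\mathcal{G} = G_\chi$.

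First I collect the tools. The sets $C^{m,\chi}$ are compact (as just proved), hence closed in $G_\chi$, and $\bigcup_m C^{m,\chi} = C^{\infty,\chi} = G_\chi$ because $C$, being centered and generating $G$ along~$\chi$, satisfies $\dot C = C$ and $\dot C^{\infty,\chi} = G_\chi$. They are submultiplicative, $C^{p,\chi}C^{q,\chi}\subseteq C^{p+q,\chi}$: concatenating two $C$-words of non-negative $\chi$-track gives a word whose track on the second half is the old one shifted up by the $\chi$-value of the first element, which is $\ge 0$. Finally, since $\chi\neq 0$ its image is a nontrivial, hence unbounded, subgroup of $\RR$, so $\chi^{-1}((0,\infty))$ is a nonempty open subset of $G_\chi$; it is locally compact Hausdorff, hence Baire, and is the ascending union of the closed-in-it subsets $C^{m,\chi}\cap\chi^{-1}((0,\infty))$. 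Baire's theorem thus produces $u_0\in\mathcal{G}$ with $s_0:=\chi(u_0)>0$.

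The engine of the proof consists of two translation statements, each proved by transporting an open-in-$G$ neighborhood that witnesses an interior condition. (i) If $g\in\mathcal{G}$ with $\chi(g)>0$ and $c\in C$ with $\chi(gc)>0$, then $gc\in\mathcal{G}$: shrinking the witnessing neighborhood $V$ of $g$ so that $\chi>0$ on $V$ and on $Vc$, one gets $V\subseteq C^{m,\chi}$ and $Vc\subseteq C^{m+1,\chi}$, with $Vc$ open in $G_\chi$ because it lies in $G_\chi$ and is open in $G$. (ii) If $g\in\mathcal{G}$ and $c\in C$ with $\chi(c)\le 0$ and $\chi(gc)\ge 0$, then $gc\in\mathcal{G}$: here $Vc\cap G_\chi\subseteq C^{m+1,\chi}$ for free, since any $y=vc$ in $G_\chi$ has $\chi(v)=\chi(y)-\chi(c)\ge\chi(y)\ge 0$, forcing $v\in C^{m,\chi}$. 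Using (i) I will show $\{g\in G_\chi:\chi(g)\ge s_0\}\subseteq\mathcal{G}$: for such~$g$, the element $u_0^{-1}g$ has $\chi$-value $\ge 0$, so it lies in some $C^{r,\chi}$ and is represented by a $C$-word $c_1\cdots c_r$ of non-negative $\chi$-track; then $u_0, u_0c_1, u_0c_1c_2,\dots,g$ are all of $\chi$-value $\ge s_0>0$, and (i) propagates membership in $\mathcal{G}$ one step at a time. Using (ii) I will then show $\{g\in G_\chi:\chi(g)<s_0\}\subseteq\mathcal{G}$: fix $c_*\in C$ with $\chi(c_*)>0$ (one exists, since $\chi(C)$ is a symmetric generating set of the nonzero group $\chi(G)$); for $g$ with $0\le\chi(g)<s_0$, choose $k$ with $\chi(gc_*^k)\ge s_0$, so $gc_*^k\in\mathcal{G}$ by the previous step, and peel off the $c_*$'s one at a time via~(ii) — each partial product $gc_*^j$ still has $\chi$-value $\ge 0$ — ending at $g\in\mathcal{G}$. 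Hence $\mathcal{G}=G_\chi$, as desired.

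The main obstacle is the asymmetry of $G_\chi$ at its boundary $\ker\chi$: right multiplication by an element $c$ with $\chi(c)>0$ carries $G_\chi$ into the \emph{proper} closed subset $\chi^{-1}([\chi(c),\infty))$, and hence does not send $G_\chi$-interior points to $G_\chi$-interior points near that boundary. This is exactly why (i) requires the \emph{strict} inequalities $\chi(g)>0$ and $\chi(gc)>0$, why the initial point $u_0$ must be extracted at a strictly positive level (so Baire is applied to $\chi^{-1}((0,\infty))$, not to all of $G_\chi$), and why the sub-$s_0$ region is handled by first raising the $\chi$-value and then descending via (ii), rather than attempting to climb out of level~$0$ directly. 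A routine but omnipresent bookkeeping point is that every interior here is taken relative to $G_\chi$, and that a subset of $G_\chi$ which happens to be open in $G$ is automatically open in $G_\chi$.
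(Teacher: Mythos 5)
Your proof is correct, but it is organized quite differently from the paper's. The paper also starts from Baire, applied to all of $G_\chi$, and then has to upgrade the resulting nonempty $G_\chi$-open subset $W\subseteq C^{k,\chi}$ to a nonempty subset $U\subseteq C^{k,\chi}$ that is open \emph{in $G$}; this forces a case analysis (either $W$'s ambient open set meets $\chi^{-1}(\RR_{>0})$, or one uses the trick $U:=W_0\cap wW_0^{-1}w$ with $w\in\ker\chi$). With that single set $U$ in hand, the paper proves the uniform inclusion $C^{l,\chi}\subseteq \Int_{G_\chi}(C^{2k+l,\chi})$ by a word argument on $C^{l,\chi}Uu^{-1}$, and concludes exactly as you do from compactness of $D$. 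You instead apply Baire only to the $G$-open subspace $\chi^{-1}((0,\infty))$, which hands you a seed point $u_0$ of strictly positive $\chi$-value for free and removes the case analysis (at the price of treating $\chi=0$ separately, which is legitimate since that case is the earlier compact-exhaustion lemma), and you then spread interiority pointwise via your two one-step translation lemmas, first ascending above level $s_0$ along a word of non-negative track and then descending with a fixed $c_*$ of positive $\chi$-value. Both arguments are sound; the paper's yields the quantitatively cleaner statement that $C^{l,\chi}$ lies in the interior of $C^{2k+l,\chi}$ with $k$ independent of $l$, while yours gives no uniform bound on the exponent but needs none for the stated conclusion, and it isolates the genuine obstruction (the failure of right translation by positive-$\chi$ elements to preserve $G_\chi$-interiors near $\ker\chi$) more transparently than the paper's $wW_0^{-1}w$ construction does.
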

\begin{proof}
	Since $G_\chi$ is locally compact and Hausdorff,
	Baire's Category Theorem tells us it is a Baire space  \cite[Chapter~IX, Theorem~1]{Bou98}. This means $G_\chi$ is not a countable union of closed subsets with empty interior. Hence, for some $k \in \NN$, the set~$C^{k, \chi}$ has nonempty interior relative to~$G_\chi$, that is, there is a nonempty subset $W\subseteq C^{k, \chi}$ that is open in~$G_\chi$.
	
	We wish to modify~$W$ to a nonempty subset~$U \subseteq C^{k, \chi}$ that is open \emph{as a subset of~$G$}. To this end, we write $W = W_0 \cap G_\chi$, where $W_0$~is an open subset of~$G$, and consider two cases:
	\begin{itemize}
		\item If $W_0$ contains an element of strictly positive $\chi$-value, we take $U:= W_0 \cap \chi^{-1}(\RR_{>0})$. In this case it is clear that $U$ is nonempty and open in~$G$, and we also see that $U\subseteq W_0 \cap G_\chi = W \subseteq C^{k, \chi}$.
		\item If $\chi(W_0) \subseteq \RR_{\le0}$, then as $W$~is nonempty, we see $W_0$~contains an element $w$~of~$\ker \chi$. We put $U:= W_0 \cap wW_0^{-1}w$. Again it is clear that $U$ is open in~$G$, and it is also easy to see that $w\in U$, so $U$~is nonempty. Moreover, we have $\chi(w W_0^{-1}w) \subseteq \RR_{\ge0}$, so $\chi(U) = \{0\}$ and thus $U \subseteq W_0 \cap G_\chi =W \subseteq C^{k, \chi}$. 
	\end{itemize}
	
	Armed with this set~$U$, our next goal is to show that for every~$l\in \NN$, we have
	\[C^{l, \chi} \subseteq \Int_{G_\chi} (C^{2k+l, \chi}).\]
	Once this inclusion is established, the result can be obtained easily: together with the assumption that $G_\chi = \bigcup_{l\in \NN}C^{l, \chi}$, we see $G_\chi = \bigcup_{m\in \NN} \Int_{G_\chi} (C^{m, \chi})$. As this is an exhaustion by an ascending sequence of open subsets of~$G_\chi$, compactness of~$D$ implies that $D$~is contained in some $\Int_{G_\chi} (C^{m, \chi})$.
	
	To prove the above inclusion, choose any $u\in U$. We have $C^{l, \chi} \subseteq (C^{l, \chi} Uu^{-1}) \cap G_\chi$, so it suffices to show that $(C^{l, \chi} Uu^{-1}) \cap G_\chi \subseteq \Int_{G_\chi} (C^{2k+l, \chi})$.
	But $C^{l, \chi} Uu^{-1}$ is open in~$G$, so $(C^{l, \chi} Uu^{-1})\cap G_\chi$ is open in~$G_\chi$, and thus it suffices that we verify $(C_\chi^l Uu^{-1}) \cap G_\chi\subseteq C_\chi^{2k+l}$.
	Now, as $U\subseteq C^{k, \chi}$, we see $C^{l, \chi} U \subseteq C_\chi^{l+k,\chi}$.
	Each element of $(C^{l, \chi} Uu^{-1}) \cap G_\chi$ is thus a product of the form $yu^{-1}$ where $y$~is represented by a word~$w_y$ in~$C$ of length $\le k+l$ with non-negative $\chi$-track, and $\chi(y) \ge \chi(u)$.
	Moreover, $u\in C^{k, \chi}$, so consider a word~$w_u$ in~$C$ for~$u$ of length $\le k$ and non-negative $\chi$-track. The inverse word~$w_u^{-1}$ represents~$u^{-1}$ and has $\chi$-track above $-\chi(u)$, so the word $w_y  w_u^{-1}$ has non-negative $\chi$-track. This exhibits~$yu^{-1}$ as an element of~$C^{2k+l,\chi}$.
\end{proof}

\section{Compact presentability and {$\TopS^2$}}\label{sec:sigma2}

We begin by introducing some notation convenient for studying group presentations.
	Given a generating set $X \subseteq G$,
	we define $\bar X$ to be a disjoint copy of~$X$, whose elements are to be thought of as formal inverses to the elements in~$X$. For each $x\in X$, the corresponding element in $\bar X$ is denoted by~$\bar x$, and we also write $\bar{\bar x}:=x$. Every element of~$G$ can thus be represented by a word in the disjoint union $X^{\pm} := X\sqcup \bar X$, where each $\bar x \in \bar X$ is to be read as~$x^{-1}$. Given an $X^{\pm}$-word~$w=y_1\cdots y_k$, we write $\bar w := \bar y_k\cdots \bar y_1$. Two $X^{\pm}$-words are \textbf{freely equivalent} if they differ by a sequence of cancellations and insertions of syllables $y\bar y$, with $y\in X^{\pm}$, and these moves are called \textbf{elementary deletions / insertions}, respectively.
	 If an $X^{\pm}$-word represents~$1$, we call it an  \textbf{$X$-relation}. For a set of $X$-relations~$R$, we write $\dot R := R \cup \{\bar w \mid w\in R\}$.
	
	Given a character $\chi \colon G \to \RR$, we say that an $X$-relation $\rho$~of non-negative $\chi$-track \textbf{follows from~$R$ along~$\chi$} if $\rho$~is freely equivalent to a relation of the form $w_1 r_1 \bar w_1 \cdots w_l r_l \bar w_l$ with non-negative $\chi$-track, where $r_1, \dots, r_l \in \dot R$ and $w_1, \dots, w_l$ are $X^{\pm}$-words. Then the~$w_i$ necessarily have non-negative $\chi$-track. Sometimes, it is also convenient for the~$r_i$ to have non-negative $\chi$-track, which may often be attained with the following trick:
	
	\begin{lem}[Non-negative relators]\label{lem:posrels}
		If $\rho$ follows from $R$ along~$\chi$, then $\rho$~is freely equivalent to a relation of the form $u_1 r'_1 \bar u_1 \cdots u_l r'_l \bar u_l$ with non-negative $\chi$-track, such that all words $r'_1, \dots, r'_l$  are cyclic permutations of relators in~$\dot R$ and have non-negative $\chi$-track.
	\end{lem}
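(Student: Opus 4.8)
The plan is to start from the given expression $\rho \simeq w_1 r_1 \bar w_1 \cdots w_l r_l \bar w_l$ with non-negative $\chi$-track, where each $r_i \in \dot R$, and to fix up one conjugated relator $w_i r_i \bar w_i$ at a time, replacing it by a product of conjugates of \emph{cyclic permutations} of relators, all of non-negative $\chi$-track, without disturbing the overall non-negative $\chi$-track of the word. The key local observation is the following: if $r = y_1 \cdots y_k$ is a relator (so it represents $1$), and its $\chi$-track dips below $0$, let $j$ be an index at which the $\chi$-track of $r$ attains its minimum, i.e. $\chi(y_1 \cdots y_j)$ is minimal. Write $r = ab$ where $a = y_1\cdots y_j$ and $b = y_{j+1}\cdots y_k$. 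Then the cyclic permutation $r' := b a$ is again a relator, and since the minimum of the $\chi$-track of $r$ was attained exactly after reading $a$, the word $r'$ has non-negative $\chi$-track (reading $b$ first, starting from $\chi$-value $0$, we never go below $0$ because along $r$ the values from position $j$ onward were all $\ge \chi(a)$, hence the increments are all $\ge 0$ relative to $\chi(a)$; then reading $a$, we return to $0$, and these partial values equal the original partial values of $r$ minus $\chi(a) \ge $ the original minus itself, so they are $\ge 0$). Moreover $r = ab$ is freely equivalent to $a (ba) \bar a = a r' \bar a$, so the conjugate $w r \bar w$ is freely equivalent to $(wa) r' \overline{(wa)}$.

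The second point to check is that this replacement preserves the non-negative $\chi$-track of the ambient word $w_1 r_1 \bar w_1 \cdots w_l r_l \bar w_l$. Here I would use the standard fact (already invoked in the proof of Proposition~\ref{prop.connectedcayley}, implication $(4\Rightarrow 2)$) that when we conjugate a relator $r$ by $a$ — i.e. replace the factor $w r \bar w$ by $(wa) r' \overline{(wa)}$ — the $\chi$-track of the new factor, as a function of position, agrees with the old one before we enter the block and after we leave it, and inside the block the partial $\chi$-values only change in a controlled way. More carefully: in the original word, the $\chi$-track over the sub-block $w r \bar w$ runs $\chi(w_1\cdots w_{i-1}) + (\text{$\chi$-track of } w r \bar w)$; in the new word it runs $\chi(w_1 \cdots w_{i-1}) + (\text{$\chi$-track of } (wa) r' \overline{(wa)})$. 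Since $r' = ba$ has non-negative $\chi$-track and $wa$, $\overline{(wa)}$ are just prefixes/suffixes whose partial values interpolate between the ambient values at the block endpoints (which are unchanged because $wr\bar w$ and $(wa)r'\overline{(wa)}$ both represent the same element, conjugation by $w$ of $1$), the new sub-block $\chi$-track is bounded below by $\min\{$ value at left endpoint, value at right endpoint $\}$, hence $\ge 0$ since $\rho$ had non-negative $\chi$-track. I would spell this out as a short lemma-internal computation rather than citing it, since the cited instance is phrased for $X$-words and paths rather than for conjugated relators.

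Applying this to each $r_i$ in turn (and noting that once $r_i$ is replaced by a cyclic permutation $r_i'$ of non-negative $\chi$-track, later replacements of other $r_j$ do not affect it), we arrive at $\rho \simeq u_1 r_1' \bar u_1 \cdots u_l r_l' \bar u_l$ with non-negative $\chi$-track, each $r_i'$ a cyclic permutation of an element of $\dot R$ and each $r_i'$ of non-negative $\chi$-track, as desired. The one subtlety — and the step I expect to require the most care — is the bookkeeping that conjugating and permuting does not secretly lower the $\chi$-track somewhere in the middle: one must be careful that $j$ is chosen to be a position of \emph{global} minimum of the $\chi$-track of $r_i$, and that "$r_i$ represents $1$" is used to guarantee the $\chi$-value returns to where it started so that the endpoints of the sub-block are genuinely unchanged. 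Everything else is routine free-equivalence manipulation.
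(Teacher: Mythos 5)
Your proposal is correct and is essentially the paper's own proof: the paper also cuts each $r_i$ at a prefix $v_i$ realizing the global minimum of its $\chi$-track, takes $r_i'$ to be the cyclic permutation moving $v_i$ to the end, and sets $u_i := w_i v_i$, with the verification that the ambient track stays non-negative left as "straightforward". One small caution for when you write up the bookkeeping: the partial $\chi$-values over $u_i = w_i v_i$ do not literally "interpolate between the block endpoints" (the $w_i$-portion may dip); the correct justification is that every partial value of the new block either already occurred in the original non-negative track or, during $r_i'$, is bounded below by the value $\chi(w_i v_i)$, which itself occurred in the original track.
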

	
	\begin{proof}
		We know $\rho$~is freely equivalent to a relation $w_1 r_1 \bar w_1 \cdots w_l r_l \bar w_l$ of non-negative $\chi$-track and with $r_i\in \dot R$.
		For each $i\in\{1, \dots, l\}$, choose a prefix~$v_i$ of~$r_i$ for which $\chi(v_i)$ is the minimum of its $\chi$-track. In particular, since the empty prefix has $\chi$-value~$0$, we have $\chi(v_i)\le 0$.
		We consider the relator~$r_i'$ obtained from~$r_i$ by moving~$v_i$ to the end of~$r_i$, and claim that $r_i'$~has non-negative $\chi$-track. Indeed, writing $r_i = y_1 \cdots y_k$ and $v_i= y_1 \cdots y_m$ with $m\le k$, the $\chi$-track of $r_i'$ is the sequence
		\[0, \chi (y_{m+1}), \ldots, \chi (y_{m+1} \cdots y_k), \chi (y_{m+1} \cdots y_k y_1),\ldots, \chi(y_{m+1} \cdots y_k y_1\cdots y_m).\]
		Minimality of $\chi(v_i)$ yields
		\begin{align*}
			\chi(y_{m+1}) & = \chi(y_1 \cdots y_{m+1}) - \chi (v_i) \ge 0,\\
			\dots\\
			\chi(y_{m+1} \cdots y_k) &= \chi(y_1 \cdots y_k) - \chi (v_i) = -\chi(v_i) \ge 0,\\
			\chi(y_{m+1} \cdots y_k y_1) &= -\chi(v_i) + \chi (y_1) \ge 0,\\
			\dots\\
			\chi(y_{m+1} \cdots y_k y_1\cdots y_m) &= -\chi(v_i) + \chi (y_1\cdots y_m) = 0.
		\end{align*}
		
		Next, observe that since $\rho$~is assumed to have non-negative $\chi$-track, and the sub-word $w_1 r_1 \bar w_1 \ldots w_{i-1} r_{i-1} \bar w_{i-1}$ represents the trivial element (which has $\chi$-value $0$), the word $u_i:=w_i v_i$ also has non-negative $\chi$-track. It is straightforward to see that the word $u_i r_i' \bar u_i$ is freely equivalent to $w_i r_i \bar w_i$, and so the product $u_1 r'_1 \bar u_1 \cdots u_l r'_l \bar u_l$ is as required.
	\end{proof}
	
	We say $\langle X \mid R\rangle$~is a \textbf{presentation of $G$ along~$\chi$} if $X$~generates~$G$ along~$\chi$ and all $X$-relations in~$G$ of non-negative $\chi$-track follow from~$R$ along~$\chi$. Since every $X$-relation is conjugate to one of non-negative $\chi$-track, this implies in particular that $\langle X \mid R \rangle$~is a presentation of~$G$. For $\chi = 0$ we recover the usual notion of a presentation.

Observe that in the condition of $\rho$~being freely equivalent to $w_1 r_1 \bar w_1 \cdots w_l r_l \bar w_l$, we do not explicitly demand that the equivalence be through relations of non-negative $\chi$-track. However, the existence of such an equivalence follows:

\begin{lem}[Free equivalence of non-negative words]\label{lem.positiveequiv}
	Let $\chi\colon G \to \RR$ be a character, let $X\subseteq G$, and let $\rho,\rho'$ be $X^{\pm}$-words of non-negative $\chi$-track. If $\rho$~is freely equivalent to~$\rho'$, then there is a sequence of elementary insertions and deletions from~$\rho$ to~$\rho'$ such that each intermediate word has non-negative $\chi$-track.
\end{lem}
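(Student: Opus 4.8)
The plan is to reduce the statement to a single monotonicity observation about how the $\chi$-track changes under one elementary deletion, and then to route the free equivalence not through the given (possibly bad) sequence of moves, but through the freely reduced word common to $\rho$ and $\rho'$.

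First I would record the monotonicity lemma. Let $w = y_1\cdots y_k$ be an $X^{\pm}$-word and write $p_i := y_1\cdots y_i \in G$ for its partial products ($0 \le i \le k$, with $p_0 = 1$), so the $\chi$-track of $w$ is $(\chi(p_0),\dots,\chi(p_k))$. Suppose $y_{j+1}y_{j+2}$ is an inverse pair, i.e.\ $y_{j+2} = \bar y_{j+1}$, so that $y_{j+1}y_{j+2}$ represents $1$ in $G$ and hence $p_{j+2} = p_j$; let $w'$ be the word obtained by deleting this syllable. A direct check shows the partial products $q_i$ of $w'$ satisfy $q_i = p_i$ for $i \le j$ and $q_i = p_{i+2}$ for $i > j$. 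Thus the $\chi$-track of $w'$ is obtained from that of $w$ by deleting precisely the two entries $\chi(p_{j+1})$ and $\chi(p_{j+2})$ and leaving all other entries unchanged and in order. In particular $\min$ of the $\chi$-track can only increase, so if $w$ has non-negative $\chi$-track then so does $w'$; by induction, any word obtained from $w$ by a finite sequence of elementary deletions has non-negative $\chi$-track.

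Next I would invoke the classical confluence of free reduction: every $X^{\pm}$-word reduces, by a finite sequence of elementary deletions, to a unique freely reduced word, and two freely equivalent words have the same freely reduced form. Applying this to $\rho$ and to $\rho'$ (which are freely equivalent by hypothesis) yields a common reduced word $r$, together with sequences of deletions $\rho = w_0 \to \dots \to w_a = r$ and $\rho' = w'_0 \to \dots \to w'_b = r$. By the monotonicity lemma, every $w_i$ and every $w'_i$ has non-negative $\chi$-track.

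Finally, concatenating produces the sequence
\[\rho = w_0 \to \dots \to w_a = r = w'_b \to w'_{b-1} \to \dots \to w'_0 = \rho',\]
whose first $a$ steps are the deletions reducing $\rho$ and whose last $b$ steps are the reverses of the deletions reducing $\rho'$, i.e.\ elementary insertions. Every word occurring has non-negative $\chi$-track, so this is exactly the sequence demanded. I do not expect a real obstacle here; the only points needing care are the bookkeeping in the monotonicity computation (that an elementary deletion removes exactly the two "inner" track entries, the identity $p_{j+2}=p_j$ keeping the outer ones intact) and the explicit appeal to confluence, which is what guarantees both that a reduction can be carried out purely by deletions and that $\rho$ and $\rho'$ admit a common reduced word.
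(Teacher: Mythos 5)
Your proposal is correct and follows essentially the same route as the paper: reduce both $\rho$ and $\rho'$ to their common free reduction by elementary deletions, observe that a deletion can only remove entries from the $\chi$-track (so non-negativity is preserved), and concatenate the first sequence with the reverse of the second. The paper states the deletion-preserves-non-negativity step without the explicit partial-product bookkeeping, but the argument is identical.
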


\begin{proof}
	Let $\rho_0$ be the common free reduction of $\rho$ and $\rho'$, that is, $\rho_0$~is the unique word obtained from $\rho$ by performing elementary deletions until no other is possible. An elementary deletion on a word of non-negative $\chi$-track yields again a word of non-negative $\chi$-track, so all words from~$\rho$ to $\rho_0$ have non-negative $\chi$-track. By the same argument, we connect~$\rho'$ to $\rho_0$ through words of non-negative $\chi$-track. Then the sequence from $\rho$, to $\rho_0$, to $\rho'$ is comprised of words of non-negative $\chi$-track.
\end{proof}

Given $X\subseteq G$ and an $X^{\pm}$-word~$w = y_1 \cdots y_k$, an edge-path $(g_0, e_1, g_1, \dots, e_k, g_k)$ in $\Gamma (G,X)$ \textbf{reads}~$w$ if for every $i\in \{1,\dots, k\}$, we have:
\begin{itemize}
	\item if $y_i\in X$, then $e_i =(g_{i-1}, g_i)$ and $g_{i-1} y_i = g_{i}$,
	\item if $y_i\in \bar X$, then $e_i =(g_i, g_{i-1})$ and $g_i \bar y_i = g_{i-1}$.
\end{itemize}
Every path determines an $X^{\pm}$-word, and conversely, specifying a starting vertex $g_0\in G$ and an $X^{\pm}$-word~$w$ determines a path reading~$w$.

Let $X\subseteq G$ and let $R$~be a set of $X$-relators. The \textbf{Cayley complex} $\Gamma(G, X\st R)$ is the $2$-dimensional CW complex obtained from the geometric realization~$|\Gamma(G,X)|$ of the Cayley graph by attaching a $2$-cell along every path reading a relator in~$R$. 

Given a character $\chi \colon G\to \RR$ and $s \in \RR$, we denote by $\Gamma(G, X\st R)_s$ the sub-complex spanned by the vertices~$g$ with $\chi(g) \ge s$. We also write $\Gamma(G, X\mid R)_\chi := \Gamma(G, X\mid R)_0$.

\begin{lem}[Relations of non-negative $\chi$-track]\label{lem.wordsincayleycomplex}
	Let $\chi\colon G \to \RR$ be a character, let $X\subseteq G$ generate~$G$ along~$\chi$, and let $R$~be a set of $X$-relations. Then an $X$-relation~$\rho$ of non-negative $\chi$-track follows from~$R$ along~$\chi$ if and only if the edge-loop~$\gamma$ in~$|\Gamma(G,X)_\chi|$ based at~$1$ reading~$\rho$ is null-homotopic in~$\Gamma(G, X \mid R)_\chi$. 
\end{lem}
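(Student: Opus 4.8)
The statement asserts an equivalence between a combinatorial condition (``$\rho$ follows from $R$ along $\chi$'') and a topological one (nullhomotopy of the loop $\gamma$ reading $\rho$ inside the subcomplex $\Gamma(G,X\mid R)_\chi$). The plan is to prove each direction by translating between words/free equivalences and edge-loops/combinatorial homotopies of loops in a $2$-complex, keeping careful track of $\chi$-tracks throughout so that everything stays in the $\chi\ge 0$ part.

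First I would set up the dictionary. A $2$-dimensional CW-complex built from a graph by attaching $2$-cells along loops reading relators has a standard description of $\pi_1$: a based edge-loop is nullhomotopic in $\Gamma(G,X\mid R)$ if and only if the word it reads is freely equivalent to a product of conjugates $w_1 r_1 \bar w_1 \cdots w_l r_l \bar w_l$ of relators $r_i \in \dot R$ by $X^{\pm}$-words $w_i$ (van Kampen's lemma). The content of the present lemma is the ``relative'' version of this, where we only allow cells with vertices of non-negative $\chi$-value, and correspondingly only allow words of non-negative $\chi$-track. For the direction ($\Leftarrow$), suppose $\gamma$ is nullhomotopic in $\Gamma(G,X\mid R)_\chi$. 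A nullhomotopy is witnessed by a van Kampen diagram whose $2$-cells all carry relators from $\dot R$ and all of whose vertices map to vertices of $\Gamma(G,X)_\chi$, hence have $\chi$-value $\ge 0$. Reading off the boundary word via a spanning-tree argument on the diagram expresses $\rho$ as freely equivalent to a product $w_1 r_1 \bar w_1 \cdots w_l r_l \bar w_l$; since every vertex traversed has $\chi$-value $\ge 0$, each prefix of this product represents an element of $G_\chi$, i.e.\ the product has non-negative $\chi$-track. This is precisely ``$\rho$ follows from $R$ along $\chi$.'' (Alternatively, and perhaps more self-containedly, one can argue by induction on the number of $2$-cells in the nullhomotopy: peeling off one boundary $2$-cell of the diagram corresponds to a conjugation by the $X^{\pm}$-word spelled out by the path from the base vertex to that cell through the $1$-skeleton of the diagram, which lies in $\Gamma(G,X)_\chi$ and hence has non-negative $\chi$-track.)

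For the direction ($\Rightarrow$), suppose $\rho$ follows from $R$ along $\chi$. By Lemma~\ref{lem:posrels} we may write $\rho$ as freely equivalent to $u_1 r'_1 \bar u_1 \cdots u_l r'_l \bar u_l$ of non-negative $\chi$-track, where now each $r_i'$ is itself a cyclic permutation of a relator in $\dot R$ and has non-negative $\chi$-track. By Lemma~\ref{lem.positiveequiv}, the free equivalence from $\rho$ to this product can be realized through $X^{\pm}$-words of non-negative $\chi$-track, so the edge-loop $\gamma$ based at $1$ is homotopic (via elementary contractions/expansions of spurs, which take place entirely in $|\Gamma(G,X)_\chi|$, since all intermediate words have non-negative $\chi$-track and so all vertices visited lie in $\Gamma(G,X)_\chi$) to the loop $\delta$ reading $u_1 r'_1 \bar u_1 \cdots u_l r'_l \bar u_l$ based at $1$. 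It now suffices to contract $\delta$ in $\Gamma(G,X\mid R)_\chi$. This loop decomposes, up to reordering handled by the standard ``lollipop'' argument, as a concatenation of loops of the form $u_i * \delta_i * \bar u_i$, where $\delta_i$ is the loop based at the endpoint of the path from $1$ reading $u_i$, reading the relator $r_i'$. Because $u_i$ has non-negative $\chi$-track, the path reading $u_i$ stays in $\Gamma(G,X)_\chi$; because $r_i'$ has non-negative $\chi$-track and is a cyclic permutation of an element of $\dot R$, the loop $\delta_i$ stays in $\Gamma(G,X)_\chi$ and, being (the boundary of) an attached $2$-cell of $\Gamma(G,X\mid R)$ whose vertices all lie in $G_\chi$, that $2$-cell belongs to $\Gamma(G,X\mid R)_\chi$ and provides a nullhomotopy of $\delta_i$ there. (Here one uses that attaching a $2$-cell along a cyclic permutation of a relator, or along its inverse, yields the same subcomplex as attaching along the relator itself, since cells are attached along \emph{all} paths reading relators in $R$, at \emph{all} basepoints.) Hence each $u_i * \delta_i * \bar u_i$ is nullhomotopic in $\Gamma(G,X\mid R)_\chi$, so $\delta$ is, and therefore so is $\gamma$.

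The main obstacle, and the step requiring the most care, is the bookkeeping of $\chi$-tracks in the ($\Rightarrow$) direction: one must ensure that \emph{every} intermediate configuration --- the free-equivalence moves from $\rho$ to the product of conjugates, the reassociation/reordering of the conjugates into lollipop form, and the individual $2$-cell fillings --- takes place inside the subcomplex $\Gamma(G,X\mid R)_\chi$ rather than in the full Cayley complex. This is exactly what Lemmas~\ref{lem:posrels} and~\ref{lem.positiveequiv} were set up to handle, so the proof should reduce to invoking them at the right moments and checking that the ``lollipop'' decomposition of a product of conjugates does not introduce vertices of negative $\chi$-value (it does not, because the connecting paths are precisely the $u_i$, which have non-negative $\chi$-track). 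In the ($\Leftarrow$) direction, the only subtlety is to phrase van Kampen's lemma in a way that respects the subcomplex structure, which is automatic once one notes that a $2$-cell of $\Gamma(G,X\mid R)$ lies in $\Gamma(G,X\mid R)_\chi$ exactly when all the vertices on its attaching loop have non-negative $\chi$-value.
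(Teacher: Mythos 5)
Your proof is correct and follows essentially the same route as the paper's: in the forward direction you realize the free equivalence through words of non-negative $\chi$-track (Lemma~\ref{lem.positiveequiv}) and fill each whiskered relator loop with a $2$-cell that lies in the subcomplex because all its vertices are in $G_\chi$, and in the backward direction you extract from the nullhomotopy a product of conjugates $w_1 r_1 \bar w_1 \cdots w_l r_l \bar w_l$ of non-negative $\chi$-track. The only cosmetic difference is that in the backward direction you read this product off a van Kampen diagram, whereas the paper invokes Seifert--van Kampen to present $\pi_1(\Gamma(G,X\mid R)_\chi)$ relative to its $1$-skeleton and then lifts to the universal cover of $\Gamma(G,X)_\chi$ to certify the free equivalence with $\rho$; these are equivalent standard devices, and your extra appeal to Lemma~\ref{lem:posrels} in the forward direction is harmless but not needed, since non-negativity of the $\chi$-track of the whole product already places every vertex of each $r_i$-subloop in $G_\chi$.
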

\begin{proof}
	($\Rightarrow$) We know $\rho$ is freely equivalent to a word $w=w_1 r_1 \bar w_1 \cdots w_l r_l \bar w_l$ of non-negative $\chi$-track, with $r_i\in \dot R$. By Lemma~\ref{lem.positiveequiv}, this free equivalence can be realized through words of non-negative $\chi$-track, which provide a homotopy in $|\Gamma(G,X)_\chi|$ from~$\gamma$ to the loop reading~$w$. Hence we need only show that this loop is null-homotopic in $\Gamma(G, X \mid R)_\chi$. But it is a sequence of ``whiskered loops'' given by the sub-words $w_i r_i \bar w_i$, where the $r_i$-portion corresponds to a loop along which a $2$-cell is attached. Thus each whiskered loop is null-homotopic, and so is their concatenation.
	
	($\Leftarrow$) By Seifert-van Kampen's Theorem, the fact that~$\gamma$ is null-homotopic in~$\Gamma(G,X\mid R)_\chi$ means that it represents an element of $\pi_1(\Gamma(G,X)_\chi, 1)$ that lies in the normal closure of the set of loops giving the attaching maps of the $2$-cells, each suitably connected to the base point~$1$ through some edge-path. These whiskered loops read $X^{\pm}$-words of the form $w_r r \bar w_r$, with non-negative $\chi$-track and $r \in R$. Thus, $\gamma$~is pointed-homotopic in $\Gamma(G,X)_\chi$ to a loop reading a product of certain conjugates of these words, or their inverses.
	Absorbing the conjugators into the~$w_r$ yields a word $w=w_1 r_1 \bar w_1 \cdots w_l r_l \bar w_l$ of non-negative $\chi$-track and $r_i \in \dot R$. Now, as the path at~$1$ reading~$w$ is pointed-homotopic to~$\gamma$, both these paths have lifts with the same endpoint in the universal cover, which is the cut-off Cayley graph of the free group~$\Gamma(\mathrm F(X), X)_\chi$. This implies $\rho$~is freely equivalent to~$w$.
\end{proof}

\begin{cor}[$1$-connectedness of $\Gamma(G,X\mid R)_\chi$]\label{cor:cptpres_1conn}
	Let $\chi\colon G \to \RR$ be a character, let $X\subseteq G$ be a generating set, and let $R$~be a set of $X$-relations. Then $\langle X \mid R \rangle$ is a presentation of~$G$ along~$\chi$ if and only if $\Gamma(G,X\mid R)_\chi$~is simply connected.
\end{cor}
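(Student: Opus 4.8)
The plan is to reduce the whole statement to Lemma~\ref{lem.wordsincayleycomplex}, which already translates ``follows from $R$ along $\chi$'' into the nullhomotopy in $\Gamma(G,X\mid R)_\chi$ of one specific edge-loop. What is left is bookkeeping on the two clauses hidden in each side of the equivalence. I would first unpack ``$\langle X\mid R\rangle$ is a presentation of $G$ along~$\chi$'' into its two defining conditions: (a)~$X$~generates $G$ along~$\chi$, and (b)~every $X$-relation of non-negative $\chi$-track follows from~$R$ along~$\chi$. Correspondingly, ``$\Gamma(G,X\mid R)_\chi$ is simply connected'' splits into (a$'$)~connectedness and (b$'$)~triviality of $\pi_1$ at the basepoint~$1$.

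I would match (a) with (a$'$) first. The $1$-skeleton of $\Gamma(G,X\mid R)_\chi$ is $|\Gamma(G,X)_\chi|$, and the $2$-cells present in $\Gamma(G,X\mid R)_\chi$ are precisely those whose attaching loop has all its vertices in $\chi^{-1}(\RR_{\ge 0})$, hence lies entirely in $\Gamma(G,X)_\chi$; attaching such $2$-cells cannot change connectedness. Thus $\Gamma(G,X\mid R)_\chi$ is connected if and only if $\Gamma(G,X)_\chi$ is, which by Corollary~\ref{cor.passagetocayley} happens exactly when $X$~generates $G$ along~$\chi$.

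Next, assuming this connectedness, I would set up the dictionary between $X$-relations of non-negative $\chi$-track and edge-loops based at~$1$ in $\Gamma(G,X)_\chi$: given such a relation $\rho = y_1\cdots y_k$, the path starting at~$1$ reading~$\rho$ visits the vertices $g_i$ represented by the prefixes $y_1\cdots y_i$; non-negativity of the $\chi$-track says all $g_i$ lie in $\Gamma(G,X)_\chi$, and $\rho$~being a relation says $g_k = 1$, so this is an edge-loop at~$1$ in $\Gamma(G,X)_\chi$. Conversely, every edge-loop at~$1$ in $\Gamma(G,X)_\chi$ reads an $X^{\pm}$-word that represents~$1$ and, since all visited vertices have non-negative $\chi$-value, has non-negative $\chi$-track, i.e.\ is such a relation. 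Since $\Gamma(G,X\mid R)_\chi$ is a connected CW complex, its $\pi_1$ at~$1$ is trivial if and only if every edge-loop at~$1$ in its $1$-skeleton is nullhomotopic in it; combining this with the dictionary and Lemma~\ref{lem.wordsincayleycomplex} (whose hypothesis that $X$~generates $G$ along~$\chi$ is available once (a)/(a$'$) has been settled), this says exactly that every $X$-relation of non-negative $\chi$-track follows from~$R$ along~$\chi$, i.e.\ condition~(b). Assembling (a)$\wedge$(b) on one side and (a$'$)$\wedge$(b$'$) on the other yields the equivalence in both directions.

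I do not expect a genuine obstacle: the real content — the Seifert--van Kampen argument — is already packaged in Lemma~\ref{lem.wordsincayleycomplex}. The only points needing (brief) care are the two standard facts about CW complexes invoked above: that $\pi_1$ of a connected CW complex is generated by edge-loops in its $1$-skeleton based at the chosen vertex, and that the sub-complex $\Gamma(G,X\mid R)_\chi$ contains only $2$-cells whose attaching loops remain in $\Gamma(G,X)_\chi$.
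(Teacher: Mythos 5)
Your proposal is correct and follows essentially the same route as the paper: reduce generation along~$\chi$ to connectedness via Corollary~\ref{cor.passagetocayley}, reduce the relation condition to $\pi_1$-triviality of the inclusion of the $1$-skeleton via Lemma~\ref{lem.wordsincayleycomplex}, and conclude using $\pi_1$-surjectivity of that inclusion. The extra bookkeeping you spell out (the dictionary between relations of non-negative $\chi$-track and edge-loops at~$1$, and the fact that attaching $2$-cells does not affect connectedness) is exactly what the paper leaves implicit.
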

\begin{proof}
	Since the $1$-skeleton of $\Gamma(G,X\mid R)_\chi$ is $\Gamma(G, X)_\chi$, 
	we see from Corollary~\ref{cor.passagetocayley} that $\langle X \mid R\rangle$ is a presentation of~$G$ along~$\chi$ if and only if $\Gamma(G, X \mid R)_\chi$ is connected and all $X$-words of non-negative $\chi$-track follow from~$R$ along~$\chi$.
	By Lemma~\ref{lem.wordsincayleycomplex}, this is equivalent to $\Gamma(G, X \mid R)_\chi$ being connected and the inclusion $\Gamma(G, X)_\chi \into \Gamma(G, X\mid R)_\chi$ being~$\pi_1$-trivial. But since the inclusion of the $1$-skeleton is always $\pi_1$-surjective, these conditions combined mean precisely that $\Gamma(G,X\mid R)$ is simply connected.
\end{proof}

Given $m \in \NN$ and $X\subseteq G$, the set of $X$-relations of length at most~$m$ will be denoted by~$R_m^X$, or simply~$R_m$ if no confusion can arise. Observe that if $m\ge 2$, then $\langle X \mid R_m\rangle$ is a presentation of~$G$ along~$\chi$ if and only if $\langle \dot X \mid R_m\rangle$ is. This often allows one to simplify arguments by assuming generating sets are centered.

With the following proposition, we start connecting the characterizations of finite presentability along a character to the simplicial sets from our theory of $\Sigma$-sets. The description of $\TopS^2(G)$ that will be convenient to us will be in terms of the filtration $((G\chi \cdot \E C^m)_\chi)_{m\in \NN}$  from Corollary~\ref{cor.powersofC} (3). Recall that if $X$~is any centered generating set and $\chi$~is a character of~$G$, we have $\Gamma(G, X)_\chi \subseteq (G_\chi\cdot \E X)_\chi$.

\begin{prop}[Null-homotopic loops]\label{prop.trivialloops}
	Let $\chi\colon G \to \RR$ be a character, let $X\subseteq G$~be a centered generating set, and $\rho$ an $X$-relation of non-negative $\chi$-track. Fix also $m \in \NN$. Then:
	\begin{enumerate}
		\item Suppose that the edge-path in $\Gamma(G,X)_\chi$ starting at~$1$ and reading $\rho$ is null-homotopic in $(G_\chi \cdot \E X^m)_\chi$. Moreover, suppose $k \in \NN_{\ge1}$ is such that $X^{2m} \cap G_\chi \subseteq X^{k, \chi}$. Then $\rho$~follows from $R_{3k}$ along~$\chi$.
		
		\item If $\rho$~follows from $R_m$ along~$\chi$, then every edge-path in~$\Gamma(G,X)_\chi$ reading~$\rho$ is null-homotopic in~$(G_\chi \cdot \E X^{\lfloor\tfrac m2\rfloor})_\chi$.
	\end{enumerate}
\end{prop}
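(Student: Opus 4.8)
The two statements translate a homotopical condition (null-homotopy of an edge-loop inside some stage $(G_\chi\cdot\E X^m)_\chi$ of the filtration) into an algebraic condition (the relation follows from $R_{3k}$, resp. $R_m$, along~$\chi$), so the plan is to carry the translation out in both directions by inspecting what a simplicial disc filling, resp. a van~Kampen diagram, looks like concretely. For part~(1), I would start with a simplicial map $\mu\colon\Delta^2\to(G_\chi\cdot\E X^m)_\chi$ (after subdividing, but one can first reduce to an honest simplicial disc since $(G_\chi\cdot\E X^m)_\chi$ is a subcomplex of the Kan complex $\E G_\chi$, or simply argue combinatorially) whose boundary reads $\rho$. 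Each $2$-simplex $(a,b,c)$ of this disc is a triangle with all three vertices in $G_\chi$ and all three pairwise ``differences'' $a^{-1}b,\ b^{-1}c,\ a^{-1}c$ lying in $X^{2m}$ (since the edges of $G\cdot\E X^m$ are exactly those with labels in $X^{-m}X^m=X^{2m}$, using that $X$ is centered). Reading the boundary of such a triangle gives a relation $(a^{-1}b)(b^{-1}c)(a^{-1}c)^{-1}$ of length~$3$ in the alphabet $X^{2m}$. The hypothesis $X^{2m}\cap G_\chi\subseteq X^{k,\chi}$ lets me replace each of these three ``long'' letters by an $X$-word of length $\le k$ with non-negative $\chi$-track (here I use that all three vertices, hence all three differences read from the basepoint-translated triangle, have non-negative $\chi$-value after conjugating the triangle to start at its $\chi$-minimal vertex — this is where care is needed). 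So each elementary triangle contributes an $X$-relation of length $\le 3k$, i.e.\ an element of $R_{3k}$ (or of a cyclic permutation of $\dot R_{3k}$, which is harmless by Lemma~\ref{lem:posrels}); assembling these across the disc, with the spanning-tree whiskers running along edges of the disc's $1$-skeleton (all of whose vertices are in $G_\chi$, giving whisker-words of non-negative $\chi$-track), expresses $\rho$ as a product of the required form, hence $\rho$ follows from $R_{3k}$ along~$\chi$.

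For part~(2), I would run the converse construction: given that $\rho$ is freely equivalent to $w_1 r_1\bar w_1\cdots w_l r_l\bar w_l$ with non-negative $\chi$-track and $r_i\in\dot R_m$, I build a van~Kampen-type filling disc where the central region is a wedge of $l$ polygons (one per relator $r_i$, a polygon with $\le m$ sides) joined by the whisker-paths reading $w_i$. Since each $r_i$ has length $\le m$, any two vertices on the boundary of the corresponding polygon differ by an $X$-word of length $\le\lfloor m/2\rfloor$ (go around the shorter way), so I can cone off the polygon using a fan of $2$-simplices of $\E G$ all of whose vertices lie within $X^{\lfloor m/2\rfloor}$ of a common apex vertex, keeping every vertex inside $G_\chi$ by choosing the apex to be a $\chi$-minimal vertex of the polygon (and here one uses Lemma~\ref{lem:posrels}, or a direct prefix-shifting argument, to arrange the relators $r_i$ to have non-negative $\chi$-track, so that such a $\chi$-minimal apex has $\chi$-value $\ge 0$ and all the fan-triangles stay in $G_\chi$). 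The whisker-paths and the free-equivalence moves from $\rho$ to $w_1 r_1\bar w_1\cdots$, by Lemma~\ref{lem.positiveequiv}, stay within $\Gamma(G,X)_\chi\subseteq(G_\chi\cdot\E X)_\chi$, so no extra room is needed there. Assembling the fan discs and whiskers yields a simplicial filling of the loop reading $\rho$ inside $(G_\chi\cdot\E X^{\lfloor m/2\rfloor})_\chi$.

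The main obstacle is the bookkeeping of $\chi$-values: in both directions the naive filling lives in $G\cdot\E X^m$ for the right $m$, but one must verify every vertex used stays in $G_\chi=\chi^{-1}(\RR_{\ge0})$, and this is exactly what forces the appearance of $X^{k,\chi}$ (non-negative $\chi$-track words) rather than just $X^k$, and the factor $3$ rather than $2$ in $R_{3k}$. The clean way to handle it is: for each elementary triangle (resp.\ polygon), left-translate it so that its $\chi$-minimal vertex sits at $1$, do the replacement/coning there entirely within $G_\chi$, then translate back by an element of $G_\chi$ (the original $\chi$-minimal vertex), which preserves membership in $G_\chi$ and in the relevant filtration stage; one should also note that passing to a subdivision $\SD^j(\Delta^2)$ only moves vertices by edge-distance $\le 2$ (as recorded in Section~\ref{sec:semisimplicial}), so it changes $X^m$ to $X^{m+O(1)}$ but does not affect the stated bounds once one is slightly generous, or can be absorbed by a more careful count. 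I would present the argument combinatorially at the level of simplicial discs and van~Kampen diagrams, invoking Lemma~\ref{lem:combfilling} only to pass between ``null-homotopic'' and ``fillable by a $\mu\in\Map(\Delta^2,-)$''.
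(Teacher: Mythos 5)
Your overall strategy coincides with the paper's: for (1), take a subdivided simplicial filling of the loop in $(G_\chi\cdot\E X^m)_\chi$, observe that its edges carry labels in $X^{2m}$, replace each internal edge-label by an $X$-word of length at most~$k$ with controlled $\chi$-track, and read off a van Kampen diagram over~$R_{3k}$ inside $\Gamma(G,X\mid R_{3k})_\chi$, concluding via Lemma~\ref{lem.wordsincayleycomplex}; for (2), reduce via Lemma~\ref{lem.positiveequiv} to filling loops reading single relators $r\in R_m$ and cone them off by a fan of triangles with apex a vertex of the polygon, exactly as in Figure~\ref{fig:trivialloops_b} (the paper takes the starting vertex as apex, you take a $\chi$-minimal one; both work since the whole sub-loop already lies in $\Gamma(G,X)_\chi$).

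One claim in your part (1) is wrong as stated: after translating a triangle $(a,b,c)$ so that its $\chi$-minimal vertex~$a$ sits at~$1$, only the two labels $a^{-1}b$ and $a^{-1}c$ are guaranteed to have non-negative $\chi$-value; the third label $b^{-1}c$ can perfectly well satisfy $\chi(b^{-1}c)<0$, so it need not lie in $X^{2m}\cap G_\chi$ and the hypothesis cannot be applied to it directly. The repair is the one the paper uses, working edge by edge rather than triangle by triangle: for an edge $(g_0,g_1)$ with label $y=g_0^{-1}g_1\in X^{2m}$ and $\chi(y)<0$, apply the hypothesis to $y^{-1}\in X^{2m}\cap G_\chi$, obtain a word~$w$ of non-negative $\chi$-track representing~$y^{-1}$, and set $w_y:=w^{-1}$; the path from~$g_0$ reading~$w_y$ then attains its $\chi$-minimum at its terminal vertex, with value $\chi(g_0)+\chi(y)=\chi(g_1)\ge 0$, so it stays in $\Gamma(G,X)_\chi$. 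With this patch your argument goes through. Also, your worry that subdividing degrades $X^m$ to $X^{m+O(1)}$ is unfounded: the subdivision happens in the domain, and every edge of the subdivided disc is mapped to an edge of $(G_\chi\cdot\E X^m)_\chi$, whose label lies in $X^{-m}X^m=X^{2m}$ exactly --- this is precisely what the hypothesis $X^{2m}\cap G_\chi\subseteq X^{k,\chi}$ is calibrated for.
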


\begin{proof}
	(1)
	We first represent the homotopy class of the given path in~$\Gamma(G, X)_\chi$ by a simplicial map $\eta \colon \SD^{n_0}(\partial \Delta^2) \to \Gamma(G,X)_\chi$, for some $n_0 \in \NN$, where~$\SD$ is the subdivision operator from Section~\ref{sec:prelim}.
	By sequentially reading the nondegenerate edges in the image of~$\eta$, we obtain a path reading an $X^{\pm}$-word~$\rho'$ that is freely equivalent to~$\rho$.
	 
	By Lemma~\ref{lem:combfilling}, there are $n\ge n_0$ and a map $\mu$ fitting into a commutative square
	\[\begin{tikzcd}
		\SD^{n}(\partial \Delta^2) \arrow[r,"\eta \circ \Phi^{n-n_0}"] \arrow[d, hook]& \Gamma(G,X)_\chi \arrow[d,hook]\\
		\SD^{n} \arrow[r,"\mu"](\Delta^2) & (G_\chi \cdot \E X^m)_\chi
	\end{tikzcd},\]
	and the upper horizontal map still defines an edge-path reading the word~$\rho'$.
	
	Now, abbreviate $\mathcal D:= \SD^n(\Delta^2)$, $\mathcal S := \SD^n(\partial \Delta^2)$, and note that for each nondegenerate edge~$e$ of~$\mathcal D$ that is not in~$\mathcal S$, its image~$\mu(e)$ is labeled by an element $y\in X^{2m}$ (because $X$~is centered). If $\chi(y) \ge 0$, so $y\in G_\chi \cap X^{2m} \subseteq X^{k,\chi}$, choose an $X$-word~$w_y$ of non-negative $\chi$-track representing~$y$, and label $e$ with~$w_y$. In case $\chi(y)<0$, put instead $w_y= w^{-1}$, where $w$~is an $X$-word of non-negative $\chi$-track for $y^{-1}$.

	The upshot of this definition is that for each such edge $e = (g_0,g_1)$, the path in $\Gamma(G,X)$ starting at~$g_0$ and reading~$w_y$ stays within $\Gamma(G,X)_\chi$, and so these labels define a map $|\mathcal D^{(1)}|\to |\Gamma(G,X)_\chi|$. Moreover, the boundary of each nondegenerate triangle of~$\mathcal D$ reads an $X$-relation of length at most~$3k$, so the preceding map extends to a map $|\mathcal D| \to \Gamma(G, X\mid R_{3k})_\chi$, which exhibits the loop reading $\rho'$ as null-homotopic; see Figure~\ref{fig:trivialloops_a}. By Lemma~\ref{lem.wordsincayleycomplex}, we conclude $\rho'$ follows from $R_{3k}$ along~$\chi$, and hence so does~$\rho$.
	
	\begin{figure}[h]
		\centering
		\def \svgwidth{0.6\linewidth}
\begingroup%
  \makeatletter%
  \providecommand\color[2][]{%
    \errmessage{(Inkscape) Color is used for the text in Inkscape, but the package 'color.sty' is not loaded}%
    \renewcommand\color[2][]{}%
  }%
  \providecommand\transparent[1]{%
    \errmessage{(Inkscape) Transparency is used (non-zero) for the text in Inkscape, but the package 'transparent.sty' is not loaded}%
    \renewcommand\transparent[1]{}%
  }%
  \providecommand\rotatebox[2]{#2}%
  \newcommand*\fsize{\dimexpr\f@size pt\relax}%
  \newcommand*\lineheight[1]{\fontsize{\fsize}{#1\fsize}\selectfont}%
  \ifx\svgwidth\undefined%
    \setlength{\unitlength}{444.38356415bp}%
    \ifx\svgscale\undefined%
      \relax%
    \else%
      \setlength{\unitlength}{\unitlength * \real{\svgscale}}%
    \fi%
  \else%
    \setlength{\unitlength}{\svgwidth}%
  \fi%
  \global\let\svgwidth\undefined%
  \global\let\svgscale\undefined%
  \makeatother%
  \begin{picture}(1,0.42016865)%
    \lineheight{1}%
    \setlength\tabcolsep{0pt}%
    \put(0,0){\includegraphics[width=\unitlength,page=1]{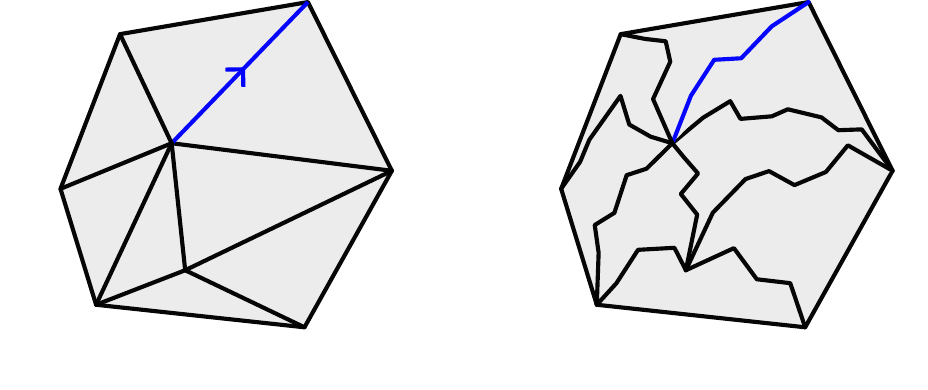}}%
    \put(0.50775879,0.2261261){\color[rgb]{0,0,0}\makebox(0,0)[t]{\lineheight{0.69999999}\smash{\begin{tabular}[t]{c}$\rightsquigarrow$\end{tabular}}}}%
    \put(0.30440954,0.31900467){\color[rgb]{0,0,1}\makebox(0,0)[t]{\lineheight{0.69999999}\smash{\begin{tabular}[t]{c}$y$\end{tabular}}}}%
    \put(0.83435693,0.32238014){\color[rgb]{0,0,1}\makebox(0,0)[t]{\lineheight{0.69999999}\smash{\begin{tabular}[t]{c}$w_y$\end{tabular}}}}%
    \put(0.25164977,0.00560379){\color[rgb]{0,0,0}\makebox(0,0)[t]{\lineheight{0.69999999}\smash{\begin{tabular}[t]{c}$\mathcal D \xrightarrow{\mu} (G_\chi\cdot \E X^m)_\chi$\end{tabular}}}}%
    \put(0.79172364,0.00560379){\color[rgb]{0,0,0}\makebox(0,0)[t]{\lineheight{0.69999999}\smash{\begin{tabular}[t]{c}$|\mathcal D| \to \Gamma(G,X\mid R_{3k})_\chi$\end{tabular}}}}%
    \put(0,0){\includegraphics[width=\unitlength,page=2]{trivialloops_a.pdf}}%
  \end{picture}%
\endgroup%

		\caption{Converting a disk in $(G_\chi \cdot \E X^m)_\chi$ into a disk in $\Gamma(G,X \mid R_{3k})_\chi$. Triangles on the left give rise to disks whose boundary read relators in $R_{3k}$.}
		\label{fig:trivialloops_a}
	\end{figure}
	
	(2)
	The assumption on~$\rho$ tells us it is freely equivalent to a word of non-negative $\chi$-track $\rho'= w_1 r_1  \bar w_1 \cdots  w_l r_l \bar w_l$ with $r_i \in R_m$. By Lemma~\ref{lem.positiveequiv}, the paths at~$1$ reading $\rho$ and~$\rho'$ are pointed-homotopic within~$\Gamma(G,X)_\chi$. Therefore, once we show that for every $r\in R_m$, edge loops in $\Gamma(G, X)_\chi$ reading~$r$ are null-homotopic in $(G_\chi \cdot \E X^{\lfloor\tfrac m2\rfloor})_\chi$, it will follow that $\rho'$ too is null-homotopic, and thus also $\rho$.
	
	One may construct a filling for a loop reading $r\in R_m$ (say, starting at $g_0\in G_\chi$) rather explicitly: if $r$~has length~$l\le m$, consider the $1$-dimensional simplicial set~$\mathcal S$ with $l$~vertices~$v_1,\dots, v_l$ and $l$~nondegenerate edges~$e_1, \dots, e_l$ arranged in a loop, with $e_i$ oriented according to whether the $i$-th symbol~$y_i$ of~$r$ comes from $X$ or $\bar X$. We claim that the morphism of simplicial sets $\mathcal S \to \Gamma(G, X)_\chi$ determined by~$r$ and~$g_0$ extends to a morphism $\mathcal D \to (G_\chi \cdot \E X^{\lfloor\tfrac m2\rfloor})_\chi$, where $\mathcal D$ is the cone of~$\mathcal S$, by sending the cone point to~$g_0$. This is illustrated in Figure~\ref{fig:trivialloops_b}. To see this, we observe that for every $i\in \{1, \dots, l\}$, the element~$g_i$ represented by the word $y_1\cdots y_i$ lies in~$X^{i}$, and also in $X^{m-i}$. Therefore, the triangles $g_0\cdot (1, g_{i-1}, g_{i})$ and $g_0\cdot (1, g_i, g_{i-1})$ are simplices of $(G_\chi \cdot \E X^{\lfloor\tfrac m2\rfloor})_\chi$, allowing us to map as desired, regardless of whether $y\in X$ or $y \in \bar X$.\qedhere
	
	\begin{figure}[h]
		\centering
		\def \svgwidth{0.4\linewidth}
		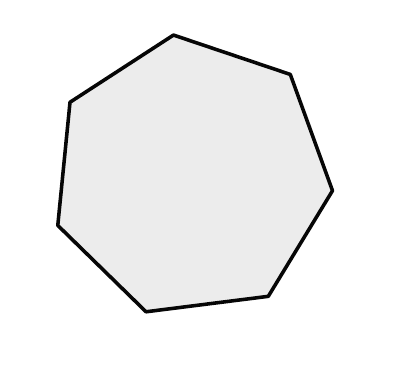
		\caption{The cone~$\mathcal D$ of~$\mathcal S$. Edges of~$\mathcal S$ get mapped to edges of $\Gamma(G,X)_\chi$ as dictated by~$r$. Edges emanating from the cone point get mapped to edges with labels in $X^{\lfloor \tfrac m2 \rfloor}$.}
		\label{fig:trivialloops_b}
	\end{figure}
\end{proof}

\begin{lem}[$\pi_1$-surjectivity]\label{lem.pi1onto}
    For every character $\chi \colon G\to \RR$ and every centered subset $X\subseteq G$,
	the inclusion $\Gamma(G,X)_\chi \into(G_\chi \cdot \E X)_\chi$ is $\pi_1$-surjective on every connected component.
\end{lem}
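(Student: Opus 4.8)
The plan is to reduce the statement to one about edge-loops, and then to exploit the fact that, since $X$ is centered, $1\in X$, which makes available a convenient $2$-simplex ``coning off'' every edge of $(G_\chi\cdot\E X)_\chi$, with cone point still inside $G_\chi$. Write $Y := (G_\chi\cdot \E X)_\chi$ and $\Gamma := \Gamma(G,X)_\chi$. First I would record the easy structural facts: because $1\in X$, one has $\Gamma\subseteq Y$ (in fact $\Gamma\subseteq Y^{(1)}$), and $\Gamma$ and $Y$ have the same $0$-skeleton~$G_\chi$; in particular every connected component of~$Y$ contains a vertex of~$\Gamma$, so basepoints may be chosen in~$\Gamma$. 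Since the inclusion of the $1$-skeleton $Y^{(1)}\into Y$ is $\pi_1$-surjective, it suffices to show that every edge-loop in~$Y$ based at a vertex $v_0\in G_\chi$ (edges possibly traversed backwards) is homotopic, rel~$v_0$ and within~$Y$, to an edge-loop contained in~$\Gamma$.

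The heart of the argument is a single edge~$e$ of~$Y$. By definition of $G_\chi\cdot\E X$, we may write $e=g\cdot(y_0,y_1)$ with $g\in G_\chi$ and $y_0,y_1\in X$; its endpoints are $v:=gy_0$ and $w:=gy_1$, both in~$G_\chi$. The key observation is that the $2$-simplex $\sigma := g\cdot(y_0,1,y_1)$ lies in~$Y$: it lies in $G_\chi\cdot\E X$ since $g\in G_\chi$ and $y_0,1,y_1\in X$ (this is exactly where $X$ being centered, i.e.\ $1\in X$, is used), and its three vertices $v,g,w$ all lie in~$G_\chi$. Reading off the boundary of~$\sigma$, the edge $e=d_1\sigma$ is homotopic rel $\{v,w\}$ inside~$Y$ to the concatenation of $d_2\sigma = g\cdot(y_0,1)$ (the edge $v\to g$) with $d_0\sigma = g\cdot(1,y_1)$ (the edge $g\to w$). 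A direct label computation gives that $d_2\sigma$ has label $y_0^{-1}\in X$ and $d_0\sigma$ has label $y_1\in X$, while the endpoints of both edges lie in~$G_\chi$; hence both edges belong to~$\Gamma$.

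Finally, applying this detour edge-by-edge to a given edge-loop~$\gamma$ in~$Y$ based at~$v_0$ and concatenating, I obtain an edge-loop~$\gamma'$ contained in~$\Gamma$ with $\gamma\simeq\gamma'$ rel~$v_0$ inside~$Y$. As $\gamma'$ is obtained from~$\gamma$ by a homotopy it lies in the component of~$Y$ through~$v_0$, hence, being connected, in the component of~$\Gamma$ through~$v_0$; this yields the claimed $\pi_1$-surjectivity on each connected component. I do not expect a genuine obstacle here: the only steps requiring care are the bookkeeping of the face maps of~$\sigma$ and the (routine but essential) remark that centeredness of~$X$ is precisely what supplies the coning simplex with vertices inside~$G_\chi$.
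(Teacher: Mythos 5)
Your proof is correct and follows essentially the same route as the paper's: both reduce to homotoping a single edge $g\cdot(x_0,x_1)$ of $(G_\chi\cdot\E X)_\chi$ rel endpoints into $\Gamma(G,X)_\chi$ by coning it off through the vertex $g$ via the $2$-simplex with vertices $gx_0, g, gx_1$ (available because $1\in X$), whose other two faces are labeled by elements of $X=X^{-1}$. The only difference is cosmetic (you write the triangle as $g\cdot(y_0,1,y_1)$ versus the paper's $g\cdot(1,x_0,x_1)$).
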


\begin{proof}
	It suffices to show that every loop in the $1$-skeleton of~$|(G_\chi \cdot \E X)_\chi|$ can be homotoped to a loop in~$|\Gamma(G,X)_\chi|$. In fact, we show that the geometric realization of every edge of~$(G_\chi \cdot \E X)_\chi$ can be homotoped relative endpoints to~$|\Gamma(G,X)_\chi|$.
	
	So let $e:=g\cdot (x_0,x_1)$ with $x_0, x_1 \in X$ and $g, gx_0, gx_1 \in G_\chi$. The triangle $g\cdot(1,x_0, x_1) \in (G_\chi \cdot \E X)_\chi$ shows that $|e|$~is homotopic relative endpoints to the path formed by $|g\cdot (1, x_0)|$ and $|g\cdot (1, x_1)|$. The edges $g\cdot (1, x_0)$ and $g\cdot (1,x_1)$, being labeled by $x_0$~and~$x_1$ respectively, lie in $\Gamma(G, X)_\chi$.
\end{proof}

Let us bring the topology of~$G$ into the picture.

\begin{dfn}\label{dfn:cpt_pres}
	A presentation $\langle C \mid R \rangle$ of~$G$ is \textbf{compact} if the generating set~$C$ is compact and there is $m\in \NN$ such that all relators in~$R$ have length at most~$m$. Given a character $\chi \colon G \to \RR$, we say $G$~is \textbf{compactly presented along $\chi$} if it has a compact presentation along $\chi$.
\end{dfn}

We now see that the condition ``$G$ is compactly presented along $\chi$'' is a rephrasing of Kochloukova's definition of ``$\chi \in \TopS^2(G)$''.
Given a subset $X\subseteq G$, denote by $\F(X)$ the free group on~$X$, and by $\pi\colon \F(X)\to G$ the map induced by the inclusion of~$X$. Moreover, for a character $\chi\colon G\to\RR$, denote by $\F_\chi(X) \subseteq \F(X)$ the submonoid of elements that are represented by $X^\pm$-words of non-negative $\chi$-track (not to be confused with the monoid $\F(X)_{\chi\circ\pi}$ of elements whose $\pi$-image is in~$G_\chi$!). We also consider the map of monoids $\pi_\chi \colon \F_\chi(X) \to G_\chi$. Then Kochloukova defines~$\TopS^2(G)$ to be the set of characters~$\chi$ for which the following condition holds \cite[p.~541]{Ko04}:
\emph{\begin{quote}
	There exist $C\subseteq G$ and $R\subseteq \ker(\pi_\chi)$ such that:
	\begin{enumerate}
		\item $C$ is a compact generating set for~$G$,
		\item the monoid homomorphism $\pi_\chi\colon \F_\chi(C) \to G_\chi$ is surjective,
		\item there is a bound on the length of the elements in $R$,
		\item the monoid $\ker(\pi_\chi)$ is generated by $\bigcup_{w\in \F_\chi(C)} wRw^{-1}$.
	\end{enumerate}
\end{quote}}

\begin{lem}[Kochloukova's definition of~$\TopS^2$]\label{lem:Kochdef}
	The character~$\chi$ lies in Kochloukova's $\TopS^2(G)$ if and only if $G$~is compactly presented along~$\chi$.
\end{lem}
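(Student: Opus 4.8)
The plan is to unwind Kochloukova's monoid-theoretic conditions into the word-and-relation language of this section, and then match them termwise against the definition of a compact presentation along~$\chi$.

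The first step is to record the dictionary between the two setups. By definition, an element of~$\F_\chi(C)$ is represented by some $C^\pm$-word of non-negative $\chi$-track, and $\pi_\chi$~sends it to the corresponding element of~$G_\chi$; hence $\pi_\chi$~is surjective precisely when every $g\in G_\chi$ is represented by a $\dot C$-word of non-negative $\chi$-track, i.e.\ (Corollary~\ref{cor.passagetocayley}) precisely when $C$~generates~$G$ along~$\chi$ — which already forces $C$ to be a generating set, so that Kochloukova's conditions (1) and~(2) together just assert that $C$~is compact and generates~$G$ along~$\chi$. Likewise, $\ker(\pi_\chi)$, viewed inside~$\F(C)$, is exactly the set of (classes of) $C^\pm$-words of non-negative $\chi$-track that represent~$1$, i.e.\ the $C$-relations of non-negative $\chi$-track. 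Since $R\subseteq\ker(\pi_\chi)$, every relator in~$R$ has non-negative $\chi$-track, and reversing the $\chi$-track shows that $\bar r$~then also has non-negative $\chi$-track and lies in~$\ker(\pi_\chi)$; thus we may freely assume $\dot R = R$, and the submonoid and subgroup of~$\F(C)$ generated by $\bigcup_{w\in\F_\chi(C)} wRw^{-1}$ coincide.

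The second and central step is to show that, once $C$~generates~$G$ along~$\chi$ and $R\subseteq\ker(\pi_\chi)$, Kochloukova's condition~(4) is equivalent to the statement that every $C$-relation of non-negative $\chi$-track follows from~$R$ along~$\chi$. Unravelling the definitions, "$\rho$ follows from~$R$ along~$\chi$'' says that $\rho$ is freely equivalent to a product $w_1 r_1 \bar w_1 \cdots w_l r_l \bar w_l$ of non-negative $\chi$-track with $r_i\in\dot R$ and $w_i$ some $C^\pm$-words (necessarily of non-negative $\chi$-track), which is precisely the assertion that $[\rho] = \prod_i [w_i]\,[r_i]\,[w_i]^{-1}$ in~$\F(C)$ with $[w_i]\in\F_\chi(C)$, i.e.\ that $[\rho]$ lies in the submonoid of~$\F(C)$ generated by $\bigcup_{w\in\F_\chi(C)} wRw^{-1}$. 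The only subtlety is that Kochloukova's condition is an abstract identity in the free group, while "follows from~$R$ along~$\chi$'' demands an honest concatenated word of non-negative $\chi$-track; the bookkeeping that bridges the gap is the observation that for $w\in\F_\chi(C)$ and a relator $r$ of non-negative $\chi$-track, the block $w r \bar w$ has non-negative $\chi$-track (the $w$-portion stays $\ge 0$, the $r$-portion is shifted upward by $\chi(\pi(w))\ge 0$, and the $\bar w$-portion descends from $\chi(\pi(w))$ back to~$0$ without dipping below it), and that such a block begins and ends at $\chi$-value~$0$, so that any concatenation of such blocks again has non-negative $\chi$-track. The same computation gives the inclusion $\bigcup_{w}wRw^{-1}\subseteq\ker(\pi_\chi)$ needed for the other direction.

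The third step reconciles the fact that our notion of a compact presentation permits an arbitrary relator set of bounded length, whereas Kochloukova insists on $R\subseteq\ker(\pi_\chi)$. Given a compact presentation $\langle C\mid R\rangle$ of~$G$ along~$\chi$ with all relators of length $\le m$, Lemma~\ref{lem:posrels} lets us pass to $R' := R_m^C\cap\ker(\pi_\chi)$, the set of $C$-relations of length $\le m$ with non-negative $\chi$-track: any relation following from~$R$ along~$\chi$ already follows from~$R'$ along~$\chi$ (the cyclic permutations of relators of~$\dot R$ produced by Lemma~\ref{lem:posrels} all lie in~$R'$), so $\langle C\mid R'\rangle$ is still a presentation of~$G$ along~$\chi$, and now $(C,R')$ satisfies Kochloukova's (1)--(4) by the first two steps. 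Conversely, given $(C,R)$ satisfying Kochloukova's conditions, the dictionary of the first step and the equivalence of the second show that $\langle C\mid R\rangle$ is a presentation of~$G$ along~$\chi$, and it is compact since $C$~is compact and the lengths in~$R$ are bounded. I expect the main obstacle to be exactly the $\chi$-track bookkeeping in the second step — in particular, verifying that converting Kochloukova's abstract free-group identity into a genuine $C^\pm$-word of non-negative $\chi$-track requires no additional hypothesis, which is what the "each block returns to level~$0$'' observation guarantees.
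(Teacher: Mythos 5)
Your proposal is correct and follows essentially the same route as the paper: translate Kochloukova's conditions (1)--(2) into ``$C$ is a compact generating set along~$\chi$'', identify her condition~(4) with ``every $C$-relation of non-negative $\chi$-track follows from~$R$ along~$\chi$'', and in the converse direction use Lemma~\ref{lem:posrels} to replace relators by ones of non-negative $\chi$-track. The paper's proof is terser; your $\chi$-track bookkeeping for the blocks $w_i r_i \bar w_i$ simply spells out what the paper leaves implicit.
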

\begin{proof}
	In our usual terminology, Conditions 1~and~2 combined state that $C$~is a compact generating for~$G$ along~$\chi$. The monoid $\ker(\pi_\chi)$ is the set of reduced words that are $C$-relations of non-negative $\chi$-track, so Condition 4 says that every reduced $C$-relation of non-negative $\chi$-track follows from~$R$ along~$\chi$ (and hence so do the unreduced ones). Thus, if $C, R$ satisfy Conditions 1 to~4, then $\langle C \mid R \rangle$ is a compact presentation of~$G$ along~$\chi$.
	
	For the converse direction, we need only take care of the additional detail that our definition of a compact presentation along~$\chi$ does not require the set of relators~$R$ to have non-negative $\chi$-track, which Kochloukova's does by imposing $R\subseteq \ker (\pi_\chi)$ (rather than merely $R\subseteq \ker(\pi))$. But if we have a compact presentation $\langle C \mid R \rangle$ of~$G$ along~$\chi$, we can use Lemma~\ref{lem:posrels} to modify the relators in~$R$ to have non-negative $\chi$-track.
\end{proof}

Our next main goal is to show that compact presentability along~$\chi$ is also equivalent to our defining condition of $\chi$~lying in~$\TopS^2(G)$.

 We begin by establishing a proposition whose case ``$\chi=0$'' contains the ``folklore'' fact that for every finite generating set~$X$ of a finitely presented group, there is a set~$R$ of relators such that $\langle X \mid R \rangle$ is a finite presentation. It also generalizes a lemma of Kochloukova \cite[Lemma~1]{Ko04}.

\begin{prop}[Adding relators]\label{prop.anygensetchi}
	Let $\chi\colon G \to \RR$ be a character and let $G$~be compactly presented along~$\chi$. Then for every compact generating set $C\subseteq G$, there is $m\in \NN$ such that $\langle C \mid R_m\rangle$ is a compact presentation of~$G$ along~$\chi$.
\end{prop}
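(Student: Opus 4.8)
The plan is to route the proof through Proposition~\ref{prop.trivialloops}: it suffices to exhibit a single power of $C$, \emph{uniform over all relations}, at which the loop reading any given $C$-relation of non-negative $\chi$-track becomes null-homotopic, and such a power can then be imported from the hypothesised compact presentation by comparing Cayley graphs. To set up, I may assume $C$ is centered, since $\langle C \mid R_m \rangle$ presents $G$ along $\chi$ if and only if $\langle \dot C \mid R_m \rangle$ does (for $m \ge 2$). Because $G$ is compactly presented along $\chi$ it is in particular compactly generated along $\chi$, so Theorem~\ref{thm:sigma1_cptgen} tells us that $C$ generates $G$ along $\chi$ (and hence $\Gamma(G,C)_\chi$ is connected, by Corollary~\ref{cor.passagetocayley}). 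Fix a compact presentation $\langle D \mid S \rangle$ of $G$ along $\chi$ with $D$ centered and every relator of length $\le N$; then $D$ generates $G$ along $\chi$ and $\langle D \mid R_N^D \rangle$ is also a presentation of $G$ along $\chi$. Put $E := C \cup D$, a compact centered generating set.

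Fix a $C$-relation $\rho$ of non-negative $\chi$-track, and let $\gamma_\rho$ be the edge-loop at $1$ reading $\rho$; since the $\chi$-track of $\rho$ is non-negative, $\gamma_\rho$ lies in $\Gamma(G,C)_\chi \subseteq \Gamma(G,E)_\chi$. The core claim is that there is $a \in \NN$, \emph{independent of $\rho$}, such that $\gamma_\rho$ is null-homotopic in $(G_\chi \cdot \E C^a)_\chi$. Granting this claim, Lemma~\ref{lem.compactexhaustionchi} supplies $k \in \NN_{\ge 1}$, again independent of $\rho$, with $C^{2a} \cap G_\chi \subseteq C^{k,\chi}$, and then Proposition~\ref{prop.trivialloops}(1) yields that $\rho$ follows from $R_{3k}^C$ along $\chi$. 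As this holds for every such $\rho$, and $C$ generates $G$ along $\chi$, the data $\langle C \mid R_m^C \rangle$ with $m := \max(3k,2)$ is a compact presentation of $G$ along $\chi$, which is what we want.

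To prove the core claim, first homotope $\gamma_\rho$ into the $D$-Cayley graph: replace each edge $g \cdot (1,c)$ of $\gamma_\rho$ (so $g, gc \in G_\chi$, $c \in C$) by a path in $\Gamma(G,D)_\chi$ reading a $D$-word for $c$ whose partial products, translated by $g$, never leave $G_\chi$, and of length at most some constant $q$ independent of the edge. The elementary triangle-by-triangle homotopy effecting each replacement is supported in $(G_\chi \cdot \E E^q)_\chi$, so inside $(G_\chi \cdot \E E^q)_\chi$ the loop $\gamma_\rho$ is homotopic to an edge-loop $\gamma'$ at $1$ in $\Gamma(G,D)_\chi$. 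Now $\gamma'$ reads a $D$-relation of non-negative $\chi$-track, which follows from $R_N^D$ along $\chi$; hence Proposition~\ref{prop.trivialloops}(2) makes $\gamma'$ null-homotopic in $(G_\chi \cdot \E D^{\lfloor N/2 \rfloor})_\chi$. Combining, $\gamma_\rho$ is null-homotopic in $(G_\chi \cdot \E E^b)_\chi$ with $b := \max(q, \lfloor N/2 \rfloor)$, and since $E \subseteq C^{a_0}$ for some $a_0$ by Lemma~\ref{lem.compactexhaustion}, it is null-homotopic in $(G_\chi \cdot \E C^a)_\chi$ with $a := a_0 b$ — a bound depending only on $C$, $D$ and $\chi$.

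The one genuinely delicate point is the $\chi$-track bookkeeping hidden in the edge-replacement step: for each edge $g \cdot (1,c)$ with $g, gc \in G_\chi$ one must produce a $D$-word for $c$ whose $g$-translated partial products stay in $G_\chi$, with a length bound uniform in $c$. When $\chi(c) \ge 0$ this is immediate from the fact that $D$ generates $G$ along $\chi$ (applied to $c \in G_\chi$); when $\chi(c) < 0$, the condition $gc \in G_\chi$ forces $\chi(g) \ge \chi(c^{-1}) > 0$, which leaves enough slack to use the reverse of a non-negative-track $D$-word for $c^{-1}$, and Lemma~\ref{lem.compactexhaustionchi}, applied to the relevant compact subset of $G_\chi$, furnishes the uniform length bound $q$. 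Once this is in place, everything else is a formal assembly of Proposition~\ref{prop.trivialloops}, Theorem~\ref{thm:sigma1_cptgen} and the two compact-exhaustion lemmas.
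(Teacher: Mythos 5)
Your proposal is correct, but it follows a genuinely different route from the paper's. The paper argues entirely at the level of words: after reducing to the two cases $D\subseteq C$ and $C\subseteq D$ (via the intermediate generating set $C\cup D$), it rewrites each relator letter-by-letter using substitution words $w_y$ whose $\chi$-tracks attain their minima at an endpoint, and checks directly that the resulting free equivalences stay at non-negative $\chi$-track. You instead pass through the geometric dictionary of Proposition~\ref{prop.trivialloops}: you transport the loop reading $\rho$ from the $C$-Cayley graph into the $D$-Cayley graph by a coning homotopy supported in $(G_\chi\cdot\E E^q)_\chi$, fill it there via Proposition~\ref{prop.trivialloops}(2) applied to the given presentation, and convert the resulting \emph{uniform} filling radius back into a bound on relator length via Proposition~\ref{prop.trivialloops}(1) together with Lemma~\ref{lem.compactexhaustionchi}. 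Both arguments ultimately rest on the same two ingredients --- compact exhaustion along $\chi$ to obtain uniform word-length bounds, and the ``minimum at an endpoint'' trick to control $\chi$-tracks under substitution --- and your route is not circular, since Proposition~\ref{prop.trivialloops} is established independently of the present statement. The paper's version is purely combinatorial and keeps explicit track of how the new relators arise from the old ones; yours is shorter modulo the cited results and has the virtue of isolating the one genuinely delicate point, namely that the filling radius $a$ must be independent of $\rho$, which you correctly identify and discharge.
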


\begin{proof}
	Suppose $\langle D\mid R_n \rangle$~is a compact presentation of~$G$ along~$\chi$. Assume, as we may, that both $C$~and~$D$ are centered. By Theorem~\ref{thm:sigma1_cptgen}, the fact that $D$~generates~$G$ along~$\chi$ implies that so does~$C$. We are left to prove that there exists~$m\in \NN$ such all $C^\pm$-words of non-negative $\chi$-track follow from~$R_m^C$ along~$\chi$.
	
	Note that it suffices to prove this assertion under the assumption that $D\subseteq C$ or $C\subseteq D$: indeed, once established under each of these conditions, we can prove it for every~$C$ by first showing it for the compact generating set $C\cup D$. So let us address each case separately.
	
	($D \subseteq C$) Since $\chi\in \TopS^1(G)$, Lemma~\ref{lem.compactexhaustionchi} tells us that there exists $k \in \NN$ such that $C_\chi \subseteq D^{k,\chi}$, that is, every $x\in C_\chi$ can be expressed as a $D$-word~$w_x$ of non-negative $\chi$-track with length at most~$k$. For $x \in C$ with $\chi(x)<0$, we put $w_x := (w_{x^{-1}})^{-1}$, and for $y\in \bar C$, we define $w_y = \bar w_{\bar y}$. Thus, for all $y\in C^\pm$, the $D$-word~$w_y$ represents~$y$, has length at most~$k$, and the minimum of its $\chi$-track is attained at one of its endpoints.
	
	We claim that by choosing $m := \max\{n, k+1\}$, it follows that $\langle C \mid R_m \rangle$ is a presentation of~$G$ along~$\chi$.
	Consider, for each $y\in C^{\pm}$, the relator $r_y:=y \bar w_y$ in $R_{k+1}^{C\cup D} \subseteq R_m^C$.
	If $\rho = y_1\cdots y_l$ is a $C$-relator of non-negative $\chi$-track, then $\rho$~is freely equivalent to
	\[r_{y_1}  (w_{y_1} r_{y_2} \bar w_{y_1}) (w_{y_1} w_{y_2}  r_{y_3} \bar w_{y_2} \bar w_{y_1}) \cdots  (w_{y_1} \cdots  w_{y_{l-1}} r_{y_l} \bar w_{y_{l-1}} \cdots  \bar w_{y_1}) (w_{y_1} \cdots  w_{y_l}).\]
	To show that $\rho$ follows from $R_m^C$ along~$\chi$, it suffices to show that each of the bracketed factors has non-negative $\chi$-track, and that the last factor $w_{y_1} \cdots  w_{y_l}$~follows from~$R_m^C$ along~$\chi$.
		
	That each factor~$w_{y_1}\cdots w_{y_{j-1}}   r_{y_j} \bar w_{y_{j-1}} \cdots \bar w_{y_1}$ has non-negative $\chi$-track follows from the choice of  words~$w_x$ as having $\chi$-track that attains its minimum at the endpoints -- explicitly, the minimal value of its $\chi$-track is the same as that of $y_1\cdots y_{j-1} y_j \bar y_j \bar y_{j-1}\cdots \bar y_1$. The $\chi$-track of this word, in turn, takes the same values as that of~$y_1\cdots y_j$. But this word, being an initial sub-word of~$\rho$, has non-negative $\chi$-track. By a similar argument, the factor $w_{y_1} \cdots  w_{y_l}$ has non-negative $\chi$-track.
	
	Finally, since $w_{y_1} \cdots  w_{y_l}$~is a $D$-relator of non-negative $\chi$-track, it follows from~$R_n^D\subseteq R_m^C$ along~$\chi$.
	
	($C \subseteq D$) Again using Lemma~\ref{lem.compactexhaustionchi}, let $k \in \NN$ be such that $D_\chi \subseteq C^{k,\chi}$ (note the reversal of the roles of $C$~and~$D$ from the previous case). We will show that if $m:= nk$, then $\langle C \mid R_m^C\rangle$ is a compact presentation of~$G$ along~$\chi$. 
	
	For every $y \in D^\pm$, we choose as before a word $w_y$ in~$C$ of length at most~$k$ representing~$y$, and whose $\chi$-track attains its minimum at an endpoint. We also insist that for every $y\in D^\pm$, we have $w_{\bar y} =\bar w_y$ and if $y\in C^\pm$, then $w_y=y$. Let $\rho$~be a  $C$-relator of non-negative $\chi$-track. Since $C \subseteq D$, we see $\rho$~is a word in~$D^\pm$ and thus freely equivalent (as $D^\pm$-word) to a word of non-negative $\chi$-track
	\[u:= u_1 s_1 \bar u_1 \cdots u_l s_l \bar u_l,\]
	where each $s_j$~is a relator in~$R_n^D$ and $u_j$ is a $D^\pm$-word. By replacing each letter~$y$ in~$u$~with the corresponding $C^\pm$-word $w_y$, we obtain a $C$-relator
	\[w := w_1 r_1 \bar w_1 \cdots w_l r_l \bar w_l.\]
	As before, the fact that the words~$w_y$ have $\chi$-tracks with minimum at the endpoints, and that $u$~has non-negative $\chi$-track, shows that $w$~has non-negative $\chi$-track. Moreover, since the~$w_y$ have length at most~$k$, the relators $r_j$ have length at most~$nk$.
	
	To conclude that $\rho$~follows from $R_m^C$ along~$\chi$, we need only verify that $w$~is freely equivalent to~$\rho$ as a $C^\pm$-word. Roughly speaking, this follows from the fact that each elementary insertion or cancellation of $D^\pm$-words used in passing from~$u$ to~$\rho$ can be mirrored in terms of $C^\pm$-words.
	Formally, given a $D^\pm$-word $v_0$, let $v_0'$ denote the $C^\pm$-word obtained from~$v_0$ by replacing each letter~$y$ with~$w_y$. The key observation is that if $v_1$~is obtained from~$v_0$ by the insertion or deletion of a sub-word $y\bar y$, then $v_1'$~is obtained from~$v_0'$ by the insertion or deletion of a sub-word $w_y \bar w_y$, respectively. This is true because of our choosing $w_{\bar y} = \bar w_y$. Since $u$~is freely equivalent as a $D^\pm$-word to~$\rho$, we conclude that $w$~is freely equivalent as a $C^\pm$-word to $\rho'$. But since $\rho$ is actually a $C^\pm$-word and we also assumed $w_y =y$ for $y\in C^\pm$, we have $\rho'=\rho$.
\end{proof}

Together with Lemma~\ref{lem:Kochdef}, the next theorem establishes that our definition of $\TopS^2(G)$ matches the one given by Kochloukova:

\begin{thm}[$\TopS^2$ and compact presentability]\label{thm:sigma2_cptpres}
	A character $\chi \colon G\to \RR$ lies in $\TopS^2(G)$ if and only if $G$~is compactly presented along~$\chi$. 
\end{thm}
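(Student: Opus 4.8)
The plan is to convert the combinatorial condition ``compactly presented along $\chi$'' into a statement about the filtration $((G_\chi\cdot\E C^m)_\chi)_{m\in\NN}$ of $\E G_\chi$ from Corollary~\ref{cor.powersofC}~(3), for a conveniently chosen compact centered generating set $C$. The bridge between the two worlds is provided by the two halves of Proposition~\ref{prop.trivialloops}, together with the $\pi_1$-surjectivity of $\Gamma(G,C)_\chi\into(G_\chi\cdot\E C)_\chi$ from Lemma~\ref{lem.pi1onto}.

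The first step would be to isolate the following \emph{dictionary}: if $C$~is a compact centered generating set that generates $G$ along $\chi$, then the following are equivalent: (i)~there is $N\in\NN$ such that the inclusion $(G_\chi\cdot\E C)_\chi\into(G_\chi\cdot\E C^N)_\chi$ is $\pi_1$-trivial; (ii)~there is $m\in\NN$ such that $\langle C\mid R_m^C\rangle$ is a presentation of $G$ along $\chi$. For ``(ii)$\Rightarrow$(i)'', observe that by definition every $C$-relation of non-negative $\chi$-track follows from $R_m^C$ along $\chi$, so by Proposition~\ref{prop.trivialloops}~(2) the edge-loop reading it in $\Gamma(G,C)_\chi$ is null-homotopic in $(G_\chi\cdot\E C^{\lfloor m/2\rfloor})_\chi$; since $\Gamma(G,C)_\chi\into(G_\chi\cdot\E C)_\chi$ is $\pi_1$-surjective, the inclusion $(G_\chi\cdot\E C)_\chi\into(G_\chi\cdot\E C^{\lfloor m/2\rfloor})_\chi$ is $\pi_1$-trivial. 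For ``(i)$\Rightarrow$(ii)'', every $C$-relation $\rho$ of non-negative $\chi$-track is read by an edge-loop in $\Gamma(G,C)_\chi\subseteq(G_\chi\cdot\E C)_\chi$, which by~(i) is null-homotopic in $(G_\chi\cdot\E C^N)_\chi$; one extracts from Lemma~\ref{lem.compactexhaustionchi} a single $k\in\NN_{\ge1}$, depending on $C$~and~$N$ only and \emph{not} on $\rho$, with $C^{2N}\cap G_\chi\subseteq C^{k,\chi}$, and then Proposition~\ref{prop.trivialloops}~(1) shows that $\rho$~follows from $R_{3k}^C$ along $\chi$; as $\rho$~was arbitrary, $\langle C\mid R_{3k}^C\rangle$ is a presentation of $G$ along $\chi$.

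Given this dictionary, both implications of the theorem follow quickly. If $\chi\in\TopS^2(G)$, then $\chi\in\TopS^1(G)$, so by Theorem~\ref{thm:sigma1_cptgen} we may fix a compact centered generating set $C$, which automatically generates $G$ along $\chi$; Corollary~\ref{cor.powersofC}~(3) tells us $((G_\chi\cdot\E C^m)_\chi)_m$ is essentially $1$-connected, and specializing to the stage $m=1$ produces an $N$ as in~(i), hence a compact presentation of $G$ along $\chi$ via~(ii). Conversely, if $G$~is compactly presented along $\chi$, then $\chi\in\TopS^1(G)$ and we again fix a compact centered generating set $C$. For each $j\in\NN$, the set $C^j$ is itself a compact centered generating set, so Proposition~\ref{prop.anygensetchi} supplies some $m_j\ge2$ with $\langle C^j\mid R_{m_j}^{C^j}\rangle$ a presentation of $G$ along $\chi$; applying the dictionary with $C^j$ in place of $C$, and using $(C^j)^{\lfloor m_j/2\rfloor}=C^{j\lfloor m_j/2\rfloor}$, shows $(G_\chi\cdot\E C^j)_\chi\into(G_\chi\cdot\E C^{j\lfloor m_j/2\rfloor})_\chi$ is $\pi_1$-trivial. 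Since every stage $(G_\chi\cdot\E C^j)_\chi$ with $j\ge1$ is connected (as $C^j$ generates $G$ along $\chi$, by Corollary~\ref{cor.passagetocayley}), these inclusions are also $\pi_0$-trivial, so $((G_\chi\cdot\E C^m)_\chi)_m$ is essentially $1$-connected, and Corollary~\ref{cor.powersofC}~(3) then yields $\chi\in\TopS^2(G)$.

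The heavy geometric lifting having been done in Proposition~\ref{prop.trivialloops}, the points that require care are bookkeeping ones: in ``(i)$\Rightarrow$(ii)'' one must be sure the constant $k$ is uniform over all relations, which is why it is produced purely by compact exhaustion along $\chi$ (Lemma~\ref{lem.compactexhaustionchi}) rather than per relation; and in the ``if'' direction one must run the argument at \emph{every} power $C^j$ rather than only at $C$ itself, since essential $1$-connectedness of the power filtration is a statement about all of its stages and cannot be read off from the first one alone.
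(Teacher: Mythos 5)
Your proof is correct and follows essentially the same route as the paper: both directions hinge on the dictionary provided by Proposition~\ref{prop.trivialloops} (parts 1 and 2), Lemma~\ref{lem.pi1onto}, Lemma~\ref{lem.compactexhaustionchi}, and Proposition~\ref{prop.anygensetchi}. The only (cosmetic) difference is in the converse: the paper works with the filtration of Proposition~\ref{prop.trimmedsigma} indexed by all of $\C(G)$, so a single application of the argument at an enlarged generating set $C$ suffices, whereas your use of the power filtration from Corollary~\ref{cor.powersofC}~(3) forces you to rerun it at every power $C^j$ — which you correctly do.
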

\begin{proof}
	($\Rightarrow$) Since $\chi \in \TopS^2(G)$, 
	we have in particular that $\chi \in \TopS^1(G)$,  so by Theorem~\ref{thm:sigma1_cptgen} there is a centered $C\in \C(G)$ that generates~$G$ along~$\chi$.
	
	Now, one of the characterizations of~$\TopS^2(G)$ given by Corollary~\ref{cor.powersofC} tells us there is $m\in \NN_{\ge 1}$ such that the inclusion $(G_\chi \cdot \E C)_\chi \into (G_\chi \cdot \E C^m)_\chi$ is $\pi_1$-trivial. The composition $\Gamma(G,C)_\chi \into (G_\chi \cdot \E C)_\chi \into (G_\chi \cdot \E C^m)_\chi$ is therefore also $\pi_1$-trivial.
	Using Lemma~\ref{lem.compactexhaustionchi} to choose $k\in \NN$ such that $C^{2m} \cap G_\chi \subseteq C^{k, \chi}$, we deduce from Proposition~\ref{prop.trivialloops}~(1) that all $C$-relations in~$G$ of non-negative $\chi$-track follow from $R_{3k}$ along~$\chi$.
	This shows $\langle C\mid R_{3k}\rangle$ is a compact presentation of~$G$ along~$\chi$.
	
	($\Leftarrow$) We will use the characterization of $\chi \in \TopS^2(G)$ as essential $1$-connectedness of the filtration $((G_\chi \cdot \E C)_\chi)_{C\in \C(G)}$ given by Proposition~\ref{prop.trimmedsigma}. So fix $C \in \C(G)$, which we may assume is a centered generating set for~$G$ along~$\chi$, and thus $(G_\chi \cdot\E C)_\chi$ is connected.
	
	By Proposition~\ref{prop.anygensetchi}, $C$~fits into a compact presentation $\langle C \mid R_m \rangle$ of~$G$ along~$\chi$.
	Since every element of~$\pi_1(\Gamma(G,C)_\chi, 1)$ is represented by a $C$-relator of non-negative $\chi$-track,  Proposition~\ref{prop.trivialloops} (2) tells us that the composition
	\[\Gamma(G,C)_\chi \into (G_\chi \cdot \E C)_\chi \into (G_\chi \cdot \E C^{\lfloor \frac m2\rfloor})_\chi\]
	is $\pi_1$-trivial. As the first map is $\pi_1$-surjective by Lemma~\ref{lem.pi1onto}, we conclude the inclusion
	$(G_\chi \cdot \E C)_\chi \into(G_\chi \cdot \E C^{\lfloor \frac m2\rfloor})_\chi$ is $\pi_1$-trivial, as desired.
\end{proof}

\begin{prop}[$\TopS^2$-criteria]
	Let $\chi\in \TopS^1(G)$ and let $C\subseteq G$ be a compact generating set. The following conditions are equivalent:
	\begin{enumerate}
		\item $\chi \in \TopS^2(G)$,
		\item There exists $m\in \NN$ such that for all $s\in \RR$, the complex~$\Gamma(G, C\mid R_m)_s$ is simply connected.
		\item For some $m\in\NN$, the complex $\Gamma(G, C\mid R_m)_\chi$ is simply connected.
		\item For some $m \in \NN$ and some $s\in \RR_{\le0}$, the inclusion $\Gamma(G,C\mid R_m)_\chi \into \Gamma(G,C \mid R_m)_s$ is $\pi_1$-trivial.
	\end{enumerate}
\end{prop}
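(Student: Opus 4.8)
The plan is to run through the cycle of implications $(1)\Rightarrow(2)\Rightarrow(3)\Rightarrow(4)\Rightarrow(1)$, isolating the two genuinely substantial steps and disposing of the rest quickly. Two of the implications are immediate: $(2)\Rightarrow(3)$ is the case $s=0$ (recall $\Gamma(G,C\mid R_m)_0=\Gamma(G,C\mid R_m)_\chi$), and $(3)\Rightarrow(4)$ is again $s=0$, since the identity map of a simply connected complex is trivially $\pi_1$-trivial. So the real content lies in $(1)\Rightarrow(2)$ and $(4)\Rightarrow(1)$, together with the companion equivalence $(1)\Leftrightarrow(3)$.

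The equivalence $(1)\Leftrightarrow(3)$ is a repackaging of results already in hand. If $\chi\in\TopS^2(G)$, then by Theorem~\ref{thm:sigma2_cptpres} the group $G$ is compactly presented along~$\chi$, so Proposition~\ref{prop.anygensetchi} produces $m\in\NN$ with $\langle C\mid R_m\rangle$ a (compact) presentation of $G$ along~$\chi$, and Corollary~\ref{cor:cptpres_1conn} says precisely that $\Gamma(G,C\mid R_m)_\chi$ is simply connected. Conversely, if $\Gamma(G,C\mid R_m)_\chi$ is simply connected for some~$m$, then Corollary~\ref{cor:cptpres_1conn} gives that $\langle C\mid R_m\rangle$ presents $G$ along~$\chi$; since $R_m$ has bounded length this is a compact presentation, whence $\chi\in\TopS^2(G)$ by Theorem~\ref{thm:sigma2_cptpres}. (I would also treat the degenerate case $\chi=0$ separately: there $t$ with $\chi(t)>0$ does not exist, but the complexes $\Gamma(G,C\mid R_m)_s$ are the whole Cayley complex for $s\le 0$ and empty for $s>0$, so all of $(1)$--$(4)$ reduce to ``$\langle C\mid R_m\rangle$ presents $G$ for some~$m$''.)

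It remains to prove the two ``raising'' implications. For $(1)\Rightarrow(2)$ one has, by the previous paragraph, a presentation $\langle C\mid R_m\rangle$ of $G$ along~$\chi$, and one must upgrade ``$\Gamma(G,C\mid R_m)_0$ simply connected'' to ``$\Gamma(G,C\mid R_{m'})_s$ simply connected for every $s\in\RR$'' with a single $m'$ independent of~$s$. Fixing $s$ and a loop at level $\ge s$, translating its basepoint to~$1$, one obtains a $C$-relation whose $\chi$-track is bounded below by some $s_0\le 0$, and the task is to fill it by a disk that stays at level $\ge s_0$. Since $\langle C\mid R_m\rangle$ presents $G$ in the ordinary sense, the relation admits a van Kampen diagram over $R_m$; one then modifies this diagram so that, realized in the Cayley complex, all its vertices lie at level $\ge s_0$. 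This is the two-dimensional analogue of the word combinatorics of Proposition~\ref{prop.connectedcayley} (where one reorders letters so $\chi$-positive ones come first and absorbs the defect into a controlled commutator): here one ``pushes up'' the sub-level portion of the diagram, replacing each offending $2$-cell by boundedly many $2$-cells whose boundary words are $C$-relations of length bounded by an $m'$ determined by $m$, by a compact-exhaustion constant for $C$ along~$\chi$ (Lemma~\ref{lem.compactexhaustionchi}), and by a fixed $t\in G$ with $\chi(t)>0$; the new relators still follow from $R_{m'}$ along~$\chi$ by Lemma~\ref{lem.wordsincayleycomplex} and Proposition~\ref{prop.trivialloops}. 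For $(4)\Rightarrow(1)$ the input is only a single $m$ and $s\le 0$ with $\Gamma(G,C\mid R_m)_\chi\into\Gamma(G,C\mid R_m)_s$ $\pi_1$-trivial, and the goal (via Proposition~\ref{prop.trimmedsigma} or Corollary~\ref{cor:boundeddrop}) is to produce $C'\in\C(G)$ making $(G_\chi\cdot\E C)_\chi\into(G_\chi\cdot\E C')_\chi$ $\pi_1$-trivial. Here I would pass between the Cayley complex and the simplicial sets $(G_\chi\cdot\E C^j)_\chi$ using Lemma~\ref{lem.pi1onto} and Proposition~\ref{prop.trivialloops}(2): a relator-loop at level $\ge s$ is translated up by a power $t^k$ with $\chi(t^k)\ge -s$ so that it has non-negative $\chi$-track, filled by Proposition~\ref{prop.trivialloops}(2) inside $(G_\chi\cdot\E C^{\lfloor m/2\rfloor})_\chi$, and translated back down; combined with the $\pi_1$-surjectivity of $\Gamma(G,C)_\chi\into(G_\chi\cdot\E C)_\chi$ and cofinality of $(C^j)_{j\in\NN}$ in $\C(G)$ (Lemma~\ref{lem.compactexhaustion}), this yields the required $\pi_1$-triviality, exactly paralleling the steps $(2)\Rightarrow(3)$ and $(3)\Rightarrow(1)$ of Proposition~\ref{prop.connectedcayley} one dimension higher.

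The main obstacle, common to both raising implications, is the bookkeeping: one must control simultaneously the $\chi$-levels of \emph{all} vertices of a two-dimensional diagram while keeping the sizes (lengths of boundary words of cells, exponents of~$C$) uniformly bounded, so that the modified $2$-cells can be certified to follow from one fixed $R_{m'}$ along~$\chi$. Getting the translation constants, the compact-exhaustion constants, and the relator-length bounds to fit together into a single uniform $m'$ is where the argument demands care; once the diagram manipulation is set up correctly, everything else is a formal consequence of Proposition~\ref{prop.trivialloops}, Lemma~\ref{lem.pi1onto}, and the characterizations of $\TopS^2(G)$ in Proposition~\ref{prop.trimmedsigma} and Proposition~\ref{prop:doublefiltration}.
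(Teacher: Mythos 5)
Your treatment of the easy implications and of $(1)\Leftrightarrow(3)$ coincides with the paper's: the latter is exactly the chain Theorem~\ref{thm:sigma2_cptpres} $\to$ Proposition~\ref{prop.anygensetchi} $\to$ Corollary~\ref{cor:cptpres_1conn}. Where you diverge is in how the remaining work is organized. The paper proves the single implication $(4)\Rightarrow(2)$, which together with $(1)\Leftrightarrow(3)$ and $2\Rightarrow 3\Rightarrow 4$ closes the whole proposition; you instead plan two separate hard arguments, $(1)\Rightarrow(2)$ and $(4)\Rightarrow(1)$, so you are doing roughly twice the necessary work. More importantly, the raising mechanism you propose for $(1)\Rightarrow(2)$ --- ``push up the sub-level portion of the van Kampen diagram, replacing each offending $2$-cell by boundedly many $2$-cells'' --- is exactly the step that needs a concrete construction, and local surgery creates a mismatch along the frontier between the pushed and unpushed regions of the diagram. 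The paper's mechanism is global and cleaner: by Lemma~\ref{lem.compactexhaustionchi} each $x\in C_\chi$ can be written as $t w_x \bar t$ with $w_x$ of non-negative $\chi$-track and length at most~$k$; adjoining the length-$(k+3)$ relators $x t \bar w_x \bar t$ to form $R_M$ lets one homotope an \emph{entire} boundary loop at level $\ge r$ to a loop at level $\ge r+\chi(t)$ through an annulus staying at level $\ge r$. Iterating shows $\Gamma(G,C\mid R_M)_\chi\into\Gamma(G,C\mid R_M)_r$ is $\pi_1$-surjective, and a commutative square comparing $R_m$ with $R_M$ transports the $\pi_1$-triviality of hypothesis~(4) to this inclusion, whence the target is simply connected. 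I would replace your diagram surgery by this loop-raising argument.

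On $(4)\Rightarrow(1)$: your ``translate up by $t^k$, fill at level $\ge 0$, translate back down'' produces a filling at level $\ge -\chi(t^k)<0$, not at level $\ge 0$, so it cannot deliver the goal as you state it, namely $\pi_1$-triviality of $(G_\chi\cdot\E C)_\chi\into(G_\chi\cdot\E C')_\chi$ as in Proposition~\ref{prop.trimmedsigma}. It does suffice for Corollary~\ref{cor:boundeddrop}, since the drop $-\chi(t^k)$ is bounded independently of the loop, and you mention that corollary as an alternative --- but you must commit to it, and you still owe the reduction from arbitrary $C'\in\C(G)$ to powers of~$C$ (Lemma~\ref{lem.compactexhaustion}, plus Lemma~\ref{lem.compactexhaustionchi} to rewrite edges of $\Gamma(G,C^j)_\chi$ as $C$-paths of non-negative track). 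None of this is unfixable, but the simplest repair is to drop $(4)\Rightarrow(1)$ entirely and prove $(4)\Rightarrow(2)$ as above.
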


\begin{proof}
	It is immediate that $2 \Rightarrow 3 \Rightarrow 4$.
	
	$(1\Leftrightarrow 3)$ By Theorem~\ref{thm:sigma2_cptpres}, we have $\chi \in \TopS^2(G)$ if and only if $G$~is compactly presented along~$\chi$. By Proposition~\ref{prop.anygensetchi}, this is equivalent to the existence of a compact presentation of~$G$ along $\chi$ of the form~$\langle C\mid R_m\rangle$. By Corollary~\ref{cor:cptpres_1conn}, this is equivalent to Condition 3.
	
	$(4\Rightarrow 2)$ This is immediate if $\chi = 0$, so we assume $\chi \neq 0$ and choose $t\in C$ with $\chi(t)>0$. We may also assume $C$~is centered.
	
	Because $\chi \in \TopS^1(G)$, 
	Proposition~\ref{prop:sigma1_crit} tells us that $C$~generates $G$~along $\chi$. As $t^{-1} C_\chi t$ is a compact subset of~$G_\chi$, it follows from Lemma~\ref{lem.compactexhaustionchi} that $t^{-1} C_\chi t \subseteq C^{k, \chi}$ for some $k\in \NN$. In other words, $C_\chi \subseteq t C^{k, \chi} t^{-1}$, and thus every $x\in C_\chi$ may be expressed as~$t w_x \bar t$ with $w_x$~a $C^\pm$-word of non-negative $\chi$-track and length at most~$k$. For $x\in C$ with $\chi(x) <0$, define $w_x := \bar w_{\bar x}$, so again $x$~is represented by $t w_x \bar t$.
	
	Now, let $m, s$ be as in Condition 2 and put $M := \max\{m, k+3\}$. We show that for all $r\le s$, the complex $\Gamma(G, C\mid R_M)_r$ is simply connected. This is sufficient, since for every $s'\in \RR$, the complex $\Gamma(G, C\mid R_M)_{s'}$ has a $G$-translate of the form $\Gamma(G, C\mid R_M)_r$ with $r\le s$.
	
	Since $M\ge m$ and $r\le s$, we have a commutative square of inclusion maps
	\[ \begin{tikzcd}
		\Gamma(G, C \mid R_m)_\chi \arrow[r, hook]\arrow[d, hook]& \Gamma(G, C \mid R_m)_s \arrow[d, hook]\\
		\Gamma(G, C \mid R_M)_\chi \arrow[r, hook] & \Gamma(G, C \mid R_M)_r 
	\end{tikzcd},\]
	where the top map is $\pi_1$-trivial by assumption and the left-hand map is $\pi_1$-surjective because the spaces differ by the attachment of $2$-cells. We thus conclude that the lower map is $\pi_1$-trivial, and so the proof will be complete as soon as we show it is $\pi_1$-surjective.
	
	Observe that, for each $x\in C$, the $C$-relator $x t \bar w_x \bar t$ lies in~$R_M$. These relators allow us to homotope relative endpoints any edge in $\Gamma(G, C \mid R_M)_r$ labeled by~$x$ to the path reading $tw_x \bar t$. Now consider an edge-loop~$\gamma$ in $\Gamma(G,C\mid R_M)_r$, say based at~$g$ and reading the $C$-relator $w=y_1 \cdots y_l$.
	The preceding observation shows that $\gamma$~is pointed-homotopic to the loop reading $(tw_{y_1}\bar t) (tw_{y_2}\bar t) \cdots (tw_{y_l}\bar t)$, which in turn is freely homotopic to the loop $\gamma^+$ starting at $gt$ and reading $w':= w_{y_1} w_{y_2} \cdots w_{y_l}$. Since for each $y\in C^\pm$ the $\chi$-track of~$w_y$ attains its minimum at an endpoint, the minimal value of the $\chi$-track of $w'$ is the same as that of~$w$.
	Hence, as $\gamma$ lies in $\Gamma(G, C \mid R_M)_r$, we conclude $\gamma^+$~lies in $\Gamma(G, C \mid R_M)_{r+\chi(t)}$. Repeating this technique enough times, we can show $\gamma$~is homotopic to a loop in $\Gamma(G, C \mid R_M)_{\chi}$; in other words, the bottom map in the above commutative square is $\pi_1$-surjective.
\end{proof}

We close this section by generalizing the well-known fact that finitely presented groups admit a presentation whose relators have length at most~$3$.

\begin{prop}[Relators of length~$3$ suffice]
	If $G$~is compactly presented along a character~$\chi$, then there is a centered $C \in \C(G)$ such that $\langle C \st R_3\rangle$ is a presentation of~$G$ along~$\chi$.
\end{prop}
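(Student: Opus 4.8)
The plan is to enlarge the generating set and appeal to Corollary~\ref{cor:cptpres_1conn}: it suffices to produce a compact centered $C' \in \C(G)$ with $\Gamma(G, C' \mid R_3^{C'})_\chi$ simply connected. To choose $C'$, note that by Theorem~\ref{thm:sigma2_cptpres} we have $\chi \in \TopS^2(G) \subseteq \TopS^1(G)$; by Theorem~\ref{thm:sigma1_cptgen} fix a compact centered generating set $C$ that generates $G$ along $\chi$, and by Corollary~\ref{cor.powersofC}~(3) fix $m \in \NN_{\ge 1}$ for which the inclusion $(G_\chi \cdot \E C)_\chi \into (G_\chi \cdot \E C^m)_\chi$ is $\pi_1$-trivial. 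I would then set $C' := C^{2m}$, which is compact, centered, and (by Theorem~\ref{thm:sigma1_cptgen}) still generates $G$ along $\chi$, so that $\Gamma(G, C')_\chi$ is connected.

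The first step is to check that the inclusion $\Gamma(G, C)_\chi \into \Gamma(G, C' \mid R_3^{C'})_\chi$ is $\pi_1$-trivial. The key observation is that the boundary of every nondegenerate $2$-simplex $g\cdot(x_0, x_1, x_2)$ of $(G_\chi \cdot \E C^m)_\chi$ is an edge-loop of $\Gamma(G, C')_\chi$ (all its vertices $g x_i$ lie in $G_\chi$) reading the length-$3$ relation $(x_0^{-1}x_1)(x_1^{-1}x_2)(x_2^{-1}x_0)$, whose letters lie in $C^{-m}C^m = C^{2m} = C'$ and which therefore belongs to $R_3^{C'}$. Together with the identity on vertices and the evident map on edges, this yields a cellular map $\lvert(G_\chi \cdot \E C^m)_\chi^{(2)}\rvert \to \Gamma(G, C' \mid R_3^{C'})_\chi$ restricting on $\Gamma(G, C)_\chi \subseteq (G_\chi \cdot \E C)_\chi \subseteq (G_\chi \cdot \E C^m)_\chi$ to the standard inclusion; precomposing with the $\pi_1$-trivial inclusion $\Gamma(G, C)_\chi \into (G_\chi \cdot \E C^m)_\chi$ gives the claim.

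It remains to show that $\Gamma(G, C)_\chi \into \Gamma(G, C' \mid R_3^{C'})_\chi$ is also $\pi_1$-surjective; since the inclusion of the $1$-skeleton $\Gamma(G, C')_\chi$ is $\pi_1$-surjective, it is enough to homotope, rel endpoints and within $\Gamma(G, C' \mid R_3^{C'})_\chi$, each edge $(h, hc')$ of $\Gamma(G, C')_\chi$ to an edge-path in $\Gamma(G, C)_\chi$. Writing $c' = c_1 \cdots c_{2m}$ with $c_i \in C$, this edge is joined to the path reading $c_1 \cdots c_{2m}$ by the fan of triangles $(h,\, hc_1\cdots c_i,\, hc_1\cdots c_{i+1})$, each reading a relation of $R_3^{C'}$; and since $C$ is compact, $\chi(C)$ is bounded, so the partial products $hc_1\cdots c_i$ drop at most $B := 2m\sup\lvert\chi(C)\rvert$ below the $0$-level. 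This means the fan triangles need not lie in $\Gamma(G, C'\mid R_3^{C'})_\chi$, which is the main obstacle. The remedy I have in mind is to first conjugate the whole configuration by $t^k$ for some $t \in C$ with $\chi(t) > 0$ and $\chi(t^k) \ge B$ (such $t$ exists when $\chi \ne 0$; the case $\chi = 0$ is the classical ``compactly presented'' statement and the construction then carries no level constraints), so that the translated fan lies above the $0$-level; one then descends again, using that $\chi \in \TopS^1(G)$ forces every sublevel set $\Gamma(G, C)_s$ to be connected (Proposition~\ref{prop:sigma1_crit} applied to the relevant property in Proposition~\ref{prop.connectedcayley}), which supplies the auxiliary paths in $\Gamma(G, C)_\chi$ needed to reconcile a loop with its $t^k$-translate. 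Carrying out this ``conjugate up and descend'' manoeuvre carefully is the technical heart of the argument. Granting it, $\Gamma(G, C'\mid R_3^{C'})_\chi$ is simply connected, and Corollary~\ref{cor:cptpres_1conn} yields that $\langle C' \mid R_3^{C'}\rangle$ is a presentation of $G$ along $\chi$, with $C'$ a compact centered set, as required.
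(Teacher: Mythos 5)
Your first step (converting $2$-simplices of $(G_\chi\cdot\E C^m)_\chi$ into length-$3$ relators over $C'=C^{2m}$ to get $\pi_1$-triviality of $\Gamma(G,C)_\chi\into\Gamma(G,C'\mid R_3^{C'})_\chi$) is sound, and is a clean variant of Proposition~\ref{prop.trivialloops}~(1). The gap is exactly where you flag it: the $\pi_1$-surjectivity step is not established, and the ``conjugate up and descend'' manoeuvre you propose is unlikely to go through with only length-$3$ relators over $C^{2m}$. To relate a loop $\gamma$ in $\Gamma(G,C')_\chi$ to its translate $t^k\gamma$ \emph{inside} $\Gamma(G,C'\mid R_3^{C'})_\chi$, you must fill an annulus between them; edge by edge this requires squares whose sides from $h$ to $t^kh$ read words for the conjugates $h^{-1}t^kh$, and filling those squares needs relators of the shape $x\,t\,\bar w_x\,\bar t$ as in the proof of the $\TopS^2$-criteria -- these have length up to $k+3$, not $3$. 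Connectivity of the sublevel sets $\Gamma(G,C)_s$ gives you the $\pi_0$-statement (the endpoints can be joined), but not the $\pi_1$-statement (that the resulting annulus is filled by triangles), so the ``descend'' half of the manoeuvre is precisely the unproved claim.

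The paper avoids this entirely by choosing the enlarged generating set differently: starting from a compact presentation $\langle D\mid R_m\rangle$ along $\chi$, it takes $C:=D^{m,\chi}\cup\bigl(D^{m,\chi}\bigr)^{-1}$, i.e.\ it adjoins the elements represented by short $D$-words of \emph{non-negative $\chi$-track} (and their inverses), rather than all of $D^{2m}$. With this choice, whenever an edge labelled by $x\in C$ is coned off to the path reading a $D$-word $w_x$ whose $\chi$-track attains its minimum at an endpoint, every partial product $g_i$ of $w_x$ again lies in $C$ and satisfies $\chi(g_i)\ge\min\{0,\chi(x)\}$; hence the fan of length-$3$ triangles stays in the $\chi\ge 0$ region automatically, and no conjugation is needed. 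If you want to rescue your own setup, the fix is of the same kind: using Lemma~\ref{lem.compactexhaustionchi} choose $k$ with $C^{2m}\cap G_\chi\subseteq C^{k,\chi}$ and replace $C^{2m}$ by $C^{k,\chi}\cup\bigl(C^{k,\chi}\bigr)^{-1}$, so that the partial products needed for your fans are themselves generators of non-negative $\chi$-value. As written, though, the proof is incomplete.
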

 \begin{proof}	
	Let $\langle D \mid R_m\rangle$ be a compact presentation of~$G$ along~$\chi$, where we may assume that $D$~is centered and $m\ge 2$.
	We shall take
	\[C:=D^{m, \chi} \cup \big(D^{m, \chi}\big)^{-1},\]
	which is clearly centered. Since $D \subseteq C$, we see $C$~generates~$G$, so by Theorem~\ref{thm:sigma1_cptgen} it generates~$G$ along~$\chi$. We need to show every $C$-relator $\rho=y_1 \cdots y_l$ of non-negative $\chi$-track follows from~$R_3^C$ along~$\chi$.
	
	Each $x\in C_\chi$ can be expressed by a $D$-word~$w_x$ of non-negative $\chi$-track and length at most~$m$. In case $x\in D_\chi$, we may further assume $w_x=x$. For $x \in C$ with $\chi(x)<0$, put $w_x := w_{x^{-1}}^{-1}$ (that is, the $D$-word obtained from $w_{x^{-1}}$ by reversing the order and swapping each letter with its inverse in~$D$), and for $y\in \bar C$, consider the $\bar D$-word $w_y:=\bar w_{\bar y}$. Thus for every $y\in C^{\pm}$, the $\chi$-track of~$w_y$ attains its minimum at one of its endpoints, so the $D$-relator $\rho' := w_{y_1} \cdots w_{y_l}$ has non-negative $\chi$-track.
	We first show that $\rho'$, interpreted as a $C$-relator, follows from~$R_3^C$ along~$\chi$ if and only if $\rho$~does. Afterwards, we will see that $\rho'$ does indeed follow from~$R_3^C$ along~$\chi$.
	
	For the first assertion, by Lemma~\ref{lem.wordsincayleycomplex}, we need only show that the edge-loops at~$1$ reading $\rho$ and~$\rho'$ are homotopic in~$\Gamma(G, C\mid R_3^C)_\chi$. We will prove a stronger statement: each edge $g\cdot (1,x)$ of $\Gamma(G, C\mid R_r^C)_\chi$ is homotopic relative endpoints to the edge-path at~$g$ reading~$w_x$. 
	
	So write $w_x= x_1\cdots x_k$ with $3\le k\le m$  (for $k\le 2$ the statement is immediate) and for each $i\le k$, let~$g_i := x_1\cdots x_i$. If $\chi(x) \ge 0$, then $g_i \in D^{m, \chi}$; otherwise we have $g_i^{-1} \in D^{m, \chi}$. Either way, we see $g_i \in C$.
	Hence, for every $2 \le i \le k$, the graph $\Gamma(G,C)_\chi$ contains the edges
	$g\cdot (1,g_{i-1})$ and $g \cdot (1, g_i)$. Together with the edge $gg_{i-1}(1,x_i)$, they form a loop reading the $C$-relator $g_{i-1} x_i \bar g_i \in R_3^C$, which is filled by a $2$-cell in $\Gamma(G \mid C , R_3^C)_\chi$. These triangles assemble to give the desired homotopy relative endpoints from $g\cdot (1,x)$ to the path reading $w_x$; see the example in Figure~\ref{fig:three_suffice_a}.
	
	\begin{figure}[h]
		\centering
		\def \svgwidth{0.5\linewidth}
		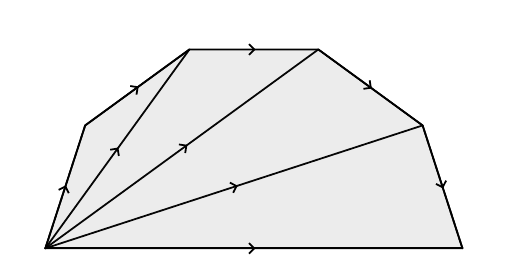
		\caption{A collection of $k-1$ triangles in $\Gamma(G, C \mid R_3^C)_\chi$ exhibits the edge $g\cdot(1,x)$ as homotopic relative endpoints to the path at $g$ reading $w_x = x_1 \cdots x_k$.}
		\label{fig:three_suffice_a}
	\end{figure}
	
	We are left to show that $\rho'$ follows from~$R_3^C$ along~$\chi$. Of course, being a $D$-relator, $\rho'$~follows from $R_m^D$ along~$\chi$, so it is freely equivalent to a product of relators $w r \bar w$ with non-negative $\chi$-track and with $r \in R_m^D$.
	Using Lemma~\ref{lem:posrels}, we may assume $r$~has non-negative $\chi$-track. Thus the proof will be complete once we show that every $r \in R_m^D$ with non-negative $\chi$-track follows from $R_3^C$ along~$\chi$. We may also restrict to the case where $r$~is of the form $z_1 \cdots z_l$ with $z_i \in D^{\pm}$ and $4\le l\le m$.
	
	Again appealing to Lemma~\ref{lem.wordsincayleycomplex}, we will prove this by showing that the edge-loop~$\gamma$ in~$\Gamma(G,C\mid R_3^C)_\chi$ at~$1$ reading~$r$ is null-homotopic. The argument is similar to the previous one: write $r=z_1 \cdots z_l$ with $z_i \in D^{\pm}$. Each initial sub-word $z_1 \cdots z_i$ represents an element~$g_i \in  D^{m, \chi} \subseteq C$, so $\Gamma(G,C)_\chi$ contains all edges of the form $(1, g_{i})$. Together with the edges of~$\gamma$, they form length-3 loops reading $C$-relators, which are thus filled by $2$-cells in $\Gamma(G,C\mid R_3^C)_\chi$. These triangles assemble to a filling of~$\gamma$, as illustrated in Figure~\ref{fig:three_suffice_b}.\qedhere
	
	\begin{figure}[h]
		\centering
		\def \svgwidth{0.4\linewidth}
		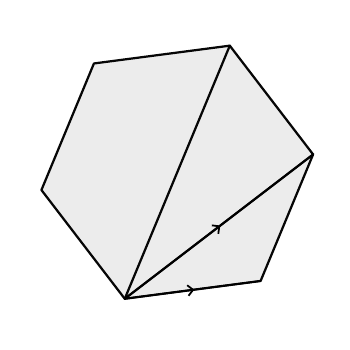
		\caption{Triangles in $\Gamma(G, C \mid R_3^C)_\chi$ exhibiting the loop~$\gamma$ reading~$r$ as null-homotopic. The orientation of edges labeled by $z_i$ should be understood to be reversed when $z_i\in \bar C$.}
		\label{fig:three_suffice_b}
	\end{figure}
 \end{proof}
\section{$\TopS^n$ via proper actions}\label{sec:properaction}

We now present a characterization of the $\Sigma$-sets in terms of an action on a locally compact Hausdorff space~$X$, extending results in Section~3.2 of Abels--Tiemeyer's article \cite{AT97}. Actions $\alpha \colon G\times X  \to X$ are always assumed to be continuous and denoted by $\alpha(g,x) =: g\cdot x$. We remind the reader that $\alpha$~is \textbf{proper} if the map
\begin{align*}
	\alpha_+ \colon G\times X &\to X \times X \\
	(g,x)& \mapsto (x, g\cdot x)
\end{align*}
is proper. Another useful description is the following:

\begin{lem}[Properness via $\Ret$]\label{lem:ret}
	The action $\alpha \colon G\times X \to X$ is proper if and only if for every $K\in \C(X)$ the set \[\Ret(K):=\{g \in G \mid  g\cdot K \cap K \neq \emptyset\}\]
	is compact.
\end{lem}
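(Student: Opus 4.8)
The statement is the familiar equivalence between properness of an action and compactness of its ``return sets'', and the plan is simply to unwind both sides through the definition of a proper map as one for which preimages of compact sets are compact.

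For the implication ``$\alpha$ proper $\Rightarrow$ $\Ret(K)$ compact'', I would fix $K\in\C(X)$, note that $K\times K\in\C(X\times X)$, and identify
\[\Ret(K) = \pr_G\bigl(\alpha_+^{-1}(K\times K)\bigr),\]
using that $g\cdot K\cap K\neq\emptyset$ holds precisely when some $x\in K$ satisfies $g\cdot x\in K$, i.e.\ $(g,x)\in\alpha_+^{-1}(K\times K)$. Properness of~$\alpha_+$ makes this preimage compact, and its image under the (continuous) projection $G\times X\to G$ is then compact as well.

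For the converse, given $L\in\C(X\times X)$, I would enclose it in a box by setting $K:=\pr_1(L)\cup\pr_2(L)\in\C(X)$, so that $L\subseteq K\times K$. A direct computation gives $\alpha_+^{-1}(K\times K)=\{(g,x)\mid x\in K,\ g\cdot x\in K\}$, and this set is contained in $\Ret(K)\times K$, because $x\in K$ and $g\cdot x\in K$ force $g\cdot x\in g\cdot K\cap K$. Hence $\alpha_+^{-1}(L)\subseteq\Ret(K)\times K$, the latter being compact by hypothesis. Since $\alpha_+$ is continuous and $L$ is closed (being compact in the Hausdorff space $X\times X$), the preimage $\alpha_+^{-1}(L)$ is closed, hence a closed subset of a compact set, hence compact; thus $\alpha_+$ is proper.

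I do not anticipate a genuine obstacle. The only points demanding mild attention are the set-theoretic identification of $\Ret(K)$ with a coordinate projection in the first direction, and, in the second, the use of Hausdorffness of $X\times X$ to know that the compact set~$L$ is closed (so that $\alpha_+^{-1}(L)$ is closed, and a closed subset of a compact space is compact). No local compactness is needed for this particular lemma.
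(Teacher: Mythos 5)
Your proposal is correct and follows essentially the same route as the paper: the forward direction identifies $\Ret(K)$ as the $G$-projection of the compact set $\alpha_+^{-1}(K\times K)$, and the converse encloses a compact $L\subseteq X\times X$ in the box $K\times K$ with $K=\pr_1(L)\cup\pr_2(L)$ and realizes $\alpha_+^{-1}(L)$ as a closed subset of the compact Hausdorff set $\Ret(K)\times K$. Your explicit justification of closedness (via $L$ being closed in the Hausdorff space $X\times X$) is a small clarification the paper leaves implicit, but the argument is the same.
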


\begin{proof}
	($\Rightarrow$) Properness of~$\alpha$ implies that for every $K\in \C(X)$, the set
	$$\alpha_+^{-1}(K\times K) = \{(g,k) \in G\times K \mid g\cdot k\in K\}$$
	is compact. Its projection to the $G$-factor is precisely $\Ret(K)$, which is therefore compact.
	
	($\Leftarrow$) Let $Q\subseteq X\times X$ be compact; we need to show that $\alpha_+^{-1}(Q)$ is compact.
	Consider the compact set $K = \pr_1(Q) \cup \pr_2(Q) \subseteq X$, and note that
	$$\alpha_+^{-1}(Q) = \{(g,x) \in G\times X \mid x \in \pr_1(Q), g\cdot x \in \pr_2(Q)\}$$
	is a closed subset of $\Ret(K) \times K$.
	Because  $\Ret(K) \times K$ is compact and Hausdorff, we conclude $\alpha_+^{-1}(Q)$ is compact.
\end{proof}

\begin{prop}[$\Sigma^n(G)$ via proper actions]\label{prop:properaction}
	Let $X$~be a nonempty locally compact Hausdorff space with a proper $G$-action, let $n\in \NN$, and let $\chi\colon G \to \RR$ be a character. Then $\chi \in \TopS^n(G)$ if and only if the filtration
	\[(G_\chi \cdot \E K)_{K \in \C(X)}\]
	of $\E X$ is essentially $(n-1)$-connected.
\end{prop}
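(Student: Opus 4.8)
The plan is to exhibit a simplicial homotopy equivalence, in the sense of Definition~\ref{dfn:hoequiv}, between the filtration $(G_\chi\cdot\E C)_{C\in\C(G)}$ of $\E G$ and the filtration $(G_\chi\cdot\E K)_{K\in\C(X)}$ of $\E X$, and then to invoke Lemma~\ref{lem:homotopyequiv}: since $\chi\in\TopS^n(G)$ means precisely that the first filtration is essentially $(n-1)$-connected, and both conditions amount to the systems $(\pi_k(-))$ being essentially trivial for all $k\le n-1$, this yields the stated equivalence. (That $(G_\chi\cdot\E K)_{K\in\C(X)}$ is a genuine filtration uses $X\neq\emptyset$.) Fix a basepoint $x_0\in X$ and pass to the cofinal subposets of $\C(G)$, resp.\ $\C(X)$, of compacta containing $1$, resp.\ $x_0$, so that all stages share a basepoint.

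For the forward direction I would take $\Phi\colon\C(G)\to\C(X)$, $C\mapsto Cx_0$ (monotone, $Cx_0$ compact), and let $f_C$ be the restriction to $G_\chi\cdot\E C$ of the $G$-equivariant simplicial map $\E\beta\colon\E G\to\E X$ induced by the orbit map $\beta\colon G\to X$, $g\mapsto g\cdot x_0$; it carries $G_\chi\cdot\E C$ into $G_\chi\cdot\E(Cx_0)$, and the $f_C$ commute strictly with the filtration inclusions. For the backward direction I would use properness to lift points of $X$ to $G$: set $\Psi(K):=\Ret(K)\cdot\Ret(K)$, which is compact by Lemma~\ref{lem:ret} and continuity of multiplication, and monotone in $K$; for each vertex $y$ of $G_\chi\cdot\E K$, i.e.\ each $y\in G_\chi\cdot K$, choose $\gamma_K(y)\in G_\chi$ with $\gamma_K(y)^{-1}\cdot y\in K$ (and $\gamma_K(x_0):=1$), and let $g_K\colon G_\chi\cdot\E K\to\E G$ be the simplicial map corresponding to $\gamma_K$ under the adjunction. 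The key computation is that if $\sigma=g\cdot(z_0,\dots,z_k)$ is a simplex of $G_\chi\cdot\E K$ with $g\in G_\chi$ and $z_i\in K$, then from $\gamma_K(gz_i)^{-1}g\cdot z_i\in K$ and $z_i\in K$ one reads off $g^{-1}\gamma_K(gz_i)\in\Ret(K)$ for each $i$, so that $g_K(\sigma)=g\cdot\bigl(g^{-1}\gamma_K(gz_0),\dots,g^{-1}\gamma_K(gz_k)\bigr)$ lies in $G_\chi\cdot\E(\Psi(K))$.

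What then remains is to check the diagrams defining a simplicial homotopy equivalence: that the $g_K$ commute with inclusions up to simplicial homotopy, and that the round trips $g_{\Phi(C)}\circ f_C$ and $f_{\Psi(K)}\circ g_K$ are, after passing to a larger target stage, simplicially homotopic to the respective inclusions. In every instance the homotopy is the canonical one between two maps of a subcomplex of a free simplicial set (Section~\ref{sec:prelim}), prescribed on a simplex $\sigma$ by the mixed tuples $(\dots,u(x_{i-1}),u'(x_i),\dots)$; the only thing to verify is that these tuples land in the intended stage $G_\chi\cdot\E D$. Writing $\sigma=g\cdot(z_0,\dots,z_k)$ with $g\in G_\chi$, every entry that can arise — one of the form $gz_j$, or $\gamma_K(gz_j)$, or $\gamma_K(gz_j)\cdot x_0$ — is $g$ times an element of a compact subset of $G$ (resp.\ of $X$) built from $K$, $\Ret(K)$, $Cx_0$, $\Ret(Cx_0)$ and $x_0$ alone, exactly as in the key computation; hence the mixed tuple lies in $G_\chi\cdot\E D$ as soon as $D$ contains that compact set and dominates the prescribed index ($\Phi(\Psi(K))$, $\Psi(\Phi(C))$, and so on). Choosing such a $D$ in each of the finitely many cases completes the construction, and Lemma~\ref{lem:homotopyequiv} concludes.

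The main obstacle is precisely this last bookkeeping: one must observe that the chosen lift $\gamma_K(y)$ differs from the group element $g$ appearing in a decomposition $\sigma=g\cdot(z_0,\dots,z_k)$ only by a factor lying in a compact set that depends on $K$ alone — and it is here, through the compactness of $\Ret(K)$, that properness of the action does the real work. Everything else is routine manipulation of free simplicial sets.
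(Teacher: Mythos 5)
Your proposal is correct and follows essentially the same route as the paper's proof: both directions are realized by the orbit map $g\mapsto g\cdot x_0$ and by a choice of lifts $y\mapsto g_y\in G_\chi$ with $g_y^{-1}\cdot y\in K$, with properness entering exactly through the compactness of $\Ret(K)$, and the homotopy-equivalence diagrams checked via the canonical simplicial homotopies into free simplicial sets. The only (harmless) cosmetic difference is your choice of index $\Psi(K)=\Ret(K)\cdot\Ret(K)$ where the paper uses $\Ret(K)$ itself.
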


For $\chi =0$, this fact has been established by Abels--Tiemeyer \cite[Theorem~3.2.2]{AT97}, using essentially the same argument we now present.
\begin{proof}
	Fix any $x_0 \in X$. For each $C\in \C(G)$, note that $Cx_0 \in \C(X)$ and consider the map of simplicial sets
	\begin{align*}
		\rho_C \colon G_\chi \cdot \E C &\to G_\chi \cdot \E (C x_0)\\
		(g_0, \ldots, g_k) & \mapsto (g_0 \cdot x_0, \ldots, g_k \cdot x_0).
	\end{align*}
	The~$\rho_C$, taken over all~$C$, form a commuting family of maps up to simplicial homotopy from $(G_\chi \cdot \E C)_{C\in \C(G)}$ to $(G_\chi \cdot \E K)_{K\in \C(X)}$ (recall Definition~\ref{dfn:hoequiv} -- in fact, the relevant diagrams commute ``on the nose'', not just up to homotopy).
	By Lemma~\ref{lem:homotopyequiv}, the statement will follow once  we show that this family has a simplicial homotopy-inverse. 
	
	Given $K\in \C(X)$ and $y\in G_\chi \cdot K$, we choose an expression $y = g_y \cdot z_y$ with $g_y\in G_\chi$ and $z_y\in K$.
	 We claim there is a map of simplicial sets
	\begin{align*}
		s_K \colon G_\chi \cdot \E K &\to G_\chi \cdot \E \Ret(K)\\
		(y_0, \ldots, y_k) &\mapsto (g_{y_0}, \ldots, g_{y_k}).
	\end{align*}
	Indeed, writing $(y_0, \ldots, y_k) = g\cdot (l_0, \ldots, l_k)$ with $g \in G_\chi$ and $l_i \in K$, we have $g_{y_i} \cdot z_{y_i} = g \cdot l_i$ for each~$i$, so $g^{-1} g_{y_i} \cdot z_{y_i} \in K$, and hence $g^{-1}g_{y_i} \in \Ret (K)$.
	The expression $(g_{y_0}, \ldots, g_{y_k}) = g\cdot (g^{-1}g_{y_0}, \ldots, g^{-1}g_{y_k})$ thus shows this is a simplex in $G_\chi \cdot \E \Ret(K)$.
	
	Moreover, the $(s_K)_{K\in \C(X)}$ commute up to simplicial homotopy. To see this, consider compact subsets $K\subseteq K'$ of~$X$ and
	the (unique) simplicial homotopy $H\colon G_\chi \cdot \E K \times \Delta^1 \to \E G$ from~$s_K$ to~$s_{K'}$. It is given by 
	$$H\left((y_0, \ldots, y_k\right), \tau_i^k) = (g_{y_0}, g_{y_{i-1}}, g_{y_i}',\ldots, g_{y_k}'),$$
	where $g_y$~and~$g_y'$ denote the elements of~$G_\chi$ chosen when constructing $s_K$ and $s_{K'}$, respectively. The same argument as in the previous paragraph shows the simplex on the right hand side lies in $G_\chi \cdot \E \Ret(K')$.
	
	To establish that $(\rho_C)_{C\in\C(G)}$ and $(s_K)_{K\in \C(X)}$ do indeed form an inverse pair of simplicial homotopy equivalences, we claim that for each $C\in \C(G)$ and $K\in \C(X)$, the following diagrams commute up to simplicial homotopy:
	$$\begin{tikzcd}
		G_\chi \cdot \E C \ar[r,hook] \ar[d, "\rho_C"]& G_\chi \cdot \E (C\cup \Ret (Cx_0))\\
		G_\chi \cdot \E(Cx_0) \ar[r,"s_{Cx_0}"] & G_\chi \cdot \E \Ret(C x_0)\ar[u,hook]
	\end{tikzcd} \begin{tikzcd}
		G_\chi \cdot \E K \ar[r,hook] \ar[d, "s_K"]& G_\chi \cdot \E (K\cup \Ret (K)x_0)\\
		G_\chi \cdot \E\Ret(K) \ar[r,"\rho_{\Ret(K)}"] & G_\chi \cdot \E (\Ret(K)x_0)\ar[u,hook]
	\end{tikzcd}$$
	
	For the left diagram, this amounts to checking that the (unique) simplicial homotopy with codomain $\E G$ between the two relevant maps has image in $G_\chi \cdot \E(C \cup \Ret (Cx_0))$.
	This follows if we prove that for each $g\in G_\chi$, both the inclusion and the composition $s_{Cx_0} \circ \rho_C$ send $g\cdot \E C$ to $g\cdot \E(C\cup \Ret(Cx_0))$. And as the latter simplicial set is free, one need only verify the statement on vertices; all other necessary simplices are automatically available. Now, the claim is certainly true for the inclusion; as for $s_{Cx_0} \circ \rho_C$, given $c\in C$, we compute the image of the vertex $gc$:
	$$s_{Cx_0}(\rho_C(gc)) = s_{Cx_0}(gc\cdot x_0) = g_{gc\cdot x_0}.$$
	By the same argument as before, we have $g_{gc\cdot x_0} \in g \Ret(Cx_0)$, as required.
	
	The proof for the second diagram proceeds similarly.
\end{proof}

A subgroup $H\le G$ is called \textbf{cocompact} if the left action of $H$ on~$G$ is cocompact, that is, there is  $C \in \C(G)$ such that $HC=G$.

\begin{thm}[Closed cocompact subgroups]\label{thm:cocptsubgps}
	Let $H\le G$ be a closed cocompact subgroup and let $\chi\colon G \to \RR$ be a character. Then, 
	$$\chi\in\TopS^n(G) \iff \chi|_H\in\TopS^n(H).$$ 
\end{thm}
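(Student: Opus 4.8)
The strategy is to realize both $\TopS^n(G)$ and $\TopS^n(H)$ as essential $(n-1)$-connectedness of two filtrations of the \emph{same} simplicial set $\E G$, and then exhibit a simplicial homotopy equivalence between those two filtrations, so that Lemma~\ref{lem:homotopyequiv} concludes the argument. This follows ``essentially the same proof'' as the $\chi=0$ case (Abels--Tiemeyer), the only new ingredient being a correction needed to land inside the monoid $H_{\chi|_H}=H\cap G_\chi$.

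First I would put $X:=G$ and observe that the left multiplication action of $H$ on $X$ is proper: for $K\in\C(G)$, any $h$ with $hK\cap K\neq\emptyset$ lies in $H\cap KK^{-1}$, which is closed in the compact set $KK^{-1}$, hence compact; so $\Ret(K)$ is compact and Lemma~\ref{lem:ret} applies. Writing $\psi:=\chi|_H$, so that $H_\psi=H\cap G_\chi$, Proposition~\ref{prop:properaction} gives that $\psi\in\TopS^n(H)$ if and only if the filtration $(H_\psi\cdot\E K)_{K\in\C(G)}$ of $\E G$ is essentially $(n-1)$-connected. On the other hand, by Definition~\ref{dfn.topsigma}, $\chi\in\TopS^n(G)$ if and only if $(G_\chi\cdot\E K)_{K\in\C(G)}$ is essentially $(n-1)$-connected. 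Since $H_\psi\subseteq G_\chi$, the inclusions $H_\psi\cdot\E K\into G_\chi\cdot\E K$ form a strictly commuting family from the first filtration to the second, so it remains to build a simplicial homotopy inverse, i.e. $H$-equivariant ``correction maps'' $r_K\colon G_\chi\cdot\E K\to H_\psi\cdot\E D_K$ for suitable $D_K\in\C(G)$, in the spirit of the retractions of Lemma~\ref{lem:trimmedfiltrations} and Proposition~\ref{prop:properaction}.

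To construct them, fix $C_0\in\C(G)$ with $HC_0=G$ (cocompactness). If $\chi\neq 0$ then $\chi(H)$ is a nontrivial subgroup of $\RR$ --- otherwise $\chi$ would be bounded on $G=HC_0$, hence $\chi(G)=\{0\}$ --- so fix $t\in H$ with $\chi(t)>0$ (and take $t=1$ if $\chi=0$). For each $g\in G$ choose a decomposition $g=h_gc_g$ with $h_g\in H$ and $c_g\in C_0$, and let $n_g\geq 0$ be minimal with $\chi(h_gt^{n_g})\geq 0$; put $\tilde h_g:=h_gt^{n_g}\in H_\psi$, so that $g^{-1}\tilde h_g=c_g^{-1}t^{n_g}$. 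The point is uniform boundedness of the exponents: every vertex $g$ of $G_\chi\cdot\E K$ lies in $G_\chi K$, so $\chi(h_g)=\chi(g)-\chi(c_g)\geq \min\chi(K)-\max\chi(C_0)$, whence $n_g\leq N_K$ for a constant $N_K$ depending only on $K$. Thus $g^{-1}\tilde h_g$ ranges over the fixed compact set $E_K:=C_0^{-1}\{1,t,\dots,t^{N_K}\}$. Defining $r_K$ on vertices by $g\mapsto\tilde h_g$, a short computation (write a simplex as $g\cdot(l_0,\dots,l_k)$ with $g\in G_\chi$, $l_i\in K$, and factor out the leading vertex, exactly as in Proposition~\ref{prop:properaction}) shows $r_K$ is a well-defined simplicial map into $H_\psi\cdot\E D_K$ with $D_K:=E_K^{-1}K^{-1}KE_K$. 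Since the assignment $g\mapsto\tilde h_g$ is independent of $K$, the maps $r_K$ commute \emph{strictly} with the filtration inclusions (note $D_K\subseteq D_{K'}$ for $K\subseteq K'$, as $\min\chi(K')\leq\min\chi(K)$), and the standard ``slide the basepoint'' simplicial homotopies $\big((\sigma,\tau^k_i)\mapsto$ first $i$ vertices unchanged, remaining vertices replaced by their $\tilde h$-values$\big)$ exhibit $r_K\circ\iota_K$ and $\iota_{D_K}\circ r_K$ as simplicially homotopic to the inclusions into larger stages; in each case one only needs to check that the intermediate tuples lie in $H_\psi\cdot\E(K\cup KE_K)$, resp. $G_\chi\cdot\E(K\cup KE_K)$, which is immediate from $g^{-1}\tilde h_g\in E_K$.

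Packaging this as in Definition~\ref{dfn:hoequiv}, the pair $\big(K\mapsto D_K,\ (r_K)\big)$ is a simplicial homotopy equivalence with simplicial homotopy inverse given by the inclusions $(\iota_K)$; hence, by Lemma~\ref{lem:homotopyequiv}, the two filtrations of $\E G$ have the same essential connectedness, which yields $\chi\in\TopS^n(G)\iff\psi=\chi|_H\in\TopS^n(H)$. The main obstacle is the middle step: showing that the correction maps $r_K$ land in a \emph{single} compactly-supported subcomplex $H_\psi\cdot\E D_K$ --- this is precisely where cocompactness of $H$ and the choice of $t\in H$ with $\chi(t)>0$ are used --- together with the bookkeeping needed to verify that all the squares and triangles of the homotopy-equivalence-of-filtrations framework commute up to simplicial homotopy.
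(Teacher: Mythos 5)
Your proposal is correct, and its first half — properness of the left $H$-action on $G$ (via $\Ret(K)=H\cap KK^{-1}$ and Lemma~\ref{lem:ret}) followed by Proposition~\ref{prop:properaction} to recast $\chi|_H\in\TopS^n(H)$ as essential $(n-1)$-connectedness of the filtration $(H_\chi\cdot\E K)_{K\in\C(G)}$ of $\E G$ — is exactly what the paper does. Where you diverge is in comparing this filtration with $(G_\chi\cdot\E K)_{K\in\C(G)}$. You build a full simplicial homotopy equivalence: retractions $r_K$ obtained by decomposing $g=h_gc_g$ and pushing $h_g$ into $H_\chi$ by a bounded power of some $t\in H$ with $\chi(t)>0$, together with the interpolating homotopies and the poset bookkeeping of Definition~\ref{dfn:hoequiv}. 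The paper instead observes that the two filtrations are \emph{cofinal}: choosing $K\in\C(G)$ with $HK=G$ and $g_0\in G$ with $\chi(g_0)\le-\max\chi(K)$, one gets $G_\chi\subseteq H_\chi Kg_0$ and hence $G_\chi\cdot\E C\subseteq H_\chi\cdot\E(Kg_0C)$, so each stage of one filtration is literally contained in a stage of the other, and cofinal filtrations share their essential connectedness properties (end of Section~\ref{sec:filtrations}). The single right-translation by $g_0$ absorbs in one stroke the entire ``correction into $H_\chi$'' that your $t$-power argument performs vertex by vertex, and removes the need for any homotopies at all. Your route is heavier but sound; the one genuinely delicate point in it — that the exponents $n_g$ are uniformly bounded over the vertices of $G_\chi\cdot\E K$, so that $r_K$ lands in a single compactly generated stage $H_\chi\cdot\E D_K$ — is the correct place to use cocompactness, and you handle it properly.
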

\begin{proof}
	We first note that the action of~$H$ on~$G$ by left multiplication is proper: indeed, since $H$~is closed in~$G$, for every $C \in \C(G)$, the set
	\[
	\Ret(C)=\{h\in H\mid hC\cap C\not=\emptyset\}=CC^{-1}\cap H
	\]
	is compact, and so the claim follows from Lemma~\ref{lem:ret}.
	Therefore, by Proposition~\ref{prop:properaction}, we have $\chi|_H \in \TopS^n(H)$ if and only if the filtration $(H_\chi\cdot \E C)_{C\in \C(G)}$ of~$\E G$ is essentially $(n-1)$-connected. We are thus reduced to showing that the filtrations
	\[(H_\chi \cdot \E C)_{C\in \C(G)}\qquad \text{and} \qquad (G_\chi\cdot \E C)_{C\in \C(G)}\]
	of $\E G$ are cofinal.

	One of the directions is immediate: given $C\in \C(G)$, we have
	$H_\chi \cdot \E C\subseteq G_\chi \cdot \E C$.
	For the other inclusion, use that $H$ is cocompact to choose $K\in \C(G)$ with $HK =G$, and let $g_0 \in G$ be such that $\chi (g_0) \le - \max (\chi(K))$. We first show that $G_\chi \subseteq H_\chi Kg_0$. Indeed, since $HKg_0 = Gg_0 = G$, every $g\in G_\chi$ may be written as $g=hkg_0$ with $h\in H$ and $k\in K$. But then $\chi(h) = \chi(g) - \chi(kg_0) \ge 0$, so $h\in H_\chi$.
	From here we conclude that, given $C\in \C(G)$, we have
	\[G\chi \cdot \E C \subseteq H_\chi Kg_0 \cdot \E C \subseteq H_\chi \cdot \E (Kg_0C).\qedhere\] 
\end{proof}

\begin{cor}[Abelian groups]\label{cor:abelian}
	If $G$~is a compactly generated abelian group, then for every $n\in \NN$, we have $\TopS^n(G) = \TopHom(G,\RR)$.
\end{cor}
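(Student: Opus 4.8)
The plan is to bootstrap from the case of $\mathbb R^d$ via the structure theory of compactly generated locally compact abelian (LCA) groups, using Theorem~\ref{thm:cocptsubgps} to pass between a group and a cocompact subgroup. Recall that a compactly generated LCA group $G$ is, by the structure theorem for LCA groups, topologically isomorphic to $\mathbb R^d \times \mathbb Z^k \times K$ for some $d, k \in \mathbb N$ and some compact group $K$. First I would reduce to proving the statement for $\mathbb R^d \times \mathbb Z^k$: the subgroup $\mathbb R^d \times \mathbb Z^k \times \{1\} \le G$ is closed, and it is cocompact since $G / (\mathbb R^d \times \mathbb Z^k) \cong K$ is compact; by Theorem~\ref{thm:cocptsubgps}, a character $\chi$ on $G$ lies in $\TopS^n(G)$ if and only if its restriction lies in $\TopS^n(\mathbb R^d\times\mathbb Z^k)$, and since $K$ is compact every character of $G$ is determined by its restriction. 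Similarly, $\mathbb Z^d \times \mathbb Z^k = \mathbb Z^{d+k}$ sits as a closed cocompact (indeed lattice) subgroup of $\mathbb R^d \times \mathbb Z^k$, so by another application of Theorem~\ref{thm:cocptsubgps} it suffices to treat the discrete group $\mathbb Z^m$ with $m = d + k$.

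For $\mathbb Z^m$ we invoke the comparison with the classical theory: by Theorem~\ref{thm.sigmasmatch}, $\TopS^n(\mathbb Z^m) = \Sigma^n(\mathbb Z^m)$, and it is a classical fact (originally due to Bieri--Strebel in degree $1$ and part of the standard computations in the BNSR literature, e.g.\ via Theorem~\ref{thm.renz}) that $\Sigma^n(\mathbb Z^m) = \Hom(\mathbb Z^m, \RR)$ for all $n$, since $\mathbb Z^m$ acts freely cocompactly on $\RR^m$ with contractible quotient and any $\chi$-sublevel set of $\RR^m$ is a half-space, hence contractible, so every filtration $(Y_s)_{s}$ arising from a $\chi$-valuation is essentially $(n-1)$-connected. (Alternatively one can note $\mathbb Z^m$ is of type $\mathrm F_\infty$ and, being abelian, one checks directly that a $\chi$-valuation on $\RR^m$ gives sublevel sets $Y_s$ that are themselves $(n-1)$-connected for every $s$.) Either way, $\Sigma^n(\mathbb Z^m)$ is the full space of characters, and tracing back through the two cocompact-subgroup reductions gives $\TopS^n(G) = \TopHom(G, \RR)$.

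The main obstacle is ensuring the structure-theoretic input is applied correctly: one must confirm that a \emph{compactly generated} LCA group really does have the form $\RR^d \times \mathbb Z^k \times K$ with $K$ compact (this is the standard structure theorem, e.g.\ Hewitt--Ross), that the quotient maps identifying $K$ and the compact quotient $(\RR^d\times\mathbb Z^k)/\mathbb Z^{d+k} \cong (\RR/\mathbb Z)^d$ are as claimed, and that ``cocompact'' in the sense used in Theorem~\ref{thm:cocptsubgps} (namely $HC = G$ for some compact $C$) indeed holds — which it does, since a closed subgroup with compact quotient always satisfies this. A cleaner alternative avoiding the structure theorem entirely would be to argue directly that for $G$ compactly generated abelian and any character $\chi$, the filtration $(G_\chi \cdot \E C)_{C \in \C(G)}$ is essentially $\infty$-connected by exhibiting explicit ``shift'' homotopies: translating by a fixed $t \in G$ with $\chi(t) > 0$ pushes any finite subcomplex into a deeper stage, and abelianness makes such a translation a simplicial self-map of $G_\chi \cdot \E C$ up to enlarging $C$; but making this fully rigorous at all levels $n$ is essentially re-deriving the $\mathbb Z^m$ computation, so routing through Theorem~\ref{thm.sigmasmatch} and the known classical result is the most economical path.
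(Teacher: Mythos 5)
Your proposal is correct and follows essentially the same route as the paper: apply the structure theorem for compactly generated locally compact abelian groups, pass to the closed cocompact lattice $\ZZ^{m}$ via Theorem~\ref{thm:cocptsubgps}, and conclude from Theorem~\ref{thm.sigmasmatch} together with the classical fact that $\Sigma^n(\ZZ^m)=\Hom(\ZZ^m,\RR)$. The only cosmetic difference is that you perform the cocompact reduction in two steps rather than taking $\ZZ^{m_1}\times\ZZ^{m_2}$ as a cocompact subgroup of $G$ in one step.
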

\begin{proof}
	The classification theorem for compactly generated locally compact abelian groups \cite[Theorem~24]{Mor77} tells us that $G \cong \ZZ^{m_1} \times \RR^{m_2} \times Q$, with $m_1, m_2\in \NN$ and $Q$~compact. Thus the subgroup $H:= \ZZ^{m_1}\times \ZZ^{m_2}$ is a closed cocompact subgroup of~$G$. 
	It is well-known that for every $n \in \NN$ we have $\Sigma^n(H) = \Hom(H,\RR)$ (one can see this by first noting that $H$~is of type~$\mathrm F_n$, since it has the $(m_1+ m_2)$-dimensional torus as a classifying space, and then using Renz's Theorem \cite[Satz~C]{Ren88} with $N=1$). By Theorem~\ref{thm.sigmasmatch}, it follows that $\TopS^n(H) = \Hom(H, \RR)$.
	
	Now, every character on~$H$ extends to a character on~$G$ (uniquely, in fact): on the $\RR^{m_2}$ factor we use the only $\RR$-linear map extending the definition on~$\ZZ^{m_2}$, and we send~$Q$ to~$0$. Thus, we conclude from Theorem~\ref{thm:cocptsubgps} that $\TopS^n(G) = \TopHom(G,\RR)$.
\end{proof}
\section{$\TopS^n$ and quotients}\label{sec:quotients}

In this section, we relate the homotopical $\Sigma$-sets of the locally compact Hausdorff groups in an extension
\[1\to N \to G \overset{p}{\to} Q \to 1.\]
Explicitly, this means that the indicated maps form a short exact sequence of the underlying abstract groups and are continuous, and that $N$~and~$Q$ carry, respectively, the subspace and quotient topologies induced from~$G$. The fact that~$N$ is locally compact and $G$~is Hausdorff implies that $N$~includes as a closed subgroup of~$G$ \cite[Chapter~1, Proposition~7]{Mor77}.

We also fix a character $\chi \colon Q \to \RR$ and denote by $\hat{\chi}\colon G \to \RR$ its pull-back~$\chi \circ p$ (which of course vanishes on~$N$). Note that we do not assume $\chi\neq 0$, so the results in this section also apply to the compactness properties~$\mathrm C_n$.

To warm up, we consider the case where $p$~has a \textbf{section}, that is, a continuous group homomorphism $s\colon Q \to G$ with $p\circ s = \id_{Q}$. Equivalently, the above short exact sequence expresses $G$~as a semidirect product $G \cong N \rtimes Q$.

\begin{prop}[Semidirect products]\label{prop:split}
	Suppose the quotient map $p\colon G \to Q$ has a section~$s$, and let $n\in\NN$. Then,
	$$\hat \chi \in \TopS^n(G) \implies \chi \in \TopS^n(Q).$$
\end{prop}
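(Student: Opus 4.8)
The plan is to exploit the section $s\colon Q\to G$ to transport the filtration of $\E G_{\hat\chi}$ witnessing $\hat\chi\in\TopS^n(G)$ down to a filtration of $\E Q_\chi$ witnessing $\chi\in\TopS^n(Q)$. The key point is that $s$, being a continuous homomorphism with $p\circ s=\id_Q$, gives a splitting that interacts well with $\chi$-levels: since $\chi\circ(p\circ s)=\chi$, we have $s(Q_\chi)\subseteq G_{\hat\chi}$, and conversely $p(G_{\hat\chi})=Q_\chi$ because $\hat\chi=\chi\circ p$. Concretely, for every compact $C\subseteq Q$ the set $s(C)\subseteq G$ is compact (continuity of $s$), and the map $\E s\colon \E Q\to\E G$ restricts to $Q_\chi\cdot\E C\to G_{\hat\chi}\cdot\E s(C)$; symmetrically $\E p$ restricts to $G_{\hat\chi}\cdot\E D\to Q_\chi\cdot\E p(D)$ for compact $D\subseteq G$ (here $p(D)$ is compact by continuity of $p$). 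These are exactly the ``functorial filtration'' maps of Remark~\ref{rem:functfilt} applied to the triangles $Q\xrightarrow{s}G\xrightarrow{\hat\chi}\RR$ with top edge $\chi$, and $G\xrightarrow{p}Q\xrightarrow{\chi}\RR$ with top edge $\hat\chi$.

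The first main step is to record that $\E p\circ\E s=\id$ on $\E Q$ (since $p\circ s=\id_Q$ on the vertex sets, and $\E$ is a functor), so that on the level of filtrations, the composite
$$Q_\chi\cdot\E C\xrightarrow{\ \E s\ }G_{\hat\chi}\cdot\E s(C)\xrightarrow{\ \E p\ }Q_\chi\cdot\E p(s(C))=Q_\chi\cdot\E C$$
is literally the inclusion (in fact the identity, since $p(s(C))=C$). The second main step is the retraction argument: assume $\hat\chi\in\TopS^n(G)$ and fix $k\le n-1$ and $C\in\C(Q)$; we must produce $E\in\C(Q)_{\supseteq C}$ such that $Q_\chi\cdot\E C\into Q_\chi\cdot\E E$ is $\pi_k$-trivial. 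Apply the hypothesis (via Proposition~\ref{prop.trimmedsigma}, say, or directly from Definition~\ref{dfn.topsigma}) to the compact set $s(C)\subseteq G$: there is $D\in\C(G)_{\supseteq s(C)}$ with $G_{\hat\chi}\cdot\E s(C)\into G_{\hat\chi}\cdot\E D$ being $\pi_k$-trivial. Set $E:=C\cup p(D)\in\C(Q)$, which contains $C$. Then the inclusion $Q_\chi\cdot\E C\into Q_\chi\cdot\E E$ factors (up to the identification from the first step) as
$$Q_\chi\cdot\E C\xrightarrow{\E s}G_{\hat\chi}\cdot\E s(C)\into G_{\hat\chi}\cdot\E D\xrightarrow{\E p}Q_\chi\cdot\E p(D)\into Q_\chi\cdot\E E,$$
and the middle arrow is $\pi_k$-trivial, hence so is the whole composite. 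By Proposition~\ref{prop.trimmedsigma} (or directly), this shows $\chi\in\TopS^n(Q)$.

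I expect the main---though still mild---obstacle to be purely bookkeeping: making sure all the simplicial maps, inclusions, and the identification $\E p\circ\E s=\id$ fit together into a genuine factorization of the inclusion $Q_\chi\cdot\E C\into Q_\chi\cdot\E E$ (as opposed to merely a commuting diagram of some related maps), and keeping track of which filtration—$\E Q_\chi$ versus $\E Q$, trimmed versus untrimmed—one is working with at each stage. One has to be slightly careful that $\E s$ and $\E p$ preserve the relevant $\chi$-level conditions, but this is immediate from $\chi\circ p\circ s=\chi$ and $\hat\chi=\chi\circ p$, as noted above. No deep input is needed: unlike the converse directions in Theorem~\ref{thm:sigma_quotient_intro}, no lifting lemma or compactness-of-$N$ hypothesis is required, because the section provides an honest retraction at every stage. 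The argument is uniform in whether $\chi=0$, so it simultaneously yields the statement for property~$\mathrm C_n$.
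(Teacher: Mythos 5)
Your proposal is correct and follows essentially the same route as the paper: apply the hypothesis to $s(C)$ to get $D\supseteq s(C)$ with $\pi_k$-trivial inclusion in $G$, and factor $Q_\chi\cdot\E C\into Q_\chi\cdot\E p(D)$ as $\E p\circ(\text{inclusion})\circ\E s$ using $p\circ s=\id_Q$. The only cosmetic difference is that you enlarge to $E=C\cup p(D)$, whereas the paper notes $C=p(s(C))\subseteq p(D)$ and uses $p(D)$ directly.
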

\begin{proof}
	Given $C\in \C(Q)$, the set $s(C)$~is compact by continuity of~$s$. Assuming $\hat\chi\in \TopS^n(G)$, we find $D\in\C(G)_{\supseteq s(C)}$ such that the inclusion $G_{\hat\chi} \cdot \E s(C) \into G_{\hat\chi} \cdot \E D$ is $\pi_k$-trivial for all $k\le n-1$. We now consider the compact set~$p(D)$. Since $s$~is a section, the inclusion $Q_\chi \cdot \E C \into Q_\chi \cdot \E p(D)$ factors as 
	$$Q_\chi \cdot \E C \xrightarrow{s} G_{\hat\chi} \cdot \E s(C) \into G_{\hat\chi} \cdot \E D \xrightarrow{p} Q_\chi \cdot \E p(D),$$
	where we slightly abuse notation in denoting by~$s$ and~$p$ the induced maps on simplicial sets (as explained in Remark~\ref{rem:functfilt}). The composition is therefore $\pi_k$-trivial for all $k\le n-1$, showing $\chi \in \TopS^n(Q)$.
\end{proof}

A converse to the implication in Proposition~\ref{prop:split} would be too much to hope for (take for example $Q$~as the trivial group). However, if one has some control on the compactness properties of the kernel~$N$, it is possible to relate the $\Sigma$-sets of~$G$ and~$Q$, even without the help of a section:

\begin{thm}[Quotients by subgroups of type~$\mathrm C_n$]\label{thm:sigmaofquotient} Let $n\in \NN$ and suppose $N$~is of type~$\mathrm{C}_n$. Then,
	\begin{enumerate}
		\item $\chi \in \TopS^n(Q) \implies \hat\chi \in \TopS^n(G)$,
		\item $\hat\chi \in \TopS^{n+1}(G) \implies \chi \in \TopS^{n+1}(Q)$.
	\end{enumerate}
\end{thm}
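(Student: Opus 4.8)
The plan is to build a preferred $G$-CW-like model by combining equivariant cells for~$G$ out of "blocks" $\E N' \times \E C$, and then transfer essential connectedness along the quotient map using the fact that $N$ is of type~$\mathrm C_n$ to control the fibers. Concretely, I would work with the filtration of $\E G$ by the subcomplexes $G_{\hat\chi}\cdot \E C$ (Definition~\ref{dfn.topsigma}), together with the induced filtration of $\E Q$ by $Q_\chi \cdot \E \bar C$, and set up a comparison using the continuous open surjection $p\colon G\to Q$. The key geometric input is that $p$ restricts, for each compact $C\subseteq G$ containing a compact transversal-like set, to a map $G_{\hat\chi}\cdot \E C \to Q_\chi\cdot \E p(C)$ whose "fibers" are governed by translates of $\E N'$ for suitable compact $N'\subseteq N$; since $N$ is of type~$\mathrm C_n$, these fibers are essentially $(n-1)$-connected, which is exactly the hypothesis needed to push connectedness down and up by one degree.

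For part~(1): assume $\chi\in\TopS^n(Q)$ and fix $C\in\C(G)$. First enlarge $C$: using that $p$ is open and $N$ is of type~$\mathrm C_n$, choose a compact $N'\subseteq N$ (depending on $C$) witnessing $\pi_k$-triviality of $N\cdot\E(N'\cap\text{relevant set})\hookrightarrow N\cdot \E N''$ for $k\le n-1$, and choose a compact set-theoretic section $\sigma$ of $p$ over $p(C)$ so that $C\subseteq N'\cdot \sigma(p(C))$. Apply the hypothesis $\chi\in\TopS^n(Q)$ to $p(C)$ to get $\bar D\in\C(Q)_{\supseteq p(C)}$ making $Q_\chi\cdot\E p(C)\hookrightarrow Q_\chi\cdot \E\bar D$ be $\pi_k$-trivial for $k\le n-1$. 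Then lift: take $D$ to be a compact set in $G$ with $p(D)\supseteq \bar D$ and $D\supseteq C\cdot(N')^{\pm}$, enlarged further so that the fibers of $p\colon G_{\hat\chi}\cdot\E D\to Q_\chi\cdot\E p(D)$ over the image of the null-homotopy in $Q$ are filled using the $\mathrm C_n$-property of~$N$. The technical heart is a "relative semisimplicial approximation / obstruction" argument (in the style of Lemma~\ref{lem:combfilling} and the proof of Theorem~\ref{thm:brownforchars}): given $\eta\in\Map(\partial\Delta^{k+1}, G_{\hat\chi}\cdot\E C)$, push it to $Q$, fill it there inside $Q_\chi\cdot\E\bar D$, then lift the filling skeleton-by-skeleton, using at each stage that the preimage in $\E G$ of a simplex of $\E Q$ is (a translate of) $\E(\text{something}) \times \E N'$-like and that the $N$-part is $(n-1)$-connected after enlarging $N'$. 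Because $N$ is closed, all the relevant preimages of compacta are compact, so all enlargements stay within $\C(G)$.

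For part~(2): assume $\hat\chi\in\TopS^{n+1}(G)$; we want $\chi\in\TopS^{n+1}(Q)$. Here the roles reverse: fix $\bar C\in\C(Q)$, pull it back to a compact $C\in\C(G)$ with $p(C)=\bar C$ (using $N$ closed and $p$ open, $p$ is a quotient map admitting compact lifts of compacta). Apply $\hat\chi\in\TopS^{n+1}(G)$ to $C$ (after enlarging it by a compact chunk of $N$ big enough, via the $\mathrm C_n$-property, to make the fibers of $p$ restricted to $G_{\hat\chi}\cdot\E C$ be $(n-1)$-connected) to get $D\in\C(G)_{\supseteq C}$ with $G_{\hat\chi}\cdot\E C\hookrightarrow G_{\hat\chi}\cdot\E D$ being $\pi_k$-trivial for $k\le n$; set $\bar D:=p(D)$. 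Given $\eta\in\Map(\partial\Delta^{k+1}, Q_\chi\cdot\E\bar C)$ with $k\le n$, lift it through $p$ to $\tilde\eta\in\Map(\partial\Delta^{k+1}, G_{\hat\chi}\cdot\E C)$ — here one uses that $\E G\to\E Q$ is a (levelwise surjective) fibration-like map of free simplicial sets, so sphere maps lift, possibly after one subdivision, exactly as in Lemma~\ref{lem:combspheres} — fill $\tilde\eta$ in $G_{\hat\chi}\cdot\E D$, then project the filling down by $p$ to get a filling of $\eta$ in $Q_\chi\cdot\E\bar D$. The degree drop from $n+1$ to "$\pi_k$ for $k\le n$" is precisely what makes this work and why the hypothesis on $G$ is one degree higher than the conclusion on $Q$.

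The main obstacle I expect is the lifting/obstruction bookkeeping in part~(1): turning "$N$ is of type~$\mathrm C_n$" into a statement that lets one inductively lift a filling disk over $Q$ to a filling disk over $G$, while keeping every auxiliary compact set genuinely compact and making all the enlargements uniform (i.e. not depending on the individual sphere~$\eta$, only on $C$). This is the analogue of the "uniform vanishing" subtlety flagged in the Remark after Definition~\ref{sec:filtrations}'s essential-triviality discussion, and it is where one must be careful to first pick all the needed compact sets in $N$ (finitely many, since the disk has finitely many simplices after subdivision, but their count is bounded in terms of $C$ and the dimension, not the particular $\eta$) before doing the lift. A clean way to organize this is to prove a separate lifting lemma — the "$\mathrm C_n$-lift" alluded to in the introduction as Lemma~\ref{lem:Cnlift} — asserting that for $k\le n$, any compatible square of a $k$-sphere in $G_{\hat\chi}\cdot\E C$ and a $(k+1)$-disk over it in $Q$ can be lifted to a $(k+1)$-disk in $G_{\hat\chi}\cdot\E D$ for a suitable $D$, and then feed that lemma into both parts.
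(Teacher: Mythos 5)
Your proposal is correct and follows essentially the same route as the paper: both parts are reduced to a single lifting lemma (the paper's Lemma~\ref{lem:Cnlift}), proved by induction on skeleta using that the filtration $(N\cdot \E C)_{C\in\C(G)}$ of~$\E G$ is essentially $(n-1)$-connected because $N$ acts properly on~$G$ and is of type~$\mathrm C_n$; in part~(1) the lemma is applied to fillings of spheres, and in part~(2) to the spheres themselves, which accounts for the degree shift. The only loose point is the suggestion that spheres lift because $\E G\to \E Q$ is ``fibration-like'' after one subdivision --- vertexwise lifts of a simplex of $Q_\chi\cdot\E p(C)$ need not assemble into a simplex of any $G_{\hat\chi}\cdot\E D$, so the lift genuinely requires the connectivity of the $N$-filtration at every skeletal stage, which your proposed lifting lemma correctly supplies.
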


The remainder of this section is devoted to the proof of Theorem~\ref{thm:sigmaofquotient}.


\begin{lem}[Compact lifts]\label{lem:cptlifts}
	For each $D\in \C(Q)$, there is $C\in \C(G)$ with $p(C)=D$.
\end{lem}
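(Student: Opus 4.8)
The plan is to lift $D$ one step at a time using local compactness of $G$ together with the fact that $p$ is an open map (being a quotient map of topological groups, $p$ sends open sets to open sets). First I would recall why $p$ is open: for an open $U \subseteq G$ and any $n \in N$, the set $nU$ is open, so $p^{-1}(p(U)) = NU = \bigcup_{n \in N} nU$ is open in $G$, hence $p(U)$ is open in $Q$ by definition of the quotient topology. Consequently, for each point $d \in D$, pick any $g_d \in p^{-1}(d)$; since $G$ is locally compact, choose a compact neighborhood $K_d \ni g_d$ in $G$. Then $p(\operatorname{Int}_G K_d)$ is an open neighborhood of $d$ in $Q$, and $p(K_d)$ is a compact subset of $Q$ containing this open neighborhood.

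The family $\{p(\operatorname{Int}_G K_d)\}_{d \in D}$ is an open cover of the compact set $D$, so it admits a finite subcover indexed by $d_1, \dots, d_r \in D$. I would then set
\[
C := \bigcup_{i=1}^{r} \big(K_{d_i} \cap p^{-1}(D)\big).
\]
This $C$ is closed in the compact set $\bigcup_{i=1}^r K_{d_i}$ (as $p^{-1}(D)$ is closed, $D$ being compact hence closed in the Hausdorff group $Q$), hence compact. By construction $p(C) \subseteq D$. For the reverse inclusion, given $d \in D$, it lies in some $p(\operatorname{Int}_G K_{d_i})$, so there is $g \in \operatorname{Int}_G K_{d_i} \subseteq K_{d_i}$ with $p(g) = d$; then $g \in K_{d_i} \cap p^{-1}(D) \subseteq C$, whence $d \in p(C)$. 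Thus $p(C) = D$, as required.

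I do not anticipate a genuine obstacle here; the statement is a standard local-compactness-plus-openness argument. The only points requiring a little care are: (i) verifying that $p$ is open — this uses the normality of $N$ only implicitly, via $NU = UN$ not being needed since $p^{-1}(p(U)) = \bigcup_{n}nU$ already suffices; (ii) ensuring $C$ is genuinely compact, which is why I intersect with the closed set $p^{-1}(D)$ rather than working with the $K_{d_i}$ alone; and (iii) using that $Q$ is Hausdorff (part of the standing hypotheses) so that the compact set $D$ is closed, making $p^{-1}(D)$ closed in $G$. An alternative route avoiding openness of $p$ would use a local cross-section of $p$ near each point, but such sections need not exist in this generality, so the open-mapping argument is the cleaner choice.
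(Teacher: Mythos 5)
Your proof is correct and follows essentially the same route as the paper's: compact neighborhoods of chosen preimages, openness of the quotient map, a finite subcover of $D$, and intersection with the closed set $p^{-1}(D)$ to upgrade $D \subseteq p(C_0)$ to equality. No issues.
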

\begin{proof}
	We show that $D\subseteq p(C_0)$ for some $C_0\in \C(G)$; the lemma then follows by putting $C:= C_0 \cap p^{-1}(D)$. For each $q\in D$, choose a $p$-preimage $g_q\in G$. Since $G$~is locally compact, there is an open subspace $U_q \subseteq G$ with compact closure containing~$g_q$. Moreover, quotients by group actions are open maps, so~$p$ is open. Therefore, we obtain an open cover $\{p(U_q) \cap D\}_{q\in D}$ of~$D$, which has a sub-cover indexed by a finite subset $F\subseteq D$. Now in $G$, the space $C:=\bigcup_{q\in F} \overline{U_q}$ is a finite union of compact sets, and hence compact. Clearly, $D\subseteq p(C)$.
\end{proof}

\begin{cor}[Re-indexing by~$\C(G)$]\label{cor.uniformfiltrations} We have:
	\begin{enumerate}
		\item $\C(N) = \{C\cap N \mid C\in \C(G)\}$,
		\item $\C(Q) = \{p(C) \mid C\in \C(G)\}$.
	\end{enumerate}
\end{cor}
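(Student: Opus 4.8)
\textbf{Proof plan for Corollary~\ref{cor.uniformfiltrations}.}
The plan is to deduce both statements almost immediately from Lemma~\ref{lem:cptlifts}, together with two elementary facts: continuous images of compact sets are compact, and closed subsets of compact sets are compact (recall that $N$~is closed in~$G$). For statement~(1), the inclusion ``$\supseteq$'' is the easy direction: if $C\in\C(G)$, then $C\cap N$ is a closed subset of the compact set~$C$ (closed because $N$~is closed in~$G$), hence compact, i.e.\ $C\cap N\in\C(N)$. For the reverse inclusion ``$\subseteq$'', if $D\in\C(N)$, then since $N$~carries the subspace topology, $D$~is also compact as a subset of~$G$, so $D\in\C(G)$ and trivially $D = D\cap N$, exhibiting $D$ as an element of the right-hand set.

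For statement~(2), the inclusion ``$\supseteq$'' is again immediate: for $C\in\C(G)$, continuity of~$p$ gives $p(C)\in\C(Q)$. The reverse inclusion ``$\subseteq$'' is precisely the content of Lemma~\ref{lem:cptlifts}: given $D\in\C(Q)$, that lemma produces $C\in\C(G)$ with $p(C)=D$, so $D$ lies in the right-hand set.

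\begin{proof}
	For~(1): if $C\in\C(G)$, then $C\cap N$ is closed in the compact set~$C$ (because $N$~is closed in~$G$), hence compact, so $C\cap N\in\C(N)$. Conversely, any $D\in\C(N)$ is, by definition of the subspace topology on~$N$, also a compact subset of~$G$, so $D\in\C(G)$ and $D = D\cap N$.

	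For~(2): if $C\in\C(G)$, then $p(C)$ is compact by continuity of~$p$, so $p(C)\in\C(Q)$. Conversely, given $D\in\C(Q)$, Lemma~\ref{lem:cptlifts} yields $C\in\C(G)$ with $p(C)=D$.
\end{proof}

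The only non-trivial input is Lemma~\ref{lem:cptlifts}, which has already been proved; everything else is a direct appeal to the definitions of the subspace and quotient topologies and basic compactness facts. I therefore expect no real obstacle here — the corollary is a bookkeeping statement that repackages Lemma~\ref{lem:cptlifts} so that, in later arguments, one may index the filtrations of $\E N$, $\E G$ and $\E Q$ uniformly by a single poset $\C(G)$ (via $C\mapsto C\cap N$ and $C\mapsto p(C)$), which is exactly what is needed to compare their essential connectedness properties.
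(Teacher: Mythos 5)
Your proof is correct and follows exactly the same route as the paper's: statement (1) via closedness of $N$ in $G$ and the subspace topology, and statement (2) via continuity of $p$ for one inclusion and Lemma~\ref{lem:cptlifts} for the other. No issues.
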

\begin{proof}
	(1) Every compact subspace $D\subseteq N$~is also a compact subspace of~$G$. Conversely, since $N$~is closed in~$G$, it follows that for every $C\in \C (G)$, the intersection $C\cap N$ is compact.
	
	(2) Given $C\in \C(G)$, continuity of~$p$ implies that $p(C)$ is compact. The converse direction follows by Lemma~\ref{lem:cptlifts}.
\end{proof}

The role of the section~$s$ in Proposition~\ref{prop:split} will be replaced in Theorem~\ref{thm:sigmaofquotient} by a lifting property afforded to us by $N$~having type~$\mathrm C_n$. In what follows, we denote by~$p$ also the maps induced by~$p$ on the various filtration stages, as explained in Remark~\ref{rem:functfilt}.

\begin{lem}[Lifting along quotients by subgroups of type~$\mathrm{C}_n$]\label{lem:Cnlift}	
	Suppose $N$~is of type~$\mathrm{C}_n$. Then for every $C\in \C(G)$ there is $D\in \C(G)_{\supseteq C}$ such that the following lifting property is satisfied: for every pair $A\subseteq X$ of simplicial sets of dimension at most~$n$, with $X$~finite relative to~$A$, and every commutative square
	\[\begin{tikzcd}
		A \arrow[r, "\eta"] \arrow[d, hook]& G_{\hat\chi} \cdot \E C \arrow[d, "p"]\\
		X \arrow[r,"\mu"] & Q_\chi \cdot \E p(C)
	\end{tikzcd},\]
	there are $m\in \NN$ and a map $\tilde \mu \colon \SD^m(X) \to G_{\hat\chi}\cdot \E D$ such that in the diagram
	\[\begin{tikzcd}
		\SD^m(A) \ar[d,hook]\ar[r,"\Phi^m"]& A \arrow[r, "\eta"] &  G_{\hat\chi} \cdot \E C  \arrow[r,hook] &  G_{\hat\chi} \cdot \E D \arrow[d, "p"] \\
		\SD^m(X) \ar[r,"\Phi^m"']  \arrow[rrru,"\tilde \mu"] & X \arrow[r,"\mu"'] & Q_\chi \cdot \E p(C) \arrow[r,hook] & Q_\chi \cdot \E p(D)
	\end{tikzcd},\]
		the upper triangle commutes and the lower triangle commutes up to simplicial homotopy.	 
\end{lem}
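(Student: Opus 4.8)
The plan is to build the lift $\tilde\mu$ by induction on the skeleta of $X$ relative to $A$, using the type~$\mathrm C_n$ hypothesis on $N$ to fill the spheres that obstruct extending the lift one simplex at a time. First I would fix notation: write $K:=p(C)$, and note that $p^{-1}(g\cdot \E K)\cap (G\cdot \E G)$ decomposes, over the fiber, into pieces that look like $N$-translates of $\E(C')$ for a suitable compact $C'$; more precisely, the map $p\colon G_{\hat\chi}\cdot \E C\to Q_\chi\cdot \E p(C)$ has ``fibers'' controlled by $N$ because $\hat\chi$ vanishes on $N$, so lifting a vertex of $Q_\chi\cdot \E p(C)$ amounts to choosing a coset representative, and lifting an edge amounts to choosing a path in a copy of $\E(N\cap \text{(something compact)})$. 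The compact set $D$ will be produced by applying the definition of type~$\mathrm C_n$ for $N$: for the compact set $C^{-1}C\cap N$ (or a slightly enlarged centered version thereof), essential $(n-1)$-connectedness of the filtration $(N\cdot \E C')_{C'\in\C(N)}$ gives a single compact $D_N\in\C(N)$ so that $N\cdot \E(C^{-1}C\cap N)\into N\cdot \E D_N$ is $\pi_k$-trivial for all $k\le n-2$ — and iterating to absorb the finitely many enlargements needed across dimensions $\le n$, one gets a single $D_N$ working in all the relevant dimensions. Then set $D:=C\cup C\dot D_N$ (a compact set containing $C$), after invoking Corollary~\ref{cor.uniformfiltrations} to see $D_N$ is realized as $D'\cap N$ for some $D'\in\C(G)$.

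The inductive construction proceeds as follows. Over the $0$-skeleton: for each vertex $x$ of $X$ not in $A$, $\mu(x)$ is a vertex of $Q_\chi\cdot \E p(C)$, so $\mu(x)=q\cdot k$ with $q\in Q_\chi$, $k\in p(C)$; choose $g\in G_{\hat\chi}$ with $p(g)=q$ (possible since $\hat\chi=\chi\circ p$ and $\chi(q)\ge0$) and $c\in C$ with $p(c)=k$, and set $\tilde\mu(x):=gc$; for $x\in A$ we are forced to take $\tilde\mu\circ\Phi^0(x)=\eta(x)$. Assume now $\tilde\mu$ has been defined on the $(j-1)$-skeleton (after some subdivision) so that it lifts $\mu$ on the nose over $A$, lifts $\mu$ up to the homotopy $p\circ\tilde\mu\simeq \mu\circ\Phi$ below dimension $j$, and $p\circ\tilde\mu$ agrees with $\mu$ exactly. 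To extend over a nondegenerate $j$-simplex $\sigma$ of $X\setminus A$: its boundary $\partial\sigma$ has already been lifted, yielding a map $\SD^{m'}(\partial\Delta^j)\to G_{\hat\chi}\cdot \E D$; composing with $p$ gives something homotopic to $\mu|_{\partial\sigma}$, which bounds $\mu|_\sigma$, so $p\circ(\text{the boundary lift})$ is nullhomotopic in $Q_\chi\cdot \E p(C)\subseteq Q_\chi\cdot \E p(D)$. The obstruction to filling the boundary lift itself lives in $\pi_{j-1}$ of a fiber-like subcomplex of $G_{\hat\chi}\cdot \E D$ lying over a single simplex of $Q_\chi\cdot \E p(D)$; such a subcomplex is (a translate of) $N\cdot\E(\text{compact})$, and by our choice of $D_N$ it is $\pi_k$-trivial into the corresponding subcomplex of $G_{\hat\chi}\cdot \E D$ for $k\le n-2\ge j-1$. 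So after further subdividing (Theorem~\ref{thm:approx}, Lemma~\ref{lem:combfilling}) we obtain the filling $\tilde\mu|_\sigma$, at the cost of a simplicial homotopy in the bottom triangle. Doing this for the finitely many nondegenerate simplices of $X\setminus A$ in each dimension up to $n$ (there are finitely many since $X$ is finite relative to $A$, and $\dim X\le n$) produces $\tilde\mu$ on all of $\SD^m(X)$ for the final $m$; the bottom triangle commutes up to simplicial homotopy because each extension step only introduced such a homotopy and these can be concatenated, while the top triangle commutes strictly by construction.

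The main obstacle is the careful identification of the ``fibers'' of $p$ at the simplicial level and verifying that the relevant obstruction group is exactly one to which the type~$\mathrm C_n$ hypothesis applies. Concretely: a simplex of $G_{\hat\chi}\cdot \E D$ lying over a fixed simplex $q\cdot(k_0,\dots,k_j)$ of $Q_\chi\cdot \E p(D)$ has the form $q\cdot(n_0 \tilde k_0,\dots,n_j\tilde k_j)$ where $\tilde k_i$ are fixed lifts of the $k_i$ and $n_i\in N$ range over $N\cap$ (a compact set built from $D$ and the chosen lifts); after left-translating by $q^{-1}$ this is precisely a simplex of $N\cdot\E(\text{that compact set})$. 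One must check this subcomplex sits inside $G_{\hat\chi}\cdot \E C$ (respectively $G_{\hat\chi}\cdot \E D$) correctly — this uses $\hat\chi(n)=0$ for $n\in N$, so $\hat\chi$-positivity is preserved — and that the enlargement from $C$ to $D$ exactly matches the enlargement from $C^{-1}C\cap N$ to $D_N$ supplied by type~$\mathrm C_n$. A secondary bookkeeping subtlety is that each skeletal step requires its own subdivision and the obstruction subcomplex for dimension $j$ should be chosen once and for all (not $j$-by-$j$), which is why one should take $D_N$ large enough to handle all dimensions $\le n$ simultaneously before fixing $D$; this is legitimate because type~$\mathrm C_n$ gives, for each fixed compact input, a single cofinal $D_N$ killing $\pi_k$ for all $k\le n-2$ at once, and one iterates finitely many times to absorb the dimension-dependent enlargements of the input.
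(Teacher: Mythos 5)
Your architecture is the paper's: lift skeleton by skeleton, observe that the obstruction to extending over a $j$-simplex is a $(j-1)$-sphere lying in an $N$-translate subcomplex, and kill it using that $N$ is of type~$\mathrm C_n$ (the paper packages the skeletal induction as an induction on~$n$, building $D$ recursively from the compact set supplied by the $(n-1)$-case, but that is the same idea). Two of your steps, however, do not work as written. First, an off-by-one: you ask for $\pi_k$-triviality only for $k\le n-2$ and assert ``$k\le n-2\ge j-1$'', which fails for $j=n$ — and $X$ is allowed to have dimension exactly~$n$, so you must fill $(n-1)$-spheres. What is needed, and what type~$\mathrm C_n$ actually provides, is $\pi_k$-triviality for $k\le n-1$. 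Second, the connectivity you need is not that of the filtration $(N\cdot\E C')_{C'\in\C(N)}$ of $\E N$: the obstruction subcomplexes are translates $gN\cdot\E(CC^{-1}D_0)$ with $CC^{-1}D_0$ compact \emph{in $G$}, sitting inside $\E G$. The bridge from ``$N$ is of type~$\mathrm C_n$'' to essential $(n-1)$-connectedness of $(N\cdot\E C)_{C\in\C(G)}$ is Proposition~\ref{prop:properaction} applied to the proper left-multiplication action of $N$ on $G$; your appeal to Corollary~\ref{cor.uniformfiltrations} to ``realize $D_N$ as $D'\cap N$'' does not do this job.

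The more substantive gap is the one you yourself flag as ``the main obstacle'' without resolving: you need to know that the already-constructed lift of $\partial\sigma$ lands in a \emph{single} translate $gN\cdot\E(\text{compact})$, for a $g$ with $\mu(\sigma)\subseteq p(g\cdot\E C)$. This does not follow merely from the lower triangle commuting up to simplicial homotopy at the previous stage. The paper secures it by strengthening the induction with an explicit invariant (its property $(\star)$): whenever $\mu(\sigma)\subseteq p(h\cdot\E C)$, then $\tilde\mu(\SD^m(\sigma))\subseteq hN\cdot\E(CC^{-1}D)$. Propagating this invariant — using normality of $N$ and the observation that if $p(g\cdot\E C)$ and $p(h\cdot\E C)$ share a simplex then $g\in hNCC^{-1}$ — is exactly the bookkeeping your sketch omits, and it is also what makes the final verification of the homotopy-commutativity of the lower triangle go through (both $\mu\circ\Phi^m$ and $p\circ\tilde\mu$ send each simplex into a common free translate $p(g)\cdot\E p(D)$, so the unique simplicial homotopy between them has image in $Q_\chi\cdot\E p(D)$); ``concatenating'' the homotopies from successive extension steps is not how this is checked in the rigid simplicial setting.
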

\begin{proof}
	We will proceed by induction on~$n$. Along the way, we will also show that the maps~$\tilde\mu$ can be constructed so that following condition holds:
		
	\begin{itemize}
		\item[($\star$)] Let $\sigma$ be a simplex of~$X$, and let $h\in G_{\hat\chi}$ be such that $\mu(\sigma)$ lies in~$p(h \cdot \E C)$. Then $\tilde \mu (\SD^m(\sigma))\subseteq hN\cdot \E(CC^{-1}D)$.
	\end{itemize}
	
	For $n = 0$, take $D:=C$, and suppose we are given a commutative square as in the statement. For each vertex~$x$ of~$X$, we define $\tilde \mu(x)$ as follows: if $x\in A$, then we set (as we must) $\tilde \mu(x) = \eta(x)$; if $x\not \in A$, we take $\tilde \mu (x)$ to be any $p$-preimage of~$\mu(x)$. In particular, $\tilde \mu$ is a lift of~$\mu$ and both relevant triangles commute on the nose. To verify~$(\star)$, fix $x\in X^{(0)}=X$ and $h \in G_{\hat\chi}$~with $\mu(x) \in p(h \cdot \E C)$. Regarding~$\mu(x)$ as an element of~$Q$, we have  $\mu(x) \in p(hC)$. Thus, by our construction of $\tilde \mu$ as a lift of~$\mu$, we see $\tilde\mu(x) \in hCN = hN C$ (by normality of~$N$), which makes $\tilde \mu(x)$ a vertex of $hN\cdot \E C$.
	
	Now let $n\ge 1$. Having fixed $C\in \C(G)$,
	the induction hypothesis provides us with $D_0\in \C(G)_{\supseteq C}$ satisfying the stated lifting property on commutative squares with $X$~of dimension at most~$n-1$. Consider the proper action of~$N$ on~$G$ by left multiplication. Since $N$~is of type~$\mathrm C_n$,  Proposition~\ref{prop:properaction} applied to the zero character on~$N$ tells us that the filtration $(N\cdot \E C)_{C \in \C(G)}$ of $\E G$ is essentially $(n-1)$-connected. Thus there exists $D \in \C(G)_{\supseteq CC^{-1}D_0}$ such that the inclusion $N\cdot \E(CC^{-1}D_0) \into N\cdot \E D$ is $\pi_{n-1}$-trivial. We claim such~$D$ is as desired.
	
	To verify the required lifting property, let $X, A, \eta, \mu$ fit in a commutative square as above, with $X$~of dimension at most~$n$. By choice of~$D_0$, there are $m_0\in \NN$ and $\tilde\mu_0 \colon \SD^{m_0}(X^{(n-1)}) \to G_{\hat \chi}\cdot \E D_0$ solving the lifting problem given by the restriction of $\eta$ and~$\mu$ to the $(n-1)$-skeleta $A^{(n-1)}\subseteq X^{(n-1)}$.
	
	The crux of the problem lies in defining~$\tilde \mu$ on the (subdivided) non\-de\-ge\-ne\-rate $n$-simplices~$\sigma$ of~$X$ that are not in~$A$. Given such~$\sigma$, choose $g\in G_{\hat\chi}$ such that $\mu(\sigma)$ lies in $p(g)\cdot \E p(C) = p(g\cdot \E C)$. Property~$(\star)$ on~$\tilde \mu_0$ tells us that for each face $\tau$ of~$\sigma$, we have $\tilde \mu_0(\SD^{m_0}(\tau)) \subseteq gN\cdot \E(CC^{-1}D_0)$. In other words, $\tilde \mu_0(\SD^{m_0}(\partial \sigma)) \subseteq gN\cdot \E(CC^{-1}D_0)$. Therefore, by choice of~$D$ (and Lemma~\ref{lem:combfilling}), there are $m_\sigma \ge m_0$ and a map $\tilde \mu_\sigma \colon \SD^{m_\sigma}(\sigma) \to gN\cdot \E D$ extending~$\tilde \mu_0 \circ \Phi^{m_\sigma - m_0}$.
	
	Now, finiteness of~$X$ relative to~$A$ allows us to take~$m\in \NN$ as the maximum of all the~$m_\sigma$, and we are now ready to construct $\tilde \mu \colon \SD^m(X) \to G_{\hat\chi}\cdot \E D$. The required conditions on~$\tilde \mu$ dictate that the definition on~$\SD^m(A)$ be as $\eta \circ \Phi^m$. On $\SD^m(X^{(n-1)})$, we define~$\tilde \mu$ as $\tilde \mu_0 \circ \Phi^{m-m_0}$, which is consistent with the definition on~$\SD^m(A)$. Finally, on each nondegenerate $n$-simplex~$\sigma$ of~$X$ we define~$\tilde \mu$ as $\tilde \mu_\sigma \circ \Phi^{m-m_\sigma}$, which again matches the definition on $\SD^m(A \cup X^{(n-1)})$. Thus $\tilde \mu$~is defined so that the upper triangle of the above diagram commutes. 
	
	Next, let us establish~$(\star)$. 
	If $\sigma$~lies in~$X^{(n-1)}$, it follows directly by the induction hypothesis.
	
	If $\sigma$~is a nondegenerate $n$-simplex of~$X$ not in~$A$, recall that for the element $g\in G_{\hat\chi}$ chosen when constructing~$\tilde \mu_\sigma$, we have:
	\begin{enumerate}
		\item $\mu(\sigma) \subseteq p(g\cdot \E C)$,
		\item $\tilde \mu(\SD^m(\sigma)) \subseteq gN\cdot \E D$, and 
		\item $\mu(\sigma) \subseteq p(h\cdot \E C)$, the assumption of~$(\star)$.
	\end{enumerate}
	Conditions 1 and 3 imply that in~$Q$ we have $p(gC) \cap p(hC) \neq \emptyset$, which means there are $x, x'\in C$ and $t\in N$ with $gx = thx'$. Therefore $g\in NhCC^{-1} = hNCC^{-1}$ (since $N$~is normal), and so Condition~2 allows us to conclude:
	$$\tilde\mu(\SD^m(\sigma)) \subseteq hNCC^{-1}\cdot \E D \subseteq hN\cdot \E(CC^{-1}D).$$
	
	If $\sigma$~is a nondegenerate $n$-simplex of~$A$, we let $g \in G_{\hat\chi}$ be any element for which $\eta(\sigma) \subseteq g\cdot \E C$, so in particular $g$~satisfies Condition~1. Using the fact that $\tilde \mu(\SD^m(\sigma)) = \eta(\sigma)$ and that $C\subseteq D$, we see that Condition~2 also holds for~$g$. Thus, the same argument as before establishes~$(\star)$.
	
	Finally, commutativity up to homotopy of the lower triangle amounts to showing that the (unique) simplicial homotopy
	$H\colon \SD^m(X) \times \Delta^1 \to \E Q$ from~$\mu\circ \Phi^m$ to~$p\circ \tilde\mu$ has image in $Q_\chi \cdot \E p(D)$. But in each case of our construction, one readily sees that for every simplex~$\sigma$ of~$X$, there is $g\in G_{\hat\chi}$ such that
	$$\mu\circ \Phi^m(\SD^m(\sigma))\subseteq p(g\cdot \E C) \subseteq p(g\cdot \E D)\quad \text{and} \quad  p\circ \tilde\mu(\SD^m(\sigma))\subseteq p(g\cdot \E D).$$
	In other words, both maps take $\SD^m(\sigma)$ into the translated free simplicial set $p(g)\cdot \E p(D)$, which thus contains all simplices of $H(\SD^m(\sigma) \times \Delta^1)$. The conclusion follows at once.
\end{proof}

\begin{proof}[Proof of Theorem~\ref{thm:sigmaofquotient}]
	(1) Given $C\in \C(G)$, the fact that $\chi \in  \TopS^n(Q)$, together with Corollary~\ref{cor.uniformfiltrations}~(2), produces $C'\in \C(G)_{\supseteq C}$ such that the inclusion $Q_\chi \cdot \E p(C) \into Q_\chi \cdot \E p(C')$ is $\pi_k$-trivial for all $k \le n-1$. Let $D\in \C(G)_{\supseteq C'}$ be given by applying Lemma~\ref{lem:Cnlift} to~$C'$. We claim that $G_{\hat\chi} \cdot \E C$ includes $\pi_k$-trivially in $G_{\hat\chi} \cdot \E D$ for all $k\le n-1$.
	
	For every $\eta \in \Map(\partial \Delta^{k+1}, G_{\hat\chi}\cdot \E C)$, there is a filling $\mu \in \Map(\Delta^{k+1}, Q_\chi \cdot \E p(C'))$ of $p\circ \eta$ (by Lemma~\ref{lem:combfilling}). We apply the defining property of~$D$ to find a lift $\tilde\mu\in\Map(\Delta^{k+1}, G_{\hat\chi}\cdot\E D)$ of~$\mu$, thus exhibiting~$\eta$ as nullhomotopic in $G_{\hat\chi}\cdot \E D$.
	
	(2) Start with a compact subset of~$Q$, which by Corollary~\ref{cor.uniformfiltrations}~(2) is of the form~$p(C)$ with $C\in\C(G)$. Applying Lemma~\ref{lem:Cnlift} to~$C$, we produce $D\in \C(G)_{\supseteq C}$ with the indicated lifting property. Using that $\hat\chi \in \TopS^{n+1}(G)$, let $D'\in \C(G)_{\supseteq D}$ be such that $G_{\hat\chi}\cdot \E D \into G_{\hat\chi} \cdot \E D'$ is $\pi_k$-trivial for $k\le n$.
	
	Now, given $\eta \in \Map(\partial \Delta^{k+1}, Q_\chi\cdot \E p(C))$, with $k\le n$, apply the defining property of~$D$ to the square
	\[\begin{tikzcd}
		\emptyset \arrow[r] \arrow[d, hook]& G_{\hat\chi} \cdot \E C \arrow[d, "p"]\\
		\SD^l(\partial \Delta^{k+1}) \arrow[r,"\eta"] & Q_\chi \cdot \E p(C)
	\end{tikzcd}\]
	(for appropriate~$l$), producing a lift $\tilde \eta \in \Map(\partial \Delta^{k+1}, G_{\hat\chi}\cdot \E D)$. By definition of~$D'$, we then find a filling $\mu\in \Map(\Delta^{k+1}, G_{\hat\chi}\cdot \E D')$ of~$\tilde\eta$, whose projection $p\circ \mu$ shows~$\eta$ is null-homotopic.
\end{proof}

It is worth spelling out explicitly that the discrepancy in dimensions between items (1) and (2) of Theorem~\ref{thm:sigmaofquotient} is due to the fact that Lemma~\ref{lem:Cnlift} allows us to lift maps out of complexes of dimension at most $n$. In (2) this is applied to spheres, whereas in (1) it is applied to their fillings.

\section{Technical interlude}\label{sec:technical}

\subsection{The raising principle}\label{sec:raisingprinciple}

In using the criterion in Proposition~\ref{prop.trimmedsigma}~(2) to show that a character $\chi\colon G\to \RR$ lies in~$\TopS^n(G)$, one seeks to find, for each given $C\in \C(G)$, a larger compact set~$D$ such that for~$k\leq n-1$ the inclusion $(G\cdot \E C)_\chi\into (G\cdot\E D)_\chi$ is $\pi_k$-trivial. The goal of this brief section is to reduce such a task to finding $K\in\C(G)_{\supseteq C}$ with $\pi_k$-trivial inclusion~$G\cdot \E C \into G\cdot \E K$, and a $G$-equivariant map $\varphi \in \Map (G\cdot \E K^{(n)}, G\cdot \E K)$ that raises $\chi$-values. We establish this principle as Lemma~\ref{lem:pushfilingup}. Its main usage is in the coming sections, but we also collect some immediate dividends by deducing one of our main results, Theorem~\ref{thm:center} below.

\begin{lem}[Homotopy to a map on a subdivision]\label{lem:interpol_homotopy}
	Let $K\in \C(G)$, let~$X$ be a simplicial set, and suppose we are given simplicial maps $\varphi_0\colon X \to G\cdot \E K$ and $\varphi_1\colon \SD^m(X) \to  G\cdot \E K$ (for some $m\in \NN$). We consider the (unique) simplicial homotopy \[H\colon \SD^m(X) \times \Delta^1 \to \E G\]
	from $\varphi_0 \circ \Phi^m$ to~$\varphi_1$.
		
	Suppose there is $B\in \C(G)_{\supseteq \{1\}}$  such that for every vertex $x\in X^{(0)}$, we have $\varphi_1(x) \in \varphi_0(x) B$. Then there is $D\in \C(G)_{\supseteq K}$ such that $H$~has image contained in~$G\cdot \E D$.
\end{lem}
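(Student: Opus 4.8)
The plan is to analyze the simplicial homotopy $H$ simplex by simplex, using the explicit formula for homotopies between maps into a free simplicial set. Recall that for a $k$-simplex $\sigma = (x_0,\dots,x_k)$ of $\SD^m(X)$, we have
\[
H(\sigma, \tau_i^k) = \bigl(\varphi_0\Phi^m(x_0), \dots, \varphi_0\Phi^m(x_{i-1}), \varphi_1(x_i), \dots, \varphi_1(x_k)\bigr),
\]
so the simplices in the image of $H$ are exactly the tuples obtained by concatenating an initial segment of vertices of $\varphi_0\Phi^m(\sigma)$ with a terminal segment of vertices of $\varphi_1(\sigma)$. Since $|X^{(0)}| = \SD^m(X)^{(0)}$ (the $0$-skeleton is unaffected by subdivision up to the canonical inclusion), the hypothesis says $\varphi_1(x) \in \varphi_0\Phi^m(x)\cdot B$ for every vertex $x$ of $\SD^m(X)$. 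The first step is thus to record that every vertex $v$ of $\SD^m(X)$ satisfies $\varphi_0\Phi^m(v) = \varphi_1(v) \cdot b_v$ for some $b_v \in B^{-1} \subseteq B \cup B^{-1}$; replacing $B$ by its centered version $\dot B$, we may assume $\varphi_0\Phi^m(v) \in \varphi_1(v)\cdot B$ as well.

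The key step is to bound all vertices appearing in a single simplex of the image of $H$ by a $G$-translate of a fixed compact set. Fix a nondegenerate $k$-simplex $\sigma$ of $\SD^m(X)$. Since $\varphi_1$ maps into $G\cdot \E K$, the simplex $\varphi_1(\sigma)$ lies in $g\cdot \E K$ for some $g\in G$; that is, each vertex of $\varphi_1(\sigma)$ is of the form $g\cdot y$ with $y\in K$. Then each vertex of $\varphi_0\Phi^m(\sigma)$ is of the form $g\cdot y \cdot b$ with $y \in K$ and $b\in B$, hence lies in $g\cdot KB$. Consequently every vertex of every simplex $H(\sigma,\tau_i^k)$ lies in $g\cdot (K \cup KB) = g\cdot KB$ (using $1\in B$), so $H(\sigma,\tau_i^k) \in g\cdot \E(KB)$. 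Taking the union over all simplices $\sigma$ and all $i$, we conclude $H$ has image in $G\cdot \E(KB)$. Setting $D := KB$, which is compact as the image of $K\times B$ under multiplication and contains $K$ since $1\in B$, finishes the proof.

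I do not expect any serious obstacle here: the argument is a routine bookkeeping exercise with the explicit formula for simplicial homotopies into free simplicial sets, exactly of the flavor already used in the proof of Lemma~\ref{lem:trimmedfiltrations}(2). The only mild subtlety is matching up the hypothesis "$\varphi_1(x) \in \varphi_0(x)B$" — which is literally about vertices of $X$, i.e.\ vertices of $\SD^m(X)$ under the canonical inclusion $X^{(0)}\into \SD^m(X)^{(0)}$ — with the vertices actually appearing in simplices of $\SD^m(X)$, but since $\SD^m(X)^{(0)}$ is precisely the set of vertices of $X$ together with the barycenters introduced by subdivision, and the map $\Phi^m$ is the identity on the former, one should instead read the hypothesis as applying to all vertices of $\SD^m(X)$ via $\Phi^m$; the bound $g\cdot KB$ is insensitive to which vertices we track. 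Equivalently, one notes that for \emph{any} vertex $v$ of $\SD^m(X)$, both $\varphi_0\Phi^m(v)$ and $\varphi_1(v)$ lie in the same $G$-translate of $\E K$ used for the simplex containing $v$, and they differ by an element of $B$ by hypothesis, which is all that is needed.
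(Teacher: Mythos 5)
There is a genuine gap, and it sits exactly at the point you dismiss as a ``mild subtlety''. Your claim that $X^{(0)} = \SD^m(X)^{(0)}$ is false: the subdivision functor $\SD$ adds new vertices, and $\Phi$ only induces an \emph{inclusion} $X^{(0)} \into \SD(X)^{(0)}$ (this is why the paper records that every vertex of $\SD(\sigma)$ lies within edge-distance at most~$2$ of some vertex of~$\sigma$ --- a statement that would be vacuous if no new vertices appeared). Consequently the hypothesis $\varphi_1(x) \in \varphi_0(x)B$ is available only at the vertices of~$X$, not at the barycentric vertices of $\SD^m(X)$. Your key step fixes a simplex $\sigma$ of $\SD^m(X)$, takes $g$ with $\varphi_1(\sigma) \subseteq g\cdot\E K$, and then asserts that every vertex of $\varphi_0\Phi^m(\sigma)$ lies in $g\cdot KB$; this uses the hypothesis at \emph{all} vertices of $\sigma$, which is not justified --- indeed for $m$ large most simplices of $\SD^m(X)$ contain no vertex of $X$ at all, so the hypothesis gives you no handle on them whatsoever. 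The closing remark that ``both $\varphi_0\Phi^m(v)$ and $\varphi_1(v)$ lie in the same $G$-translate of $\E K$ \dots and they differ by an element of $B$ by hypothesis'' is precisely the statement that needs proof, not something that can be read off.

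The missing idea is to anchor the argument at simplices $\sigma$ of~$X$ rather than of $\SD^m(X)$: choose $g$ with $\varphi_0(\sigma) \subseteq g\cdot\E K$, use the hypothesis to place $\varphi_1$ of the (original) vertices of~$\sigma$ into $gKB$, and then propagate to the new vertices $v$ of $\SD^m(\sigma)$ using the two facts that $v$ is within edge-distance at most $2m$ of a vertex of~$\sigma$ and that $\varphi_1$, having image in $G\cdot\E K$, sends edges to edges with labels in $K^{-1}K$. This yields $\varphi_1(v) \in gKB(K^{-1}K)^{2m}$, and the correct compact set is $D := KB(K^{-1}K)^{2m}$; your $D = KB$ is too small, since it does not account for the drift of the subdivision vertices. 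The final step --- that a free simplicial set $g\cdot\E D$ containing all the relevant vertices automatically contains the simplices of the homotopy --- is fine as you have it.
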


Of particular interest for us is the case where $X$~carries a $G$-action that is transitive on vertices, and $\varphi_0, \varphi_1$ are $G$-equivariant. Then the existence of a suitable~$B$ is automatic:  choosing any $x\in X^{(0)}$ and writing $g_0:=\varphi_0(x), g_1:=\varphi_1(x)$, it follows from $G$-equivariance that for every $y\in X$ we have $\varphi_1(y) = g_1 g_0^{-1}\varphi_0(y)$. Hence, $B:=\{1, g_1 g_0^{-1}\}$ is as required.

\begin{proof}
	Let $\sigma$ be a simplex of~$X$ and let $g\cdot \E K$ be a translate of~$\E K$ containing the simplex~$\varphi_0 (\sigma)$.
	It follows that each vertex of~$\sigma$ is sent by $\varphi_1$ to $gKB$.
	Now recall that each vertex~$v$ of~$\SD^m(\sigma)$ is within edge-distance at most~$2m$ from a vertex of~$\sigma$. 
	As $\varphi_1$ maps each such edge to an edge with label in~$K^{-1} K$, we see that 
	\[\varphi_1(v) \in g K B (K^{-1} K)^{2m}.\]
	
	Thus, by taking $D:= K B (K^{-1} K)^{2m}$, we see that $g\cdot \E D$ contains the $H$-image of $(\SD^m(\sigma) \times \Delta^1)^{(0)}$. Since $g\cdot \E D$ is a free simplicial set, it contains $H(\SD^m(\sigma) \times \Delta^1)$, and thus $G\cdot \E D$ contains the image of~$H$.
\end{proof}

	Here and in the coming sections, it will be convenient to lighten the notation by establishing the following conventions.
	Given a character~$\chi$ on~$G$ and a simplicial subset~$Y \in \E G$,
	we write
	$$\chi(Y):=\inf \{\chi(g) \mid g\in Y^{(0)}\}.$$
	Moreover, given simplicial sets $A\subseteq X$ and $\varphi \in \Map(X, \E G)$, say supported on $\SD^m(X)$, we abbreviate $\chi(\varphi(A)) := \chi(\varphi(\SD^m(A)))$, and $\chi(\varphi):=\chi(\varphi(X))$.
	
	\begin{dfn}
		Let  $X,Y$ be simplicial subsets of $\E G$, let $\chi \colon G\to \RR$ be a character, and let $\varphi \in \Map(X, Y)$ be a map, say defined on $\SD^m(X)$. We say that $\varphi$ \textbf{raises $\chi$-values} if there is $\epsilon >0$ such that for every simplex $\sigma$ of~$X$, we have
		$$\chi(\varphi(\sigma)) \ge \chi(\sigma) + \epsilon.$$
	\end{dfn}
	
	We emphasize that this inequality is a more stringent condition than requiring each vertex $v\in X^{(0)}$ to satisfy $\chi(\varphi(v))\ge \chi(v) + \epsilon$. We also need the vertices~$v$ of $\SD^m(X)$ not in~$X$, say in the subdivision $\SD^m(\sigma)$ of a simplex~$\sigma$ of~$X$, to satisfy $\chi(\varphi(v)) \ge \chi(\sigma) + \epsilon$.
	
	Maps raising $\chi$-values will play a role similar to Renz's $v$-homotopies. The following principle is reminiscent of his result establishing $v$-homotopies as witnesses for essential $(n-1)$-connectedness \cite[Satz~2.5]{Ren88}.
	
\begin{lem}[The raising principle]\label{lem:pushfilingup}
	Fix $n\in \NN$ and a character $\chi\colon G\to \RR$. Let $C\subseteq K$ be compact subsets of~$G$ such that the inclusion $G\cdot \E C \into G\cdot \E K$ is $\pi_k$-trivial for all $k \le n-1$.
	If there is a $G$-equivariant $\varphi \in \Map (G\cdot \E K^{(n)}, G\cdot \E K)$ raising $\chi$-values, then there is $D\in \C(G)_{\supseteq K}$ depending on~$\varphi$ but not on~$\chi$, such that the inclusion
	$(G\cdot \E C)_\chi \into (G\cdot \E D)_\chi$ is $\pi_k$-trivial for $k\le n-1$.
\end{lem}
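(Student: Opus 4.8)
## Proof plan

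The plan is to use the raising map $\varphi$ to push a filling of a sphere in $(G\cdot\E C)_\chi$ up in $\chi$-value, step by step, until it lands in $(G\cdot\E D)_\chi$. Concretely, fix $k\le n-1$ and start with $\eta\in\Map(\partial\Delta^{k+1},(G\cdot\E C)_\chi)$. By hypothesis the inclusion $G\cdot\E C\into G\cdot\E K$ is $\pi_k$-trivial, so by Lemma~\ref{lem:combfilling} there is a filling $\mu_0\in\Map(\Delta^{k+1},G\cdot\E K)$ of $\eta$. This filling need not have image in $\E G_\chi$, but since $\Delta^{k+1}$ is finite, $\chi(\mu_0)$ is finite; the goal is to compose repeatedly with $\varphi$ to raise this value above $0$ while keeping the boundary under control.

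The key steps, in order. First, restrict attention to the $n$-skeleton: since $k+1\le n$, the filling $\mu_0$ has image in $(G\cdot\E K)^{(n)}=G\cdot\E K^{(n)}$ (note $\Delta^{k+1}$ is at most $n$-dimensional after any subdivision, as $\SD$ does not raise dimension). Hence $\varphi$ may be applied to it. Second, iterate: set $\mu_{j+1}:=\varphi\circ\mu_j\in\Map(\Delta^{k+1},G\cdot\E K)$. Because $\varphi$ raises $\chi$-values by a fixed $\epsilon>0$ \emph{on every simplex} (this is exactly why we need the stronger "on simplices" version of the definition, to control the subdivided vertices too), we get $\chi(\mu_{j+1})\ge\chi(\mu_j)+\epsilon$, so after $N:=\lceil(-\chi(\mu_0))/\epsilon\rceil$ steps we have $\chi(\mu_N)\ge 0$, i.e.\ $\mu_N$ has image in $(G\cdot\E K)_\chi$. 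Third, the remaining issue is the boundary: $\mu_N$ fills $\varphi^{N}\circ\eta$, not $\eta$ itself. To fix this, build a homotopy in $G\cdot\E K$ from $\eta$ to $\varphi^{N}\circ\eta$ — more precisely, since $\varphi$ is $G$-equivariant and $G\cdot\E K$ is a union of translates of the (Kan) complex $\E K$, one uses Lemma~\ref{lem:interpol_homotopy} to compare $\varphi$ with the inclusion on each skeleton: the $G$-equivariant maps $\id$ and $\varphi$ agree "up to a bounded translate $B$" on vertices, so the canonical simplicial homotopy between $\eta$ and $\varphi\circ\eta$ has image in some $G\cdot\E D_1$ with $D_1\in\C(G)_{\supseteq K}$ depending only on $\varphi$ (via the bound $B$ coming from $\varphi$, exactly as in the remark following Lemma~\ref{lem:interpol_homotopy}). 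Iterating this $N$ times gives a homotopy from $\eta$ to $\varphi^N\circ\eta$ inside $G\cdot\E D'$ for some fixed $D'$; but we must also ensure this homotopy stays in $\E G_\chi$. Here one uses the same $\chi$-track bookkeeping: translating $\eta$ by an element of very negative $\chi$-value is \emph{not} available, so instead one runs the raising argument on the homotopy as well — i.e.\ apply $\varphi$ to the whole homotopy $H\colon\SD^m(\partial\Delta^{k+1})\times\Delta^1\to G\cdot\E K^{(n)}$ (which is legitimate since $\dim(\partial\Delta^{k+1}\times\Delta^1)=k+1\le n$), raising its $\chi$-value by $\epsilon$ at each step. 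Finally, assembling: glue the (now $\chi\ge 0$, after enough raising) homotopy $H_N$ from $\eta$ to $\varphi^N\circ\eta$ onto the filling $\mu_N$ of $\varphi^N\circ\eta$ to obtain a filling of $\eta$ with image in $(G\cdot\E D)_\chi$, where $D:=K\cdot B\cdot(K^{-1}K)^{2m}$-type set (all constants traced to $\varphi$ and $N$, hence to $\varphi$ alone once we observe $N$ is bounded in terms of $\chi(\mu_0)$, which in turn is bounded once $C$ is fixed — actually $N$ depends on $\chi$, but $D$ can be taken $\chi$-independent since the per-step enlargement is uniform and the sets $G\cdot\E D_j$ stabilize: $G\cdot\E D_j=G\cdot\E D_1$ for all $j$, as $\varphi$ maps $\E K^{(n)}$ into $\E K$ and translations are absorbed).

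The main obstacle I expect is the very last point: making $D$ genuinely independent of $\chi$. Each application of $\varphi$ stays within $G\cdot\E K$ (for the fillings) and within a \emph{fixed} $G\cdot\E D_1$ (for the homotopies), because $\varphi$'s target is $G\cdot\E K$ and the bound $B$ from Lemma~\ref{lem:interpol_homotopy}'s remark is the same at every stage. So the enlargement does not compound: $D$ is determined by $K$, the subdivision number $m$ of $\varphi$, and $B$, all of which come from $\varphi$. The delicate bookkeeping is checking that at the stage where $\chi(\mu_N)\ge0$ we can simultaneously arrange $\chi(H_N)\ge0$ for the attaching homotopy, so that the glued filling genuinely lies in $\E G_\chi$; this works because $\chi(H_j)$ and $\chi(\mu_j)$ both increase by $\epsilon$ per step and differ by a constant (bounded in terms of $K$ and the initial data), so a common $N$ suffices. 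Then $\pi_k$-surjectivity of the skeleton inclusion upgrades the conclusion from the $n$-skeleton to all of $(G\cdot\E C)_\chi$, and since $k\le n-1$ was arbitrary, we are done.
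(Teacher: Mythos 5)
Your overall strategy is the paper's: fill $\eta$ in $G\cdot\E K$, iterate $\varphi$ to raise the filling's $\chi$-value above $0$, connect $\eta$ to $\varphi^N\circ\eta$ by the canonical simplicial homotopy of Lemma~\ref{lem:interpol_homotopy} (applied \emph{once}, with $B=\{1,\varphi(1)\}$, which is what makes $D$ independent of~$\chi$), and glue. That architecture is correct, and your observation that the enlargement does not compound is exactly right.

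The one step that goes wrong is your treatment of why the connecting homotopies stay in $\E G_\chi$. You propose to ``run the raising argument on the homotopy as well'', i.e.\ to apply $\varphi$ to the homotopy. This is not available: the homotopy from $\Phi^m$ to $\varphi$ has image in $G\cdot\E D$, not in $G\cdot\E K^{(n)}$, which is the domain of~$\varphi$, so the composite is undefined. More importantly, the step addresses a non-issue. Each $\eta^{(j)}:=\varphi^j\circ\eta$ already has image in $(G\cdot\E K)_\chi$ (using only that $\eta$ does and that $\varphi$ raises $\chi$-values), and the restriction of the homotopy $H$ to $\SD^m((G\cdot\E K)^{(n)}_\chi)\times\Delta^1$ automatically lands in $(G\cdot\E D)_\chi$: every vertex of $H(\SD^m(\sigma)\times\Delta^1)$ is either a vertex of $\sigma$, with $\chi$-value $\ge\chi(\sigma)\ge0$, or a vertex of $\varphi(\SD^m(\sigma))$, with $\chi$-value $\ge\chi(\sigma)+\epsilon$ --- and this is precisely why the definition of ``raising $\chi$-values'' is phrased simplex-wise. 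Hence the chain of homotopies $\eta\simeq\varphi\circ\eta\simeq\cdots\simeq\varphi^N\circ\eta$ lives in $(G\cdot\E D)_\chi$ from the very first step; only the fillings $\mu_j$ dip below level~$0$, and they are used only once $\chi(\mu_N)\ge0$, at which point $\mu_N$ lies in $(G\cdot\E K)_\chi\subseteq(G\cdot\E D)_\chi$. With that correction (and deleting the ``common $N$ for the homotopy and the filling'' bookkeeping, which solves a problem that does not arise), your argument coincides with the paper's proof.
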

\begin{proof}
	Say $\varphi$ is defined on $\SD^m(G\cdot E K^{(n)})$. Since $\varphi$~is $G$-equivariant, we may apply Lemma~\ref{lem:interpol_homotopy} with $B:=\{1,\varphi(1)\}$ to produce $D\in \C(G)_{\supseteq K}$ such that the simplicial homotopy
	\[H\colon \SD^m(G\cdot \E K^{(n)}) \times \Delta^1 \to G\cdot \E D.\]
	from the composition $\SD^m(G\cdot \E K^{(n)}) \xrightarrow{\Phi^m} G\cdot \E K^{(n)} \into G\cdot \E K$ to~$\varphi$ is well-defined.
	Note that $D$~and~$H$ are independent of~$\chi$.
	Since $\varphi$ raises $\chi$-values, we see that $H$~restricts to a homotopy
	$\SD^m((G\cdot \E K)_\chi^{(n)})\times \Delta^1 \to (G\cdot \E D)_\chi$.
	
	We will prove that the inclusion $(G\cdot \E C)_\chi \into (G\cdot \E D)_\chi$ is $\pi_k$-trivial, so let $\eta \in \Map (\partial \Delta^{k+1}, (G\cdot \E C)_\chi)$. Since $G\cdot \E C \into G\cdot \E K$ is $\pi_k$-trivial, we can use Lemma~\ref{lem:combfilling} to fill~$\eta$ with a map $\mu \in  \Map(\Delta^{k+1}, G\cdot \E K)$; we need to find a filling with image in~$(G\cdot \E D)_\chi$.
	Writing $s:= \chi(\mu)$, we see $\mu \in \Map(\Delta^{k+1}, (G\cdot \E K)_{s})$.
	
	If $s\ge 0$, there is nothing left to show.
	Otherwise, the composition
	\[\partial \Delta^{k+1} \times\Delta^1 \xrightarrow{\eta \times \id} (G\cdot \E C)^{(n)}_\chi \times \Delta^1 \xrightarrow{H}(G\cdot \E D)_\chi \]
	(where we suppress occurrences of $\SD$ and $\Phi$ to lighten notation)
	homotopes~$\eta$ to $\eta' := \varphi\circ \eta \in\Map(\partial \Delta^{k+1}, (G\cdot \E K)_\chi)$, which is filled by
	$\mu':= \varphi \circ \mu \in \Map(\Delta^{k+1}, (G\cdot \E K)_{s+\epsilon})$.
	In particular, $\chi(\mu') \ge s+\epsilon$.
	
	If still $\chi(\mu') < 0$, we may repeat the procedure, again homotoping $\eta'$ to a map~$\eta''$, with a filling~$\mu''$ satisfying $\chi(\mu'') \ge s + 2\epsilon$. After enough iterations, we eventually conclude $\eta$~is homotopic in $(G\cdot \E D)_\chi$ to a map that has a filling in $(G\cdot \E K)_\chi \subseteq (G\cdot \E D)_\chi$.
\end{proof}

\begin{thm}[Non-vanishing on the center]\label{thm:center}
	Let $n \in \NN$ and suppose $G$~is of type~$\mathrm C_n$. Then $\TopS^n(G)$~contains all characters that do not vanish on the center~$\operatorname{Z}(G)$.
\end{thm}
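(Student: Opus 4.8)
The section has been arranged precisely so that this theorem falls out of the raising principle (Lemma~\ref{lem:pushfilingup}), and that is the route I would take. First I would dispose of trivialities: for $n=0$ there is nothing to prove since $\TopS^0(G)=\TopHom(G,\RR)$, so assume $n\ge 1$; and given a central $z$ with $\chi(z)\neq 0$, replacing $z$ by $z^{-1}$ if necessary, I may assume $\chi(z)>0$. I would then use the description of $\TopS^n(G)$ from Proposition~\ref{prop.trimmedsigma}~(2): it suffices to show that the filtration $\left((G\cdot\E C)_\chi\right)_{C\in\C(G)}$ of $\E G_\chi$ is essentially $(n-1)$-connected, i.e.\ that for each $C\in\C(G)$ there is $D\in\C(G)_{\supseteq C}$ (independent of $k$) making the inclusion $(G\cdot\E C)_\chi\into(G\cdot\E D)_\chi$ $\pi_k$-trivial for all $k\le n-1$.

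The key observation is that left multiplication by a central element is exactly the kind of equivariant ``shift'' that the raising principle consumes. Concretely, for any $K\in\C(G)$ I would consider the map $\varphi\colon G\cdot\E K^{(n)}\to G\cdot\E K$ given on simplices by $\varphi(g_0,\dots,g_\ell)=(zg_0,\dots,zg_\ell)$. This is well defined on $G\cdot\E K$ because $z\cdot(g\cdot\E K)=(zg)\cdot\E K$; it is simplicial; it is $G$-equivariant precisely because $z$ is central (so that left multiplication by $z$ commutes with the left translation action); and it raises $\chi$-values with $\epsilon=\chi(z)$, since $\chi(zg_i)=\chi(z)+\chi(g_i)$ on every vertex, hence $\chi(\varphi(\sigma))=\chi(\sigma)+\chi(z)$ for every simplex $\sigma$. (Restricting the domain to the $n$-skeleton is only to match the hypothesis of Lemma~\ref{lem:pushfilingup}; no subdivision is needed, so $\varphi\in\Map(G\cdot\E K^{(n)},G\cdot\E K)$.)

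With this in hand the argument closes quickly. Fix $C\in\C(G)$. Since $G$ is of type $\mathrm C_n$, the filtration $(G\cdot\E C')_{C'\in\C(G)}$ is essentially $(n-1)$-connected, and because $\C(G)$ is directed (and $\pi_k$-triviality of an inclusion is preserved under further inclusions) I may choose a single $K\in\C(G)_{\supseteq C}$ such that $G\cdot\E C\into G\cdot\E K$ is $\pi_k$-trivial for all $k\le n-1$. Now apply Lemma~\ref{lem:pushfilingup} to $C\subseteq K$ and the equivariant raising map $\varphi$ above; it yields $D\in\C(G)_{\supseteq K}$, depending only on $\varphi$ (hence only on $C$) and not on $k$, with $(G\cdot\E C)_\chi\into(G\cdot\E D)_\chi$ being $\pi_k$-trivial for $k\le n-1$. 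This is exactly the condition needed, so $\chi\in\TopS^n(G)$ by Proposition~\ref{prop.trimmedsigma} (alternatively one can phrase the last step via Corollary~\ref{cor:boundeddrop} with $s=0$).

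I do not expect a genuine obstacle here: the only things to verify are the three routine properties of left multiplication by $z$ (well-definedness on $G\cdot\E K$, $G$-equivariance, raising of $\chi$-values), and the ``uniform'' extraction of $K$ from essential $(n-1)$-connectedness. The conceptual content is entirely in recognizing that a central character value provides the equivariant self-map demanded by the raising principle — which is precisely why this theorem was placed immediately after Lemma~\ref{lem:pushfilingup}.
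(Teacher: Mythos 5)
Your proposal is correct and is essentially the paper's own proof: the paper also applies the raising principle (Lemma~\ref{lem:pushfilingup}) to the map induced by multiplication by the central element $t$ with $\chi(t)>0$ (written there as $g\mapsto gt$, which coincides with your $g\mapsto zg$ by centrality), after extracting a single $K\supseteq C$ with $\pi_k$-trivial inclusion from type~$\mathrm C_n$. The only cosmetic difference is which of the two required properties (landing in $G\cdot\E K$ versus $G$-equivariance) you present as the one needing centrality; both verifications are valid.
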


\begin{proof}
	Suppose a character $\chi\colon G\to \RR$ does not vanish on~$\operatorname{Z} (G)$, so there is  $t\in \operatorname{Z}(G)$ with $\chi(t) > 0$, and let $C \in \C(G)$. Since $G$~is of type~$\mathrm C_n$, there is $K \in \C(G)_{\supseteq C}$ such that the inclusion $G\cdot \E C \into G\cdot \E K$ is $\pi_k$-trivial for all $k\le n-1$.
	
	We define a simplicial map $\varphi \colon G\cdot \E K \to G\cdot \E K$ by specifying it on vertices as $g\mapsto gt$. This is clearly $G$-equivariant, but the fact that its images lies in $G\cdot \E K$ uses centrality of~$t$ in a crucial way: given a simplex $\sigma = g\cdot (x_0, \ldots, x_m)$ of~$G\cdot \E K$, with $g\in G$ and $x_0, \ldots, x_m\in K$, we have
	\[\varphi(\sigma) = g\cdot (x_0t, \ldots, x_mt) = gt(x_0, \ldots ,x_m) \in G\cdot \E K.\]
	
	Since we also have $\chi(\varphi(\sigma)) = \chi(\sigma) + \chi(t)$, Lemma~\ref{lem:pushfilingup} yields $D\in \C(G)_{\supseteq K}$ such that the inclusion $(G\cdot \E C)_\chi \into (G\cdot \E D)_\chi$ is $\pi_k$-trivial for $k\le n-1$. This shows $\chi \in \TopS^n(G)$ by Proposition~\ref{prop.trimmedsigma}.
\end{proof}

\subsection{Topologising spaces of maps}\label{sec:topologise}
Adapting results of discrete $\Sigma$-theory to the setting of locally compact groups will often require translating finiteness arguments from the classical proofs into compactness arguments.
To this end, we will now add meaningful topologies to the sets of maps between relevant simplicial sets.

Let $X$~be a topological space and $S$ a simplicial set. We topologise $\HomSS(S, \E X)$  using the canonical bijections
\[\HomSS(S, \E X) \cong \HomSet(S^{(0)}, X) \cong \prod_{S^{(0)}} X.\]
Given a sub-simplicial set~$Y \subseteq \E X$, we endow $\HomSS(S,Y)$ with the sub\-space topology induced from $\HomSS(S, \E X)$.

\begin{lem}[Closed and open $G$-subspaces]\label{lem:closedandopensubspaces}
	Let $X$ be a locally compact Hausdorff $G$-space with proper action. Then:
	\begin{enumerate}
		\item For every compact $C\subseteq X$, the subspace $G C \subseteq X$ is closed. 
		\item For every open  $U\subseteq X$, the subspace $G U \subseteq X$ is open.
	\end{enumerate}
\end{lem}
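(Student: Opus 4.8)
The plan is to prove both statements directly from the characterization of properness in terms of the sets $\Ret(K)$ (Lemma~\ref{lem:ret}), exploiting local compactness of $X$ to reduce everything to compact pieces.

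For part (2), the strategy is to show $GU$ is a union of open sets, hence open. Given $x \in U$, I would pick, using local compactness and Hausdorffness, an open neighborhood $V$ of $x$ with compact closure $\overline V \subseteq U$. Then $GV = \bigcup_{g\in G} gV$ is open (each $gV$ is open since the action is by homeomorphisms), and $GV \subseteq GU$. Running over all $x \in U$ produces an open cover of $GU$ by subsets of $GU$, so $GU$ is open. This part is routine and does not even need properness; it only uses that $G$ acts by homeomorphisms and that $X$ is a $G$-space.

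For part (1), the substance is showing $GC$ is closed, which is where properness enters. I would argue that the complement is open: take $y \notin GC$ and produce an open neighborhood of $y$ disjoint from $GC$. Using local compactness, choose an open neighborhood $W$ of $y$ with compact closure $\overline W$. Set $L := \overline W \cup C$, a compact subset of $X$, and consider $\Ret(L) = \{g \in G \mid gL \cap L \neq \emptyset\}$, which is compact by Lemma~\ref{lem:ret}. The point is that $\Ret(L)\cdot C$ is compact (continuous image of $\Ret(L) \times C$ under the action map), hence closed in the Hausdorff space $X$, and one checks $GC \cap \overline W \subseteq \Ret(L)\cdot C$: if $gc \in \overline W$ for some $g \in G$, $c \in C$, then $gc \in L$ and $c \in L$, so $gc \in gL \cap L$, giving $g \in \Ret(L)$. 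Since $y \notin GC \supseteq GC \cap \overline W$ and more importantly $y \notin \Ret(L)\cdot C$ would need justification — actually one should be slightly more careful: I would instead observe $y \notin GC$ implies $y \notin \Ret(L) \cdot C$ (as the latter is contained in $GC$), and since $\Ret(L)\cdot C$ is closed, there is an open neighborhood $W'$ of $y$ with $W' \cap \Ret(L)\cdot C = \emptyset$. Shrinking to $W'' := W' \cap W$, we get an open neighborhood of $y$ with $W'' \cap GC = W'' \cap (GC \cap \overline W) \subseteq W'' \cap \Ret(L)\cdot C = \emptyset$, using $W'' \subseteq W \subseteq \overline W$. This shows the complement of $GC$ is open.

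The main obstacle is the bookkeeping in part (1): one must be careful that the local neighborhood $\overline W$ chosen around $y$ is used both to form the compact set $L$ controlling which group elements can move $C$ near $y$, and to confine the intersection $GC \cap \overline W$ inside the compact (hence closed) set $\Ret(L)\cdot C$. Getting the two roles of $\overline W$ to interlock correctly — rather than, say, needing a separate neighborhood for each — is the only genuinely delicate point; the rest is formal manipulation with compactness, Hausdorffness, and continuity of the action.
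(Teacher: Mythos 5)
Your proposal is correct, and for part (1) it takes a genuinely different route from the paper. The paper proves closedness of $GC$ directly via nets: a point $x_\infty \in \overline{GC}$ is a limit of $g_ic_i$; after passing to a subnet with $c_i \to c_\infty \in C$, properness of $\alpha_+$ makes the preimage of a compact neighborhood of $(c_\infty, x_\infty)$ compact, yielding a cluster point $(g_\infty, c_\infty)$ with $g_\infty c_\infty = x_\infty$. You instead show the complement is open, localizing at $y \notin GC$ and using the $\Ret$-characterization of properness (Lemma~\ref{lem:ret}) to trap $GC \cap \overline W$ inside the compact, hence closed, set $\Ret(L)\cdot C$ with $L = \overline W \cup C$. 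Your argument is correctly assembled (the inclusion $GC \cap \overline W \subseteq \Ret(L)\cdot C$ and the final shrinking to $W'' = W'\cap W$ both check out), and it buys you freedom from nets, subnets and cluster points, leaning only on material the paper has already set up; the paper's version buys a shorter, more direct "closure is contained in the set" computation at the cost of net formalism. For part (2) you are right that properness is irrelevant, but your detour through relatively compact neighborhoods is unnecessary: as the paper notes, $GU = \bigcup_{g\in G} gU$ is already a union of open sets because each $g$ acts by a homeomorphism.
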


\begin{proof}
	(1)
	Let $x_\infty$ be a point of the closure~$\overline{G C}$, so $x_\infty$~is the limit of a net $(g_i c_i)_{i\in I}$ with $g_i\in G$ and $c_i \in C$. Since $C$~is compact, we may, after passing to a cofinal subnet, assume $(c_i)_{i\in I}$ converges to some $c_\infty \in C$ (unique, since $X$~is Hausdorff). Thus in $X\times X$ we have $(c_i, g_i c_i) \to (c_\infty, x_\infty)$.
	
	Choose a compact neighborhood~$K$ of~$(c_\infty,x_\infty)$ in $X\times X$. Properness of the map $f\colon G\times X \to X\times X$ sending $(g,x)\mapsto(x,g  x)$ ensures that $\tilde K := f^{-1}(K)$ is a compact subset of $G\times X$, and $\tilde K$ contains a tail of the net $(g_i, c_i)_{i\in I}$. Thus $\tilde K$ contains a cluster point $(g_\infty, c_\infty)$, which $f$~maps to $(c_\infty, x_\infty)$ by continuity. Hence $x_\infty = g_\infty c_\infty \in G C$ and we conclude $\overline {G C} \subseteq G C$, as required.
	
	(2)
	Openness of $G U \subseteq X$ is immediate from its expression as the union of open sets $G U= \bigcup_{g\in G} g U$.
\end{proof}

\begin{lem}[Closed and open Hom-subspaces]
	Let $S$ be a simplicial set and $X$~a locally compact Hausdorff $G$-space with proper action. Then:
	\begin{enumerate}
		\item For every $C\in \C(X)$, the subspace $\HomSS(S, G\cdot \E C) \subseteq \HomSS(S, \E X)$ is closed. 
		\item If $S$~is finite and $U\subseteq X$ is open, then the subspace $\HomSS(S, G\cdot \E U) \subseteq \HomSS(S, \E X)$ is open.
	\end{enumerate}
\end{lem}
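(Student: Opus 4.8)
The statement has two parts, and both are ``pointwise'' conditions that can be checked using the product topology on $\HomSS(S,\E X)\cong \prod_{S^{(0)}}X$, so the plan is to reduce each to the corresponding statement about subspaces of~$X$ already proved in Lemma~\ref{lem:closedandopensubspaces}, handling the simplicial bookkeeping (the labels on edges) carefully.

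For part (1), the plan is as follows. Recall that $G\cdot \E C$ consists of the simplices $g\cdot(x_0,\dots,x_k)$ with $g\in G$, $x_i\in C$; equivalently, a tuple $(y_0,\dots,y_k)\in X^{k+1}$ is such a simplex precisely when all $y_i$ lie in a single translate $gC$, i.e.\ $y_0\in GC$ and $y_0^{-1}y_i$\ldots --- but here $X$ is not a group, so I should phrase this as: $(y_0,\dots,y_k)$ lies in $G\cdot\E C$ iff there is $g\in G$ with $g^{-1}y_i\in C$ for all~$i$. A map $f\colon S\to\E X$, viewed as an element $(f(v))_{v\in S^{(0)}}$ of $\prod_{S^{(0)}}X$, lies in $\HomSS(S,G\cdot\E C)$ iff for every nondegenerate simplex $\sigma=(v_0,\dots,v_k)$ of~$S$, the tuple $(f(v_0),\dots,f(v_k))$ is a simplex of $G\cdot \E C$. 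It therefore suffices to show, for each fixed finite tuple of vertices $(v_0,\dots,v_k)$ spanning a simplex of~$S$, that the set $Z_\sigma:=\{f\in\prod_{S^{(0)}}X : (f(v_0),\dots,f(v_k))\in G\cdot\E C\}$ is closed; the desired set is then $\bigcap_\sigma Z_\sigma$. Since $Z_\sigma$ is the preimage of the set $W_k(C):=\{(y_0,\dots,y_k): \exists g\in G,\ g^{-1}y_i\in C\ \forall i\}\subseteq X^{k+1}$ under the (continuous) coordinate projection $\prod_{S^{(0)}}X\to X^{k+1}$, it is enough to show $W_k(C)$ is closed in~$X^{k+1}$. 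For this I would run the net argument exactly as in Lemma~\ref{lem:closedandopensubspaces}(1): given $(y_0^{(i)},\dots,y_k^{(i)})=g_i\cdot(c_0^{(i)},\dots,c_k^{(i)})$ converging in $X^{k+1}$ with $c_j^{(i)}\in C$, pass to a subnet so each $c_j^{(i)}\to c_j^\infty\in C$; then for each~$j$, $g_i c_j^{(i)}$ converges, so in $X\times X$ we have $(c_0^{(i)},g_i c_0^{(i)})\to(c_0^\infty, y_0^\infty)$, and properness of $\alpha_+\colon G\times X\to X\times X$ gives a cluster point $(g_\infty,c_0^\infty)$ of $(g_i,c_0^{(i)})$, whence $g_i\to g_\infty$ along a subnet and $y_j^\infty=g_\infty c_j^\infty$, so the limit lies in $W_k(C)$. (Alternatively, $W_k(C)$ is the image of the compact-to-closed map one gets from $\alpha_+$, but spelling out the net argument is cleanest.)

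For part (2), the plan is dual but uses finiteness of~$S$ essentially. A map $f$ lies in $\HomSS(S,G\cdot\E U)$ iff for every simplex $\sigma$ of~$S$ the tuple $(f(v))_{v\in\sigma}$ lies in some translate $gU$; since $S$ is finite it has finitely many nondegenerate simplices, so the condition is a \emph{finite} conjunction, and it suffices to show that for each fixed spanning tuple $(v_0,\dots,v_k)$, the set $\{f : (f(v_0),\dots,f(v_k))\in G\cdot\E U\}$ is open, i.e.\ that $V_k(U):=\{(y_0,\dots,y_k)\in X^{k+1} : \exists g\in G,\ g^{-1}y_j\in U\ \forall j\}$ is open in $X^{k+1}$. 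But $V_k(U)=\bigcup_{g\in G}(gU)^{k+1}=\bigl(GU\bigr)^{k+1}\cap\{\text{lies in one translate}\}$ --- more directly, $V_k(U)=\bigcup_{g\in G}(gU\times\cdots\times gU)$, a union of open sets since each $gU$ is open in~$X$ (translation by~$g$ is a homeomorphism), hence open. Pulling back along the continuous projection $\prod_{S^{(0)}}X\to X^{k+1}$ and intersecting over the finitely many $\sigma$ gives an open set. The hypothesis that the action is proper is in fact not needed for part (2) beyond what guarantees $G\cdot\E U$ makes sense --- only finiteness of~$S$ matters, to keep the intersection finite.

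The main obstacle, and the only place requiring real care, is part (1): one must be sure the net argument goes through with a $(k+1)$-fold tuple rather than a single point, and in particular that a \emph{single} application of properness (to the $0$-th coordinate pair $(c_0^{(i)},g_ic_0^{(i)})$) suffices to extract a convergent subnet of the group elements $g_i$, after which all other coordinates are handled by continuity of the action. Everything else --- the reduction from simplicial maps to tuples via the product topology, the decomposition of the Hom-condition over nondegenerate simplices, and the openness computation --- is formal. I would also remark that in part (1) one does not need $S$ finite, since an arbitrary intersection of closed sets is closed, whereas in part (2) finiteness is indispensable.
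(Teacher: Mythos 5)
Your proposal is correct and follows essentially the same route as the paper: reduce via the product topology $\HomSS(S,\E X)\cong\prod_{S^{(0)}}X$ to showing that the $G$-orbit of $C^{k+1}$ (resp.\ $U^{k+1}$) is closed (resp.\ open) in $X^{k+1}$, intersect over simplices, and use finiteness of~$S$ only in part (2). The only cosmetic difference is that you rerun the net/properness argument directly on tuples, whereas the paper invokes Lemma~\ref{lem:closedandopensubspaces} applied to the diagonal action on $\prod_{\sigma^{(0)}}X$ after noting that this action is again proper.
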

\begin{proof}
	(1) Recall that a simplicial map $\eta\colon  S \to \E X$ is determined by its restriction to the $0$-skeleton~$S^{(0)}$ -- specifically, the image of a simplex~$\sigma$ with vertices $v_0, \ldots, v_k$ is~$(\eta(v_0), \ldots, \eta(v_k))$. Thus, the condition of $\eta(\sigma)$~being a simplex of~$G \cdot \E C$ means that there are $g\in G, c_0, \ldots, c_k \in C$ with $\eta(v_i) = g\cdot c_i$ for each $i\in \{0, \ldots, k\}$.
	Regarding~$\eta$ as an element of~$\prod_{S^{(0)}} X$, this translates to $$\eta \in \big(G\cdot \prod_{\sigma^{(0)}} C\big) \times  \prod_{S^{(0)} \setminus \sigma^{(0)}} X,$$ where the left factor is the union of $G$-translates with respect to the diagonal action on~$\prod_{\sigma^{(0)}} X$.
	
	We first claim this product is closed; equivalently, that $G\cdot \prod_{\sigma^{(0)}} C$ is closed in $\prod_{\sigma^{(0)}} X$. This is a consequence of Lemma~\ref{lem:closedandopensubspaces}~(1), because, as one can easily verify, the diagonal action of~$G$ on $\prod_{\sigma^{(0)}} X$ is again proper, this product is locally compact and Hausdorff, and $\prod_{\sigma^{(0)}} C$ is compact.
	
	We have thus shown that for each simplex~$\sigma$ of~$S$, the subset of $\HomSS(S,\E X)$ mapping~$\sigma$ to~$G\cdot \E C$ is closed. $\HomSS(S, G\cdot \E C)$ is the intersection of the closed subsets given by all~$\sigma$, and thus closed.
	
	(2) As in the previous part, for each simplex~$\sigma$ of~$S$, the subset of~$\HomSS(S,\E X)$ mapping~$\sigma$ to $G\cdot \E U$~is the product $\left(G\cdot \prod_{\sigma^{(0)}} U\right) \times  \prod_{S^{(0)} \setminus \sigma^{(0)}} X$. Openness of this set is equivalent to that of $G\cdot \prod_{\sigma^{(0)}} U \subseteq \prod_{\sigma^{(0)}} X$, which in turn follows from Lemma~\ref{lem:closedandopensubspaces}~(2).
	
	Now, the subset $\HomSS(S, G\cdot \E U) \subseteq \HomSS(S, \E X)$ is an intersection of open subsets taken over all simplices of~$S$. But this (possibly infinite) intersection is the same as the one taken only over the nondegenerate simplices, of which there are only finitely many by assumption. Thus $\HomSS(S, G\cdot \E U)$ is open.
\end{proof}
\section{Openness and connecting sets}\label{sec:stability}
The main goal of this section is to establish the following fact:

\begin{thm}[Openness]\label{thm:openness}
	For every $n\in \NN$, if $G$~is of type~$\mathrm C_n$, then $\TopS^n(G)$ is the cone at~$0$ over an open subset of $\TopHom(G,\RR)\setminus\{0\}$.
\end{thm}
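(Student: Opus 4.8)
The plan is to fix $n\in\NN$, assume $G$ is of type $\mathrm C_n$, and show that the set of nonzero characters in $\TopS^n(G)$ is open in $\TopHom(G,\RR)\setminus\{0\}$. By Proposition~\ref{prop.zerocharacter} we already know $\TopS^n(G)$ is a cone at~$0$, so only the openness away from $0$ needs proof. The strategy is to use the raising principle (Lemma~\ref{lem:pushfilingup}) together with the topology on spaces of maps introduced in Section~\ref{sec:topologise}: given $\chi_0\in\TopS^n(G)$ nonzero, I will produce a single compact set $K\in\C(G)$ and a $G$-equivariant $\varphi\in\Map(G\cdot\E K^{(n)},G\cdot\E K)$ that raises $\chi_0$-values by some $\epsilon>0$, and then argue that the \emph{same} $\varphi$ still raises $\chi$-values (by $\epsilon/2$, say) for all $\chi$ in a suitable neighborhood of $\chi_0$. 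Since $\varphi$ is a map on finitely many nondegenerate simplices' worth of $G$-orbits, the relevant inequalities $\chi(\varphi(\sigma))\ge\chi(\sigma)+\epsilon$ only need to be checked on finitely many orbit representatives, so the perturbation estimate is finite in nature and the compact-open topology on $\TopHom(G,\RR)$ gives enough control.

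\textbf{Key steps in order.} First I would record the existence of $K$ and $\varphi$: since $\chi_0\in\TopS^n(G)$ and $G$ is of type $\mathrm C_n$, by the equivalence in Theorem~\ref{thm:sigmacriteria_intro} (part (3) $\Leftrightarrow$ (1), i.e.\ Proposition~\ref{prop:theraisingcrit}) there is a compact $K$ with $G\cdot\E C\into G\cdot\E K$ being $\pi_k$-trivial for all $k\le n-1$ (where $C$ is chosen as an $(n-1)$-connecting set for the zero character, available because $G$ is of type~$\mathrm C_n$, hence also $\mathrm C_{n-1}$), together with a $G$-equivariant $\varphi\in\Map(G\cdot\E K^{(n)},G\cdot\E K)$ raising $\chi_0$-values by some $\epsilon_0>0$; say $\varphi$ is supported on $\SD^m(G\cdot\E K^{(n)})$. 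Second, I would pin down the finitely many ``test simplices'': the nondegenerate simplices of $\SD^m(G\cdot\E K^{(n)})$ up to the $G$-action form finitely many orbits (because $K$ is compact and the subdivision of a $G$-finite complex is $G$-finite), so fix orbit representatives $\sigma_1,\dots,\sigma_N$. For each $\sigma_j$, both $\sigma_j$ and $\varphi(\sigma_j)$ have finitely many vertices, all of the form $g x$ with $x$ ranging over a fixed compact set (a translate-product of $K$, depending on $m$ but not on $\chi$); by $G$-equivariance, the quantity $\chi(\varphi(\sigma_j))-\chi(\sigma_j)$ is $G$-invariant, so it equals a fixed finite expression in the values of $\chi$ on finitely many elements $g_{j,1},\dots,g_{j,\ell_j}\in G$ lying in a fixed compact set $L\subseteq G$ (independent of $\chi$). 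Third, I would set up the neighborhood: let $L\subseteq G$ be the (compact) union over $j$ of all the relevant group elements needed to express all the $\chi$-values of vertices of $\sigma_j$ and of $\varphi(\sigma_j)$; then $\{\chi : \sup_{g\in L}|\chi(g)-\chi_0(g)|<\delta\}$ is a compact-open neighborhood of $\chi_0$, and for $\delta$ small enough (depending on $\epsilon_0$, $N$, and the combinatorics, all finite data) every $\chi$ in this neighborhood satisfies $\chi(\varphi(\sigma_j))\ge\chi(\sigma_j)+\epsilon_0/2$ for all $j$, hence $\varphi$ raises $\chi$-values by $\epsilon_0/2$. Fourth, apply Lemma~\ref{lem:pushfilingup} with this fixed $\varphi$ (noting $C\subseteq K$ and the $\pi_k$-triviality of $G\cdot\E C\into G\cdot\E K$ are independent of $\chi$) to conclude that for each such $\chi$ there is $D\in\C(G)_{\supseteq K}$ with $(G\cdot\E C)_\chi\into(G\cdot\E D)_\chi$ being $\pi_k$-trivial for $k\le n-1$; this $D$ depends only on $\varphi$, not on $\chi$. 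Then Corollary~\ref{cor:boundeddrop} (applied at the single stage $C$, together with the observation that $C$ being an $(n-1)$-connecting set makes the probing at one compact set suffice) gives $\chi\in\TopS^n(G)$. Finally, intersect with $\TopHom(G,\RR)\setminus\{0\}$ to get an open neighborhood of $\chi_0$ inside the nonzero part of $\TopS^n(G)$, completing the proof that $\TopS^n(G)$ is a cone over an open subset of $\TopHom(G,\RR)\setminus\{0\}$.

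\textbf{Main obstacle.} The delicate point is the bookkeeping in Step two and three: one must verify that the ``drop control'' for $\varphi$ — the difference $\chi(\sigma)-\chi(\varphi(\sigma))$ over \emph{all} simplices $\sigma$, including the subdivided ones not originally in $G\cdot\E K^{(n)}$ — is governed by $\chi$-values on a \emph{fixed} compact subset of $G$ that does not depend on $\chi$. The subdivision $\SD^m$ introduces new vertices, but the remark after Lemma~\ref{lem:interpol_homotopy} and the edge-distance bound recalled in Section~\ref{sec:semisimplicial} (each vertex of $\SD^m(\sigma)$ is within edge-distance $\le 2m$ of a vertex of $\sigma$) confine these new vertices to a bounded ``tube'' of translates of $K$, so this is manageable but requires care to state cleanly; I expect this is exactly the kind of estimate the authors will package using connecting sets. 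A secondary subtlety is ensuring the $\pi_k$-triviality hypothesis of Lemma~\ref{lem:pushfilingup} ($G\cdot\E C\into G\cdot\E K$) is genuinely $\chi$-independent — but this is immediate since it is a statement about the character-free complexes $G\cdot\E C$ and $G\cdot\E K$, guaranteed by $G$ being of type $\mathrm C_n$.
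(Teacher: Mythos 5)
Your overall architecture matches the paper's: reduce openness to producing, around each nonzero $\chi_0\in\TopS^n(G)$, a single $G$-equivariant $\varphi\in\Map(G\cdot\E K^{(n)},G\cdot\E K)$ that raises $\psi$-values for every $\psi$ in a neighborhood, then feed this into the raising criterion (Proposition~\ref{prop:theraisingcrit}) with $K$ an $n$-connecting set for the zero character. Where you genuinely diverge is in how the neighborhood is obtained. The paper threads it through the inductive construction of the value-raising map — this is the ``Moreover'' clause of Proposition~\ref{prop:alonso_main}, proved by covering the compact space $\overline{\mathcal M}$ of normalized filling problems by finitely many open sets and defining $V$ via the compact images $Q_{\alpha,v}$. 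You instead take an \emph{arbitrary} $\chi_0$-value-raising $\varphi$ and perturb a posteriori. That route does work and is arguably shorter for this particular theorem: writing $\sigma=g\cdot(x_0,\dots,x_k)$ with $x_i\in K$, $G$-equivariance and the homomorphism property give $\chi(\varphi(v))-\chi(gx_j)=\chi\bigl(\varphi(v_0)x_j^{-1}\bigr)$ for $v=gv_0$, and the edge-distance bound from Section~\ref{sec:semisimplicial} confines $\varphi(v_0)x_j^{-1}$ to the fixed compact set $L:=Kt(K^{-1}K)^{2m}K^{-1}$ with $t=\varphi(1)$ — exactly the estimate the paper itself performs inside the proof of Proposition~\ref{prop:theraisingcrit}. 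Then $\{\chi \mid \sup_{\ell\in L}|\chi(\ell)-\chi_0(\ell)|<\epsilon_0/2\}$ is a compact-open neighborhood of $\chi_0$ on which $\varphi$ still raises values by $\epsilon_0/2$, and Proposition~\ref{prop:theraisingcrit} ($\Leftarrow$), or your unfolding of it via Lemma~\ref{lem:pushfilingup} and Proposition~\ref{prop:alonsocriterion}, finishes.

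However, one intermediate claim in your Step two is false and must be excised: the nondegenerate simplices of $\SD^m(G\cdot\E K^{(n)})$ do \emph{not} form finitely many $G$-orbits unless $G$ is discrete. For $G=\RR$ and $K=[0,1]$, already the edges of $G\cdot\E K$ have orbit set parametrized by the uncountable label set $K^{-1}K$; the complex $G\cdot\E K$ is not $G$-finite, so ``the subdivision of a $G$-finite complex is $G$-finite'' has no purchase, there are no finitely many test simplices $\sigma_1,\dots,\sigma_N$, and $\delta$ cannot be chosen ``depending on $N$''. Replacing finiteness by compactness is precisely the point of Section~\ref{sec:stability}. Your argument survives only because the reduction you sketch in the ``Main obstacle'' paragraph — all the relevant differences are $\chi$-values of elements of the single compact set $L$ above — supplies the uniformity without any orbit count; the correct phrasing of Steps two and three is the one given in the previous paragraph, quantifying over all simplices at once rather than over finitely many representatives.
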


The emphasis here is on the word ``open''; everything else follows directly from Proposition~\ref{prop.zerocharacter}.

We now introduce one of the most important ingredients of the proof of Theorem~\ref{thm:openness}, whose properties will also have ramifications in Section~\ref{sec:coabelian}. 

\begin{dfn}\label{def:connectingset}
Let $\chi\colon G \to \RR$ be a character and let~$K\in \C(G)$.
\begin{itemize}
	\item We say~$K$ is a \textbf{$0$-connecting set for~$\chi$} if $1 \in K$.
	\item For $n\ge 1$, we say~$K$ is an~\textbf{$n$-connecting set for~$\chi$} if there is an $(n-1)$-connecting set~$K_0 \subseteq \mathring K$ for~$\chi$ such that the inclusion $(G\cdot \E K_0)_\chi \into (G\cdot \E \mathring K)_\chi$ is $\pi_{k}$-trivial for all $k\le n-1$.
\end{itemize}
\end{dfn}

Note that every $n$-connecting set for a character $\chi$ is $n$-connecting for the zero character, and that if $K$~is $n$-connecting for~$\chi$, then so is every $K'\in \C(G)_{\supseteq K}$.

For the moment, this definition might seem obscure. We will see later (Proposition~\ref{prop:alonsoandsigma}) that the existence of an $n$-connecting set for a character~$\chi$ is equivalent to $\chi\in \TopS^n(G)$. Before that, however, we establish the 
main result that justifies our interest in connecting sets.

\begin{prop}[Fundamental property of connecting sets]\label{prop:alonso_main}
	Let $n\in \NN$, let $\chi\colon G \to \RR$ be a character, and let $K$~be an $n$-connecting set for~$\chi$.
	Then for every~$C \in \C(G)$, there is a $G$-equivariant $\varphi \in \Map(G\cdot \E C^{(n)},  G\cdot \E K)$.
	
	Moreover, if $\chi \neq 0$, then $\varphi$~may be chosen so that there is $\epsilon >0$ and a neighborhood $V\subseteq \TopHom(G,\RR) \setminus \{0\}$ of~$\chi$ such that for every $\psi \in V$ and every simplex~$\sigma$ of $G\cdot \E C^{(n)}$, we have
		\[\psi(\varphi(\sigma)) \ge \psi(\sigma) + \epsilon.\]
\end{prop}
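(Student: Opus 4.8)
The plan is to build $\varphi$ by induction on the skeleton dimension, using the $\pi_k$-triviality hypotheses packaged into the notion of an $n$-connecting set together with the combinatorial $\pi_k$-filling machinery from Section~\ref{sec:prelim}. Since all simplicial sets in sight are free $G$-sets that are transitive on vertices of each $G$-orbit of simplices, a $G$-equivariant map out of $G\cdot \E C^{(n)}$ is determined by its values on a set of $G$-orbit representatives, and compatibility is what one checks inductively. First I would unwind the definition of an $n$-connecting set: there is a chain $K_0 \subseteq K_1 \subseteq \cdots \subseteq K_n = K$ with $K_i \subseteq \mathring K_{i+1}$, each $K_i$ an $i$-connecting set for $\chi$, and the inclusion $(G\cdot \E K_{i-1})_\chi \into (G\cdot \E \mathring K_i)_\chi$ being $\pi_k$-trivial for $k \le i-1$. (If the definition only gives a two-step chain, iterate Definition~\ref{def:connectingset} to extract such a tower; this is routine.)

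\textbf{The inductive construction.} Set $\varphi$ on $0$-simplices by $g \mapsto g$ (using $1 \in K_0 \subseteq K$). Suppose $\varphi$ has been defined $G$-equivariantly on $G\cdot \E C^{(j-1)}$, landing in $G\cdot \E K_{j-1}$, for some $j \le n$. Pick a $G$-orbit representative $\sigma = (x_0,\dots,x_j)$ of a nondegenerate $j$-simplex of $G\cdot \E C$; we may take $\sigma$ of the form $(1,x_1,\dots,x_j)$ with $x_i \in C^{-1}C$ or similar, so that $\sigma$ and all its faces have bounded $\chi$-values — concretely, translate so that $\chi(\sigma) \ge 0$, i.e. view $\sigma$ as a simplex of $(G\cdot \E C')_\chi$ for a fixed compact $C'$ depending only on $C$. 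Its boundary $\partial\sigma$ is already mapped by $\varphi$ into $G\cdot \E K_{j-1}$, and (after the translation) into $(G\cdot \E K_{j-1})_\chi$. By the $\pi_{j-1}$-triviality of $(G\cdot \E K_{j-1})_\chi \into (G\cdot \E \mathring K_j)_\chi$ and Lemma~\ref{lem:combfilling}, the map $\varphi|_{\partial\sigma}$, viewed as an element of $\Map(\partial\Delta^j, (G\cdot \E K_{j-1})_\chi)$, admits a filling $\mu_\sigma \in \Map(\Delta^j, (G\cdot \E \mathring K_j)_\chi)$. Extend $\varphi$ $G$-equivariantly over the orbit of $\sigma$ using $\mu_\sigma$ (precomposing with the appropriate power of $\Phi$ so that all orbits are defined on a common subdivision, which is possible since $C^{(n)}$ has finitely many $G$-orbits of simplices). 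This completes the inductive step; after $n$ steps one obtains $\varphi \in \Map(G\cdot \E C^{(n)}, G\cdot \E K)$ as desired.

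\textbf{The quantitative refinement.} For the "moreover" clause, assume $\chi \neq 0$ and track $\chi$-values through the construction. The key point: each filling $\mu_\sigma$ lands in a translate-free piece, and since we only ever translated $\sigma$ by a group element to bring it into the $0$-level set, the $\chi$-value of $\varphi(\sigma)$ is controlled by $\chi(\sigma)$ up to a constant determined by finitely many compact sets ($C'$, the $K_i$'s, and the finitely many fillings $\mu_\sigma$, of which there are finitely many orbit representatives). Precisely, there is a constant $\delta = \delta(C, K)$ such that $\chi(\varphi(\tau)) \ge \chi(\tau) - \delta$ for every simplex $\tau$ — wait, that is the wrong direction. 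What we actually need is $\psi(\varphi(\sigma)) \ge \psi(\sigma) + \epsilon$, which cannot hold for $\varphi$ as constructed from fillings alone (fillings can drop $\chi$-values). I expect the resolution is that one must \emph{compose} the skeletal map just built with the $\chi$-raising equivariant self-map $\varphi'$ supplied by the hypothesis that $K$ is $n$-connecting — but that map is not given here. Re-reading: the statement only asks for \emph{existence} of such $\varphi$, and the inequality is a property to be arranged. I believe the honest plan is: first build the skeletal map $\varphi_0$ landing in $G\cdot \E K$ as above; this $\varphi_0$ satisfies $\psi(\varphi_0(\sigma)) \ge \psi(\sigma) - \delta$ for all $\psi$ in a compact-open neighborhood $V_0$ of $\chi$ and all simplices $\sigma$, with $\delta$ uniform over $V_0$ (by finiteness of orbit representatives plus continuity of evaluation $\psi \mapsto \psi(g)$, shrinking $V_0$). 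Separately, since $\chi \neq 0$, pick $t \in G$ with $\chi(t) > 0$; then $\chi(t) > \delta + \epsilon$ for some large power is \emph{not} available unless $t$ is central. So the actual mechanism must be the $\chi$-raising map from the definition of $n$-connecting set.

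\textbf{Main obstacle.} The genuine difficulty — and where I would concentrate effort — is precisely the interaction between the filling steps (which are $\chi$-value-agnostic) and the required $\psi$-raising inequality, uniformly over a neighborhood $V$ of $\chi$. The induction as sketched gives a map into $G\cdot \E K$ with $\psi$-values bounded \emph{below} by $\psi(\sigma) - \delta$; to turn "$-\delta$" into "$+\epsilon$" one needs an additional ingredient that pushes $\chi$-values up by a definite amount at each simplex, and the only such gadget available without assuming more about $G$ is built into the connecting-set structure itself (cf. the raising principle, Lemma~\ref{lem:pushfilingup}, and Renz's $v$-homotopies). I would therefore expect the real proof to interleave: at each skeletal stage, after filling, apply a $\chi$-raising correction before moving up a dimension, and choose $\epsilon$ and $V$ at the end by taking minima over the finitely many orbit representatives and invoking continuity of the compact-open-topology evaluation maps $\psi \mapsto \psi(g)$ to pass from the strict inequality at $\chi$ to a (slightly weaker, still strict) inequality on a neighborhood $V$. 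Making the bookkeeping of subdivisions, base-point translations, and the uniform $\epsilon$ all consistent is the technical heart of the argument; everything else is a routine application of Lemmas~\ref{lem:combspheres}, \ref{lem:combfilling}, and \ref{lem:interpol_homotopy}.
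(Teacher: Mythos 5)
There are two genuine gaps here, and you have honestly flagged the second one yourself without resolving it. First, your uniformization of the subdivision depth rests on the claim that $G\cdot \E C^{(n)}$ has finitely many $G$-orbits of nondegenerate simplices. That is false unless $C$ is finite: the orbits of $n$-simplices are parametrized (roughly) by tuples in a compact but generally infinite set. This is exactly the point where the discrete argument's finiteness must be replaced by compactness. The paper topologizes $\HomSS(\SD^{m_0}(\partial\Delta^n), G\cdot\E K_0)$ as in Section~\ref{sec:topologise}, shows that the closure $\overline{\mathcal M}$ of the set of normalized boundary maps $\eta_\sigma^\bullet$ is compact, proves that each filling $\mu_\alpha$ can be re-used on an open neighborhood $U_\alpha$ of filling problems, and extracts a finite subcover to get a single $m$. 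Without this (or an equivalent device), your induction does not produce a map defined on one fixed subdivision.

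Second, the raising inequality. You correctly observe that fillings by themselves can drop $\chi$-values, but your proposed fixes do not work: the definition of an $n$-connecting set does not supply a $\chi$-raising equivariant self-map (that is the \emph{conclusion} of Proposition~\ref{prop:theraisingcrit}, which is deduced from the present proposition, so invoking it here would be circular), and composing with $g\mapsto gt$ for non-central $t$ is not well defined on $G\cdot\E K$. The actual mechanism is: (a) the entire $\epsilon$ of raising is injected once, in the base case $n=0$, by $g\mapsto gt$ with $\chi(t)>0$ (well defined because it is only a map on vertices); and (b) the fillings at higher stages never undo it, because the connecting-set property gives $\pi_{n-1}$-triviality of the inclusion $(G\cdot\E K_0)_\chi\into(G\cdot\E\mathring K)_\chi$ of the $\chi$-nonnegative truncations, so a sphere whose image lies in $g\cdot(G\cdot\E K_0)_\chi$ with $\chi(g)$ equal to the sphere's minimum can be filled inside $g\cdot(G\cdot\E\mathring K)_\chi$, i.e.\ without dropping below its own minimum $\chi$-value. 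The induction then propagates the lower bound $\chi(\mu_\sigma)\ge\min_i\chi(\mu_\sigma(e_i)t^{-1})+\epsilon_0$ up the skeleta, degrading $\epsilon_0$ to $\epsilon_0/2$ and then $\epsilon_0/3$ to obtain uniformity first over the closures $\overline{U_\alpha}$ and then over a compact-open neighborhood $V$ of $\chi$. Your closing intuition about continuity of evaluation maps is the right ingredient for producing $V$, but without (a) and (b) there is no strict raising to perturb.
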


\begin{proof}
	\setcounter{claim}{0}
	The proof will focus on the $\chi \neq 0$ case. For $\chi = 0$, only the first assertion is claimed and the argument is much simpler.
	[We will point out in parenthetical comments like this one the changes in this easier case.]
	
	We proceed by induction over~$n$. If $n=0$, choose $t\in G$ with $\chi(t) >0$ and define~$\varphi$ by $g\mapsto gt$, which is clearly $G$-equivariant [if $\chi=0$ simply define $\varphi$ as the identity].
	One can then take $\epsilon := \tfrac {\chi(t)} 2$ and $V:=\{\psi\colon G\to \RR \mid \psi(t) >\epsilon \}$.
	
	Now let $n\ge 1$, with $K$~and~$C$ as in the statement. By definition, there is an $(n-1)$-connecting set $K_0 \subseteq \mathring K$ for~$\chi$ such that $(G\cdot \E K_0)_\chi \into (G\cdot \E \mathring K)_\chi$ is $\pi_{n-1}$-trivial. Let $\varphi_0 \in \Map(G\cdot \E C^{(n-1)}, G\cdot \E K_0)$ be as given by the inductive hypothesis, with corresponding constant~$\epsilon_0$, and say $\varphi_0$~is defined on the subdivision $\SD^{m_0}(G\cdot \E C^{(n-1)})$.
	We will find a suitable $m\in \NN$ and define~$\varphi$ on $\SD^{m_0+m} (G\cdot \E C^{(n)})$, so that it extends {$\varphi_0\circ \Phi^m\colon \SD^{m_0+m}(G\cdot \E C^{(n-1)}) \to G\cdot \E K$}, where $\Phi$~is the natural map from the semisimplicial approximation theorem (Theorem~\ref{thm:approx}).
	
	Write $\mathcal S := \SD^{m_0} (\partial \Delta ^n)$. For each nondegenerate $n$-simplex~$\sigma$ of~$G\cdot \E C$, the given~$\varphi_0$ specifies a map $\eta_\sigma \colon \mathcal S \to G\cdot \E K_0$, and our goal is to
	produce a map $\mu_\sigma \colon \SD^{m_0 + m} (\Delta^n) \to  G \cdot \E K$ extending $\eta_\sigma \circ \Phi^{m}$. These $\mu_\sigma$ will then assemble into the desired~$\varphi$. 
	We also need to control the drop in $\psi$-value of these fillings for characters~$\psi$ near~$\chi$, as required by the second part of the proposition.
	
	Writing $t:=\varphi_0(1)$, we first observe that $G$-equivariance implies that $\varphi_0$~is given on~$G\cdot \E C^{(0)}$ by $g\mapsto gt$. Therefore, the inequality in the theorem, which we seek to establish, can be rephrased in terms of~$\mu_\sigma$ as
	\[\psi(\mu_\sigma) \ge \min_{0\le i\le n}\psi(\mu_\sigma(e_i)t^{-1}) + \epsilon.\tag{$\dagger$}\]
	
	The maps $\eta_\sigma$ with $\sigma$ the nondegenerate $n$-simplices of~$G\cdot \E C$ form a $G$-set, and we will represent each orbit by the translate sending the $0$-th vertex~$e_0$ of~$\Delta^n$ to~$1$:
	\[\eta_\sigma^\bullet := \eta_\sigma(e_0)^{-1} \cdot \eta_\sigma.\]
	We collect these representatives in the transversal
	\[\mathcal M := \{ \eta_\sigma^{\bullet} \colon  \mathcal S \to  G\cdot \E K_0 \mid \text{$\sigma$~is a nondegenerate $n$-simplex of $G \cdot \E C$} \}.\]

	\begin{claim}~\label{claim:compactproblems}
		The closure~$\overline{\mathcal M}$ of~$\mathcal M$ in $\HomSS (\mathcal S, G\cdot \E K_0)$~is compact, and every $\eta \in \overline{\mathcal M}$ satisfies
		\[\chi(\eta) \ge \min_{0\le i\le n} \chi (\eta(e_i)t^{-1}) + \epsilon_0 \tag{$\star$}.\]
		[This inequality is suppressed if $\chi = 0$.]
	\end{claim}
	
	\begin{proof}
		We first show~$\overline{\mathcal M}$ is compact.
		By Lemma~\ref{lem:closedandopensubspaces}~(1), the inclusion 
		\[\HomSS(\mathcal S, G\cdot \E K_0) \subseteq \HomSS(\mathcal S, \E G) = \prod_{\mathcal S^{(0)}} G\]
		is closed, so it is enough to show $\mathcal M$~has compact closure in~$\prod_{\mathcal S^{(0)}} G$. And since this is a Hausdorff space, that follows once we find a compact~$\mathcal B\subseteq \prod_{\mathcal S^{(0)}} G$ containing~$\mathcal M$. To find such~$\mathcal B$, we consider two cases.
		
		If $n=1$, then $\sigma$~is an edge labeled by some~$g\in C^{-1}C$, so $\eta_\sigma^\bullet$~consists of the two vertices~$1$ and~$t^{-1}gt$. Thus, $\mathcal M \subseteq \{1\} \times t^{-1} C^{-1}C t =:\mathcal B$.
		
		When $n\ge2$, we use the fact that $\partial \Delta^{n}$ is connected. In particular, each vertex $v \in \mathcal S^{(0)}$ is within some finite edge-distance $d_v\in \NN$ from the vertex~$e_0$ in the $1$-skeleton~$\mathcal S^{(1)}$. Now, $\eta_\sigma^\bullet$~maps~$e_0$ to~$1$ and takes each edge of~$\mathcal S$ to an edge labeled by $K_0^{-1}K_0$. Thus, for each vertex~$v$ we see $\eta_\sigma^\bullet(v)\in (K_0^{-1}K_0)^{d_v}$. Hence
		\[\mathcal M \subseteq \prod_{v\in \mathcal S^{(0)}} (K_0^{-1}K_0)^{d_v} =: \mathcal B.\]
		
		We are only left to prove the inequality~$(\star)$, and we focus first on the elements~$\eta_\sigma^\bullet$ of~$\mathcal M$.
		For each $(n-1)$-simplex~$\tau$ of~$\partial \sigma$, we have a map $\mu_\tau\colon \SD^{m_0}(\Delta^{n-1})\to G\cdot \E K_0$ specified by the restriction of $\varphi_0$ to~$\SD^{m_0}(\tau)$,
		and by induction,
		\[\chi(\mu_\tau) \ge \min_{0\le i\le n-1} \chi(\mu_\tau(e_i)t^{-1}) + \epsilon_0.\]
		Therefore,
		\[\chi(\eta_\sigma)  = \min_{\tau}\chi(\mu_\tau) \ge \min_\tau \big(\min_{0 \le i \le n-1} \chi(\mu_\tau(e_i)t^{-1})\big) + \epsilon_0 = \min_{0\le i \le n}\chi( \eta_\sigma(e_i)t^{-1}) + \epsilon_0.\]
		The inequality for $\eta_\sigma^\bullet$ is then a special case.
		
		To see that $(\star)$~is satisfied for all $\eta \in \overline{\mathcal M}$, observe that the set~$Q \subseteq  \HomSS(\mathcal S, G\cdot \E K_0)$ for which it holds is the preimage of the closed set ${[\epsilon_0, +\infty[}\subset \RR$ by the continuous map
		\begin{align*}
			\HomSS(\mathcal S, G\cdot \E K_0) & \to \RR\\
			\eta &\mapsto \min_{v\in \mathcal S^{(0)}}\chi(\eta(v)) - \min_{0\le i \le n} \chi(\eta(e_i)t^{-1}).
		\end{align*}
		Hence, $Q$~is a closed set containing~$\mathcal M$, and thus also~$\overline {\mathcal M}$.
	\end{proof}
	
	Intuitively, Claim~\ref{claim:compactproblems} says that to construct the maps~$\mu_\sigma$, we need only solve a compact set of filling problems. Claim~\ref{claim:copyhomework} below will complement it by establishing that once a solution to one such filling problem is found, it can be re-used on nearby problems. Ultimately, this will result in an open cover of the compact set~$\overline{\mathcal M}$, and a finite sub-cover of it will allow us to produce the desired (finite)~$m$.
		
	The idea of ``re-using the solution to a filling problem on a different filling problem'' will be expressed through the following notation: For each $m'\in\NN$, we consider the subdivisions
	\[ \mathcal D_{m'}:=\SD^{m_0 + m'}(\Delta^n), \qquad \mathcal S_{m'} := \SD^{m'}(\mathcal S) = \SD^{m_0+m'}(\partial \Delta^n).\]
	Given maps $\eta \colon \mathcal S \to \E G$ and $\mu \colon \mathcal D_{m'}\to \E G$, we define the simplicial map $\eta^\mu \colon \mathcal D_{m'} \to \E G$ by specifying it on each vertex~$v\in  \mathcal D_{m'}^{(0)}$ as
	\[\eta^\mu(v) := \begin{cases}
		\eta\circ \Phi^{m'}(v) & \text{if $v\in \mathcal S_{m'}^{(0)}$,}\\
		\mu(v) & \text{otherwise.}
	\end{cases}\]
	In other words, for each fixed~$\mu$, the assignment $\eta \mapsto \eta^\mu$ is the composition
	\[\HomSS(\mathcal S, \E G) \xrightarrow{- \circ \Phi^{m'}} \HomSS(\mathcal S_{m'}, \E G) \into \HomSS(\mathcal D_{m'}, \E G),\]
	where the inclusion on the right is determined by the value of~$\mu$ on~$\mathcal D_{m'}^{(0)} \setminus \mathcal S_{m'}^{(0)}$. Regarding each of these spaces as products of copies of~$G$ indexed by the vertices of the source simplicial sets, it becomes clear that this composition is continuous.
	
	Let us now find \emph{some} filling for each~$\alpha \in \overline{\mathcal M}$. Choose $g\in G$ satisfying $\chi(g) = \chi(\alpha)$, so $\alpha$~has image in~$g \cdot (G\cdot \E K_0)_\chi$.
	By choice of~$K$ (and Lemma~\ref{lem:combfilling}), there are $m_\alpha \in \NN$ and a map $\mu_\alpha \colon \mathcal D_{m_\alpha} \to g\cdot (G\cdot \E \mathring K)_\chi$ extending $\alpha \circ \Phi^{m_\alpha} \colon \mathcal S_{m_\alpha} \to g\cdot (G\cdot \E K_0)_\chi$.
	We have
	\begin{align*}
		\chi(\mu_\alpha) & = \chi(\alpha\circ \Phi^{m_\alpha}) \tag{Construction of~$\mu_\alpha$}\\
		& = \chi(\alpha)\\
		& \ge \min_{0\le i\le n} \chi(\alpha(e_i)t^{-1}) + \epsilon_0. \tag{Claim~\ref{claim:compactproblems}}
	\end{align*}
	We also fix a choice of vertex $e_\alpha \in (\Delta^n)^{(0)} =\{e_0, \ldots, e_n\}$ realizing this minimum, so
	\[\chi(\mu_\alpha) \ge \chi(\mu_\alpha(e_\alpha)t^{-1}) + \epsilon_0.\]
	[If $\chi=0$, this lower bound on $\chi(\mu_\alpha)$ is suppressed.]

	\begin{claim}\label{claim:copyhomework}
		Each $\alpha \in \overline{\mathcal M}$ has an open neighborhood~$U_\alpha \subseteq  \HomSS(\mathcal S, G\cdot \E K_0)$ such that for every $\eta \in U_\alpha$, the map $\eta^{\mu_\alpha}\colon \mathcal D_{m_\alpha} \to \E G$ has image in~$G\cdot \E \mathring K$ and
		\[\chi(\eta^{\mu_\alpha}) > \chi(\eta(e_\alpha)t^{-1}) + \tfrac {\epsilon_0} 2.\]
		[If $\chi=0$, this inequality is suppressed.]
	\end{claim}
	
	\begin{proof}
		By Lemma~\ref{lem:closedandopensubspaces}~(2), the inclusion $\HomSS(\mathcal D_{m_\alpha}, G\cdot \E \mathring K) \subseteq \HomSS(\mathcal D_{m_\alpha}, \E G)$ is open. The subset of maps~$\mu$ satisfying $\chi(\mu) > \chi(\mu(e_\alpha)t^{-1}) +  \frac{\epsilon_0}2$ is also open, because it is the preimage of~$]{\frac{\epsilon_0}2, +\infty} [$ under the continuous map
		\begin{align*}
			\HomSS(\mathcal D_{m_\alpha}, \E G)& \to \RR \\
			\mu & \mapsto \min_{v\in \mathcal D_{m_\alpha}^{(0)}} \chi(\mu(v))- \chi(\mu(e_\alpha)t^{-1}).
		\end{align*}
		Hence, there is an open neighborhood~$U$ of~$\mu_\alpha$ in $\HomSS(\mathcal D_{m_\alpha}, \E G)$ that is contained in~$\HomSS(\mathcal D_{m_\alpha}, G\cdot \E \mathring K)$, and whose elements~$\mu$ satisfy $\chi(\mu) > \chi(\mu(e_\alpha)t^{-1}) + \frac {\epsilon_0}2$.
		
		We take $U_\alpha$~to be the preimage of~$U$ under the map
		\begin{align*}
			\HomSS(\mathcal S, G\cdot \E K_0) &\to \HomSS(\mathcal D_{m_\alpha}, G\cdot \E \mathring K)\\
			\eta &\mapsto \eta^{\mu_\alpha},	
		\end{align*}
		which we have already observed to be continuous. Thus $U_\alpha$~is open and satisfies all required properties.
	\end{proof}
	
	The sets $U_\alpha$ now form an open cover of~$\overline{\mathcal M}$, which is compact by Claim~\ref{claim:compactproblems}, so
	there is a finite subset~$F\subseteq \overline{\mathcal M}$ with $\overline{\mathcal M} \subseteq \bigcup_{\alpha\in F}U_\alpha$ and we may set
	$m := \max\{m_\alpha \mid \alpha \in F\}$.
	For each $\eta\in \mathcal M$, choose $\alpha \in F$ with $\eta \in U_\alpha$ and define, for every non\-degenerate $n$-simplex~$\sigma$ with $\eta_\sigma^\bullet = \eta$, the map $\mu_\sigma \colon \mathcal D_m \to G\cdot \E K$ to be the appropriately translated filling of~$\eta_\sigma^\bullet$ given by $\mu_\alpha$:
	\[\mu_\sigma := \eta_\sigma(e_0) \cdot  ((\eta_\sigma^\bullet)^{\mu_\alpha}\circ \Phi^{m-m_\alpha}).\]
	This is a $G$-equivariant definition extending the maps~$\eta_\sigma \circ \Phi^m$. We assemble~$\varphi$ out of the $\mu_\sigma$, thus extending $\varphi_0 \circ \Phi^m$. [This concludes the proof of Proposition~\ref{prop:alonso_main} in the case $\chi=0$.]

	We are left to find the neighborhood~$V$ of~$\chi$ required by the second part of the proposition. 
	For each $\alpha \in F$, let $\overline{U_\alpha}$~be the closure of~$U_\alpha$ in~$\overline{\mathcal M}$ (in particular, $\overline{U_\alpha}$~is compact).
	We shall take $\epsilon:= \frac {\epsilon_0} 3$ and
	\[V := \{\psi\in \TopHom(G,\RR) \mid \forall\alpha \in F \quad \forall\eta \in\overline{U_\alpha}: \quad \psi(\eta^{\mu_\alpha}) > \psi(\eta(e_\alpha)t^{-1}) + \epsilon\}.\]
	
	To see $V$~is open in~$\TopHom(G,\RR)$, we rewrite its defining condition:
	\[\forall \alpha \in F \quad \forall v \in \mathcal D_{m_\alpha}^{(0)} \quad \forall \eta\in\overline{U_\alpha} :\quad  \psi\left(\eta^{\mu_\alpha}(v) \, \eta(e_\alpha)^{-1} \,t \right) > \epsilon.\]
	Thus $V$~is the finite intersection
	$V = \bigcap_{\alpha \in F} \bigcap_{v\in \mathcal D_{m_\alpha}^{(0)}} V_{\alpha, v}$,
	where
	\[V_{\alpha,v} := \left\{\psi \in \TopHom(G, \RR) \mid \forall \eta \in \overline{U_\alpha}: \quad  \psi\left(\eta^{\mu_\alpha}(v) \, \eta(e_\alpha)^{-1} \,t \right) > \epsilon \right\},\]
	so it suffices we show each $V_{\alpha,v}$ is open.
	Consider the continuous map
	\begin{align*}
		\overline{U_\alpha} &\to G\\
		\eta & \mapsto \eta^{\mu_\alpha}(v)\, \eta(e_\alpha)^{-1}\, t.
	\end{align*}
	Its image $Q_{\alpha,v}$ is compact and allows us to express~$V_{\alpha,v}$ as
	\[V_{\alpha,v}= \{\psi\in\TopHom(G,\RR) \mid \psi(Q_{\alpha,v}) \in {]\epsilon,+\infty[} \}.\]
	Hence $V_{\alpha,v}$~is open by definition of the compact-open topology.

	To see $\chi \in V$, recall that for each $\alpha\in F$ and $\eta \in U_\alpha$ we have
	\[\chi(\eta^{\mu_\alpha}) - \chi(\eta(e_\alpha)t^{-1}) > \tfrac{\epsilon_0}2.\]
	Since the map
	\begin{align*}
		\overline{\mathcal M} &\to \RR\\
		\eta & \mapsto \min_{v \in \mathcal D_{m_\alpha}^{(0)}} \chi(\eta^{\mu_\alpha}(v)) - \chi(\eta(e_\alpha)t^{-1}) 
	\end{align*}
	is continuous, it follows that for $\eta \in \overline{U_\alpha}$ we still have
	\[\chi(\eta^{\mu_\alpha})  - \chi(\eta(e_\alpha)t^{-1}) \ge \tfrac{\epsilon_0}2 > \epsilon,\]
	and hence $\chi \in V$.
	
	Finally, inequality $(\dagger)$, required for every nondegenerate $n$-simplex~$\sigma$ of $G\cdot \E C$ and every $\psi \in V$, may be expressed in terms of the normalized maps:
	\[\psi((\eta_\sigma^\bullet)^{\mu_\alpha}) \ge \min_{0\le i \le n} \psi(\eta_\sigma^\bullet(e_i)t^{-1}) + \epsilon,\]
	where $\alpha \in F$ indexes the open set~$U_\alpha$ used to cover $\eta_\sigma^\bullet$ when defining~$\mu_\sigma$.
	But the definition of~$V$ actually ensures the stricter condition
	\[\psi((\eta_\sigma^\bullet)^{\mu_\alpha}) >  \psi(\eta_\sigma^\bullet(e_\alpha)t^{-1}) + \epsilon.\qedhere \]
\end{proof}

We now establish the three $\TopS^n$-criteria stated in Theorem~\ref{thm:sigmacriteria_intro} of the Introduction. The first is the long-promised refinement of Proposition~\ref{prop:doublefiltration} and Corollary~\ref{cor:boundeddrop}.

\begin{prop}[Testing on connecting sets for~$0$]\label{prop:alonsocriterion}
	Let $\chi\colon G \to \RR$ be a character, let $n\in \NN$, and let $K$~be an $(n-1)$-connecting set for the zero character. Then $\chi \in \TopS^n(G)$ if and only if there are $K^+\in \C(G)_{\supseteq K}$ and $s\le 0$ such that the inclusion
	\[(G\cdot \E K)^{(n-1)}_\chi \into (G\cdot \E K^+)_s\]
	is $\pi_k$-trivial for every $k \le n-1$. 
\end{prop}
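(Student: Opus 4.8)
The plan is to prove the two directions separately, using the raising principle (Lemma~\ref{lem:pushfilingup}) together with the fundamental property of connecting sets (Proposition~\ref{prop:alonso_main}).

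\textbf{The ``only if'' direction.} Suppose $\chi \in \TopS^n(G)$. The set $K$ is in particular in $\C(G)$, so by Corollary~\ref{cor:boundeddrop} there are $D\in \C(G)_{\supseteq K}$ and $s\le 0$ such that the inclusion $(G\cdot \E K)_\chi^{(n-1)} \into (G\cdot \E D)_s$ is $\pi_k$-trivial for all $k\le n-1$. Taking $K^+ := D$ finishes this direction immediately. (This is essentially a restatement of Corollary~\ref{cor:boundeddrop}, with $C$ specialized to $K$; the point is only to note that the condition we are testing is necessary.)

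\textbf{The ``if'' direction.} This is where the work lies, and it is the step I expect to be the main obstacle. Assume $K^+ \in \C(G)_{\supseteq K}$ and $s\le 0$ are such that $(G\cdot \E K)_\chi^{(n-1)} \into (G\cdot \E K^+)_s$ is $\pi_k$-trivial for $k\le n-1$. By Proposition~\ref{prop.trimmedsigma}~(2) it suffices to show that for every $C\in\C(G)$ there is $D\in\C(G)_{\supseteq C}$ with $(G\cdot\E C)_\chi \into (G\cdot\E D)_\chi$ being $\pi_k$-trivial for $k\le n-1$. The strategy is: first reduce to the case $\chi\neq 0$ (if $\chi=0$ the hypothesis with $s\le 0$ already gives property $\mathrm C_n$ after translating, via an argument like the one in Corollary~\ref{cor:boundeddrop}, or one simply observes $0\in\TopS^n(G)$ is being assumed through $K$ being a connecting set for $0$ and $G$ of type $\mathrm C_n$ --- actually $K$ being $(n-1)$-connecting for $0$ only gives type $\mathrm C_{n-1}$, so the $\chi=0$ case must be handled by the hypothesis directly). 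Assuming $\chi\neq 0$, pick $t\in G$ with $\chi(t)>0$. Since $K$ is $(n-1)$-connecting for $0$ and $(G\cdot\E K)_\chi^{(n-1)}\into (G\cdot\E K^+)_s$ is $\pi_k$-trivial, I want to manufacture an $n$-connecting set $K'$ for $\chi$ out of $K$, $K^+$, and a suitable translate: the idea is that a sufficiently high power $t^N$ pushes $(G\cdot\E K^+)_s$ up into $(G\cdot\E K^+)_0$, so that after enlarging to some compact $K'$ absorbing $K^+$ and $t^{\pm N}K^+$, the inclusion $(G\cdot \E K)_\chi^{(n-1)}\into (G\cdot\E K')_\chi$ becomes $\pi_k$-trivial; this, together with $K$ being $(n-1)$-connecting for $\chi$ (which follows from $K$ being $(n-1)$-connecting for $0$ plus the $s\le 0$ hypothesis probed on $(n-2)$-skeleta), would make $K'$ an $n$-connecting set for $\chi$. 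Then by Proposition~\ref{prop:alonsoandsigma} (which the text promises gives the equivalence of existence of an $n$-connecting set for $\chi$ with $\chi\in\TopS^n(G)$) we are done.

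Actually, since we are told in the statement of Theorem~\ref{thm:sigmacriteria_intro} that Proposition~\ref{prop:alonsocriterion} is logically equivalent to the existence of an $n$-connecting set for $\chi$, the cleanest route may be to argue the equivalence directly against Definition~\ref{def:connectingset}: the hypothesis with $K^+$ and $s$ is, up to translating by a power of $t$ and possibly shrinking to an interior, exactly the statement that some enlargement of $K$ is $n$-connecting for $\chi$, given that $K$ is $(n-1)$-connecting for $0$. I would make this precise by the following chain: enlarge $K^+$ to a compact set $\hat K$ with $K^+ \subseteq \mathring{\hat K}$ and with $t^N K^+ \subseteq \hat K$ for a chosen $N$ with $N\chi(t) \ge -s$; verify that the inclusion $(G\cdot\E K)_\chi^{(n-1)}\into (G\cdot\E \mathring{\hat K})_\chi$ is $\pi_k$-trivial by composing the hypothesis inclusion with the $t^N$-translation isomorphism $(G\cdot\E K^+)_s \xrightarrow{\cong} (G\cdot\E K^+)_{s+N\chi(t)} \subseteq (G\cdot\E K^+)_0 \subseteq (G\cdot\E\mathring{\hat K})_\chi$ and precomposing with the translation of $(G\cdot\E K)_\chi^{(n-1)}$ by $t^N$ (using that $(G\cdot\E K)_\chi^{(n-1)}$ and its $t^N$-translate have cofinal images, or arguing on representatives of $\pi_k$ as in the proof of Lemma~\ref{lem:CnfromSigma}); and finally check that $K$ itself is $(n-1)$-connecting for $\chi$ --- this needs $1\in K$ (true) and, for $n\ge 2$, an $(n-2)$-connecting set for $\chi$ inside $\mathring K$ with the $\pi_k$-triviality condition, which I extract from $K$ being $(n-1)$-connecting for $0$ together with the hypothesis restricted to lower skeleta, probably by an inductive unwinding. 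The main obstacle is bookkeeping the base point / translation juggling so that the $\pi_k$-triviality survives passing between $(-)_\chi$ and $(-)_s$, but this is exactly the kind of argument already carried out in Corollary~\ref{cor:boundeddrop} and Lemma~\ref{lem:CnfromSigma}, so I would model the details on those.
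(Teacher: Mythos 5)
Your ``only if'' direction is fine (it is indeed just Corollary~\ref{cor:boundeddrop} applied with $C=K$), but the ``if'' direction has two genuine problems. First, a circularity: you close the argument by appealing to Proposition~\ref{prop:alonsoandsigma}, but the ``$\Leftarrow$'' direction of that proposition (existence of an $n$-connecting set for $\chi$ implies $\chi\in\TopS^n(G)$) is itself deduced in the paper from Proposition~\ref{prop:alonsocriterion} --- the very statement you are proving. Second, and more seriously, the step where you verify that your enlarged set $\hat K$ is $n$-connecting for $\chi$ does not go through. Translating the hypothesis inclusion by $t^N$ translates \emph{both} sides, so what you actually obtain is $\pi_k$-triviality of $(G\cdot\E K)^{(n-1)}_{N\chi(t)}\into (G\cdot\E K^+)_{s+N\chi(t)}\subseteq (G\cdot\E\hat K)_\chi$, whose \emph{source} is only the part of $(G\cdot\E K)^{(n-1)}_\chi$ lying above level $N\chi(t)$. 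A sphere near the bottom of $(G\cdot\E K)_\chi$ has a filling only in $(G\cdot\E K^+)_s$ with $s<0$; translating the sphere up by $t^N$ so that the shifted hypothesis applies translates the resulting filling back down when you undo the translation. Bridging this requires homotoping spheres upward \emph{inside} $(G\cdot\E\hat K)_\chi$, i.e.\ a $G$-equivariant $\chi$-value-raising map --- exactly the content of the raising principle, which you announce in your opening sentence but never actually deploy. (The trick of Lemma~\ref{lem:CnfromSigma} does not help either: it passes from the $\chi$-restricted statement to the unrestricted one, not the other way.) The same issue infects your claim that $K$ is $(n-1)$-connecting for $\chi$: that is essentially data witnessing $\chi\in\TopS^{n-1}(G)$ and does not follow from $K$ being $(n-1)$-connecting for $0$ plus bookkeeping.

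The paper's route avoids all of this. Given an arbitrary $C\in\C(G)$, it applies Proposition~\ref{prop:alonso_main} to the \emph{zero} character to obtain a $G$-equivariant $\varphi\in\Map(G\cdot\E C^{(n-1)}, G\cdot\E K)$, uses Lemma~\ref{lem:interpol_homotopy} to see that the simplicial homotopy from the inclusion to $\varphi$ lands in some $G\cdot\E D$ with $D\supseteq K^+$ and drops $\chi$-values by at most a fixed $s':=\min(\chi(D^{-1}D))\le 0$, and then composes with the $s'$-shifted hypothesis $(G\cdot\E K)^{(n-1)}_{s'}\into(G\cdot\E D)_{s+s'}$ (here source and target are shifted \emph{together}, so there is no mismatch) to conclude that $(G\cdot\E C)^{(n-1)}_\chi\into(G\cdot\E D)_{s+s'}$ is $\pi_k$-trivial; Corollary~\ref{cor:boundeddrop} then finishes. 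The point is that one never needs a filling at level $\ge 0$ --- a filling at a \emph{uniformly bounded} negative level suffices. If you want to salvage your sketch, replace the translation step by this use of $\varphi$.
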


\begin{proof}
	The implication ``$\Rightarrow$'' is immediate from Proposition~\ref{prop:doublefiltration}, so we focus on ``$\Leftarrow$''.
	
	By Corollary~\ref{cor:boundeddrop}, it suffices to show that for each $C\in\C(G)$ there are $D\in \C(G)_{\supseteq C}$ and $r\le 0$ such that the inclusion
	\[(G\cdot \E C)^{(n-1)}_\chi \into (G\cdot \E D)_r\]
	is $\pi_k$-trivial for all $k \le n-1$.
		
	Let $\varphi \in \Map(G\cdot \E C^{(n-1)}, G\cdot \E K)$ be as given by applying Proposition~\ref{prop:alonso_main} to the zero character, and suppose $\varphi$~is defined on $\SD^m(G\cdot \E C^{(n-1)})$.
	We wish to apply Lemma~\ref{lem:interpol_homotopy} with $\varphi_0$~the inclusion $G\cdot \E C^{(n-1)}\into G\cdot \E C$ and $\varphi_1 = \varphi$. As both maps are $G$-equivariant, we may take $B:=\{1,\varphi(1)\}$. 
	The lemma then yields $D\in \C(G)_{\supseteq C}$ such that the simplicial homotopy
	\[H \colon \SD^m (G\cdot \E C^{(n-1)}) \times \Delta^1 \to G\cdot \E D\]
	from $\varphi_0 \circ \Phi^m$ to~$\varphi$ is well-defined. Without loss of generality we may assume $K^+\subseteq D$.
	
	For each simplex $\sigma$ of $G\cdot \E C^{(n-1)}$ and vertex~$v$ of $\SD^m(\sigma)$, we see $H$~maps the edge~$\{v\}\times \Delta^1$ to $(\Phi^m(v), \varphi(v))$, which is an edge connecting a vertex of~$\sigma$ to~$\varphi(v)$. Moreover, this edge is labeled by an element of~$D^{-1}D$. Therefore, we have $\chi(\varphi(v)) \ge \chi(\sigma) + \min(\chi(D^{-1}D))$. Hence, setting $s':=\min(\chi(D^{-1}D))$,
	we see that $H$~restricts to a homotopy
	\[\SD^m((G\cdot \E C)_\chi^{(n-1)}) \times \Delta^1 \to (G\cdot \E D)_{s'}.\]
	
	Now, the assumption on $K^+\subseteq D$~and~$s$ implies that the translated inclusion
	\[(G\cdot\E K)_{s'}^{(n-1)} \into (G\cdot \E D)_{s+s'}\]
	is $\pi_k$-trivial for $k\le n-1$. Thus, writing $r:=s+s'$, we see that the composition
	\[\SD^m((G\cdot \E C)_\chi^{(n-1)}) \xrightarrow{\varphi} (G\cdot \E K)_{s'}^{(n-1)} \into (G\cdot \E D)_r\]
	is $\pi_k$-trivial as well. On the other hand, $H$~exhibits it as homotopic to
	\[\SD^m((G\cdot \E C)_\chi^{(n-1)})\xrightarrow{\Phi^m} (G\cdot \E C)_\chi^{(n-1)} \into (G\cdot \E D)_r.\]
	Since $|\Phi^m|$~is homotopic to a homeomorphism, the desired inclusion $(G\cdot \E C)_\chi^{(n-1)} \into (G\cdot \E D)_r$ is $\pi_k$-trivial.
\end{proof}

\begin{prop}[$n$-connecting sets and $\TopS^n$]\label{prop:alonsoandsigma}
	A character $\chi\colon G\to \RR$ lies in~$\TopS^n(G)$ if and only if there is an $n$-connecting set for~$\chi$.
\end{prop}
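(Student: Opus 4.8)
The plan is to derive both implications from machinery already in place: the filtration description of $\TopS^n$ in Proposition~\ref{prop.trimmedsigma} for the "only if" direction, and the connecting-set criterion Proposition~\ref{prop:alonsocriterion} for the "if" direction. The case $n=0$ is vacuous on both sides, since $\TopS^0(G)=\TopHom(G,\RR)$ contains every character and $\{1\}$ is always a $0$-connecting set; so I would assume $n\ge 1$ throughout.

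For the implication "$\chi$ has an $n$-connecting set $\Rightarrow\chi\in\TopS^n(G)$", let $K$ be an $n$-connecting set for $\chi$ and let $K_0\subseteq\mathring K$ be the $(n-1)$-connecting set for $\chi$ witnessing this, so that $(G\cdot\E K_0)_\chi\into(G\cdot\E\mathring K)_\chi$ is $\pi_k$-trivial for all $k\le n-1$. Since $K_0$ is in particular an $(n-1)$-connecting set for the zero character (as noted right after Definition~\ref{def:connectingset}), I would apply Proposition~\ref{prop:alonsocriterion} with $K^+:=K\in\C(G)_{\supseteq K_0}$ and $s:=0$. The relevant inclusion $(G\cdot\E K_0)^{(n-1)}_\chi\into(G\cdot\E K)_0=(G\cdot\E K)_\chi$ factors as
\[(G\cdot\E K_0)^{(n-1)}_\chi\into(G\cdot\E K_0)_\chi\to(G\cdot\E\mathring K)_\chi\into(G\cdot\E K)_\chi,\]
and since the middle arrow is $\pi_k$-trivial for $k\le n-1$, so is the whole composition. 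Proposition~\ref{prop:alonsocriterion} then yields $\chi\in\TopS^n(G)$.

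For the converse "$\chi\in\TopS^n(G)\Rightarrow\chi$ has an $n$-connecting set" I would induct on $n$. As $\chi\in\TopS^n(G)\subseteq\TopS^{n-1}(G)$, the induction hypothesis provides an $(n-1)$-connecting set $K_0$ for $\chi$, which we may take to contain $1$ (being in particular $0$-connecting). By Proposition~\ref{prop.trimmedsigma}, the filtration $((G\cdot\E C)_\chi)_{C\in\C(G)}$ of $\E G_\chi$ is essentially $(n-1)$-connected, so there is $D\in\C(G)_{\supseteq K_0}$ such that $(G\cdot\E K_0)_\chi\into(G\cdot\E D)_\chi$ is $\pi_k$-trivial for all $k\le n-1$. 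Using that $G$ is locally compact Hausdorff, the compact set $D$ has a compact neighbourhood, so I can choose a compact $K$ with $D\subseteq\mathring K$; then $K_0\subseteq D\subseteq\mathring K$, $1\in K_0\subseteq K$, and $(G\cdot\E K_0)_\chi\into(G\cdot\E D)_\chi\subseteq(G\cdot\E\mathring K)_\chi$ is $\pi_k$-trivial for $k\le n-1$, so $K$ is an $n$-connecting set for $\chi$.

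I do not expect a genuine obstacle here — the real work was already carried out in Propositions~\ref{prop.trimmedsigma}, \ref{prop:alonso_main}, and~\ref{prop:alonsocriterion}. The only points requiring mild care are the bookkeeping ones: that "$(n-1)$-connecting for $\chi$" is stronger than "$(n-1)$-connecting for the zero character" and hence legal input for Proposition~\ref{prop:alonsocriterion}; that precomposing a $\pi_k$-trivial map with an inclusion preserves $\pi_k$-triviality, which makes the factorisation above valid; and the standard fact that a compact subset of a locally compact Hausdorff group lies in the interior of some compact set (cover it by finitely many open sets with compact closure and take the union of those closures). If desired, one could also phrase the "only if" direction directly via the same factorisation argument rather than as an induction, but the inductive presentation mirrors Definition~\ref{def:connectingset} most transparently.
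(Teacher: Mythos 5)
Your proposal is correct and follows essentially the same route as the paper: the ``if'' direction by feeding the witnessing $(n-1)$-connecting set into Proposition~\ref{prop:alonsocriterion} with $s=0$, and the ``only if'' direction by induction on~$n$ via Proposition~\ref{prop.trimmedsigma}. The only (immaterial) difference is how you thicken $D$ to a compact $K$ with $D\subseteq\mathring K$ — you use the elementary local-compactness covering argument, whereas the paper invokes Lemma~\ref{lem.compactexhaustion} on powers of a compact generating set; both are valid.
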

\begin{proof}
	($\Rightarrow$) For $n=0$ take $K = \{1\}$.
	For $n\ge 1$, since $\TopS^n(G) \subseteq \TopS^{n-1}(G)$, we may inductively assume the existence of an $(n-1)$-connecting set~$K_0$ for~$\chi$. Since $\chi \in \TopS^n(G)$, we know by Proposition~\ref{prop.trimmedsigma} that there is $D \in \C(G)_{\supseteq K_0}$ such that the inclusion $(G\cdot \E K_0)_\chi \into (G\cdot \E D)_\chi$ is $\pi_{k}$-trivial for all $k\le n-1$. The fact that $\TopS^n(G) \neq \emptyset$ implies that $G$~is compactly generated, so by Lemma~\ref{lem.compactexhaustion} some power $K$~of a  compact generating set satisfies $D\subseteq \mathring K$. Thus $K$~is an $n$-connecting set for~$\chi$.
	
	($\Leftarrow$) If $K$~is an $n$-connecting set for~$\chi$, then by definition there is an $(n-1)$-connecting set~$K_0$ for~$\chi$ such that the inclusion $(G\cdot \E K_0)_\chi \into (G\cdot \E K)_\chi$ is $\pi_{k}$-trivial for $k\le n-1$. Because $K_0$~is, in particular, an $(n-1)$-connecting set for the zero character, Proposition~\ref{prop:alonsocriterion} implies $\chi \in \TopS^n(G)$.
\end{proof}

 The third criterion is an analogue of a theorem of Renz \cite[Satz~2.6]{Ren88}.

\begin{prop}[$\TopS^n$ via $\chi$-value raising maps]\label{prop:theraisingcrit}
	Let $\chi\colon G \to \RR$ be a nonzero character and let $K$~be an $n$-connecting set for the zero character. Then $\chi \in\TopS^n(G)$ if and only if there is a $G$-equivariant $\varphi\in \Map(G\cdot \E K^{(n)}, G\cdot \E K)$ raising $\chi$-values.
\end{prop}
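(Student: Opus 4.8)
The plan is to prove Proposition~\ref{prop:theraisingcrit} by linking the two implications through the previously established machinery about connecting sets, particularly Propositions~\ref{prop:alonso_main}, \ref{prop:alonsocriterion}, and the raising principle (Lemma~\ref{lem:pushfilingup}).

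\textbf{The ``if'' direction.} Assume there is a $G$-equivariant $\varphi \in \Map(G\cdot \E K^{(n)}, G\cdot \E K)$ raising $\chi$-values. Since $K$~is an $n$-connecting set for the zero character, it is in particular $(n-1)$-connecting for~$0$, and moreover, unpacking the definition of ``$n$-connecting for~$0$'', there is an $(n-1)$-connecting set~$K_0 \subseteq \mathring K$ for~$0$ with the inclusion $(G\cdot \E K_0) \into (G\cdot \E \mathring K)$ being $\pi_k$-trivial for all $k\le n-1$ (here I use that the zero character makes $(G\cdot \E X)_0 = G\cdot \E X$). In particular the inclusion $G\cdot \E K_0 \into G\cdot \E K$ is $\pi_k$-trivial for $k\le n-1$. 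Now I want to apply Lemma~\ref{lem:pushfilingup} with the pair $K_0 \subseteq K$ and the given map~$\varphi$ — but $\varphi$ is only defined on $G\cdot \E K^{(n)}$, which is fine, since that is exactly the hypothesis of Lemma~\ref{lem:pushfilingup}. The lemma yields $D\in\C(G)_{\supseteq K}$, independent of~$\chi$, such that $(G\cdot \E K_0)_\chi \into (G\cdot \E D)_\chi$ is $\pi_k$-trivial for $k\le n-1$. Since $K_0$~is $(n-1)$-connecting for~$0$, this says precisely that $K_0$ together with $D$ and $s=0$ witnesses the criterion of Proposition~\ref{prop:alonsocriterion} (after passing to the $(n-1)$-skeleton, which is harmless by $\pi_k$-surjectivity of skeletal inclusions); alternatively one can observe directly that $D$ is an $n$-connecting set for~$\chi$ and invoke Proposition~\ref{prop:alonsoandsigma}. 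Either way, $\chi\in\TopS^n(G)$.

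\textbf{The ``only if'' direction.} Assume $\chi\in\TopS^n(G)$. By Proposition~\ref{prop:alonsoandsigma} there is an $n$-connecting set $K'$ for~$\chi$; enlarging if necessary (an $n$-connecting set stays $n$-connecting when enlarged, and $K$ is $n$-connecting for~$0$ hence for nothing stronger, but we can take $K'' := \dot{(K\cup K')}$, which is $n$-connecting for~$\chi$ since it contains~$K'$) we may assume $K\subseteq K'$ with $K'$ an $n$-connecting set for~$\chi$. Apply Proposition~\ref{prop:alonso_main} with $C := K$ and the $n$-connecting set~$K'$ for~$\chi$: since $\chi\neq 0$, we obtain a $G$-equivariant $\varphi \in \Map(G\cdot \E K^{(n)}, G\cdot \E K')$, together with $\epsilon>0$ and a neighborhood~$V$ of~$\chi$, such that $\chi(\varphi(\sigma)) \ge \chi(\sigma) + \epsilon$ for every simplex~$\sigma$ of $G\cdot \E K^{(n)}$ — that is, $\varphi$ raises $\chi$-values. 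The only gap is that the target is $G\cdot \E K'$ rather than $G\cdot \E K$. This is cosmetic: the statement of Proposition~\ref{prop:theraisingcrit} should be read with the understanding that, having enlarged~$K$ to~$K'$ at the outset (which we are free to do since $K'$ is again $n$-connecting for~$0$ and we are proving an ``iff'' for a fixed such set), the map lands in $G\cdot \E K'$. More cleanly: I would state the proof so that we first replace $K$ by a larger $n$-connecting set $\hat K \supseteq K$ for~$0$ and note that both sides of the claimed equivalence are unaffected by this replacement — the ``if'' direction because a $\chi$-value-raising map into $G\cdot \E K$ composes with the inclusion into $G\cdot \E \hat K$, and the ``only if'' direction because we just produced a map into $G\cdot \E K'$; then choose $\hat K \supseteq K'$.

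\textbf{Main obstacle.} The substantive work is entirely contained in Proposition~\ref{prop:alonso_main} and Lemma~\ref{lem:pushfilingup}, so the proof of Proposition~\ref{prop:theraisingcrit} itself is bookkeeping. The one point requiring care is the mismatch between ``$n$-connecting set'' as a hypothesis and the codomain of the map produced by Proposition~\ref{prop:alonso_main}: one must be careful that enlarging the connecting set does not change the truth value of either implication, and that the $\epsilon$-uniform raising inequality from Proposition~\ref{prop:alonso_main} (over all simplices, including those in subdivisions) is exactly the definition of ``raises $\chi$-values''. I expect no genuine difficulty here, only the need to phrase the enlargement cleanly so the ``iff'' is about a genuinely fixed object.
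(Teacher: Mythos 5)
Your ``if'' direction is correct and is essentially the paper's argument: extract the $(n-1)$-connecting set $K_0\subseteq \mathring K$ from the definition, feed the pair $K_0\subseteq K$ and $\varphi$ into the raising principle (Lemma~\ref{lem:pushfilingup}), and conclude via Proposition~\ref{prop:alonsocriterion}. (Your parenthetical alternative --- that $D$ is an $n$-connecting set for~$\chi$ --- does not obviously hold, since $K_0$ is only known to be $(n-1)$-connecting for the zero character, not for~$\chi$; but the main route is fine.)

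The ``only if'' direction has a genuine gap, and it is precisely the point you dismiss as cosmetic. The statement asserts, for the \emph{given} $K$, the existence of a raising map with codomain $G\cdot \E K$; producing one with codomain $G\cdot \E K'$ for an enlargement $K'\supseteq K$ is strictly weaker, and your proposed repair --- replacing $K$ by $\hat K\supseteq K'$ throughout --- proves a different proposition. The two conditions are not interchangeable: a raising map in $\Map(G\cdot \E K^{(n)}, G\cdot \E K)$ yields one in $\Map(G\cdot \E K^{(n)}, G\cdot \E \hat K)$, but there is no formal implication in the direction you need (from the statement for $\hat K$ back to the statement for $K$), so the ``iff'' for the original $K$ is simply not established. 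The paper closes this gap by exploiting the hypothesis that $K$ is $n$-connecting \emph{for the zero character} a second time: applying Proposition~\ref{prop:alonso_main} with $\chi=0$ and $C=K_+$ (the $n$-connecting set for $\chi$ containing $K$) produces a $G$-equivariant $\varphi_0\in \Map(G\cdot \E K_+^{(n)}, G\cdot \E K)$ whose codomain is the correct $G\cdot \E K$ and which, by the bounded-subdivision estimate, drops $\chi$-values by at most $|s|$ with $s:=2m\min(\chi(K^{-1}K))$. One then applies Proposition~\ref{prop:alonso_main} to $\chi$ itself to get a raising self-map $\varphi$ of $G\cdot \E K_+^{(n)}$ and takes the composite $\varphi_0\circ\varphi^k$ for $k$ large enough that $k\epsilon+s>0$; restricted to $G\cdot \E K^{(n)}$, this is the required $\chi$-value-raising map into $G\cdot \E K$. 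Your draft is missing both the construction of $\varphi_0$ and the iteration that absorbs its $\chi$-drop.
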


\begin{proof}
	($\Leftarrow$) By definition of~$K$, there is an $(n-1)$-connecting set $C\subseteq K$ for the zero character such that the inclusion $G\cdot \E C \into G\cdot \E K$ is $\pi_k$-trivial for all $k\le n-1$. Together with the map~$\varphi$, the raising principle (Lemma~\ref{lem:pushfilingup}) finds $D\in \C(G)_{\supseteq K}$ such that the inclusion $(G\cdot \E C)_\chi \into (G\cdot \E D)_\chi$ is $\pi_k$-trivial. From Proposition~\ref{prop:alonsocriterion}, we deduce $\chi \in \TopS^n(G)$.
	
	($\Rightarrow$) If $\chi \in \TopS^n(G)$, Proposition~\ref{prop:alonsoandsigma} provides an $n$-connecting set $K_+$ for~$\chi$, which we may enlarge to contain~$K$. We apply Proposition~\ref{prop:alonso_main} to the zero character to find a $G$-equivariant map $\varphi_0 \in \Map(G\cdot \E K_+^{(n)}, G\cdot \E K)$. 
	We may assume that $\varphi_0(1)=1$ by acting with $\varphi_0(1)^{-1}$ if necessary, so $\varphi_0$~is the identity on vertices (in fact this is satisfied by the construction provided in that proof). Moreover, let $m\in \NN$ be such that $\varphi_0$~is defined on~$\SD^m(G\cdot \E K_+^{(n)})$, and recall that each vertex~$v$ of $\SD^m(G\cdot \E K_+^{(n)})$ is within edge-distance~$2m$ from a vertex of~$G\cdot \E K_+^{(n)}$. As $\varphi_0$ maps these edges to edges with labels in $K^{-1}K$, we see that for each such~$v$, there is $g\in G$ such that  $\varphi_0(v) \in \varphi(g) (K^{-1}K)^{2m}$. Therefore, setting $s:= 2m\min(\chi(K^{-1}K))\le 0$, it follows that $\varphi_0$ drops $\chi$-values by at most~$s$, that is, it restricts to an element of $\Map((G\cdot \E K_+)^{(n)}_\chi, (G\cdot \E K)_s)$.
	
	Let us once again apply Proposition~\ref{prop:alonso_main}, this time to the character~$\chi$, to find a $G$-equivariant map~$\varphi\in \Map (G\cdot \E K_+^{(n)}, G\cdot \E K_+^{(n)})$ that raises $\chi$-values, say by $\epsilon>0$. If $k\in \NN$ is such that $k\epsilon + s >0$, then the composite $\varphi_0 \circ \varphi^k$, when restricted to the appropriate subdivision of $(G\cdot \E K)^{(n)}_\chi$, raises $\chi$-values by $k\epsilon + s$.
\end{proof}

Propositions \ref{prop:alonsocriterion}, \ref{prop:alonsoandsigma} and \ref{prop:theraisingcrit} form a useful toolkit for showing that a character lies in~$\TopS^n$, or for extracting consequences thereof. With these criteria in hand, we are equipped to prove the main result of this section.

\begin{proof}[Proof of Theorem~\ref{thm:openness}]
	We know $\TopS^n(G)$~is a cone at~$0$, so we are left to show that every $\chi \in \TopS^n(G) \setminus \{0\}$ has an open neighborhood contained in~$\TopS^n(G)\setminus\{0\}$.
	By Proposition~\ref{prop:alonsoandsigma}, there is an $n$-connecting set~$K$ for~$\chi$.  Proposition~\ref{prop:alonso_main} yields a neighborhood~$V\subseteq \TopHom(G,\RR)\setminus\{0\}$ of~$\chi$ and a map $\varphi\in \Map(G\cdot \E K^{(n)}, G\cdot \E K)$ that is $\psi$-value-raising for every $\psi \in V$. As $K$~is also an $n$-connecting set for the zero character, Proposition~\ref{prop:theraisingcrit} shows $V\subseteq \TopS^n(G)$.
\end{proof}

We finish this section with a pair of results that will be used in proving Proposition~\ref{prop:radialalonso} below and, ultimately, Theorem~\ref{thm:coabelian}. The first is a slight refinement of Proposition~\ref{prop:theraisingcrit}.

\begin{prop}[Stable essential triviality]\label{prop:stabletriviality}
	Let $n\in\NN$, let $\chi \in \TopS^n(G) \setminus \{0\}$, and let $C\in\C(G)$. Then there are a neighborhood~$V\subseteq \TopHom(G,\RR) \setminus \{0\}$ of~$\chi$ and $D\in \C(G)_{\supseteq C}$ such that for every $\psi \in V$, the inclusion
	\[(G\cdot \E C)_\psi \into (G\cdot \E D)_\psi\]
	is $\pi_k$-trivial for all $k \le n-1$.
\end{prop}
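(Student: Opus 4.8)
The plan is to combine the uniformity in Proposition~\ref{prop:alonso_main} — a single $\psi$-value-raising map valid for all $\psi$ in a neighborhood of $\chi$ — with the feature of the raising principle (Lemma~\ref{lem:pushfilingup}) that the compact set it produces depends only on that map, not on the character.

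First I would dispose of the trivial case $n=0$, where the conclusion is vacuous: take $V=\TopHom(G,\RR)\setminus\{0\}$ (an open neighborhood of the nonzero character $\chi$) and $D=C$. So assume $n\ge1$, and, enlarging $C$ so that $1\in C$ (which only strengthens the conclusion), I would first produce a compact set $K\supseteq C$ with two properties: (i) $K$ is an $n$-connecting set for $\chi$, and (ii) the inclusion $G\cdot\E C\into G\cdot\E K$ is $\pi_k$-trivial for every $k\le n-1$. For (i), Proposition~\ref{prop:alonsoandsigma} supplies some $n$-connecting set $K_2$ for $\chi$. For (ii), note that $\chi\in\TopS^n(G)$ forces $G$ to be of type $\mathrm C_n$, so the filtration $(G\cdot\E C')_{C'\in\C(G)}$ is essentially $(n-1)$-connected; applying this successively for $k=0,\dots,n-1$ and passing to a common upper bound yields $C_1\supseteq C$ with $G\cdot\E C\into G\cdot\E C_1$ being $\pi_k$-trivial for all $k\le n-1$. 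Then $K:=C_1\cup K_2$ is as desired: it contains the $n$-connecting set $K_2$, hence is itself $n$-connecting for $\chi$, and $G\cdot\E C\into G\cdot\E C_1\subseteq G\cdot\E K$ is $\pi_k$-trivial for $k\le n-1$.

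Next I would apply Proposition~\ref{prop:alonso_main} to the nonzero character $\chi$, to the $n$-connecting set $K$, and — this is the key move — with the compact set ``$C$'' of that proposition taken to be $K$ itself. This produces a $G$-equivariant $\varphi\in\Map(G\cdot\E K^{(n)},G\cdot\E K)$, together with an $\epsilon>0$ and a neighborhood $V\subseteq\TopHom(G,\RR)\setminus\{0\}$ of $\chi$, such that $\psi(\varphi(\sigma))\ge\psi(\sigma)+\epsilon$ for every $\psi\in V$ and every simplex $\sigma$ of $G\cdot\E K^{(n)}$; equivalently, the one map $\varphi$ raises $\psi$-values for \emph{every} $\psi\in V$. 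Fixing $\psi\in V$, I would then invoke Lemma~\ref{lem:pushfilingup} with the character $\psi$, with the pair $C\subseteq K$ from the previous step, and with this $\varphi$: it produces $D\in\C(G)_{\supseteq K}$ — depending on $\varphi$ but not on $\psi$ — such that $(G\cdot\E C)_\psi\into(G\cdot\E D)_\psi$ is $\pi_k$-trivial for all $k\le n-1$. Since this $D$ is one and the same for every $\psi\in V$, the pair $(V,D)$ is as required.

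The proof is essentially a reorganization of tools already in place, so no step should present a genuine difficulty. The one point that calls for care is the bookkeeping in the first step: Lemma~\ref{lem:pushfilingup} can only be leveraged to conclude about the \emph{given} $C$ if $C$ has been placed inside a single compact $K$ that is simultaneously $n$-connecting for $\chi$ and satisfies the $\pi_k$-triviality of $G\cdot\E C\into G\cdot\E K$. Beyond that, the whole argument hinges on two facts already available to us — that Proposition~\ref{prop:alonso_main} yields one $\varphi$ good for an entire neighborhood $V$, and that the compact set in Lemma~\ref{lem:pushfilingup} is independent of the character.
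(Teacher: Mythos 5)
Your proposal is correct and follows essentially the same route as the paper: enlarge $C$ to a compact $K$ that is both an $n$-connecting set for~$\chi$ and receives $G\cdot \E C$ $\pi_k$-trivially, apply Proposition~\ref{prop:alonso_main} to get one $\varphi$ raising $\psi$-values for all $\psi$ in a neighborhood~$V$, and then invoke Lemma~\ref{lem:pushfilingup}, whose output~$D$ is independent of the character. The only difference is that you spell out the bookkeeping for constructing~$K$, which the paper compresses into a single sentence.
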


\begin{proof}		
	First use that $G$~is of type $\mathrm C_n$ to find $K \in \C(G)_{\supseteq C}$ such that the inclusion $G\cdot \E C \into G\cdot \E K$ is $\pi_k$-trivial. In particular, since $\chi \in \TopS^n(G)$ we may assume $K$~is large enough to be an $n$-connecting set for~$\chi$.
	
	Applying Proposition~\ref{prop:alonso_main}, we produce a $G$-equivariant $\varphi\in \Map(G\cdot \E K^{(n)}, G\cdot \E K)$, together with a neighborhood~$V\subseteq \TopHom(G,\RR)\setminus \{0\}$ of~$\chi$ such that $\varphi$~is $\psi$-value raising for every $\psi \in V$. 
	Lemma~\ref{lem:pushfilingup} then yields the required $D\in \C(G)_{\supseteq K}$ uniformly for all $\psi \in V$.
\end{proof}

\begin{cor}[Uniform essential triviality]\label{cor:unifesstriv}	
	Let $n\in\NN$ and let $\Theta$~be a compact subset of~$\TopS^n(G) \setminus \{0\}$. Then for every $C\in \C(G)$ there is $D\in \C(G)_{\supseteq C}$ such that for every $\chi \in \Theta$ and every~$k \le n-1$, the inclusion
	\[(G\cdot \E C)_\chi \into (G\cdot \E D)_\chi\]
	is $\pi_k$-trivial.
\end{cor}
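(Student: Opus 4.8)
The plan is to deduce Corollary~\ref{cor:unifesstriv} from Proposition~\ref{prop:stabletriviality} by a straightforward compactness argument, with the only subtlety being how to combine the finitely many compact sets~$D$ furnished by the proposition into a single one. First I would fix $C\in\C(G)$ and apply Proposition~\ref{prop:stabletriviality} to each $\chi\in\Theta$: this yields an open neighborhood $V_\chi\subseteq\TopHom(G,\RR)\setminus\{0\}$ of~$\chi$ and a compact set $D_\chi\in\C(G)_{\supseteq C}$ such that for every $\psi\in V_\chi$, the inclusion $(G\cdot\E C)_\psi\into(G\cdot\E D_\chi)_\psi$ is $\pi_k$-trivial for all $k\le n-1$. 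The family $(V_\chi)_{\chi\in\Theta}$ is an open cover of the compact set~$\Theta$, so it admits a finite subcover $V_{\chi_1},\dots,V_{\chi_r}$.

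Next I would set $D:=D_{\chi_1}\cup\dots\cup D_{\chi_r}$, which is compact and contains~$C$. Given any $\chi\in\Theta$, pick $i$ with $\chi\in V_{\chi_i}$; then $(G\cdot\E C)_\chi\into(G\cdot\E D_{\chi_i})_\chi$ is $\pi_k$-trivial for $k\le n-1$. Since $D_{\chi_i}\subseteq D$, this inclusion factors as
\[(G\cdot\E C)_\chi \into (G\cdot\E D_{\chi_i})_\chi \into (G\cdot\E D)_\chi,\]
so the composite $(G\cdot\E C)_\chi\into(G\cdot\E D)_\chi$ is $\pi_k$-trivial as well, which is exactly what is required. (Here I am using the elementary fact that a simplicial map that factors through a $\pi_k$-trivial map is itself $\pi_k$-trivial — the composite on $\pi_k$ is zero because it already is after the first arrow.)

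I do not anticipate a genuine obstacle here; the work has all been done in Proposition~\ref{prop:stabletriviality}, and this corollary is a routine ``compactness upgrade'' of a pointwise statement, entirely analogous to how one passes from the openness in Theorem~\ref{thm:openness} to uniform control on compacta. The one point worth a sentence of care is that enlarging the target from $D_{\chi_i}$ to $D$ preserves $\pi_k$-triviality (via the factorization above), and that the finite union of compact sets containing~$C$ is again compact and contains~$C$; both are immediate. If one wanted to be slightly more economical one could instead take $D$ to be a single compact set containing each $D_{\chi_i}$ produced via Lemma~\ref{lem.compactexhaustion}, but the plain union is simpler and suffices.
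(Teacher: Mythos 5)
Your proof is correct and is essentially identical to the paper's: both apply Proposition~\ref{prop:stabletriviality} pointwise, extract a finite subcover of~$\Theta$ by compactness, and take $D$ to be the union of the finitely many resulting compact sets. The factorization remark justifying that enlarging the target preserves $\pi_k$-triviality is left implicit in the paper but is exactly the right observation.
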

\begin{proof}
	For each $\chi \in \Theta$, use Proposition~\ref{prop:stabletriviality} to find an open neighborhood~$V_\chi \subseteq \TopHom(G,\RR) \setminus \{0\}$ of~$\chi$ and $D_\chi \in \C(G)_{\supseteq C}$ such that for every $\psi \in V_\chi$ the inclusion
	$(G\cdot \E C)_\psi \into (G\cdot \E D_\chi)_\psi$
	is $\pi_k$-trivial.
	Now consider the open cover $\{V_\chi \mid \chi \in \Theta\}$ of~$\Theta$ and use compactness to produce a finite subset $F\subseteq \Theta$ such that $\Theta \subseteq \bigcup_{\chi \in F} V_\chi$. The compact set
	\[D := \bigcup_{\chi \in F} D_\chi\]
	is as required.
\end{proof}
\section{Compactness properties of co-abelian subgroups}\label{sec:coabelian}

This section presents a result that transfers compactness properties from~$G$ to a closed normal subgroup~$N$, provided the quotient~$Q$ is abelian. We retain the notation for the short exact sequence from before:
$$1 \to N \to G \xrightarrow{p} Q \to 1,$$
and we write $N^\perp$ to denote the set of characters $G\to \RR$ that vanish on~$N$; in other words,
\[N^\perp := p^*(\TopHom(Q, \RR)).\]
Our main result, which generalizes a classical theorem of Renz \cite[Satz~C]{Ren88}, is:

\begin{thm}[Co-abelian subgroups]\label{thm:coabelian}
	Suppose $Q$~is abelian and let $n\in\NN$. If $N^\perp \subseteq\TopS^n(G)$, then $N$~is of type~$\mathrm{C}_n$.
\end{thm}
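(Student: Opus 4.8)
\textbf{Proof strategy for Theorem~\ref{thm:coabelian}.}
The plan is to show that the filtration $(N\cdot \E C)_{C\in \C(G)}$ of~$\E G$ is essentially $(n-1)$-connected, which by Proposition~\ref{prop:properaction} (applied to the proper left-translation action of~$N$ on~$G$ and the zero character on~$N$) is equivalent to $N$~being of type~$\mathrm C_n$. I would proceed by induction on~$n$, the case $n=0$ being vacuous, and the case $n=1$ being handled separately (or absorbed into the inductive machinery). For the inductive step, one fixes $C\in \C(G)$; since $N^\perp \subseteq \TopS^n(G)$, in particular $0\in \TopS^n(G)$, so $G$~is of type~$\mathrm C_n$, and one first enlarges $C$ to a compact $K$ such that $N\cdot \E C\into N\cdot \E K$ is already $\pi_k$-trivial for $k\le n-2$ (using the inductive hypothesis on the $N$-filtration) and such that $G\cdot \E C\into G\cdot \E K$ is $\pi_k$-trivial for $k\le n-1$. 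The goal is then to find $D\in\C(G)_{\supseteq K}$ killing $\pi_{n-1}$ of $N\cdot\E K$ in $N\cdot\E D$.

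The core idea, mirroring the proof of Theorem~\ref{thm:openness}, is to build an $N$-equivariant ``radial'' map on the $(n-1)$-skeleton of $G\cdot\E K$ that simultaneously raises $\chi$-values for \emph{all} characters $\chi$ lying on the unit sphere of $N^\perp$ (identified with the unit sphere of $\TopHom(Q,\RR)$ via $p^*$). Since $Q$ is abelian, $N^\perp\cong\TopHom(Q,\RR)$ is a finite-dimensional real vector space (here one uses that $Q$, being a quotient of a locally compact group with $0\in\TopS^1(Q)$ coming from $0\in\TopS^1(G)$ and Proposition~\ref{prop:split}-type reasoning — or directly from $N^\perp\subseteq\TopS^1(G)$ — is compactly generated, hence $\TopHom(Q,\RR)$ is finite-dimensional), so its unit sphere $\Sph(N^\perp)$ is \emph{compact}. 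This compactness is what makes a uniform construction possible: Corollary~\ref{cor:unifesstriv} already gives, for the compact set $\Theta=\Sph(N^\perp)\subseteq\TopS^n(G)\setminus\{0\}$, a single $D_0\in\C(G)_{\supseteq K}$ such that $(G\cdot\E K)_\chi\into(G\cdot\E D_0)_\chi$ is $\pi_k$-trivial for every $\chi\in\Sph(N^\perp)$ and $k\le n-1$. The technical heart — which I would isolate as a proposition, call it Proposition~\ref{prop:radialalonso} as the excerpt anticipates — is to promote this to an $N$-equivariant map $\varphi\in\Map(G\cdot\E K^{(n)},G\cdot\E D_0)$ whose ``radial $\chi$-value raising'' is uniform over $\chi\in\Sph(N^\perp)$: there is $\epsilon>0$ with $\chi(\varphi(\sigma))\ge\chi(\sigma)+\epsilon$ for every simplex $\sigma$ and every $\chi\in\Sph(N^\perp)$. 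The construction of~$\varphi$ would run exactly as in Proposition~\ref{prop:alonso_main}, by induction on skeleta, solving a compact family of filling problems (compactness of the transversal $\overline{\mathcal M}$ via Lemma~\ref{lem:closedandopensubspaces}) and re-using solutions on neighborhoods; the only new wrinkle is that one tracks the drop in $\chi$-value uniformly across the compact parameter space $\Sph(N^\perp)$ rather than in a single neighborhood of one character, which costs nothing beyond replacing ``open neighborhood of $\chi$'' by ``open set in $\HomSS\times\Sph(N^\perp)$'' and taking a finite subcover of a compact product.

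Granting such a $\varphi$, the argument concludes as follows. Given $\eta\in\Map(\partial\Delta^n,N\cdot\E K)$, the image of~$\eta$ lies in $F\cdot\E K$ for a finite $F\subseteq N$, hence (as $\eta$'s vertices all lie in $N$) we may as well assume $\eta$ has image in $N\cdot\E K$ with all vertex-values in $\ker\chi$ simultaneously for every $\chi\in N^\perp$, i.e. inside $(G\cdot\E K)_\chi$ for every such $\chi$. Since $G\cdot\E K$ is $(n-1)$-connected we may fill $\eta$ by some $\mu\in\Map(\Delta^n,G\cdot\E K)$; write $s=\min_{\chi\in\Sph(N^\perp)}\chi(\mu)$, attained at some $\chi_0$ by compactness of $\Sph(N^\perp)$ and continuity of $\chi\mapsto\chi(\mu)$ (a minimum over finitely many vertex-values of a jointly continuous function). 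If $s\ge 0$ we are essentially done: $\mu$ has image in $(G\cdot\E K)_\chi$ for all $\chi\in N^\perp$, and then a separate, easier argument (using the inductive hypothesis that the $N$-filtration is essentially $(n-2)$-connected, plus $\pi_{n-1}$-triviality of $(G\cdot\E K)_{\chi_0}\into(G\cdot\E D_0)_{\chi_0}$ from Corollary~\ref{cor:unifesstriv} together with the interpolation/homotopy lemmas) pushes the filling into $N\cdot\E D$ for a suitable $D$. If $s<0$, apply $\varphi$: the interpolation homotopy of Lemma~\ref{lem:interpol_homotopy} homotopes $\eta$ (inside a controlled $N\cdot\E D'$) to $\varphi\circ\eta$, which is filled by $\varphi\circ\mu$ with $\min_{\chi}\chi(\varphi\circ\mu)\ge s+\epsilon$; iterating finitely many times lands us in the case $s\ge0$. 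The main obstacle I expect is precisely the bookkeeping in making $\varphi$ uniformly radial-raising over the compact sphere $\Sph(N^\perp)$ and simultaneously $N$-equivariant (not $G$-equivariant!) — the equivariance is only under $N$, so translates of simplices by elements of $G$ are genuinely different filling problems parametrized by $G/N\cong Q$, and one must confine attention to $N$-orbits while letting $\chi$-values vary; this is the content I would package into Proposition~\ref{prop:radialalonso}, and it is where essentially all the work lies.
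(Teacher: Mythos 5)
Your overall architecture matches the paper's: reduce via Proposition~\ref{prop:properaction} to essential $(n-1)$-connectedness of the filtration $(N\cdot\E C)_{C\in\C(G)}$, exploit the structure of compactly generated abelian~$Q$ to get a compact sphere of characters, invoke Corollary~\ref{cor:unifesstriv}, build an $N$-equivariant map by solving a compact family of filling problems, and iterate using Lemma~\ref{lem:interpol_homotopy}. However, the object at the heart of your argument cannot exist: a map~$\varphi$ with $\chi(\varphi(\sigma))\ge\chi(\sigma)+\epsilon$ for \emph{every} $\chi$ in the unit sphere of~$N^\perp$ is impossible, because the sphere is symmetric under $\chi\mapsto-\chi$, and applying the inequality to both yields $\chi(\varphi(\sigma))\ge\chi(\sigma)+\epsilon$ and $\chi(\varphi(\sigma))\le\chi(\sigma)-\epsilon$. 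Relatedly, $\min_{\chi\in\Sph(N^\perp)}\chi(g)=-\Vert p(g)\Vert\le 0$, so your stopping condition ``$s\ge0$'' would force every vertex of the filling to lie exactly in~$N$, a target no $\epsilon$-increment iteration can reach.

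What is actually needed --- and what the paper's Proposition~\ref{prop:radialalonso} provides --- is a map that decreases the \emph{norm} $\Vert\sigma\Vert=\Vert p(\sigma)\Vert$ (equivalently raises $\min_\chi\chi(\sigma)$, not each individual $\chi(\sigma)$), and only for simplices beyond a threshold radius: the conclusion has the form $\Vert\varphi(\sigma)\Vert\le\max\{\mathcal R,\Vert\sigma\Vert-\epsilon\}$. Achieving this is not ``exactly as in Proposition~\ref{prop:alonso_main} plus a finite subcover'': for each far-away simplex one raises only the single inward-pointing character $\chi_\sigma=\chi_{\sigma[i_\sigma]}$ attached to its farthest vertex, and one must then convert a $\chi_\sigma$-raise together with a bounded displacement into a genuine norm decrease. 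That conversion is a nontrivial Euclidean estimate (Lemma~\ref{lem:normfromchi}): a filling can drift tangentially and \emph{gain} norm while still raising~$\chi_\sigma$, and the estimate only closes once $\Vert\sigma\Vert$ is large relative to the displacement bound, which is exactly where the threshold~$\mathcal R$ comes from. This also requires an actual embedding $Q\subseteq\RR^m$ (after splitting off the compact factor of~$Q$ via Theorem~\ref{thm:sigmaofquotient}), not merely finite-dimensionality of $\TopHom(Q,\RR)$. Finally, the endgame is a direct containment $(G\cdot\E D)_{\mathcal R}\subseteq N\cdot\E D^+$ with $D^+=LD^{-1}D$ for a compact lift~$L$ of $Q\cap\mathcal R\DD^m$, rather than the further homotopy argument you sketch for the case $s\ge0$.
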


Under the additional assumption that $Q$~is compactly generated, the converse statement also holds, because Corollary~\ref{cor:abelian} implies $\TopS^n(Q) = \TopHom(G,\RR)$, and then Theorem~\ref{thm:sigmaofquotient} (1) yields $N^\perp \subseteq \TopS^n(G)$.

\begin{rem}[Generalizing to $\chi \neq 0$]
	Theorem~\ref{thm:coabelian} is a statement about transferring property~$\mathrm C_n$ from~$G$ to~$N$ in case $Q$~is abelian and $N^\perp \subseteq\TopS^n(G)$. In other words, for the character $\chi =0$, we have $$\chi \in \TopS^n(G) \implies \chi|_{N}\in \TopS^n(N).$$ One might conjecture that this transfer property generalizes to all nonzero characters, but this is not true even in the discrete case.
	
	Consider, for example, the group $G:= \mathrm F_2 \times \ZZ$, where $\F_2$~is the free group on two generators, 
	and the short exact sequence
	\[1 \to \mathrm F_2 \to G \to  \ZZ \to 1\]
	given by the obvious inclusion and projection.
	Define also the character
	\begin{align*}
		\chi \colon \mathrm F_2 \times  \ZZ = \langle a,b \rangle \times \langle t \rangle & \to \RR\\
		a,b,t & \mapsto 1.
	\end{align*}
	A classical criterion for computing~$\Sigma^1$ for right-angled Artin groups \cites[Theorem~4.1]{MV95}[Theorem~1]{Mei95} reveals that $\chi \in \Sigma^1(\mathrm F_2 \times \ZZ)$
	and that $\mathrm F_2^\perp \subseteq \Sigma^1(\mathrm F_2 \times \ZZ)$. However, it is well known (and indeed follows from the same criterion) that $\Sigma^1(\mathrm F_2)=\{0\}$, so $\chi|_{\mathrm F_2} \not \in \Sigma^1(\mathrm F_2)$.
\end{rem}

The remainder of this section is devoted to proving Theorem~\ref{thm:coabelian}, which relies on embedding a cocompact subgroup of~$Q$ into a suitable $\RR^m$, and then exploiting its geometry. From now on, we equip~$\RR^m$ with the standard scalar product and induced norm.

For each point $P\in \RR^m\setminus\{0\}$, let $\chi_P\colon \RR^m \to \RR$ be the character given by
\[\chi_P(T) := \left\langle T, -\frac P{\Vert P\Vert}\right\rangle.\]
In particular, $\chi_P(P) = - \Vert P \Vert$.
Often, an embedding of topological groups $Q\subseteq \RR^m$ will be implicit, and then we use, for elements $g \in G$, the shorthand notation $\Vert g\Vert := \Vert p(g)\Vert$. If $g\not \in N$, so $\Vert g\Vert \neq 0$, we define the character
\begin{align*}
	\chi_g :=  \chi_{p(g)} \circ p \colon G&\to \RR\\
	h & \mapsto \left\langle p(h), -\frac{p(g)}{\Vert g\Vert}\right \rangle.
\end{align*}

Given a subset $X \subseteq G$, we will also write $\Vert X \Vert := \sup_{g\in X}\Vert g\Vert$ and similarly, given a map~$\mu$ from a simplicial set into~$\E G$, we put $\Vert \mu \Vert := \sup\{\Vert g \Vert \mid g\in \im(\mu)^{(0)}\}$.

The main geometrical input used in the proof of Theorem~\ref{thm:coabelian} is the following lemma in euclidean geometry, which, given $P\in \RR^m$ of large enough norm, converts a lower bound on the $\chi_P$-value of a point~$T$ close to~$P$ into an upper bound on~$\Vert T \Vert$.

\begin{lem}[Norm bound from $\chi_P$-bound]\label{lem:normfromchi}
	Let $P,T\in \RR^m$ be points with distance bounded by $\Vert P - T \Vert \le r$, and with $P \neq 0$. Let $h > 0$ be such that $\chi_P (T) \ge \chi_P(P) + h$. Then, given $\nu \in {[0,h[}$, if 
	$\Vert P\Vert \ge \max \left\{ \frac{r^2 - \nu^2}{2(h-\nu)}, h\right\}$, it follows that $\Vert T \Vert \le \Vert P\Vert - \nu$.
\end{lem}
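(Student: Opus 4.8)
This is a purely Euclidean statement, so the proof should be a direct computation in $\RR^m$. Write $u := P/\Vert P\Vert$, so that $\chi_P(S) = -\langle S, u\rangle$ for every $S$, and the hypothesis $\chi_P(T)\ge \chi_P(P)+h$ reads $\langle T, u\rangle \le \langle P,u\rangle - h = \Vert P\Vert - h$. The goal $\Vert T\Vert \le \Vert P\Vert - \nu$ is, after squaring (legitimate once we know $\Vert P\Vert \ge h > \nu$, so the right-hand side is positive), the inequality $\Vert T\Vert^2 \le (\Vert P\Vert - \nu)^2$.

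\textbf{Key steps.} First I would decompose $T = \langle T,u\rangle\, u + T^\perp$ with $T^\perp \perp u$, so $\Vert T\Vert^2 = \langle T,u\rangle^2 + \Vert T^\perp\Vert^2$. Next, bound $\Vert T^\perp\Vert$ using the distance hypothesis: since $P = \Vert P\Vert\, u$ has no $u^\perp$-component, $T^\perp = (T-P)^\perp$, hence $\Vert T^\perp\Vert \le \Vert T - P\Vert \le r$; so $\Vert T\Vert^2 \le \langle T,u\rangle^2 + r^2$. Now split according to the sign of $\langle T,u\rangle$. If $\langle T,u\rangle \le \Vert P\Vert - h < \Vert P\Vert - \nu$ and also $\langle T,u\rangle \ge -(\Vert P\Vert - \nu)$ — i.e.\ $|\langle T,u\rangle| \le \Vert P\Vert - \nu$ — then it suffices to show $(\Vert P\Vert - \nu)^2 - \langle T,u\rangle^2 \ge r^2$; the left side is smallest when $\langle T,u\rangle$ is as large as allowed, namely $\Vert P\Vert - h$, giving $(\Vert P\Vert - \nu)^2 - (\Vert P\Vert - h)^2 = (h-\nu)\big(2\Vert P\Vert - h - \nu\big)$, and one checks that the hypothesis $\Vert P\Vert \ge \frac{r^2 - \nu^2}{2(h-\nu)}$ together with $\Vert P\Vert \ge h$ forces this to be $\ge r^2$ (indeed $2\Vert P\Vert - h - \nu \ge \Vert P\Vert - \nu$ when $\Vert P\Vert\ge h$, and then $(h-\nu)(\Vert P\Vert - \nu) \ge \frac{r^2-\nu^2}{2} + \ldots$; I would just verify $(h-\nu)(2\Vert P\Vert - h - \nu) \ge r^2$ directly from $2\Vert P\Vert(h-\nu) \ge r^2 - \nu^2$ by noting $-(h-\nu)(h+\nu) = \nu^2 - h^2 \ge \nu^2 - r^2 \cdot(\text{something})$ — this is a short line of algebra). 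The remaining subcase is $\langle T,u\rangle < -(\Vert P\Vert - \nu)$, i.e.\ $\langle T,u\rangle$ very negative; but then $\Vert T - P\Vert^2 \ge (\langle T,u\rangle - \Vert P\Vert)^2 > (2\Vert P\Vert - \nu)^2 \ge \Vert P\Vert^2 \ge h^2$, and comparing with $r$ and the assumed lower bound on $\Vert P\Vert$ shows this contradicts $\Vert T-P\Vert \le r$ (using $\Vert P\Vert \ge \frac{r^2-\nu^2}{2(h-\nu)} \ge$ roughly $r/2$ in the relevant regime), so this subcase is vacuous.

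\textbf{Main obstacle.} None of this is deep; the only thing requiring care is the bookkeeping of which of the two lower bounds on $\Vert P\Vert$ (namely $\frac{r^2-\nu^2}{2(h-\nu)}$ versus $h$) is doing the work in each subcase, and making sure the degenerate cases ($\nu = 0$, or $r$ small, or $T$ on the far side of the origin) are genuinely covered rather than silently assumed away. I expect the cleanest write-up is: reduce to $\Vert T\Vert^2 \le \langle T,u\rangle^2 + r^2$ and $\langle T,u\rangle \le \Vert P\Vert - h$, then observe that the function $x \mapsto x^2 + r^2$ on the interval $(-\infty, \Vert P\Vert - h]$ is maximized at the endpoint provided we can first rule out hugely negative $x$ via the distance constraint, and finish with the one-line inequality $(\Vert P\Vert - h)^2 + r^2 \le (\Vert P\Vert - \nu)^2$, which unwinds to exactly $2\Vert P\Vert(h - \nu) \ge r^2 + h^2 - \nu^2$, i.e.\ $\Vert P\Vert \ge \frac{r^2 + h^2 - \nu^2}{2(h-\nu)} = \frac{r^2 - \nu^2}{2(h-\nu)} + \frac{h^2}{2(h-\nu)}$ — and since $\frac{h^2}{2(h-\nu)} \le h \iff h \le 2(h-\nu) \iff h \ge 2\nu$, one sees the stated hypothesis $\Vert P\Vert \ge \max\{\frac{r^2-\nu^2}{2(h-\nu)}, h\}$ must be shown to imply it in all cases, which may require splitting on whether $h \ge 2\nu$. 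I would double-check that split; it is the only genuinely fiddly point.
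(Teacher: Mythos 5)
There is a genuine gap, and it is precisely the point you flagged as "fiddly" at the end: the check fails. Your route bounds $\Vert T\Vert^2 \le \langle T,u\rangle^2 + \Vert T^\perp\Vert^2$ and then uses $\Vert T^\perp\Vert \le \Vert T-P\Vert \le r$. This discards the longitudinal component of $T-P$, whose square is $(\chi_P(T)-\chi_P(P))^2 \ge h^2$; the sharp bound is $\Vert T^\perp\Vert^2 \le r^2 - (\chi_P(T)-\chi_P(P))^2$. As a result your finishing inequality $(\Vert P\Vert-h)^2 + r^2 \le (\Vert P\Vert-\nu)^2$ requires $\Vert P\Vert \ge \frac{r^2+h^2-\nu^2}{2(h-\nu)}$, which is strictly stronger than the stated hypothesis whenever $r > h-\nu$ — and $r\ge h$ always holds when a point $T$ satisfying both hypotheses exists (since $h \le \chi_P(T)-\chi_P(P) \le \Vert T-P\Vert \le r$). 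No split on $h\ge 2\nu$ repairs this: one checks that $\max\bigl\{\frac{r^2-\nu^2}{2(h-\nu)}, h\bigr\} \ge \frac{r^2+h^2-\nu^2}{2(h-\nu)}$ is equivalent to $h-\nu\ge r$. Concretely, with $h=1$, $\nu=0$, $r=2$, $\Vert P\Vert = 2$ the hypothesis holds and the lemma is true ($\Vert T\Vert^2 = 4\langle T, P/\Vert P\Vert\rangle \le 4$ on the extremal sphere), but your bound only yields $\Vert T\Vert^2 \le 1+4 = 5$. A secondary slip: in your first case split, $(\Vert P\Vert-\nu)^2 - x^2$ on the interval $\bigl[-(\Vert P\Vert-\nu),\,\Vert P\Vert-h\bigr]$ is minimized at the \emph{left} endpoint (where it is $0$), not at $x=\Vert P\Vert-h$.

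The repair is to keep the full Pythagorean identity, which is what the paper does: $\Vert T\Vert^2 = \chi_P(T)^2 + \Vert P-T\Vert^2 - (\chi_P(T)-\chi_P(P))^2 \le r^2 + 2\chi_P(T)\chi_P(P) - \chi_P(P)^2$. This expression is \emph{linear} in $\chi_P(T)$ with negative slope $2\chi_P(P) = -2\Vert P\Vert$, so it is maximized at $\chi_P(T) = \chi_P(P)+h$ with no case analysis — your worry about "hugely negative $\langle T,u\rangle$" is an artifact of the lossy bound and disappears. One then gets $\Vert T\Vert^2 \le r^2 - 2h\Vert P\Vert + \Vert P\Vert^2$, and the hypothesis $\Vert P\Vert \ge \frac{r^2-\nu^2}{2(h-\nu)}$ is \emph{exactly} equivalent to $r^2 - 2h\Vert P\Vert \le \nu^2 - 2\nu\Vert P\Vert$, giving $\Vert T\Vert^2 \le (\Vert P\Vert-\nu)^2$; the second hypothesis $\Vert P\Vert \ge h$ is used only to take square roots ($\Vert P\Vert - \nu \ge h-\nu > 0$).
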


\begin{figure}[h]
	\centering
	\def \svgwidth{0.6\linewidth}
	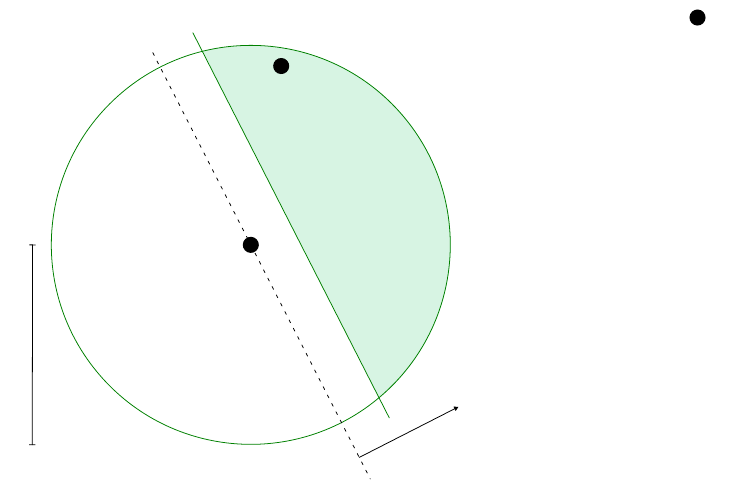
	\caption{Bounding $\Vert T\Vert$. The shaded region indicates where $T$~might lie, once $P, h$~and~$r$ have been fixed.}
	\label{fig:euclideangeometrylemma}
\end{figure}

\begin{proof}
	Since $h-\nu >0$, the first lower bound on~$\Vert P \Vert$ may be rewritten as
	\[ r^2 - 2h\Vert P \Vert \le \nu ^2 - 2 \nu \Vert P\Vert \tag{$\star$}.\]

	Applying Pythagoras's Theorem to the two right triangles indicated in Figure~\ref{fig:euclideangeometrylemma} shows that
	\begin{align*}
		\Vert T \Vert ^2 & = \chi_P(T)^2 + \Vert P-T\Vert ^2 - (\chi_P(T) - \chi_P(P))^2\\
		&\le r^2 + 2\chi_P(T)\chi_P(P) - \chi_P(P)^2  & (\Vert P - T\Vert \le r)\\
		&\le r^2 + 2(\chi_P(P)+h)\chi_P(P) - \chi_P(P)^2  & (\chi_P(T) \ge \chi_P(P) + h, \quad \chi_P(P) \le 0)\\
		& = r^2 - 2h \Vert P \Vert + \Vert P \Vert^2 & (\chi_P(P) = -\Vert P \Vert)\\
		&\le \nu^2 - 2 \nu \Vert P \Vert + \Vert P \Vert ^2 &(\star)  \\
		&= (\Vert P \Vert - \nu)^2.
	\end{align*}
	As $\Vert P \Vert - \nu \ge h - \nu > 0$, this implies $\Vert T \Vert \le \Vert P \Vert - \nu$.
\end{proof}

To state the main technical result of this section, it will be convenient to package some of its hypotheses and input data into the following setup:
	
\begin{setup}\label{setup:coabelian}
	\begin{itemize}
		\item Assume $Q = \ZZ^{m_1} \times \RR^{m_2} \subseteq \RR^m$, for some $m_1, m_2 \in \NN$ and $m := m_1 + m_2$.
		\item For each element $u_1,\ldots, u_m$ of the standard basis of~$\RR^m$, choose $p$-preimages $\tilde u_1, \ldots, \tilde u_m \in G$ and define
		\[B:=\{1, \tilde u_1, \tilde u_1^{-1}, \ldots , \tilde u_m, \tilde u_m^{-1}\}.\]

	\end{itemize}
\end{setup}
	
\begin{prop}[Radial connecting sets]\label{prop:radialalonso}
	Assume that $N^\perp\subseteq \TopS^n(G)$ and that we are in the situation of Setup~\ref{setup:coabelian}. Then there exists $K\in \C(G)_{\supseteq \{1\}}$ such that for every $C\in \C(G)$ there is an $N$-equivariant $\varphi \in \Map(G\cdot \E C^{(n)}, G\cdot \E K)$ such that:
	\begin{enumerate}
		
		\item There are $\mathcal R >0$ and $\epsilon >0$ such that for every simplex~$\sigma$ of $G\cdot \E C^{(n)}$, we have
		\[\Vert \varphi (\sigma)\Vert \le \max\{ \mathcal R, \Vert\sigma\Vert - \epsilon \}.\]
		
		\item For each vertex $g\in G$, we have $\varphi(g)\in gB$.
	\end{enumerate}
\end{prop}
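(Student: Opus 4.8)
The strategy parallels the proof of Proposition~\ref{prop:alonso_main}, but with the role of ``$\chi$-value raising'' replaced by ``radial norm reduction'' and finiteness arguments upgraded to compactness via the topologies of Section~\ref{sec:topologise}. First I would proceed by induction on~$n$. For the base case $n=0$, I need an $N$-equivariant map on vertices $\varphi\colon G\cdot\E C^{(0)}\to G\cdot\E K$ with $\varphi(g)\in gB$ and $\Vert\varphi(\sigma)\Vert\le\max\{\mathcal R,\Vert\sigma\Vert-\epsilon\}$ for all vertices (here simplices are vertices). The idea is: for a vertex $g\in G$ with $\Vert g\Vert$ large, we want to ``move toward the origin'' by one of the generators $\tilde u_i^{\pm 1}$, i.e.\ post-multiply $g$ by the element of~$B$ that best decreases $\Vert p(g)\Vert$. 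Since $Q=\ZZ^{m_1}\times\RR^{m_2}$, for any $P\in Q$ with $\Vert P\Vert$ large there is a standard basis vector $u_i$ and sign such that $\Vert P\pm u_i\Vert \le \Vert P\Vert - \epsilon$ for a uniform $\epsilon>0$ (take $\epsilon = 1/2$, say, choosing the coordinate of largest absolute value); setting $\varphi(g) = g\tilde u_i^{\pm 1}$ achieves this as $\Vert\varphi(g)\Vert = \Vert p(g)\pm u_i\Vert$. For vertices with $\Vert g\Vert$ below some threshold~$\mathcal R$, set $\varphi(g)=g$. This is $N$-equivariant because $\Vert nh\Vert = \Vert h\Vert$ for $n\in N$ (as $p(n)=0$), so the choice of which generator to use, depending only on $p(g)$, is $N$-invariant along $N$-orbits. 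Take $K := C\cup CB$ (compact); then $\varphi(g)\in gB\subseteq G\cdot\E K$ since $g\in C$-translate data gives $\varphi$ mapping into $G\cdot\E K$.

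For the inductive step $n\ge 1$: since $N^\perp\subseteq\TopS^n(G)$, in particular $N^\perp\subseteq\TopS^{n-1}(G)$, so by induction there is $K_0\in\C(G)_{\supseteq\{1\}}$ and, for the given $C$, an $N$-equivariant $\varphi_0\in\Map(G\cdot\E C^{(n-1)}, G\cdot\E K_0)$ with constants $\mathcal R_0,\epsilon_0$ and $\varphi_0(g)\in gB$. Now I would enlarge $K_0$ to a compact set~$K$ so that, for every character $\chi_P$ with $\Vert P\Vert$ sufficiently large (equivalently, for every $\psi\in N^\perp$ lying outside a bounded region, after normalizing), the inclusion $(G\cdot\E K_0)_\psi\into(G\cdot\E K)_\psi$ is $\pi_k$-trivial for $k\le n-1$ with a \emph{uniform} choice of $K$. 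This is exactly where Corollary~\ref{cor:unifesstriv} (Uniform essential triviality) enters: the set $\Theta := \{\chi_P : P\in S^{m-1}\}\subseteq N^\perp$ of normalized radial characters pulled back to~$G$ is a compact subset of $\TopHom(G,\RR)\setminus\{0\}$ (it is the continuous image of the sphere $S^{m-1}$ under $P\mapsto\chi_P\circ p$), and it is contained in $\TopS^n(G)$ by hypothesis; so Corollary~\ref{cor:unifesstriv} applied with $C$ replaced by $K_0$ gives a single $K\in\C(G)_{\supseteq K_0}$ making $(G\cdot\E K_0)_\chi\into(G\cdot\E K)_\chi$ $\pi_k$-trivial for \emph{all} $\chi\in\Theta$ and all $k\le n-1$. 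Then $K$ is an ``$n$-connecting set for~$\Theta$'' in the appropriate sense.

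With $K$ in hand, I would build $\varphi$ on the (subdivided) nondegenerate $n$-simplices $\sigma$ of $G\cdot\E C$ by filling the boundary maps $\eta_\sigma$ determined by $\varphi_0$. Here is the key mechanism: for a nondegenerate $n$-simplex $\sigma$, let $g_\sigma\in G$ be (roughly) a vertex of $\sigma$ realizing $\Vert\sigma\Vert$, and consider the character $\chi_{g_\sigma}$. Property~(1) for $\varphi_0$ says $\Vert\varphi_0(\partial\sigma)\Vert \le \max\{\mathcal R_0, \Vert\sigma\Vert - \epsilon_0\}$, which (via Lemma~\ref{lem:normfromchi}, run in reverse: a norm bound gives a $\chi_{g_\sigma}$-lower-bound when the points are all within bounded distance of $p(g_\sigma)$, the distance being controlled by $K_0$ and the simplex diameter) translates into $\eta_\sigma$ having $\chi_{g_\sigma}$-value close to the minimum; since $\chi_{g_\sigma}\in\Theta$ up to positive scaling, the uniform $\pi_{n-1}$-triviality of $(G\cdot\E K_0)_{\chi_{g_\sigma}}\into(G\cdot\E K)_{\chi_{g_\sigma}}$ lets me fill $\eta_\sigma$ inside $(G\cdot\E K)_{s}$ where $s$ is a controlled $\chi_{g_\sigma}$-level. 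Finally, applying Lemma~\ref{lem:normfromchi} in the forward direction to each vertex $T$ of the filling (all within bounded distance $r$ of $P := p(g_\sigma)$, $r$ depending on $K$ and the subdivision depth): a $\chi_P$-lower bound for $T$ plus $\Vert P\Vert$ large forces $\Vert T\Vert \le \Vert P\Vert - \epsilon$ for a new uniform $\epsilon$; for $\Vert P\Vert$ below the threshold from the lemma we absorb everything into $\mathcal R$. To make the fillings cohere into a single map on a \emph{fixed} finite subdivision $\SD^m$ independent of $\sigma$, I would again pass to the transversal $\mathcal M$ of normalized boundary maps $\eta_\sigma^\bullet$ (normalizing by $\eta_\sigma(e_0)^{-1}$), show $\overline{\mathcal M}$ is compact exactly as in Claim~1 of Proposition~\ref{prop:alonso_main}, cover it by finitely many opens on each of which a fixed filling (from the uniform $K$) works — this is the analogue of Claim~2, using Lemma~\ref{lem:closedandopensubspaces} and continuity of $\eta\mapsto\eta^{\mu}$ — and take the max of the finitely many subdivision depths.

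\textbf{Main obstacle.} The delicate point is the \emph{uniformity of the filling data across the whole radial family} $\{\chi_P\}$ simultaneously with the \emph{norm bookkeeping}. Corollary~\ref{cor:unifesstriv} supplies a single $K$ working for all $\chi\in\Theta$, which is essential, but one must then verify that the bounded-distance estimates feeding Lemma~\ref{lem:normfromchi} (both directions) are genuinely uniform: the distance $r$ between $P = p(g_\sigma)$ and the vertices of the filling must be bounded by a constant depending only on $K$ and the final subdivision depth~$m$, not on $\sigma$ — and $m$ itself is only determined \emph{after} the compactness/finite-subcover argument, so the threshold $\mathcal R$ (which depends on $r$ through $\frac{r^2-\nu^2}{2(h-\nu)}$) must be chosen last. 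Threading this dependency order correctly — connecting set $K$ first (uniform in $\Theta$), then fillings and subdivision depth $m$, then $r = r(K,m)$, then $\mathcal R = \mathcal R(r,\epsilon_0)$ and $\epsilon = \epsilon_0/3$ — is the crux; the euclidean geometry (Lemma~\ref{lem:normfromchi}) and the simplicial-homotopy/compactness machinery are by now routine, but keeping the quantifiers straight and confirming $N$-equivariance is preserved throughout (it is, since all choices depend only on $p$-images, which are $N$-invariant) is where the real work lies.
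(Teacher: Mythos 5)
Your plan follows the paper's proof almost step for step: induction on $n$; base case by post-multiplying with an element of~$B$ chosen according to the dominant coordinate of~$p(g)$; inductive step via the compact family $\Theta=\{\chi_u\circ p\mid u\in\Sph^{m-1}\}$ and Corollary~\ref{cor:unifesstriv} to get a single connecting set~$K$ uniform over~$\Theta$; Cauchy--Schwarz to convert the inductive norm bound on $\varphi_0(\partial\sigma)$ into a $\chi_{g_\sigma}$-lower bound and Lemma~\ref{lem:normfromchi} to convert back after filling; and a compactness/finite-subcover argument to fix the subdivision depth. The dependency order you spell out at the end ($K$ first, then the fillings and~$m$, then $r=r(K,m)$, then $\mathcal R$ and $\epsilon=\epsilon_0/3$) is exactly the paper's.

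The one step that would fail as written is the finite subcover. You propose to cover only $\overline{\mathcal M}$ by opens on each of which ``a fixed filling works''. But a fixed filling~$\mu_\alpha$, produced from the $\pi_{n-1}$-triviality of $(G\cdot\E K_0)_{\chi}\into(G\cdot\E K)_{\chi}$ for one particular $\chi\in\Theta$, only yields the needed inequality $\chi(\eta^{\mu_\alpha})>\chi(\eta(e)t^{-1})+\tfrac{\epsilon_0}{2}$ for characters near that~$\chi$; two simplices with the same (or nearby) normalized boundary $\eta^\bullet$ can point in completely different radial directions and hence carry unrelated characters $\chi_\sigma$, $\chi_{\sigma'}$. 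The compactness argument must therefore run over the set of \emph{decorated} filling problems --- in the paper, the compact set $\mathcal K\subseteq\overline{\mathcal M}\times\Theta\times(\Delta^n)^{(0)}\times B$ of tuples $(\eta,\chi,e,t)$ satisfying $\chi(\eta)\ge\chi(\eta(e)t^{-1})+\epsilon_0$ --- with neighbourhoods open jointly in $(\eta,\chi)$, which uses continuity of the evaluation map $G\times\TopHom(G,\RR)\to\RR$. Relatedly, simplices with $\Vert\sigma\Vert$ below a threshold $\mathcal R_0^{+}$ do not produce tuples in~$\mathcal K$ at all and need a separate, character-free cover of $\overline{\mathcal M}$ using the plain $\pi_{n-1}$-triviality of $G\cdot\E K_0\into G\cdot\E K$ (via Lemma~\ref{lem:CnfromSigma}). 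You correctly flag ``uniformity across the whole radial family'' as the crux in your final paragraph, but the plan does not actually carry it out. Two smaller points: in the base case $K$ must be chosen before~$C$ (take $K=\{1\}$; your $K=C\cup CB$ breaks the quantifier order, harmlessly here since for $n=0$ the target on vertices is all of~$G$), and the uniform decrement there is of order $1/\sqrt m$, not~$1/2$.
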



The reader should note the similarities with Proposition~\ref{prop:alonso_main}. In that proposition, the $n$-connecting sets were used in producing a map raising $\chi$-values; here the analogous role is played by~$K$, which we use for producing a map~$\varphi$ that decreases the distance from~$N$ (albeit only for simplices beyond a fixed distance~$\mathcal R$). Condition~2 might seem like a new addition, but the analogous property in the setting of Proposition~\ref{prop:alonso_main} was automatically fulfilled by virtue of the $\chi$-value-raising map being $G$-equivariant. Our~$\varphi$, however, is far from being $G$-equivariant.
As the reader might suspect, Condition~2 will be needed later in order to apply Lemma~\ref{lem:interpol_homotopy}. Lastly, observe that (much like connecting sets) if $K\in\C(G)$~satisfies the conditions in this proposition, then so does every $K'\in \C(G)_{\supseteq K}$.

\begin{proof} \setcounter{claim}{0}
	In the degenerate case $m=0$, we have $N=G$ and $N^\perp = \{0\}$, so the hypothesis says that $G$~is of type $\mathrm C_n$. We take $K$~to be an $n$-connecting set for the zero character (using Lemma~\ref{prop:alonsoandsigma}). Then, given $C\in \C(G)$, the map $\varphi$ given by the case ``$\chi=0$'' of Proposition~\ref{prop:alonso_main}, together with the constants $\mathcal R = \epsilon = 1$ is as required. So now we prove the proposition under the assumption that $m\ge1$ (which is useful because we will soon need to divide by $\sqrt m$).
	
	We proceed by induction over~$n$. For $n=0$, we take
	$K=\{1\}$, and (independently of~$C$) define $\varphi$ as follows:
	If $\Vert g \Vert \le \sqrt{m}$, let $t_g:=1$. For $\Vert g \Vert > \sqrt{m}$, we claim it is possible to choose $t_g\in B$ such that
	\[\chi_g(t_g) \ge \frac 1{\sqrt m}.\]
	To find $t_g$, note that the element $\frac{p(g)}{\Vert g\Vert} \in \RR^{m}$, being of unit norm, has an entry (say, the $i$-th) with absolute value at least $\frac 1{\sqrt{m}}$. Depending on its sign, we may then take $t_g = \pm \tilde u_i$. Since the choice of~$t_g$ is only constrained by $p(g)$, the~$t_g$ may be picked $N$-equivariantly.
	We now define $\varphi(g) := gt_g$.
	
	Condition~2 is then certainly satisfied, and we now check that Condition~1 holds with $\mathcal R := \sqrt{m}$ and $\epsilon := \frac 1{2\sqrt m}$. This is immediate for $0$-simplices~$g$ with $\Vert g\Vert \le \mathcal R$. For the other vertices, we apply
	Lemma~\ref{lem:normfromchi},
	which tells us that
	$\Vert \varphi(g) \Vert \le \Vert g\Vert - \frac1{2\sqrt{m}}$ if
	\[\Vert g \Vert \ge \max \left\{\frac{1^2 -\left(\frac 1{2\sqrt m}\right)^2}{2\left(\frac{1}{\sqrt m} - \frac{1}{2\sqrt m}\right)}, \frac 1 {\sqrt m} \right\}.\]
	This maximum is easily seen to be bounded above by~$\sqrt m = \mathcal R$.
	
	Assume now that $n\ge 1$, let $K_0\in \C(G)_{\supseteq \{1\}}$ be as given by the ``$n-1$'' case of the statement, and
	consider the compact set of characters
	\[\Theta := \{ \chi_u \circ p \mid u \in \Sph^{m-1}\} \subseteq N^\perp \subseteq \TopS^n(G).\]
	By Corollary~\ref{cor:unifesstriv}, there is $K \in \C(G)_{\supseteq K_0}$ such that for every $\chi \in \Theta$ the inclusion $(G\cdot \E K_0)_\chi \into (G\cdot \E \mathring K)_\chi$~is $\pi_{n-1}$-trivial (we may take the interior~$\mathring K$ due to Lemma~\ref{lem.compactexhaustion}). Note that by Lemma~\ref{lem:CnfromSigma}, this implies in particular that $G\cdot \E K_0 \into G\cdot \E \mathring K$ is $\pi_{n-1}$-trivial.
	
	Now, let $C \in \C(G)$ and let $\varphi_0\in \Map(G\cdot \E C^{(n-1)}, G\cdot \E K_0)$ be as given by the inductive hypothesis (say, defined on $\SD^{m_0}(G\cdot \E C^{(n-1)})$), with corresponding constants $\epsilon_0, \mathcal R_0$. As in the proof of Proposition~\ref{prop:alonso_main}, we will find a suitable $m\in \NN$ and define~$\varphi$ on $\SD^{m_0+m}(G\cdot \E C^{(n)})$ as an extension of~$\varphi_0\circ \Phi^m$.
	Writing $\mathcal S := \SD^{m_0}(\partial \Delta^n)$, we consider, for each nondegenerate $n$-simplex~$\sigma$ of $G\cdot \E C$, the map $\eta_\sigma \colon \mathcal S \to G\cdot \E K_0$ specified by~$\varphi_0$. Our goal is to produce maps $\mu_\sigma \colon  \SD^{m_0 + m}(\Delta^n) \to G\cdot \E K$ extending the $\eta_\sigma \circ \Phi^m$, which will then assemble to the desired $\varphi$. We will also ensure these $\mu_\sigma$ are such that Conditions 1~and~2 hold.
	
	Denote the $G$-translate of each~$\eta_\sigma$ sending the $0$-th vertex~$e_0$ of~$\Delta^n$ to~$1$ by
	\[\eta_\sigma^\bullet := \eta_\sigma(e_0)^{-1} \cdot \eta_\sigma.\]
	These maps are collected into the set
	\[\mathcal M := \{\eta_\sigma^\bullet\in \Map(\partial \Delta^n, G\cdot \E K_0) \mid \text{$\sigma$~is a nondegenerate $n$-simplex of~$G\cdot \E C$}  \},\]
	and we define $\overline{\mathcal M}$ as its closure in~$\Map(\partial \Delta^n, G\cdot \E K_0)$.
	
	When proving Proposition~\ref{prop:alonso_main}, the set~$\overline {\mathcal M}$ was thought of as the collection of filling problems to be solved in order to construct the maps~$\mu_\sigma$. This time, 
	filling problems coming from simplices far away from~$N$ require additional data, which consists, roughly speaking, of a character whose value should drop by a controlled amount upon filling, and the vertex with respect to which this drop is to be measured. This is the motivation for defining the set
	\[ \mathcal K := \big\{(\eta, \chi, e, t)\in \overline{\mathcal M} \times \Theta \times (\Delta^n)^{(0)} \times B \, \mid \, \chi(\eta) \ge \chi(\eta(e)t^{-1}) +\epsilon_0\}.\]

	\begin{claim} \label{claim:bothcpt}
		$\overline{\mathcal M}$ and $\mathcal K$ are compact.
	\end{claim}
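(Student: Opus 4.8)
The plan is to prove that $\overline{\mathcal M}$ and $\mathcal K$ are compact by reducing both to the compactness argument already deployed in Claim~\ref{claim:compactproblems} in the proof of Proposition~\ref{prop:alonso_main}. First I would handle $\overline{\mathcal M}$. Exactly as before, by Lemma~\ref{lem:closedandopensubspaces}~(1) the inclusion $\HomSS(\mathcal S, G\cdot \E K_0) \subseteq \HomSS(\mathcal S, \E G) = \prod_{\mathcal S^{(0)}} G$ is closed, so it suffices to find a \emph{compact} subset of $\prod_{\mathcal S^{(0)}} G$ containing $\mathcal M$. If $n=1$, then $\sigma$ is an edge labeled by some $g\in C^{-1}C$ and $\eta_\sigma^\bullet$ consists of the two vertices $1$ and $g$; here we must be slightly careful, since $\eta^\bullet_\sigma(e_1) = \eta_\sigma(e_0)^{-1}\eta_\sigma(e_1)$ is the label of $\sigma$, which lies in $C^{-1}C$, so $\mathcal M \subseteq \{1\}\times C^{-1}C$, a compact set. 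If $n\ge 2$, then $\partial\Delta^n$ (and hence $\mathcal S$) is connected, so each vertex $v\in\mathcal S^{(0)}$ lies within a finite edge-distance $d_v$ of $e_0$ in $\mathcal S^{(1)}$; since $\eta_\sigma^\bullet$ sends $e_0$ to $1$ and each edge to an edge labeled by $K_0^{-1}K_0$, we get $\eta_\sigma^\bullet(v)\in (K_0^{-1}K_0)^{d_v}$, whence $\mathcal M\subseteq \prod_{v\in\mathcal S^{(0)}}(K_0^{-1}K_0)^{d_v}$, a product of compact sets, which is compact by Tychonoff. Thus $\overline{\mathcal M}$ is compact.

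Next I would treat $\mathcal K$. The point is that $\mathcal K$ is a \emph{closed} subset of the product $\overline{\mathcal M}\times\Theta\times(\Delta^n)^{(0)}\times B$, each factor of which is compact: $\overline{\mathcal M}$ by the previous paragraph, $\Theta$ by hypothesis (it is the continuous image of the sphere $\Sph^{m-1}$ under $u\mapsto \chi_u\circ p$, using that $\TopHom(-,\RR)$ carries the compact-open topology and that the pairing $u \mapsto \langle -, -u/\Vert u\Vert\rangle$ is continuous), and $(\Delta^n)^{(0)}$ and $B$ being finite. To see $\mathcal K$ is closed, note that the defining condition $\chi(\eta)\ge \chi(\eta(e)t^{-1}) + \epsilon_0$ can be written as $\inf_{v\in\mathcal S^{(0)}}\chi(\eta(v)) - \chi(\eta(e)t^{-1}) \ge \epsilon_0$, i.e.\ membership of a point in the preimage of the closed half-line $[\epsilon_0, +\infty[$ under the map
\[\overline{\mathcal M}\times\Theta\times(\Delta^n)^{(0)}\times B \to \RR, \qquad (\eta,\chi,e,t)\mapsto \min_{v\in\mathcal S^{(0)}}\chi(\eta(v)) - \chi(\eta(e)t^{-1}).\]
Here $e$ and $t$ range over finite sets, so this is a minimum over finitely many maps of the form $(\eta,\chi)\mapsto \chi(\eta(v))$; each such map is continuous because evaluation $\eta\mapsto\eta(v)\in G$ is continuous (it is a coordinate projection in $\prod_{\mathcal S^{(0)}}G$) and the evaluation pairing $\TopHom(G,\RR)\times G\to\RR$ is continuous. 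Hence the displayed map is continuous, $\mathcal K$ is closed in a compact Hausdorff space, and therefore $\mathcal K$ is compact.

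The main obstacle here is a bookkeeping one rather than a conceptual one: one must make sure that the continuity of the various evaluation maps — both $\eta \mapsto \eta(v)$ and the pairing of a character against a group element — is genuinely available from the compact-open topology we placed on $\HomSS(\mathcal S, \E G)=\prod_{\mathcal S^{(0)}}G$ and on $\TopHom(G,\RR)$, and that the product $\overline{\mathcal M}\times\Theta\times(\Delta^n)^{(0)}\times B$ really is compact (which needs $\Theta$ compact, hence the hypothesis that $\Theta$ is a compact subset of $N^\perp$, and $B$ finite, which it is by Setup~\ref{setup:coabelian}). Once these continuity and compactness facts are in place, the argument is the standard ``closed subset of a compact space is compact'' observation. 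I would keep the exposition short by explicitly pointing back to Claim~\ref{claim:compactproblems} for the $\overline{\mathcal M}$ part, since the proof there is verbatim the same.
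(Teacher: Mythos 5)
Your overall strategy is exactly the paper's: reduce $\overline{\mathcal M}$ to a closed subset of a compact box in $\prod_{\mathcal S^{(0)}} G$ via Lemma~\ref{lem:closedandopensubspaces}~(1), and exhibit $\mathcal K$ as a closed subset of the compact product $\overline{\mathcal M}\times\Theta\times(\Delta^n)^{(0)}\times B$ using continuity of the evaluation pairing $G\times\TopHom(G,\RR)\to\RR$ (which is where local compactness of $G$ and the compact-open topology enter). The $n\ge 2$ case and the whole treatment of $\mathcal K$ match the paper's proof.

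There is, however, a genuine error in your $n=1$ case. You assert that $\eta_\sigma^\bullet(e_1)=\eta_\sigma(e_0)^{-1}\eta_\sigma(e_1)$ equals the label of $\sigma$ and hence lies in $C^{-1}C$. That identity would hold if $\varphi_0$ were $G$-equivariant on vertices, of the form $g\mapsto gt$ for a fixed $t$ — which is the situation in Claim~\ref{claim:compactproblems} inside Proposition~\ref{prop:alonso_main}, the proof you are importing. But in Proposition~\ref{prop:radialalonso} the inductively given $\varphi_0$ is only $N$-equivariant, and Condition~2 only guarantees $\varphi_0(g)\in gB$, where the element of $B$ may vary with $g$ (indeed it does: $t_g$ is chosen according to the direction of $p(g)$). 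Writing $\eta_\sigma(e_i)=\sigma[i]\,t_i$ with $t_i\in B$, one gets $\eta_\sigma^\bullet(e_1)=t_0^{-1}\sigma[0]^{-1}\sigma[1]t_1\in B^{-1}C^{-1}CB$, not $C^{-1}C$. So your claimed containment $\mathcal M\subseteq\{1\}\times C^{-1}C$ is unjustified and in general false. The fix is immediate — replace the container by $\{1\}\times B^{-1}C^{-1}CB$, which is still compact since $B$ is finite — and this is precisely the container the paper uses, invoking Condition~2 on $\varphi_0$. With that correction your argument goes through.
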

	
	\begin{proof}
		We first show $\overline {\mathcal M}$ is compact. By Lemma~\ref{lem:closedandopensubspaces}~(1), we know $\HomSS(\mathcal S, G\cdot \E K_0)$ is closed in $\HomSS(\mathcal S, \E G) = \prod_{\mathcal S^{(0)}}G$, so it suffices to find a compact $\mathcal B \subseteq \HomSS(\mathcal S, G\cdot \E K_0)$ containing~$\mathcal M$.
		
		If $n=1$, then each simplex~$\sigma$ is an edge labeled by an element of~$C^{-1}C$. Moreover, each element of~$\mathcal M$ sends~$e_0$ to~$1$, so given Condition~2 on~$\varphi_0$, we may take
		\[\mathcal B := \{1\} \times B^{-1} C^{-1}C B.\]
		
		For $n \ge 2$, we see $\mathcal S$~is connected, so for each vertex $v\in \mathcal S ^{(0)}$, let~$d_v\in \NN$ be its edge-distance to~$e_0$ in the $1$-skeleton $\mathcal S^{(1)}$. As each edge of~$\mathcal S$ is mapped by each $\eta_\sigma^\bullet \in \mathcal M$ to an edge labeled by an element in~$K_0^{-1}K_0$, we may take
		\[\mathcal B= \prod_{v\in \mathcal S^{(0)}}(K_0^{-1}K_0)^{d_v}.\]
		
		This finishes the proof that $\overline{\mathcal{M}}$~is compact, and hence so is $\overline{\mathcal M} \times \Theta \times (\Delta^n)^{(0)} \times B$.
		We establish compactness of~$\mathcal K$ by proving it is closed in this product. Define
		\begin{align*}
			F\colon \overline{\mathcal M} \times \Theta \times (\Delta^n)^{(0)} \times B & \to \RR\\
			(\eta, \chi, e, t)&\mapsto \min_{v\in \mathcal S^{(0)}} \chi(\eta (v)) - \chi(\eta(e)t^{-1}).
		\end{align*}
		Since $G$~is locally compact and~$\TopHom(G,\RR)$ is equipped with the compact-open topology, the evaluation map $G\times \TopHom (G,\RR) \to \RR$ is continuous \cite[§3.4, Corollary~1]{Bou98}, and so $F$~is continuous. Since $\mathcal K = F^{-1}({[\epsilon_0, +\infty[})$, we conclude $\mathcal K$~is closed in the above product, and hence compact.
	\end{proof}
	
		We will now associate to each $n$-simplex~$\sigma$ of~$G\cdot \E C$ the data that will encode the filling problem which we will use for constructing~$\mu_\sigma$, and we do it differently depending on whether $\sigma$ is close to~$N$. The threshold between these two regimes will be
	\[\mathcal R_0^+ := \mathcal R_0 +\Vert C^{-1} C\Vert +\epsilon_0.\]
	If $\Vert\sigma\Vert\le \mathcal R_0^+$, the data of the associated filling problem will simply be the map $\eta_\sigma^\bullet \in \overline{\mathcal M}$.
	For $\Vert\sigma\Vert > \mathcal R_0^+$, we choose an index $i_\sigma \in \{0,\ldots, n\}$ such that the	
	vertex $e_\sigma := e_{i_\sigma} \in (\Delta^n)^{(0)}$ realizes the distance to~$N$ (that is, with $\Vert \sigma[i_\sigma] \Vert = \Vert \sigma\Vert$), and we make this choice $N$-equivariantly. Then we define $t_\sigma := \sigma[i_\sigma]^{-1} \eta_\sigma(e_\sigma) \in B$ and $\chi_\sigma := \chi_{\sigma[i_\sigma]} \in \Theta$. The filling problem associated to~$\sigma$ is then the tuple~$(\eta_\sigma^\bullet, \chi_\sigma, e_\sigma, t_\sigma)$.
	
	\begin{claim}\label{claim:tupleinK}
		If $\Vert \sigma \Vert > \mathcal R_0^+$, then $(\eta_\sigma^\bullet, \chi_\sigma, e_\sigma, t_\sigma) \in \mathcal K$.
	\end{claim}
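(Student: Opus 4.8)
\textbf{Proof plan for Claim~\ref{claim:tupleinK}.}

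The plan is to unwind the definitions and verify the single inequality $\chi_\sigma(\eta_\sigma^\bullet) \ge \chi_\sigma(\eta_\sigma^\bullet(e_\sigma)t_\sigma^{-1}) + \epsilon_0$, since the membership $\eta_\sigma^\bullet \in \overline{\mathcal M}$, $\chi_\sigma \in \Theta$, $e_\sigma \in (\Delta^n)^{(0)}$, $t_\sigma \in B$ all hold by construction. First I would rephrase everything in terms of the unnormalized map $\eta_\sigma$: since $\eta_\sigma^\bullet = \eta_\sigma(e_0)^{-1}\cdot \eta_\sigma$ and $\chi_\sigma$ is a character (so $\chi_\sigma(g_0^{-1}g) = \chi_\sigma(g) - \chi_\sigma(g_0)$), the quantity $\chi_\sigma(\eta_\sigma^\bullet) - \chi_\sigma(\eta_\sigma^\bullet(e_\sigma)t_\sigma^{-1})$ equals $\chi_\sigma(\eta_\sigma) - \chi_\sigma(\eta_\sigma(e_\sigma)t_\sigma^{-1})$; the conjugating factor $\eta_\sigma(e_0)^{-1}$ cancels. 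So it suffices to show $\chi_\sigma(\eta_\sigma) \ge \chi_\sigma(\eta_\sigma(e_\sigma)t_\sigma^{-1}) + \epsilon_0$. Now $t_\sigma = \sigma[i_\sigma]^{-1}\eta_\sigma(e_\sigma)$, so $\eta_\sigma(e_\sigma)t_\sigma^{-1} = \sigma[i_\sigma]$, and the claim reduces to
\[
\chi_\sigma(\eta_\sigma) \ge \chi_\sigma(\sigma[i_\sigma]) + \epsilon_0.
\]

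The key step is to invoke Condition~1 from the inductive hypothesis on~$\varphi_0$. The map $\eta_\sigma$ is, by definition, the restriction of $\varphi_0$ to (the appropriate subdivision of) the boundary $\partial\sigma$ of the $n$-simplex~$\sigma$; each $(n-1)$-face of~$\sigma$ has all its vertices among $\{\sigma[0],\dots,\sigma[n]\}$, so $\Vert \partial\sigma\Vert = \Vert\sigma\Vert$, and applying the inductive Condition~1 face-by-face gives $\Vert \varphi_0(\tau)\Vert \le \max\{\mathcal R_0, \Vert\tau\Vert - \epsilon_0\} = \max\{\mathcal R_0, \Vert\sigma\Vert - \epsilon_0\}$ for every face~$\tau$ of~$\partial\sigma$; hence $\Vert \eta_\sigma \Vert \le \max\{\mathcal R_0, \Vert \sigma\Vert - \epsilon_0\}$. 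Since we are in the regime $\Vert\sigma\Vert > \mathcal R_0^+ = \mathcal R_0 + \Vert C^{-1}C\Vert + \epsilon_0 \ge \mathcal R_0 + \epsilon_0$, the maximum is $\Vert\sigma\Vert - \epsilon_0$, so $\Vert \eta_\sigma\Vert \le \Vert \sigma\Vert - \epsilon_0$. In other words, every vertex~$w$ in the image of~$\eta_\sigma$ satisfies $\Vert p(w)\Vert \le \Vert p(\sigma[i_\sigma])\Vert - \epsilon_0$. Finally, I would translate this norm bound into the desired $\chi_\sigma$-bound: writing $P := p(\sigma[i_\sigma])$ (so $\chi_\sigma = \chi_P \circ p$), for any point $T$ in the closed ball of radius $\Vert P\Vert - \epsilon_0$ about the origin one has, by Cauchy--Schwarz, $\chi_P(T) = \langle T, -P/\Vert P\Vert\rangle \ge -\Vert T\Vert \ge -(\Vert P\Vert - \epsilon_0) = \chi_P(P) + \epsilon_0$. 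Applying this with $T = p(w)$ for each vertex~$w$ of $\eta_\sigma$ gives $\chi_\sigma(\eta_\sigma) \ge \chi_\sigma(\sigma[i_\sigma]) + \epsilon_0$, which is exactly what was needed.

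I do not expect a serious obstacle here; the only points requiring a little care are (i) checking that the conjugation by $\eta_\sigma(e_0)^{-1}$ really does cancel in the difference $\chi_\sigma(\eta_\sigma^\bullet) - \chi_\sigma(\eta_\sigma^\bullet(e_\sigma)t_\sigma^{-1})$ — which is immediate from additivity of characters — and (ii) being careful that $e_\sigma$ is a vertex of $\Delta^n$ while $\eta_\sigma$ is defined on the subdivision $\mathcal S = \SD^{m_0}(\partial\Delta^n)$, so that $\eta_\sigma(e_\sigma)$ is shorthand for the value of $\varphi_0$ at the vertex of $\SD^{m_0}(\Delta^n)$ corresponding to~$e_\sigma$ (recall $X^{(0)} \hookrightarrow \SD(X)^{(0)}$). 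Neither of these introduces any real difficulty. The substantive content is entirely the passage, via Condition~1 of the inductive hypothesis, from a distance-to-$N$ decrease of the faces of~$\sigma$ to a $\chi_\sigma$-value lower bound, combined with the elementary Cauchy--Schwarz estimate relating $\Vert\cdot\Vert$ and $\chi_P$.
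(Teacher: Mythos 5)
Your proposal is correct and follows essentially the same route as the paper: cancel the conjugation by $\eta_\sigma(e_0)^{-1}$, reduce to $\chi_\sigma(\eta_\sigma)\ge\chi_\sigma(\sigma[i_\sigma])+\epsilon_0$, bound $\Vert\eta_\sigma\Vert\le\Vert\sigma\Vert-\epsilon_0$ via Condition~1 of the inductive hypothesis, and convert the norm bound into the $\chi_\sigma$-bound by Cauchy--Schwarz. The only cosmetic slip is asserting $\max\{\mathcal R_0,\Vert\tau\Vert-\epsilon_0\}=\max\{\mathcal R_0,\Vert\sigma\Vert-\epsilon_0\}$ for \emph{every} face $\tau$ (it is only $\le$ facewise, with equality after taking the maximum over faces), which does not affect the argument.
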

	
	\begin{proof}
		We need to verify that $\chi_\sigma(\eta_\sigma^\bullet) \ge \chi_\sigma(\eta^\bullet_\sigma(e_\sigma)t_\sigma^{-1}) + \epsilon_0$. Adding $\chi_\sigma(\eta_\sigma(e_0))$ to both sides, we see this is equivalent to
		\[\chi_\sigma(\eta_\sigma) \ge \chi_\sigma(\eta_\sigma(e_\sigma)t_\sigma^{-1}) + \epsilon_0,\]
		which we set out to show.
		
		Since the edges of~$\sigma$ have labels in~$C^{-1}C$, the lower bound $\Vert \sigma \Vert > \mathcal R_0^+$ implies that every vertex~$g$ of~$\sigma$ has $\Vert g \Vert > \mathcal R_0 + \epsilon_0$. In particular, each $(n-1)$-simplex~$\tau$ of~$\partial \sigma$ satisfies $\Vert \tau \Vert > \mathcal R_0 + \epsilon_0$, and so $\max\{\mathcal R_0, \Vert\tau\Vert -\epsilon_0\} = \Vert \tau \Vert - \epsilon_0$. Condition~1 on~$\varphi_0$ then implies that, for the map~$\mu_\tau \colon \SD^{m_0}(\Delta^{n-1}) \to G\cdot \E K_0$ describing $\varphi_0$ on~$\tau$, we have
		\[\Vert \mu_\tau \Vert \le \Vert \tau \Vert - \epsilon_0. \tag{$\star$}\]
		
		For each vertex~$v \in \mathcal S^{(0)}$, we now see
		{\allowdisplaybreaks
			\begin{align*}
				\chi_\sigma(\eta_\sigma(v)) &= \left\langle p(\eta_\sigma(v)), -\frac{p(\sigma[i_\sigma])}{\Vert \sigma[i_\sigma]\Vert} \right\rangle\\
				&\ge - \Vert \eta_\sigma(v) \Vert \tag{Cauchy-Schwarz}\\
				&\ge - \Vert \eta_\sigma \Vert\\
				& =- \max_{\text{{$\tau$~face of~$\sigma$}}} \Vert \mu_\tau \Vert\\
				& \ge  -\max_{\text{{$\tau$~face of~$\sigma$}}} \Vert \tau \Vert + \epsilon_0 \tag{$\star$}\\
				&= - \Vert \sigma \Vert + \epsilon_0\\
				&= - \Vert \sigma[i_\sigma]\Vert + \epsilon_0 \\
				&= \chi_\sigma(\sigma[i_\sigma]) + \epsilon_0\\
				&= \chi_\sigma(\eta_\sigma(e_\sigma) t_\sigma^{-1}) + \epsilon_0 \tag{$\eta_\sigma(e_\sigma)) = \sigma[i_\sigma] t_\sigma$},
		\end{align*}}
		from which we conclude the desired lower bound on $\chi_\sigma(\eta_\sigma)$.
	\end{proof}
	
	As with Proposition~\ref{prop:alonso_main}, the idea of re-using the solution to a filling problem on nearby problems will be essential, so we again make use of the following notation: given $m'\in\NN$, abbreviate
	\[\mathcal D_{m'}:= \SD^{m_0 + m'}(\Delta^n), \qquad \mathcal S_{m'} := \SD^{m_0 + m'}(\partial \Delta^n),\]	
	and given maps $\eta\colon \mathcal S \to \E G$ and $\mu\colon \mathcal D_{m'} \to \E G$, define $\eta^\mu \colon \mathcal D_{m'} \to \E G$ on vertices as
	\[\eta^\mu(v) := \begin{cases}
		\eta\circ \Phi^{m'}(v) & \text{if $v\in \mathcal S_{m'}^{(0)}$,}\\
		\mu(v) & \text{otherwise.}
	\end{cases}\]
	Recall also that the assignment $\eta \mapsto \eta^\mu$ is continuous.
	
	We first focus on finding solutions to the easier filling problems $\alpha \in \overline {\mathcal M}$.
	Since the inclusion $G\cdot \E K_0 \into G\cdot \E \mathring K$ is $\pi_{n-1}$-trivial, we know by Lemma~\ref{lem:combfilling} that there are $m_\alpha \in \NN$ and a map $\mu_\alpha \colon \mathcal D_{m_\alpha} \to G\cdot \E \mathring K$ extending $\alpha \circ \Phi^{m_\alpha} \colon \mathcal S_{m_\alpha}\to G\cdot \E K_0$.
	
	\begin{claim}\label{claim:copyeasyhomework}
		There is an open neighborhood~$U_\alpha \subseteq  \HomSS(\mathcal S, G\cdot \E K_0)$ of~$\alpha$ such that for every $\eta \in U_\alpha$ we have
		$\eta^{\mu_\alpha} \in \HomSS(\mathcal D_{m_\alpha}, G\cdot \E \mathring K)$.
	\end{claim}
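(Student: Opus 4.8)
The plan is to run through the analogue of Claim~\ref{claim:copyhomework} from the proof of Proposition~\ref{prop:alonso_main}, stripped of the bookkeeping about $\chi$-values. First I would assemble the two ingredients that are already available. On the one hand, for the fixed filling $\mu_\alpha\colon \mathcal D_{m_\alpha}\to G\cdot\E\mathring K$, the assignment $\eta\mapsto\eta^{\mu_\alpha}$ is a continuous map $\HomSS(\mathcal S, G\cdot\E K_0)\to\HomSS(\mathcal D_{m_\alpha},\E G)$; this was recorded just before the present claim and established in the proof of Proposition~\ref{prop:alonso_main} by regarding each Hom-space as a product of copies of~$G$ indexed by the vertices of the source simplicial set. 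On the other hand, the subspace $\HomSS(\mathcal D_{m_\alpha}, G\cdot\E\mathring K)\subseteq\HomSS(\mathcal D_{m_\alpha},\E G)$ is open: this is the openness statement in the lemma on closed and open Hom-subspaces, whose hypotheses are met because $\mathcal D_{m_\alpha}=\SD^{m_0+m_\alpha}(\Delta^n)$ is a finite simplicial set, $\mathring K$ is open in~$G$, and the left-multiplication action of~$G$ on itself is proper (its $\Ret(K)=KK^{-1}$ is compact, so Lemma~\ref{lem:ret} applies).

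Next I would observe that $\alpha^{\mu_\alpha}=\mu_\alpha$. Indeed, $\mu_\alpha$ was chosen to extend $\alpha\circ\Phi^{m_\alpha}$, so the two simplicial maps agree on $\mathcal S_{m_\alpha}^{(0)}$, and they agree on the remaining vertices of~$\mathcal D_{m_\alpha}$ by the very definition of $(-)^{\mu_\alpha}$. Since $\mu_\alpha$ has image in $G\cdot\E\mathring K$, this shows $\alpha^{\mu_\alpha}\in\HomSS(\mathcal D_{m_\alpha}, G\cdot\E\mathring K)$, i.e.\ $\alpha$ lies in the preimage of that open set.

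Finally I would set $U_\alpha$ to be the preimage of $\HomSS(\mathcal D_{m_\alpha}, G\cdot\E\mathring K)$ under the continuous map $\eta\mapsto\eta^{\mu_\alpha}$. By continuity $U_\alpha$ is open in $\HomSS(\mathcal S, G\cdot\E K_0)$, by the previous paragraph it contains~$\alpha$, and by construction every $\eta\in U_\alpha$ satisfies $\eta^{\mu_\alpha}\in\HomSS(\mathcal D_{m_\alpha}, G\cdot\E\mathring K)$, which is exactly the asserted conclusion. I do not expect a genuine obstacle here; the only point that deserves care is checking the finiteness hypothesis needed for the openness of the relevant Hom-subspace, which holds since $\mathcal D_{m_\alpha}$ has only finitely many nondegenerate simplices.
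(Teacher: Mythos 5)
Your argument is correct and is precisely the ``easier version'' of Claim~\ref{claim:copyhomework} that the paper invokes when it omits this proof: continuity of $\eta\mapsto\eta^{\mu_\alpha}$, openness of $\HomSS(\mathcal D_{m_\alpha}, G\cdot\E\mathring K)$ via the lemma on closed and open Hom-subspaces, the identity $\alpha^{\mu_\alpha}=\mu_\alpha$, and taking the preimage. Nothing further is needed.
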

	
	The proof of Claim~\ref{claim:copyeasyhomework} is an easier version of that of Claim~\ref{claim:copyhomework} in Proposition~\ref{prop:alonso_main}, and also of Claim~\ref{claim:copyhardhomework} below, so we omit it.

	We now focus on the filling problems $\beta = (\eta_\beta, \chi_\beta, e_\beta, t_\beta) \in\mathcal K$. For each such~$\beta$, choose $g\in G$ with $\chi_\beta (g) = \chi_\beta (\eta_\beta)$, so $\eta_\beta \in \HomSS(\mathcal S, g\cdot (G\cdot \E K_0)_{\chi_\beta})$.
	By choice of~$K$ (and Lemma~\ref{lem:combfilling}), there is $m_\beta\in \NN$ and a map
	\[\mu_\beta \colon \mathcal D_{m_\beta} \to g\cdot (G\cdot \E\mathring K)_{\chi_\beta},\]
	extending $\eta_\beta \circ \Phi^{m_\beta}$. In particular,
	\[ \chi_\beta(\mu_\beta) = \chi_\beta(\eta_\beta)\\
	\ge \chi_\beta(\eta_\beta(e_\beta)t_\beta^{-1}) + \epsilon_0.\]

	\begin{claim}\label{claim:copyhardhomework}
		There is an open neighborhood $U_\beta \subseteq \HomSS(\mathcal S, G\cdot \E K_0) \times \Theta$  of $(\eta_\beta, \chi_\beta)$ such that for every $(\eta, \chi) \in U_\beta$, we have
		$\eta^{\mu_\beta} \in \HomSS(\mathcal D_{m_\beta}, G\cdot \E \mathring K)$ and
		\[\chi(\eta^{\mu_\beta} ) > \chi(\eta(e_\beta)t_\beta^{-1}) + \tfrac{\epsilon_0}2.\]
	\end{claim}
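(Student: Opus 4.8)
The plan is to follow the proof of Claim~\ref{claim:copyhomework} in Proposition~\ref{prop:alonso_main} almost verbatim; the only genuinely new feature is that a character parameter~$\chi$ now varies over~$\Theta$, which is handled exactly as in Claim~\ref{claim:bothcpt}, by invoking continuity of the evaluation map $G\times \TopHom(G,\RR)\to\RR$ \cite[§3.4, Corollary~1]{Bou98}.

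First I would record that both of the desired conditions hold strictly at the base point $(\eta_\beta, \chi_\beta)$. Since $\mu_\beta$ restricts to $\eta_\beta\circ\Phi^{m_\beta}$ on $\mathcal S_{m_\beta}$, the map $\eta_\beta^{\mu_\beta}$ coincides with $\mu_\beta$, which by its construction lies in $\HomSS(\mathcal D_{m_\beta}, G\cdot \E \mathring K)$; and, using $\beta\in\mathcal K$,
\[\chi_\beta(\eta_\beta^{\mu_\beta}) = \chi_\beta(\mu_\beta) = \chi_\beta(\eta_\beta) \ge \chi_\beta(\eta_\beta(e_\beta)t_\beta^{-1}) + \epsilon_0 > \chi_\beta(\eta_\beta(e_\beta)t_\beta^{-1}) + \tfrac{\epsilon_0}{2}.\]
So it remains only to check that the locus of pairs $(\eta,\chi)$ satisfying the two conditions is open, and then to take the intersection of the two resulting open sets.

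For the first condition: $\mathcal D_{m_\beta}$ is finite and $\mathring K$ is open, so by Lemma~\ref{lem:closedandopensubspaces}~(2) the subspace $\HomSS(\mathcal D_{m_\beta}, G\cdot \E \mathring K)$ is open in $\HomSS(\mathcal D_{m_\beta}, \E G)$; pulling back along the continuous map $\eta\mapsto\eta^{\mu_\beta}$ produces an open set $W_1\ni\eta_\beta$ in $\HomSS(\mathcal S, G\cdot \E K_0)$. For the second condition: for each vertex $v$ of $\mathcal D_{m_\beta}$ the map $(\eta,\chi)\mapsto\chi\big(\eta^{\mu_\beta}(v)\big)$ is continuous (compose the continuous map $\eta\mapsto\eta^{\mu_\beta}(v)$ with the evaluation $G\times\TopHom(G,\RR)\to\RR$), and likewise $(\eta,\chi)\mapsto\chi\big(\eta(e_\beta)t_\beta^{-1}\big)$ is continuous; hence
\[(\eta,\chi)\longmapsto \min_{v\in\mathcal D_{m_\beta}^{(0)}}\chi\big(\eta^{\mu_\beta}(v)\big) - \chi\big(\eta(e_\beta)t_\beta^{-1}\big)\]
is continuous, being a finite minimum of continuous functions minus a continuous function, so the set $W_2$ of pairs $(\eta,\chi)\in\HomSS(\mathcal S, G\cdot \E K_0)\times\Theta$ on which it exceeds $\tfrac{\epsilon_0}{2}$ is open and contains $(\eta_\beta,\chi_\beta)$. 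Setting $U_\beta := (W_1\times\Theta)\cap W_2$ then finishes the argument. I do not anticipate any real obstacle: the proof is structurally identical to that of Claim~\ref{claim:copyhomework}, and the only subtle point to be careful about is the identity $\eta_\beta^{\mu_\beta}=\mu_\beta$, which is just the extension property of~$\mu_\beta$ and is what makes the first condition hold at the base point.
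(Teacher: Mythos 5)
Your proposal is correct and follows essentially the same route as the paper: openness is obtained by combining continuity of $(\eta,\chi)\mapsto(\eta^{\mu_\beta},\chi)$ and of the evaluation map with the openness of $\HomSS(\mathcal D_{m_\beta}, G\cdot \E \mathring K)$, and membership of the base point follows from $\eta_\beta^{\mu_\beta}=\mu_\beta$ and the inequality $\chi_\beta(\mu_\beta)\ge\chi_\beta(\eta_\beta(e_\beta)t_\beta^{-1})+\epsilon_0$ recorded just before the claim. The only cosmetic difference is that you pull back the two open conditions separately and intersect, whereas the paper forms a single open set in the target and takes one preimage.
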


	\begin{proof}
		Since, as already mentioned, the evaluation map $G\times \TopHom(G,\RR) \to \RR$ is continuous, also the map 
		\begin{align*}
			\HomSS(\mathcal D_{m_\beta}, \E G) \times \Theta &\to \RR\\
			 (\mu , \chi) &\mapsto \min_{v\in \mathcal D_{m_\beta}^{(0)}}\chi(\mu(v)) - \chi(\mu(e_\beta)t_\beta^{-1})
		\end{align*}
		is continuous, so the set of pairs $(\mu, \chi)$ for which $\chi (\mu) > \chi(\mu(e_\beta)t_\beta^{-1}) +\tfrac{\epsilon_0}2$ is open. By Lemma~\ref{lem:closedandopensubspaces}(2), the set~$U$ of pairs where, in addition, $\mu$~has image in $G\cdot \E \mathring K$ is also open. We take~$U_\beta$ to be the preimage of~$U$ under the continuous map
		\begin{align*}
			\HomSS(\mathcal S, G\cdot \E K_0) \times \Theta &\to \HomSS(\mathcal D_{m_\beta}, \E G) \times \Theta\\
			(\eta, \chi) &\mapsto (\eta^{\mu_\beta}, \chi).
		\end{align*}
		By the observation preceding the claim, we have $(\eta_\beta, \chi_\beta)\in U_\beta$.
	\end{proof}
	
	Now, the sets $U_\alpha$ provided by Claim~\ref{claim:copyeasyhomework} over all $\alpha \in \overline{\mathcal M}$ form an open cover of~$\overline{\mathcal M}$. Since, by Claim~\ref{claim:bothcpt},  $\overline{\mathcal M}$ is compact, we may choose a finite subset $F_1 \subseteq \overline{\mathcal M}$ with
	$\overline{\mathcal M} \subseteq \bigcup_{\alpha \in F_1}  U_\alpha$.
	Similarly, the open subsets
	\[W_\beta := U_\beta  \times \{e_\beta\} \times \{t_\beta\} \subseteq \HomSS(\mathcal S, G\cdot \E K_0) \times \Theta \times (\Delta^n)^{(0)} \times B,\]
	taken over all $\beta \in \mathcal K$, cover~$\mathcal K$, which is compact by Claim~\ref{claim:bothcpt}. Hence, there is a finite subset $F_2\subseteq \mathcal K$ with
	$\mathcal K \subseteq \bigcup_{\beta \in F_2} W_\beta$.
	We take $m$ as the maximum of the finite set $$\{m_\alpha \mid \alpha \in F_1\} \cup \{m_\beta \mid \beta\in F_2\}.$$
	
	We are now ready to define $\mu_\sigma \colon \mathcal D_m \to G\cdot \E K$ for all nondegenerate $n$-simplices~$\sigma$ of $G\cdot \E C$. First we choose, for each $\eta \in \mathcal M$, an element $\alpha\in F_1$ with $\eta\in U_\alpha$. Then, for every $\sigma$ with $\Vert \sigma\Vert\le \mathcal R_0^+$ such that  $\eta_\sigma^\bullet = \eta$, we define $\mu_\sigma$ as the filling of~$\eta_\sigma^\bullet$ given by~$\mu_\alpha$, suitably translated:
	$$\mu_\sigma := \eta_\sigma(e_0) \cdot ((\eta_\sigma^\bullet)^{\mu_\alpha}\circ \Phi^{m - m_\alpha}).$$
	Similarly, for each tuple $(\eta,\chi, e,t) \in \mathcal K$, choose $\beta \in F_2$ with $W_\beta$ containing this tuple, and whenever $\Vert \sigma \Vert > \mathcal R_0^+$ satisfies $(\eta_\sigma^\bullet, \chi_\sigma, e_\sigma, t_\sigma) = (\eta, \chi, e,t)$, put
	$$\mu_\sigma := \eta_\sigma(e_0) \cdot ((\eta_\sigma^\bullet)^{\mu_\beta}\circ \Phi^{m - m_\beta}).$$
	By Claim~\ref{claim:tupleinK}, this indeed defines $\mu_\sigma$ for all $\sigma$. In this second case, it will be useful to note that since $(\eta_\sigma^\bullet, \chi_\sigma, e_\sigma, t_\sigma)\in W_\beta$, we have
	\[\chi_\sigma ((\eta_\sigma^\bullet)^{\mu_\beta}) > \chi_\sigma (\eta_\sigma^\bullet (e_\sigma)t_\sigma^{-1}) + \tfrac{\epsilon_0}2,\]
	so adding $\chi_\sigma(\eta_\sigma(e_0))$ to both sides and rewriting $\eta_\sigma(e_\sigma)t_\sigma^{-1} = \sigma[i_\sigma]$, we see
	\[\chi_\sigma (\mu_\sigma) > \chi_\sigma (\sigma[i_\sigma]) + \tfrac {\epsilon_0}2. \tag{$\star$}\]

	This clearly results in an $N$-equivariant definition, and the $\mu_\sigma$ assemble to a map~$\varphi$ that extends $\varphi_0\circ \Phi^{m-m_0}$. Condition~2 of the statement is inherited from~$\varphi_0$, so we are left to produce~$\mathcal R$ and~$\epsilon$ as required by Condition~1.	
	
	Let $d\in \NN$ be an upper bound on the edge-distance between vertices of the $1$-skeleton~$\mathcal D_m^{(1)}$.
	The maps~$\mu_\sigma$ send each edge of~$\mathcal D_m$ to an edge with label in $K^{-1}K$, and moreover, $\mu_\sigma(e_\sigma) \in \sigma[i_\sigma]\,  B$. Hence, for every $\mu_\sigma$ and every $v \in \mathcal D_m^{(0)}$, we have $\mu_\sigma(v)\in \sigma[i_\sigma]\, B(K^{-1}K)^d$, and thus
	\[ \Vert p(\mu_\sigma (v)) -  p(\sigma[i_\sigma]) \Vert \le \Vert B \Vert + d \,\Vert K^{-1}K \Vert =:r \tag{$\dagger$}.\]
	
	We now use $(\star)$ and $(\dagger)$ to apply Lemma~\ref{lem:normfromchi} with
	\[P := p(\sigma[i_\sigma]),\qquad  T := p(\mu_\sigma(v)), \qquad h := \tfrac{\epsilon_0}2, \qquad \nu := \tfrac{\epsilon_0}3.\]
	The conclusion of Lemma~\ref{lem:normfromchi} follows if $\Vert \sigma\Vert > \mathcal R^+_0$ (so $(\star)$ holds) and
	$\Vert \sigma \Vert =\Vert p(\sigma[i_\sigma]) \Vert \ge \max\{\frac{r^2 - \nu^2}{2(h-\nu)}, h \}$
	(as per the hypotheses of the lemma), yielding
	\[ \Vert \mu_\sigma(v)\Vert \le \Vert \sigma \Vert - \tfrac{\epsilon_0}3.\]
	Setting $M:=\max\{ \mathcal R_0^+, \frac{r^2 - \nu^2}{2(h-\nu)}, h\}$ we have, more succinctly,
	\[\Vert \sigma\Vert > M \quad \implies \quad \Vert \mu_\sigma \Vert \le \Vert \sigma \Vert - \tfrac{\epsilon_0}3.\]
	
	We now put 
	\[\mathcal R := M + r, \qquad \epsilon := \tfrac{\epsilon_0}3.\]
	By the preceding considerations, Condition~1 follows immediately for simplices~$\sigma$ with $\Vert \sigma\Vert > M$. To see that it also holds for $\Vert \sigma \Vert \le M$, note that $(\dagger)$ implies, for every~$\sigma$,
	\[\Vert \mu_\sigma \Vert \le \Vert \sigma \Vert + r.\]
	Thus, for $\Vert \sigma \Vert \le M$, we have $\Vert \mu_\sigma \Vert \le M+r = \mathcal R$, whence Condition 1 holds.
\end{proof}

\begin{proof}[Proof of Theorem~\ref{thm:coabelian}]
	The hypothesis implies, in particular, that $0\in \TopS^n(G) \subseteq \TopS^1(G)$, so $G$~is compactly generated, and thus so is~$Q$. By the classification theorem for compactly generated locally compact abelian groups \cite[Theorem~24]{Mor77}, one can express~$Q$ as
	\[Q \cong \ZZ^{m_1} \times \RR^{m_2} \times Q_0\]
	for some $m_1, m_2 \in \NN$ and $Q_0$~a compact abelian group. The inclusion $\ZZ^{m_1} \times \RR^{m_2} \into Q$ induces an isomorphism
	$\TopHom(Q, \RR) \cong \TopHom(\ZZ^{m_1} \times \RR^{m_2}, \RR)$, and as $Q_0$~is compact, and hence of type~$\mathrm C_n$, Theorem~\ref{thm:sigmaofquotient} says this isomorphism preserves $\TopS^n$-sets. Thus, we may assume from now on that~$Q = \ZZ^{m_1} \times \RR^{m_2}$, and  we import the notation of Setup~\ref{setup:coabelian}.
	
	For the rest of this proof, we will deviate from the usual notation by writing, for $R\ge 0$ and $X$~a subcomplex of~$\E G$,
	\[X_R := X \cap \E(p^{-1}(R \DD^{m})),\] 
	where $R\DD^m \subseteq \RR^m$~is the closed disk of radius~$R$ centered at~$0$. In other words, $X_R$~designates the subcomplex of~$X$ spanned by the vertices within distance~$R$ of~$N$, as measured in~$Q$. This notation should cause no confusion, as no particular character will be used in what follows.
	
	By the case $\chi=0$ of Proposition~\ref{prop:properaction}, we can prove that $N$~is of type~$\mathrm C_n$ by showing that the filtration $(N\cdot \E C)_{C\in \C(G)}$ of~$\E G$ is essentially $(n-1)$-connected. So let $C \in \C(G)$; we will produce a set $D^+\in \C(G)_{\supseteq C}$ such that the inclusion $N\cdot \E C \into N \cdot \E D^+$ is $\pi_k$-trivial for all $k\le n-1$.

	Let~$K$ be as given by Proposition~\ref{prop:radialalonso}. Thus, there are $m\in \NN$ and $\varphi \colon \SD^m(G\cdot \E K^{(n)}) \to G\cdot \E K$, together with constants~$\mathcal R$ and~$\epsilon$ such that for every $R\ge 0$, the map~$\varphi$ sends $\SD^m((G\cdot \E K)_R^{(n)})$ to $(G\cdot \E K)_{R'}$,
	where $R':=\max\{\mathcal R, R - \epsilon\}$. Since $K$~may be assumed to be arbitrarily large and $G$~is of type~$\mathrm C_n$, we may further assume that the inclusion $G\cdot \E C \into G\cdot \E K$ is $\pi_k$-trivial for $k\le n-1$.
	We may also assume, for later convenience, that $\mathcal R \ge \Vert C \Vert$.
	
	We now apply Lemma~\ref{lem:interpol_homotopy} to find $D\in\C(G)_{\supseteq K}$ such that the simplicial homotopy~$H$ from $\Phi^m \colon \SD^m(G\cdot \E K^{(n)}) \to G\cdot \E K$ to~$\varphi$ has image in $G\cdot \E D$.
	Note that for every simplex $\sigma$ of $(G\cdot \E K)^{(n)}$, the vertices in the $H$-image of $\SD^m(\sigma) \times \Delta^1$ are precisely the ones in $\sigma$ and in $\varphi(\SD^m(\sigma))$. Therefore, $H$~is has the following ``norm-nonincreasing'' property: for every $R\ge \RR$,
	$$H\bigl( \SD^m((G\cdot \E K)_R^{(n)}) \times \Delta^1 \bigr) \subseteq  (G\cdot \E D)_R.$$
	
	Finally, using Lemma~\ref{lem:cptlifts}, let $L \in \C(G)$ be such that $p(L) =  Q\cap \mathcal R \DD^m$. We claim that the set $D^+ := L D^{-1} D$ is as desired.
	
	To see this, observe first that $(G\cdot \E D)_{\mathcal R} \subseteq N\cdot \E D^+$.
	Indeed, given a simplex $\sigma = g\cdot (x_0, \ldots, x_r) \in (G\cdot \E D)_{\mathcal R}$, where $g\in G$ and $x_0, \ldots, x_r \in D$, one can write $gx_0=hl$ with $h\in N$ and $l \in L$, and so $\sigma = h\cdot (l, l x_0^{-1} x_1, \ldots, l x_0^{-1} x_r) \in N\cdot \E D^+$.
	The inclusion $N\cdot \E C \into N \cdot \E D^+$, which we wish to show is $\pi_k$-trivial, thus factors as
	\[N\cdot \E C \into (G\cdot \E C)_{\mathcal R} \into (G\cdot \E D)_{\mathcal R} \into N\cdot \E D^+,\]
	the first inclusion owing to our insistence that $\mathcal R \ge \Vert C\Vert$. Hence, it suffices we prove the inclusion $(G\cdot \E C)_{\mathcal R} \into (G\cdot \E D)_{\mathcal R}$ is $\pi_k$-trivial. From here on, the proof is similar to that of Lemma~\ref{lem:pushfilingup}, with the role of the $\chi$-value increase being replaced by a decrease in distance from~$N$.
	
	Let $\eta  \in  \Map(\partial \Delta^{k+1}, (G\cdot \E C)_{\mathcal R})$. Since $G\cdot \E C \into G\cdot \E K$ is $\pi_k$-trivial, Lemma~\ref{lem:combfilling} produces a filling $\mu \in \Map(\Delta^{k+1},G\cdot \E K)$. 	If $\Vert \mu\Vert =: R \le \mathcal R$, then $\eta$~is trivial in $(G\cdot \E K)_{\mathcal R} \subseteq(G\cdot \E D)_{\mathcal R}$, as we wish to show. Otherwise,
	the composition
	\[\partial \Delta^{k+1}  \times \Delta^1 \xrightarrow{\eta \times \mathrm{id}} (G\cdot \E C)^{(n)}_{\mathcal R} \times \Delta^1 \xrightarrow{H} (G\cdot \E D)_{\mathcal R}\]
	(where occurrences of $\SD$ and $\Phi$ are suppressed) 	is a homotopy from~$\eta$ to a map~$\eta'\in \Map(\partial \Delta^{k+1}, (G\cdot \E K)_{\mathcal R})$, which is filled by
	\[\mu'\colon \Delta^{k+1} \xrightarrow{\mu} (G\cdot \E K)^{(n)}_R \xrightarrow{\varphi} (G\cdot \E K)_{R'}.\]
	In particular, $\Vert \mu' \Vert \le R - \epsilon$ or $\Vert \mu'\Vert \le \mathcal R$.

	If still $\Vert \mu'\Vert > \mathcal R$, then we may again use~$H$ to homotope $\eta'$ to $\eta''$, which has a filling~$\mu''$ with $\Vert \mu ''\Vert \le R - 2\epsilon$ or $\Vert \mu''\Vert \le \mathcal R$. After finitely many applications of this procedure, we conclude $\eta$~is homotopic in $(G\cdot \E D)_{\mathcal R}$ to a map that has a filling in $(G\cdot \E K)_{\mathcal R}$.
\end{proof}

\printbibliography

\textsc{Kai-Uwe Bux}, Fakultät für Mathematik, Universität Bielefeld, Postfach 100131, Universitätsstraße 25, D-33501 Bielefeld, Germany\\
\textit{E-mail: }
\texttt{\href{mailto:bux@math.uni-bielefeld.de}{bux@math.uni-bielefeld.de}}\medskip

\textsc{Elisa Hartmann}, Fakultät für Mathematik, Universität Bielefeld, Postfach 100131, Universitätsstraße 25, D-33501 Bielefeld, Germany\\
\textit{E-mail: }
\texttt{\href{mailto:ehartmann@math.uni-bielefeld.de}{ehartmann@math.uni-bielefeld.de}}\medskip

\textsc{José Pedro Quintanilha}, Institut für Mathematik IMa, Im Neuenheimer Feld 205, 69120 Heidelberg, Germany\\
\textit{E-mail: }
\texttt{\href{mailto:jquintanilha@mathi.uni-heidelbeg.de}{jquintanilha@mathi.uni-heidelbeg.de}}

\end{document}